\documentclass[10pt,a4paper]{amsart}

\numberwithin{equation}{section}
\allowdisplaybreaks 

\usepackage{mathtools,amssymb,eucal,mathrsfs,setspace,graphicx} 
\usepackage[noadjust]{cite} 
\usepackage[margin=22mm,foot=10mm]{geometry}

\usepackage{array}
\newcolumntype{C}{>{$}c<{$}} 

%
%
\usepackage[largesc,theoremfont]{newtxtext}      
\usepackage[libertine,cmbraces,varbb]{newtxmath} 
%
\begingroup
  \makeatletter
  \@for\theoremstyle:=definition,remark,plain\do{%
    \expandafter\g@addto@macro\csname th@\theoremstyle\endcsname{%
      \addtolength\thm@preskip{.5\baselineskip plus .2\baselineskip minus .2\baselineskip}
      \addtolength\thm@postskip{.5\baselineskip plus .2\baselineskip minus .2\baselineskip}
    }%
  }
\endgroup

\usepackage{enumitem}
\setitemize{leftmargin=*}   
\setenumerate{leftmargin=*, 
  label=\textup{(\alph*)},  
	ref=\textup{\alph*}}      

\usepackage{tikz}
\usetikzlibrary{arrows}

\usepackage[colorlinks=true,citecolor=red,linkcolor=blue]{hyperref} 

\usepackage[capitalise,noabbrev]{cleveref} 



\newcommand{\eps}{\varepsilon}

\newcommand{\pd}{\partial}     


\renewcommand{\ge}{\geqslant} 
\renewcommand{\le}{\leqslant} 

\renewcommand{\cong}{\simeq} 


\DeclareMathOperator{\tr}{tr} 

\newcommand{\cc}{\mathsf{c}}   
\newcommand{\ee}{\mathsf{e}}   
\newcommand{\ii}{\mathsf{i}}   
\newcommand{\kk}{\mathsf{k}}   
\newcommand{\uu}{\mathsf{u}}   
\providecommand{\vv}{\mathsf{v}}\renewcommand{\vv}{\mathsf{v}}   

\newcommand{\wun}{\vvmathbb{1}}  

\DeclarePairedDelimiter{\brac}{\lparen}{\rparen}   
\DeclarePairedDelimiter{\sqbrac}{\lbrack}{\rbrack} 
\DeclarePairedDelimiter{\set}{\lbrace}{\rbrace}
\DeclarePairedDelimiter{\abs}{\lvert}{\rvert}

\newcommand{\no}[1]{\mathopen{:} #1 \mathclose{:}} 

\DeclarePairedDelimiterX{\comm}[2]{\lbrack}{\rbrack}{#1 , #2}  
\DeclarePairedDelimiterX{\acomm}[2]{\lbrace}{\rbrace}{#1 , #2} 
\DeclarePairedDelimiterX{\inner}[2]{\langle}{\rangle}{#1 , #2} 
\DeclarePairedDelimiterX{\super}[2]{\lparen}{\rparen}{#1 \delimsize\vert \mathopen{} #2} 

\DeclarePairedDelimiter{\ket}{\lvert}{\rangle}
\newcommand{\ghvac}{\ket{0}}                    


\newcommand{\lra}{\longrightarrow}
\newcommand{\ira}{\hookrightarrow}    

\newcommand{\lma}{\overset{\sfsymb^{1/2}}{\longmapsto}} 

\newcommand{\dses}[5]{0 \lra #1 \overset{#2}{\lra} #3 \overset{#4}{\lra} #5 \lra 0} 

\newcommand{\blank}{{-}}

\newcommand{\fld}[1]{\mathbb{#1}}    
\newcommand{\alg}[1]{\mathfrak{#1}}  
\newcommand{\grp}[1]{\mathsf{#1}}    
\newcommand{\aalg}[1]{\mathsf{#1}}   
\newcommand{\Mod}[1]{\mathcal{#1}}   
\newcommand{\VOA}[1]{\mathsf{#1}}    
\newcommand{\categ}[1]{\mathscr{#1}} 

\newcommand{\ZZ}{\fld{Z}}
\newcommand{\NN}{\ZZ_{\ge 0}} 

\newcommand{\CC}{\fld{C}}



\newcommand{\affine}[1]{\widehat{#1}}

\newcommand{\SLA}[2]{\alg{#1}_{#2}}                      
\newcommand{\SLSA}[3]{\alg{#1} \super{#2}{#3}}           
\newcommand{\AKMA}[2]{\affine{\alg{#1}}_{#2}}            

\newcommand{\sltwo}{\SLA{sl}{2}}
\newcommand{\slthree}{\SLA{sl}{3}}

\newcommand{\aslthree}{\AKMA{sl}{3}}


\newcommand{\pwlat}[1]{\grp{P}^{#1}_{\ge}}                           
\newcommand{\proots}{\Delta_+}                                       
\newcommand{\sroot}[1]{\alpha_{#1}}                                  
\newcommand{\hroot}{\theta}                                          
\newcommand{\imroot}{\delta}                                         
\newcommand{\rvec}[1]{e^{#1}}                                        
\newcommand{\cvec}[1]{h^{#1}}                                        
\newcommand{\nrvec}[1]{f^{#1}}                                       
\newcommand{\fwt}[1]{\omega_{#1}}                                    

\newcommand{\wgrp}{\grp{W}}                                          
\newcommand{\wref}[1]{w_{#1}}                                        

\newcommand{\surv}[1]{\Sigma_{#1}}                                   
\newcommand{\survk}{\surv{\uu,\vv}}
\newcommand{\infwts}[1]{\Gamma_{#1}}                                 
\newcommand{\infwtsk}{\infwts{\uu,\vv}}

\newcommand{\bpsymb}{\VOA{BP}}

\newcommand{\uaffvoa}[2]{\VOA{V}^{#1}\brac{#2}}                   
\newcommand{\saffvoa}[2]{\VOA{L}_{#1}\brac{#2}}                   

\newcommand{\uslvoa}[1]{\uaffvoa{#1}{\slthree}}                   
\newcommand{\ubpvoa}[1]{\bpsymb^{#1}}                             
\newcommand{\sslvoa}[1]{\saffvoa{#1}{\slthree}}                   
\newcommand{\sbpvoa}[1]{\bpsymb_{#1}}                             
\newcommand{\slminmod}[2]{\VOA{A}_2\brac{#1,#2}}                  
\newcommand{\sltwominmod}[2]{\VOA{A}_1\brac{#1,#2}}               
\newcommand{\bpminmod}[2]{\bpsymb\brac{#1,#2}}                    

\newcommand{\uslvoak}{\uslvoa{\kk}}                               
\newcommand{\ubpvoak}{\ubpvoa{\kk}}                               
\newcommand{\sslvoak}{\sslvoa{\kk}}                               
\newcommand{\sbpvoak}{\sbpvoa{\kk}}                               
\newcommand{\slminmoduv}{\slminmod{\uu}{\vv}}                     
\newcommand{\bpminmoduv}{\VOA{BP}\brac{\uu,\vv}}                  

\newcommand{\bpmpi}[1]{\VOA{J}^{#1}}                              
\newcommand{\slmpi}[1]{\VOA{I}^{#1}}                              
\newcommand{\bpmpik}{\bpmpi{\kk}}
\newcommand{\slmpik}{\slmpi{\kk}}

\newcommand{\fgvoa}[1]{\VOA{F}^{#1}}                              
\newcommand{\bgvoa}{\VOA{B}}                                      
\newcommand{\ghvoa}{\VOA{G}}                                      

\newcommand{\twist}{\textup{tw}}                          
\newcommand{\semisimp}{\textup{ss}}                       
\newcommand{\semi}[1]{#1^{\semisimp}}

\newcommand{\modealg}{\aalg{U}}                           
\newcommand{\twmodealg}{\modealg^{\twist}}

\newcommand{\envalg}[1]{\aalg{U}\brac{#1}}                     

\newcommand{\zhuq}[1]{\sqbrac[\big]{#1}}                       

\newcommand{\zhu}[1]{\mathsf{Zhu}\sqbrac*{#1}}                 
\newcommand{\twzhu}[1]{\mathsf{Zhu}^{\twist}\sqbrac[\big]{#1}} 
\newcommand{\ind}[1]{\mathsf{Ind}\sqbrac*{#1}}                 
\newcommand{\twind}[1]{\mathsf{Ind}^{\twist}\sqbrac[\big]{#1}} 

\newcommand{\twaalg}[1]{\aalg{Z}_{#1}}                    
\newcommand{\zk}{\twaalg{\kk}}
\newcommand{\ck}{\aalg{C}_{\kk}}                          
\newcommand{\ak}{\aalg{A}_{\kk}}                          

\newcommand{\zversion}[1]{\overline{#1}}                  

\newcommand{\conjsymb}{\gamma}                            
\newcommand{\zconjsymb}{\zversion{\gamma}}                
\newcommand{\sfsymb}{\sigma}                              

\newcommand{\conjmod}[1]{\conjsymb(#1)}                   
\newcommand{\zconjmod}[1]{\zconjsymb(#1)}                 
\newcommand{\sfmod}[2]{\sfsymb^{#1}(#2)}                  

\renewcommand{\top}[1]{#1^{\textup{top}}}                 

\newcommand{\irrsymb}{\Mod{H}}
\newcommand{\versymb}{\Mod{V}}
\newcommand{\relsymb}{\Mod{R}}
\newcommand{\rvsymb}{\Mod{W}}
\newcommand{\maxsymb}{\Mod{M}}
\newcommand{\naxsymb}{\Mod{N}}
\newcommand{\cohsymb}{\Mod{C}}
\newcommand{\slsymb}{\Mod{L}}
\newcommand{\slvsymb}{\Mod{K}}

\newcommand{\ihw}[1]{\irrsymb_{#1}}                            
\newcommand{\vhw}[1]{\versymb_{#1}}                            
\newcommand{\twihw}[1]{\irrsymb^{\twist}_{#1}}                 
\newcommand{\twvhw}[1]{\versymb^{\twist}_{#1}}                 
\newcommand{\twrhw}[1]{\relsymb^{\twist}_{#1}}                 

\newcommand{\twprhw}[1]{\relsymb^{\twist,+}_{#1}}              
\newcommand{\twmrhw}[1]{\relsymb^{\twist,-}_{#1}}
\newcommand{\twpmrhw}[1]{\relsymb^{\twist,\pm}_{#1}}
\newcommand{\twprv}[1]{\rvsymb^{\twist,+}_{#1}}                

\newcommand{\twpmrv}[1]{\rvsymb^{\twist,\pm}_{#1}}
\newcommand{\twpmax}[1]{\maxsymb^{\twist,+}_{#1}}              

\newcommand{\twpmmax}[1]{\maxsymb^{\twist,\pm}_{#1}}
\newcommand{\twpnax}[1]{\naxsymb^{\twist,+}_{#1}}              

\newcommand{\twpmnax}[1]{\naxsymb^{\twist,\pm}_{#1}}

\newcommand{\zihw}[1]{\zversion{\irrsymb}_{#1}}                
\newcommand{\zvhw}[1]{\zversion{\versymb}_{#1}}                
\newcommand{\zrhw}[1]{\zversion{\relsymb}_{#1}}                
\newcommand{\zssrhw}[1]{\semi{\zrhw{#1}}}                      
\newcommand{\zprhw}[1]{\zrhw{#1}^{\,+}}                        
\newcommand{\zmrhw}[1]{\zrhw{#1}^{\,-}}                        
\newcommand{\zpmrhw}[1]{\zrhw{#1}^{\,\pm}}
\newcommand{\zcfam}[1]{\zversion{\cohsymb}_{#1}}               
\newcommand{\zsscfam}[1]{\semi{\zcfam{#1}}}                    
\newcommand{\zpcfam}[1]{\zcfam{#1}^{\,+}}                      
\newcommand{\zmcfam}[1]{\zcfam{#1}^{\,-}}                      
\newcommand{\zpmcfam}[1]{\zcfam{#1}^{\,\pm}}

\newcommand{\slihw}[1]{\slsymb_{#1}}                       
\newcommand{\slvhw}[1]{\slvsymb_{#1}}                      

\newcommand{\mult}[2]{\bigl[ #1 : #2 \bigr]}               

\newcommand{\wcat}[1]{\categ{W}_{#1}}             
\newcommand{\twcat}[1]{\categ{W}^{\twist}_{#1}}   

\newcommand{\slocat}[1]{\affine{\categ{O}}_{#1}}  
\newcommand{\bpocat}[1]{\categ{O}_{#1}}           

\newcommand{\qhrfun}[1]{H^0(#1)}                  

\newcommand{\cft}{conformal field theory}

\newcommand{\uea}{universal enveloping algebra}

\newcommand{\lw}{lowest-weight}
\newcommand{\lwv}{\lw\ vector}
\newcommand{\lwvs}{\lwv s}
\newcommand{\lwm}{\lw\ module}
\newcommand{\lwms}{\lwm s}
\newcommand{\hw}{highest-weight}
\newcommand{\hwv}{\hw\ vector}
\newcommand{\hwvs}{\hwv s}
\newcommand{\hwm}{\hw\ module}
\newcommand{\hwms}{\hwm s}
\newcommand{\rhw}{relaxed highest-weight}
\newcommand{\rhwv}{\rhw\ vector}

\newcommand{\rhwm}{\rhw\ module}
\newcommand{\rhwms}{\rhwm s}
\newcommand{\sv}{singular vector}
\newcommand{\svs}{\sv s}
\newcommand{\vo}{vertex operator}
\newcommand{\voa}{\vo\ algebra}
\newcommand{\voas}{\voa s}

\newcommand{\svoa}{\vo\ superalgebra}
\newcommand{\svoas}{\svoa s}
\newcommand{\ope}{operator product expansion}
\newcommand{\opes}{\ope s}
\newcommand{\qhr}{quantum hamiltonian reduction} 
\newcommand{\qhrs}{\qhr s}
\newcommand{\emt}{energy-momentum tensor}

\newcommand{\lhs}{left-hand side}

\newcommand{\rhs}{right-hand side}

\newcommand{\fdim}{finite-dimensional}
\newcommand{\infdim}{infinite-dimensional}

\newcommand{\bgg}{Bern\v{s}te\u{\i}n--Gel'fand--Gel'fand}
\newcommand{\bp}{Bershadsky--Polyakov}
\newcommand{\km}{Kac--Moody}

\newcommand{\pbw}{Poincar\'{e}--Birkhoff--Witt}

\theoremstyle{plain}
\newtheorem{theorem}{Theorem}[section]
\newtheorem{corollary}[theorem]{Corollary}
\newtheorem{lemma}[theorem]{Lemma}
\newtheorem{proposition}[theorem]{Proposition}

\newtheorem{mthm}{Main Theorem}

\newtheorem{definition}[theorem]{Definition}

\newtheorem{assumption}{Assumption}

\Crefname{assumption}{Assumption}{Assumptions}


\makeatletter
\renewcommand\author@andify{%
  \nxandlist {\unskip ,\penalty-1 \space\ignorespaces}%
    {\unskip {} \@@and~}%
    {\unskip \penalty-2 \space \@@and~}%
}
\makeatother

\hyphenation{semi-simple}
\hyphenation{super-algebra}
\hyphenation{sub-algebra}

\begin{document}

\title[Relaxed highest-weight modules for the Bershadsky--Polyakov algebras]{Classifying relaxed highest-weight modules for \\ admissible-level Bershadsky--Polyakov algebras}

\author[Z~Fehily]{Zachary Fehily}
\address[Zachary Fehily]{
School of Mathematics and Statistics \\
University of Melbourne \\
Parkville, Australia, 3010.
}
\email{zfehily@student.unimelb.edu.au}

\author[]{Kazuya Kawasetsu}
\address[Kazuya Kawasetsu]{
Priority Organization for Innovation and Excellence \\
Kumamoto University \\
Kumamoto 860-8555, Japan.
}
\email{kawasetsu@kumamoto-u.ac.jp}

\author[D~Ridout]{David Ridout}
\address[David Ridout]{
School of Mathematics and Statistics \\
University of Melbourne \\
Parkville, Australia, 3010.
}
\email{david.ridout@unimelb.edu.au}

\begin{abstract}
	The Bershadsky--Polyakov algebras are the minimal quantum hamiltonian reductions of the affine vertex algebras associated to $\mathfrak{sl}_3$ and their simple quotients have a long history of applications in conformal field theory and string theory.  Their representation theories are therefore quite interesting.  Here, we classify the simple relaxed highest-weight modules, with \fdim\ weight spaces, for all admissible but nonintegral levels, significantly generalising the known highest-weight classifications \cite{AraRat13,AdaCla19}.  In particular, we prove that the simple Bershadsky--Polyakov algebras with admissible nonintegral $\mathsf{k}$ are always rational in category $\mathscr{O}$, whilst they always admit nonsemisimple relaxed highest-weight modules unless $\mathsf{k}+\frac{3}{2} \in \mathbb{Z}_{\ge0}$.
\end{abstract}

\maketitle

\markleft{Z~FEHILY, K~KAWASETSU AND D~RIDOUT} 

\onehalfspacing

\section{Introduction} \label{sec:intro}

\subsection{Background} \label{sec:back}

The \bp\ algebras $\ubpvoak$, $\kk \in \CC$, are among the simplest and best-known nonregular W-algebras \cite{PolGau90,BerCon91}.  They may be characterised \cite{KacQua03} as the minimal (or subregular) \qhrs\ of the level-$\kk$ universal affine vertex algebras $\uslvoak$.  Here, we are interested in their representation theories and, in particular, those of their simple quotients $\sbpvoak$.

When $\kk+\frac{3}{2} \in \NN$, $\sbpvoak$ is known to be rational and $C_2$-cofinite \cite{AraAss15,AraRat13}, meaning that the representation theory is semisimple and that there are finitely many simple $\sbpvoak$-modules, up to isomorphism.  More recently, the representation theory of $\sbpvoak$ was explored for certain other levels in \cite{AdaCla19,AdaBer20}.  There, the \hwms\ were classified and some non\hwms\ were described.  These works both relied on explicit formulae for \svs\ in $\ubpvoak$.  Here, we shall extend these classifications to more general levels where the \sv\ method is unavailable.  Instead, we shall exploit the properties \cite{KacQua04,AraRep05} of minimal \qhr.

In particular, we are interested in the \rhw\ theory of the simple \bp\ algebras $\sbpvoak$.  Relaxed \hwms\ are a type of generalised \hwm\ \cite{AdaVer95,FeiEqu98,RidRel15} that have been shown to be essential to achieve consistent modular properties for many nonrational \voas, for example the admissible-level affine ones associated with $\sltwo$ \cite{AdaVer95,GabFus01,RidSL210,CreMod12,CreMod13,RidRel15,AugMod17,AdaRea17,KawRel18}, their affine cousins \cite{AraWei16,RidAdm17,AdaRea17,KawRel18,WooAdm18,CreCos18,KawRel19,KawRel20,FutSim20,FutPos20} and other close relatives \cite{AdaRea16,RidBos14}.  We therefore expect them to play a central role in \bp\ representation theory and, indeed, in the representation theory of most nonrational W-algebras.  This will be discussed further in \cite{FR20}.

Here, we classify the simple \rhw\ $\sbpvoak$-modules with \fdim\ weight spaces, in both the untwisted and twisted sectors, when $\kk$ is admissible and nonintegral.  The much more difficult nonadmissible and integral cases are left for future investigations.  This classification includes the \hw\ classification as a special case.  We also show that there are nonsemisimple \rhw\ $\sbpvoak$-modules when $\kk$ is admissible, nonintegral and $2\kk+3 \notin \NN$.  In a companion paper \cite{AdaRea20}, these relaxed modules are constructed from the \hwms\ of the Zamolodchikov algebra \cite{ZamInf85}, the regular W-algebra associated to $\slthree$, using the inverse \qhr\ procedure of \cite{SemInv94,AdaRea17}.  This results in beautiful character formulae for the relaxed $\sbpvoak$-modules, generalising those found in \cite{CreMod13,KawRel18} for $\saffvoa{\kk}{\sltwo}$ and $\saffvoa{\kk}{\SLSA{osp}{1}{2}}$.

\subsection{Results} \label{sec:results}

Our strategy in classifying \rhw\ $\sbpvoak$-modules starts from the \hw\ classification.  The idea for the latter is to use Arakawa's celebrated results on minimal \qhr\ \cite{AraRep05}.  However, we must first establish a subtle technical result concerning the surjectivity of the minimal reduction functor.  This is the content of our first main result.
\begin{mthm}[\cref{thm:surjection}] \label{mthm1}
	Let $\kk$ be an admissible nonintegral level.  Then, every simple (untwisted) \hw\ $\sbpvoak$-module may be realised as the minimal \qhr\ of a simple \hw\ $\sslvoak$-module.
\end{mthm}
\noindent In \cite{AraRat13}, an explicit \sv\ formula is used to prove this theorem when $2\kk+3\in\NN$.  Our general proof also uses the existence of a generating \sv, but is necessarily very different because an explicit formula is unavailable.

Given this result, it is straightforward to classify the simple (untwisted and twisted) \hw\ $\sbpvoak$-modules and determine how they are related to one another.  For this, write $\kk+3 = \frac{\uu}{\vv}$, where $\uu\ge3$ and $\vv\ge2$ are coprime, and introduce the set $\survk$ of $\aslthree$ weights $\lambda = \lambda^I - \frac{\uu}{\vv} \lambda^F$ satisfying $\lambda^I \in \pwlat{\uu-3}$, $\lambda^F \in \pwlat{\vv-1}$ and $\lambda^F_0 \ne 0$.  Here, $\pwlat{\ell}$ denotes the dominant integral weights of $\aslthree$ whose level is $\ell$.
\begin{mthm} \label{mthm2}
	Let $\kk$ be admissible and nonintegral.  Then:
	\begin{enumerate}
		\item \label{it1} [\cref{thm:hwclass}] The isomorphism classes of the simple untwisted and twisted \hw\ $\sbpvoak$-modules, $\ihw{\lambda}$ and $\twihw{\lambda}$, are each in bijection with $\survk$.  The connection between the $\aslthree$ weights $\lambda \in \survk$ and the native $\sbpvoak$ data is given explicitly in \cref{eq:bphwfromsl3,eq:bptwhwfromsl3}.
		\item \label{it2} [\cref{thm:hwrat}] Every (untwisted or twisted) \hw\ $\sbpvoak$-module is simple, so $\sbpvoak$ is rational in the \bgg\ category $\bpocat{\kk}$.
		\item \label{it3} [\cref{prop:hwconj}] The module conjugate to $\ihw{\lambda}$, $\lambda \in \survk$, is $\ihw{\mu}$, where $\mu = [\lambda_0,\lambda_2,\lambda_1] \in \survk$.  The module conjugate to $\twihw{\lambda}$ is \hw\ if and only if $\lambda^F_1 = 0$, in which case it is $\twihw{\nu}$, where $\nu = [\lambda_2 - \frac{\uu}{\vv},\lambda_1,\lambda_0+\frac{\uu}{\vv}] \in \survk$.
		\item \label{it4} [\cref{prop:sfhw}] The spectral flow of the untwisted (twisted) \hwm\ labelled by $\lambda \in \survk$ is \hw\ if and only if $\lambda^F_1 = 0$, in which case it is the untwisted (twisted) \hwm\ labelled by $[\lambda_2 - \frac{\uu}{\vv},\lambda_0+\frac{\uu}{\vv},\lambda_1] \in \survk$.
	\end{enumerate}
\end{mthm}
\noindent This then generalises the \hw\ classifications of \cite{AraRat13}, when $2\kk+3\in\NN$, and \cite{AdaCla19}, for $\kk=-\frac{5}{3}$ and $-\frac{9}{4}$.  We refer to \cref{sec:bpsf} for an introduction to the conjugation and spectral flow functors referred to above.

To extend the \hw\ classification to simple twisted \rhwms, with \fdim\ weight spaces, we adapt the methodology developed in \cite{KawRel19} for affine vertex algebras.  This uses Mathieu's notion of a coherent family \cite{MatCla00}, extending it from semisimple Lie algebras to the twisted Zhu algebra of $\ubpvoak$.  Let $\infwtsk$ consist of the $\aslthree$ weights $\lambda \in \survk$ satisfying $\lambda^F_1 \ne 0$.  Writing $\kk+3 = \frac{\uu}{\vv}$ as above, it follows that $\infwtsk$ is empty unless $\vv\ge3$.  Moreover, $\infwtsk$ admits a free $\ZZ_3$-action generated by $\lambda \mapsto [\lambda_2 - \frac{\uu}{\vv},\lambda_0,\lambda_1 + \frac{\uu}{\vv}]$ (\cref{lem:Z3action}).
\begin{mthm}[\cref{quotientsofcoherents}] \label{mthm3}
	Let $\kk$ be admissible and nonintegral.  Then:
	\begin{enumerate}
		\item The isomorphism classes of the simple twisted \rhw\ $\sbpvoak$-modules $\twrhw{[j],\lambda}$, each of which have \fdim\ weight spaces, form families that are in bijection with $\infwtsk / \ZZ_3$.  The connection between the $\aslthree$ weights $\lambda \in \infwtsk$ and the native $\sbpvoak$ data is given explicitly in \cref{eq:bptwhwfromsl3,eq:bprhwfromsl3}.
		\item The members of each of these families are indexed by all but three of the cosets $[j] \in \CC / \ZZ$, the exceptions being determined as the images of the $\ZZ_3$-orbit of $\lambda$ under \eqref{eq:bptwhwfromsl3}.
		\item The module conjugate to $\twrhw{[j],\lambda}$ is $\twrhw{[-j],\mu}$, where $\mu = [\lambda_2 - \frac{\uu}{\vv},\lambda_0+\frac{\uu}{\vv},\lambda_1] \in \infwtsk$.
	\end{enumerate}
\end{mthm}
\noindent Moreover, the spectral flow of each $\twrhw{[j],\lambda}$ is never a \rhwm.

Our final main result relates to the existence of nonsemisimple \rhw\ $\sbpvoak$-modules when $\vv\ge3$.  Roughly speaking, these ``fill in'' the three ``holes'' in the allowed values of $[j]$ in each family of simple relaxed modules above.  However, there are two ways of filling each hole, each way related to the other by taking contragredient duals.  This is very similar to the analogous nonsemisimple picture conjectured in \cite{CreMod13,RidRel15}, and proven in \cite{AdaRea17,KawRel18}, for $\saffvoa{\kk}{\sltwo}$.  In the case at hand, we establish this picture by combining a mix of the theory developed in \cite{KawRel18,KawRel19} with the rationality of $\sbpvoak$ in category $\bpocat{\kk}$ (\cref{thm:hwrat}).  This seems robust and we expect it to generalise to higher-rank cases.
\begin{mthm}[\cref{thm:nonssstructure}] \label{mthm4}
	Let $\kk$ be admissible and nonintegral.  Then:
	\begin{enumerate}
		\item Every $\lambda \in \infwtsk$ defines two indecomposable nonsemisimple \rhw\ $\sbpvoak$-modules $\twprhw{[j],\lambda}$ and $\twmrhw{[j],\lambda}$, where $j$ is determined from $\lambda$ by \eqref{eq:bptwhwfromsl3}.
		\item $\twprhw{[j],\lambda}$ has a submodule isomorphic to the conjugate of $\twihw{\mu}$, where $\mu = [\lambda_0,\lambda_2 - \frac{\uu}{\vv},\lambda_1 + \frac{\uu}{\vv}] \in \infwtsk$, and its quotient by this submodule is isomorphic to $\twihw{\lambda}$.  The structure of $\twmrhw{[j],\lambda}$ is similar, but with submodule and quotient exchanged.
	\end{enumerate}
\end{mthm}
\noindent Conjugation and spectral flow works as for the simple relaxed modules, except that the conjugate of a $+$-type module is of $-$-type (and vice versa).  These nonsemisimple modules prove that $\sbpvoak$ has a nonsemisimple module category, for $\kk$ admissible and nonintegral.  It follows that $\sbpvoak$ is neither rational nor $C_2$-cofinite for these levels.  Nevertheless, these nonsemisimple modules are, along with their spectral flows, the building blocks (the ``atypical standards'') for the resolutions that underpin the so-called \emph{standard module formalism} \cite{CreLog13,RidVer14} for modular transformations and Verlinde formulae for nonrational \voas.  We intend to return to this in a forthcoming paper \cite{FR20}.

\subsection{Outline} \label{sec:outline}

We start by defining the universal \bp\ \voas\ $\ubpvoak$ and their simple quotients $\sbpvoak$ in \cref{sec:bpvoa}.  It is worthwhile remarking that we choose the conformal structure so that the charged generators $G^{\pm}$ have equal conformal weight $\frac{3}{2}$.  Equivalently, the Heisenberg field is a Virasoro primary.  Accordingly, we study both untwisted and twisted $\ubpvoak$-modules.  \cref{sec:bpsf} then introduces the all-important conjugation and spectral flow automorphisms and explains how they lift to invertible functors of appropriate categories of $\ubpvoak$-modules.  Happily, the untwisted and twisted sectors of the categories of interest are related by spectral flow.

In \cref{sec:ubpmods}, we embark on the first part of the journey: to understand how to identify $\ubpvoak$-modules, untwisted and twisted, relaxed and \hw.  After defining these types of modules, we introduce Zhu algebras and determine that of $\ubpvoak$ in \cref{prop:untwzhu}.  This leads to an easy classification of untwisted \hw\ $\ubpvoak$-modules (\cref{thm:untwubpclass}).  The more-involved twisted classification (\cref{thm:classtwrhw}) is then detailed.  For this, we review the identification \cite{AraRat13} of the twisted Zhu algebra with a central extension of a Smith algebra \cite{SmiCla90} (\cref{prop:twzhu}) and classify the simple weight modules, with \fdim\ weight spaces, of this extension in \cref{thm:classzkmod}.  For later use, we also introduce coherent families of modules, following \cite{MatCla00}, for the twisted Zhu algebra.

The hard work then begins in \cref{sec:sbpmods} where we convert these classification results for the universal \bp\ algebras $\ubpvoak$ into the corresponding results for their simple quotients $\sbpvoak$.  \cref{sec:sl3} is devoted to \cref{mthm1}, first reviewing the \hw\ theory of the simple affine \voa\ $\sslvoak$ \cite{KacMod88,AraRat16} and some basic, though deep, results about minimal \qhr\ \cite{KacQua03,KacQua04,AraRep05}.  The actual proof of this crucial result is deferred to \cref{app:proof} so as not to disrupt the flow of the arguments too much.

From this, we immediately deduce the classification of \hw\ $\sbpvoak$-modules, as in \cref{mthm2}.  The remainder of \cref{sec:hwmod} then addresses how the \hwms\ are related by the conjugation and spectral flow functors.  This will be important for the standard module analysis in \cite{FR20}.  \cref{sec:rhwmod} then lifts this classification to simple \rhw\ $\sbpvoak$-modules, establishing \cref{mthm3}.  The existence of nonsemisimple \rhwms, hence \cref{mthm4}, is the focus of \cref{sec:indec}.

In \cref{sec:ex}, we conclude by illustrating our classification results with some examples.  The rational cases with $\vv=2$ were already investigated in \cite{AraRat13}, so here we content ourselves with a quick overview of the ``smallest'' nontrivial example $\sbpvoa{-1/2}$ and the slightly more involved example $\sbpvoa{3/2}$.  The latter is interesting because it has a simple current extension that may be regarded as a bosonic analogue of the $N=4$ superconformal algebra.  In particular, it has three fields of conformal weight $1$, generating a subalgebra isomorphic to $\saffvoa{1}{\SLA{sl}{2}}$, and four weight $\frac{3}{2}$ fields.

We also study three nonrational examples.  Two, namely $\sbpvoa{-9/4}$ and $\sbpvoa{-5/3}$, were already discussed in \cite{AdaCla19} and here we take the opportunity to explicitly extend their \hw\ classifications to the full relaxed classifications.  We finish by describing the example $\sbpvoa{-4/3}$ which we believe has not been analysed before.  After describing its \rhwms\ explicitly, we note an interesting fact: it seems to admit a simple current extension isomorphic to the minimal \qhr\ of $\saffvoa{-3/2}{\alg{g}_2}$.  It follows then that this $\alg{g}_2$ W-algebra should have a $\ZZ_3$-orbifold isomorphic to $\sbpvoa{-4/3}$, as well as a $\ZZ_2$-orbifold isomorphic to $\saffvoa{1/2}{\SLA{sl}{2}}$ \cite{CreMod13,AdaCon17}.

\subsection*{Acknowledgements}

We thank Dra\v{z}en Adamovi\'{c}, Tomoyuki Arakawa and Thomas Creutzig for interesting discussions related to the research reported here.

ZF's research is supported by an Australian Government Research Training Program (RTP) Scholarship.

KK's research is partially supported by MEXT Japan ``Leading Initiative for Excellent Young Researchers (LEADER)'', JSPS Kakenhi Grant numbers 19KK0065 and 19J01093 and Australian Research Council Discovery Project DP160101520.

DR's research is supported by the Australian Research Council Discovery Project DP160101520 and the Australian Research Council Centre of Excellence for Mathematical and Statistical Frontiers CE140100049.

\section{\bp\ algebras} \label{sec:bp}

\subsection{\bp\ \voas} \label{sec:bpvoa}

We begin by defining one of the families of \voas\ that we will study here.
\begin{definition}
	Given $\kk \in \CC$, $\kk \ne -3$, the \emph{level-$\kk$ universal \bp\ algebra} $\ubpvoak$ is the \voa\ with vacuum $\wun$ that is strongly and freely generated by fields $J(z)$, $G^+(z)$, $G^-(z)$ and $L(z)$ satisfying the following \opes{}:
	\begin{equation} \label{ope:bp}
		\begin{gathered}
	    L(z)L(w) \sim -\frac{(2\kk+3)(3\kk+1) \wun}{2 (\kk+3) (z-w)^4} + \frac{2L(w)}{(z-w)^2} + \frac{\partial L(w)}{(z-w)}, \\
	    L(z)J(w) \sim \frac{J(w)}{(z-w)^2} + \frac{\partial J(w)}{(z-w)}, \qquad
	    L(z)G^\pm(w) \sim \frac{\frac{3}{2}G^\pm(w)}{(z-w)^2} + \frac{\partial G^\pm(w)}{(z-w)},  \\
	    J(z)J(w) \sim  \frac{(2\kk+3) \wun}{3(z-w)^2}, \qquad
	    J(z)G^\pm(w) \sim  \frac{\pm G^\pm (w)}{(z-w)}, \qquad
	    G^\pm(z)G^\pm(w) \sim 0, \\
			G^+(z)G^-(w) \sim \frac{(\kk+1)(2\kk+3) \wun}{(z-w)^3} + \frac{3(\kk+1) J(w)}{(z-w)^2} + \frac{3 \no{JJ}(w) + \frac{3}{2} (\kk+1) \partial J(w) - (\kk+3) L(w)}{z-w}.
		\end{gathered}
	\end{equation}
\end{definition}
\noindent This family of \voas\ was first described in \cite{PolGau90,BerCon91} where it was constructed via a new type of \qhr\ from the corresponding family of universal affine \voas\ $\uslvoak$ associated to $\slthree$.  In the general framework of \qhrs\ \cite{KacQua03}, $\ubpvoak$ is the \emph{minimal} reduction corresponding to taking the nilpotent of $\slthree$ to be a root vector.

From \eqref{ope:bp}, we see that the conformal weights of the generating fields $J(z)$, $G^+(z)$, $G^-(z)$ and $L(z)$ are $1$, $\frac{3}{2}$, $\frac{3}{2}$ and $2$, respectively, whilst the central charge is
\begin{equation} \label{eq:bpc}
	\cc = -\frac{(2\kk+3)(3\kk+1)}{\kk+3}.
\end{equation}
We shall expand the homogeneous fields of $\ubpvoak$ in the form
\begin{equation}
	A(z) = \sum_{n \in \ZZ - \Delta_A + \eps_A} A_n z^{-n-\Delta_A},
\end{equation}
where $\Delta_A$ is the conformal weight of $A(z)$ and $\eps_A = \frac{1}{2}$, if $\Delta_A \in \ZZ+\frac{1}{2}$ and $A(z)$ is acting on a twisted $\ubpvoak$-module (see \cref{sec:ubpmods} below), and $\eps_A = 0$ otherwise.  Standard computations now give the mode relations.
\begin{proposition}
	The commutation relations of the modes of the generating fields of $\ubpvoak$ are
	\begin{equation} \label{cr:bp}
		\begin{gathered}
	    \comm{L_m}{L_n} = (m-n)L_{m+n} - \frac{(2\kk+3)(3\kk+1)}{\kk+3} \frac{m^3-m}{12} \delta_{m+n,0} \wun, \\
	    \comm{L_m}{J_n} = -nJ_{m+n}, \qquad
	    \comm{L_m}{G^\pm_s} = \brac*{\frac{m}{2}-s} G^\pm_{m+s}, \\
			\comm{J_m}{J_n} = \frac{2\kk+3}{3}m \delta_{m+n,0} \wun, \qquad
			\comm{J_m}{G^\pm_s} = \pm G^\pm_{m+s}, \qquad
			\comm{G_r^\pm}{G_s^\pm} = 0, \\
	    \comm{G_r^+}{G_s^-} = 3 \no{JJ}_{r+s} - (\kk+3)L_{r+s} + \frac{3}{2}(\kk+1)(r-s)J_{r+s} + (\kk+1)(2\kk+3) \frac{r^2-\frac{1}{4}}{2} \delta_{r+s,0} \wun.
		\end{gathered}
	\end{equation}
\end{proposition}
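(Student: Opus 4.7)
The plan is to apply the standard contour-deformation dictionary converting singular OPE terms into mode commutators. Concretely, if $A(z)$ and $B(w)$ are homogeneous fields of conformal weights $\Delta_A$ and $\Delta_B$ whose OPE is
\begin{equation*}
    A(z)B(w) \sim \sum_{k \ge 1} \frac{C^{(k)}(w)}{(z-w)^k},
\end{equation*}
then radial ordering together with contour deformation yields
\begin{equation*}
    \comm{A_m}{B_n} = \sum_{k \ge 1} \binom{m + \Delta_A - 1}{k-1} C^{(k)}_{m+n}.
\end{equation*}
This derivation is insensitive to whether the admissible mode indices are integers or half-integers, so one calculation suffices for both the untwisted and twisted sectors.

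With this dictionary in hand, I would first dispatch the routine cases. The relations $\comm{L_m}{L_n}$, $\comm{L_m}{J_n}$, $\comm{L_m}{G^\pm_s}$, $\comm{J_m}{J_n}$, $\comm{J_m}{G^\pm_s}$ and $\comm{G^\pm_r}{G^\pm_s}$ each follow by reading off the relevant singular terms in \eqref{ope:bp} and substituting. The central-charge term in $\comm{L_m}{L_n}$ comes from the quartic pole through $\binom{m+1}{3} = (m^3-m)/6$ multiplied by the OPE coefficient. The coefficient $\frac{m}{2} - s$ in $\comm{L_m}{G^\pm_s}$ arises from combining the double-pole contribution $\frac{3}{2}(m+1) G^\pm_{m+s}$ with the simple-pole contribution $(\partial G^\pm)_{m+s}$, using the standard mode identity $(\partial X)_n = -(n + \Delta_X) X_n$.

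The only commutator requiring genuine bookkeeping is $\comm{G^+_r}{G^-_s}$. The cubic pole contributes $(\kk+1)(2\kk+3) \binom{r+1/2}{2} \delta_{r+s,0} \wun = \frac{1}{2}(\kk+1)(2\kk+3)(r^2-\tfrac{1}{4}) \delta_{r+s,0} \wun$. The double pole contributes $3(\kk+1)(r+\tfrac{1}{2}) J_{r+s}$, and the simple pole contributes $3\no{JJ}_{r+s} - (\kk+3) L_{r+s} - \frac{3}{2}(\kk+1)(r+s+1) J_{r+s}$ once one applies $(\partial J)_n = -(n+1) J_n$. Adding the two $J$-contributions collapses them to $\frac{3}{2}(\kk+1)(r-s) J_{r+s}$, which is precisely what is claimed.

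The main, and essentially only, obstacle is bookkeeping: keeping the mode-index conventions for the half-integer-weight fields $G^\pm$ consistent across both the untwisted sector (where $r, s \in \ZZ + \frac{1}{2}$) and the twisted sector (where $r, s \in \ZZ$), and making sure the derivative identity is invoked with the correct conformal weight. Once the OPE-to-commutator formula is in place, the proposition is just the mode-level rewriting of \eqref{ope:bp}.
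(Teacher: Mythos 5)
Your proposal is correct and is exactly the route the paper takes: the paper simply invokes ``standard computations,'' meaning the same OPE-to-commutator dictionary $\comm{A_m}{B_n} = \sum_{k\ge1}\binom{m+\Delta_A-1}{k-1}C^{(k)}_{m+n}$ that you apply, and your bookkeeping for the quartic pole, the derivative modes and the $\comm{G^+_r}{G^-_s}$ terms all checks out. Nothing further is needed.
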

\noindent Here, the indices $m$ and $n$ always take values in $\ZZ$ while $r$ and $s$ take values in $\ZZ+\frac{1}{2}$, if acting on an untwisted module, and in $\ZZ$, if acting on a twisted module.  We call the (unital associative) algebra generated by the modes of the fields of $\ubpvoak$ the \emph{untwisted mode algebra} $\modealg$, in the first case, and the \emph{twisted mode algebra} $\twmodealg$, in the latter case.  Each is a completion of the corresponding algebra generated by the modes of the generating fields.

\begin{definition}
	\leavevmode
	\begin{itemize}
		\item A \emph{fractional level} $\kk \in \CC$ for the \bp\ algebras is one that is not critical, meaning that $\kk \ne -3$, and for which $\ubpvoak$ is not simple.
		\item The \emph{level-$\kk$ simple \bp\ \voa} $\sbpvoak$ is the unique simple quotient of $\ubpvoak$.
	\end{itemize}
\end{definition}
\noindent According to \cite[Thms.~0.2.1 and 9.1.2]{GorSim07}, the fractional levels are precisely the $\kk$ satisfying
\begin{equation} \label{eq:fraclev}
	\kk+3 = \frac{\uu}{\vv}, \quad \text{where}\ \uu \in \ZZ_{\ge2},\ \vv \in \ZZ_{\ge1}\ \text{and}\ \gcd\set{\uu,\vv}=1.
\end{equation}
If $\kk$ is fractional, then we shall refer to $\sbpvoak$ as a \emph{\bp\ minimal model} and favour the alternative notation $\bpminmoduv$.  We note that the central charge of the minimal model $\bpminmoduv$ takes the form
\begin{equation}
	\cc = -\frac{(3\uu-8\vv)(2\uu-3\vv)}{\uu\vv} = 1 - \frac{6(\uu-2\vv)^2}{\uu\vv}.
\end{equation}
Whilst the central charge is invariant under exchanging $\frac{\uu}{\vv}$ with $\frac{4\vv}{\uu}$, the corresponding simple \voas\ are not isomorphic.  We shall see this explicitly when we analyse their representation theories in \cref{sec:sbpmods}.

\subsection{Automorphisms} \label{sec:bpsf}

There are two types of automorphisms of $\ubpvoak$ that will prove useful for the classification results to follow: the \emph{conjugation} automorphism $\conjsymb$ and the \emph{spectral flow} automorphisms $\sfsymb^{\ell}$, $\ell \in \ZZ$.  It is easy to verify that their actions, given below on the generating fields, indeed preserve the \opes\ \eqref{ope:bp}.
\begin{proposition}
	There exist conjugation and spectral flow automorphisms $\conjsymb$ and $\sfsymb^{\ell}$, $\ell \in \ZZ$, of the vertex algebra underlying $\ubpvoak$.  They are uniquely determined by the following actions on the generating fields:
	\begin{equation}
		\begin{gathered}
			\conjsymb(J(z)) = -J(z), \qquad
			\conjsymb(G^+(z)) = +G^-(z), \qquad
			\conjsymb(G^-(z)) = -G^+(z), \qquad
			\conjsymb(L(z)) = L(z), \\
			\begin{aligned}
				\sfsymb^{\ell}(J(z)) &= J(z) - \frac{2\kk+3}{3} \ell z^{-1} \wun, &&&
				\sfsymb^{\ell}(G^+(z)) &= z^{-\ell} G^+(z), \\
				\sfsymb^{\ell}(L(z)) &= L(z) - \ell z^{-1} J(z) + \frac{2\kk+3}{6} \ell^2 z^{-2} \wun, &&&
				\sfsymb^{\ell}(G^-(z)) &= z^{+\ell} G^-(z)
			\end{aligned}
		\end{gathered}
	\end{equation}
\end{proposition}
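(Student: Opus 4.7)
The plan is to use the fact that $\ubpvoak$ is strongly and freely generated by $J, G^+, G^-, L$: any assignment of these generators to elements of $\ubpvoak$ of matching conformal weight and parity extends uniquely, via the state–field correspondence and the reconstruction theorem, to a vertex algebra endomorphism, provided the new fields satisfy the same \opes\ as the originals. Thus the uniqueness half of the statement is immediate, and the entire proof reduces to verifying that each OPE in \eqref{ope:bp} is preserved by $\conjsymb$ and by $\sfsymb^{\ell}$, and that the resulting endomorphisms are invertible.

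For the conjugation $\conjsymb$, I would simply check the seven OPEs directly. The $L L$, $L J$, $J J$ and $G^{\pm}G^{\pm}$ relations are obviously invariant under $J\mapsto -J$, $G^+ \leftrightarrow \pm G^\mp$, $L\mapsto L$ up to paired signs, while the $L G^{\pm}$ and $J G^{\pm}$ relations flip the $\pm$ consistently on both sides. The only nontrivial check is $G^+ G^-$: the image becomes $G^-(z)(-G^+(w))$, and matching against the conjugate of the right-hand side uses that $\no{JJ}$ and $L$ are even under $\conjsymb$ while $J$ and $\partial J$ are odd. Invertibility is trivial since $\conjsymb^2=\id$ on each generator.

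The spectral flow $\sfsymb^{\ell}$ is more delicate because the images of $J$ and $L$ contain $\wun$-valued shifts and the images of $G^{\pm}$ are multiplied by the field $z^{\mp\ell}$. The OPEs not involving $G^+G^-$ are easy: the central shifts in $\sfsymb^{\ell}(J)$ and $\sfsymb^{\ell}(L)$ are proportional to $\wun$ and hence contribute no singular terms, while $J z^{-\ell}$ clearly reproduces $\pm z^{-\ell}G^+/(z-w)$, and the shift $-\ell z^{-1}J(z)$ in $\sfsymb^\ell(L(z))$ combines with $J(z)G^{\pm}(w)\sim \pm G^{\pm}(w)/(z-w)$ to furnish exactly the correction needed to turn the conformal weight term $\frac{3}{2}/(z-w)^2$ into $(\frac{3}{2}\mp\ell)/(z-w)^2$, matching the Taylor expansion of $z^{\mp\ell}$ about $w$ in the relation $L G^{\pm}$. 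The central correction to $\sfsymb^\ell(L(z))\sfsymb^\ell(L(w))$ comes out correctly because the $-\ell z^{-1}J(z)$ shift contributes the Sugawara-like anomaly $-\frac{2\kk+3}{3}\ell^2$ to the fourth-order pole, which is cancelled by $2\cdot\frac{2\kk+3}{6}\ell^2$ arising from the cross terms with the $\wun$-shift.

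The main obstacle is the $G^+G^-$ check. The left-hand side is $z^{-\ell}w^{\ell}G^+(z)G^-(w)$, and I would expand $z^{-\ell}w^\ell=(1+(z-w)/w)^{-\ell}$ by the binomial series and reorganise the resulting expression into the canonical form of an OPE with coefficients in $J, \no{JJ}, \partial J, L, \wun$. On the other side, one applies $\sfsymb^\ell$ term by term to the right-hand side of the $G^+G^-$ OPE, using $\sfsymb^\ell(\no{JJ})=\no{JJ} - \frac{2(2\kk+3)}{3}\ell w^{-1}J + \frac{(2\kk+3)^2}{9}\ell^2 w^{-2}\wun$. Matching the $(z-w)^{-1},(z-w)^{-2},(z-w)^{-3}$ coefficients against one another reduces to a handful of polynomial identities in $\ell$, $\kk$, and $w$, which are straightforward but must be organised carefully to confirm cancellation of the $w^{-1}$ and $w^{-2}$ terms. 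Finally, since the actions $\sfsymb^\ell\circ\sfsymb^{-\ell}$ and $\sfsymb^{\ell_1}\circ\sfsymb^{\ell_2}$ clearly agree with $\id$ and $\sfsymb^{\ell_1+\ell_2}$ on each generator, the $\sfsymb^\ell$ are invertible and form a $\ZZ$-action, completing the proof.
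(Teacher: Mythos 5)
Your proposal takes essentially the same route as the paper, which simply asserts that it is easy to verify that the stated assignments preserve the \opes\ \eqref{ope:bp}; your detailed plan (uniqueness from strong free generation, direct verification of each OPE, with the $G^+G^-$ relation as the only delicate case for both $\conjsymb$ and $\sfsymb^{\ell}$) is precisely that verification written out. One slip: $\conjsymb^2 \ne \id$, since $\conjsymb^2(G^{\pm}) = -G^{\pm}$, so conjugation has order $4$ rather than $2$ (as the paper notes immediately after the proposition); invertibility of $\conjsymb$ still holds, e.g.\ because $\conjsymb^4 = \id$, or by exhibiting the inverse $J \mapsto -J$, $G^+ \mapsto -G^-$, $G^- \mapsto G^+$, $L \mapsto L$ directly.
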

\noindent The $\sfsymb^{\ell}$ with $\ell \ne 0$ are not \voa\ automorphisms because they do not preserve the conformal structure.  Note that conjugation has order $4$, whilst spectral flow has infinite order.  Together, they satisfy the dihedral group relation
\begin{equation} \label{eq:dihedral}
	\conjsymb \sfsymb^{\ell} = \sfsymb^{-\ell} \conjsymb,
\end{equation}
though we do not have $\conjsymb^2 = \wun$.

\begin{proposition}
	Conjugation and spectral flow act on the modes of the generating fields $J(z)$, $G^+(z)$, $G^-(z)$ and $L(z)$ of $\ubpvoak$ as follows:
	\begin{equation} \label{eq:auts}
		\begin{aligned}
			\conjsymb(J_n) &= -J_n, & \conjsymb(G^+_r) &= +G^-_r, & \conjsymb(G^-_r) &= -G^+_r, & \conjsymb(L_n) &= L_n, \\
			\sfsymb^\ell (J_n) &= J_n - \frac{2\kk+3}{3} \ell \delta_{n,0} \wun, & \sfsymb^\ell (G^+_r) &= G^+_{r-\ell}, &
			\sfsymb^\ell (G^-_r) &= G^-_{r+\ell}, & \sfsymb^\ell (L_n) &= L_n - \ell J_n + \frac{2\kk+3}{6} \ell^2 \delta_{n,0} \wun.
		\end{aligned}
	\end{equation}
\end{proposition}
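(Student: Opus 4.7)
The plan is to extract each mode-level identity directly from the field-level action given in the preceding proposition. Concretely, I would substitute the mode expansion $A(z) = \sum_n A_n z^{-n-\Delta_A}$ into both sides of each of the eight field identities defining $\conjsymb$ and $\sfsymb^\ell$, then match coefficients of powers of $z$. The computation is mechanical; the only points meriting care are the explicit $z$-factors that appear in the spectral flow formulae, and the index sets that govern the allowed modes on twisted versus untwisted modules.

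For $\conjsymb$, the conformal weights of $J$, $G^\pm$ and $L$ are preserved, so the four mode identities are read off in one line each by comparing coefficients of $z^{-n-\Delta_A}$. For $\sfsymb^\ell$, I would treat each generator in turn. The formula for $J_n$ follows by matching coefficients in $\sum_n \sfsymb^\ell(J_n) z^{-n-1} = \sum_n J_n z^{-n-1} - \tfrac{2\kk+3}{3} \ell z^{-1} \wun$, so that only $n=0$ picks up a correction. The formulae for $G^\pm_r$ emerge from the trivial reindexing $z^{\mp\ell} \sum_r G^\pm_r z^{-r-3/2} = \sum_r G^\pm_{r\mp\ell} z^{-r-3/2}$. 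For $L_n$, I would rewrite $-\ell z^{-1} J(z) = -\ell \sum_n J_n z^{-n-2}$, which now carries the conformal grading of $L$ and so contributes $-\ell J_n$ to every mode, while the $z^{-2}\wun$ term produces a Kronecker delta at $n=0$ with coefficient $\tfrac{2\kk+3}{6}\ell^2$.

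The one subtlety worth flagging, though easily dispatched, is that the shift $r \mapsto r \mp \ell$ in the $G^\pm$ identities must preserve the mode index set of whichever sector we are acting on. Since $\ell \in \ZZ$, the shift preserves both the untwisted set $\ZZ + \tfrac{1}{2}$ and the twisted set $\ZZ$, so $\sfsymb^\ell$ is simultaneously well-defined on the untwisted and twisted mode algebras $\modealg$ and $\twmodealg$. This is the only observation in the proof that is not entirely routine, and there is no substantive obstacle elsewhere.
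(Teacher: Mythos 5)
Your proposal is correct and is exactly the routine argument the paper has in mind (it states this proposition without proof, as an immediate consequence of the field-level formulas of the preceding proposition): expand each generator as $A(z)=\sum A_n z^{-n-\Delta_A}$, apply the field-level action, and match coefficients, with the only points of care being the $z^{-1}$, $z^{\mp\ell}$ and $z^{-2}$ factors and the fact that an integer shift $r\mapsto r\mp\ell$ preserves both the untwisted index set $\ZZ+\frac{1}{2}$ and the twisted set $\ZZ$. No gaps.
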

\noindent An extremely useful observation is that if we extend the definition of $\sfsymb^{\ell}$ to allow $\ell \in \ZZ+\frac{1}{2}$, then we see that $\sfsymb^{1/2}$ exchanges the twisted and untwisted mode algebras $\modealg$ and $\twmodealg$ introduced above.

Our main application for these automorphisms is to construct new $\ubpvoak$-modules from old ones.  This amounts to applying the automorphism (or its inverse) before acting with the representation morphism.  As we prefer to keep representations implicit, we implement this twisting notationally through the language of modules as follows.  Given a $\ubpvoak$-automorphism $\omega$ and a $\ubpvoak$-module $\Mod{M}$, define $\omega^*(\Mod{M})$ to be the image of $\Mod{M}$ under an (arbitrarily chosen) isomorphism $\omega^*$ of vector spaces.  The action of $\ubpvoak$ on $\omega^*(\Mod{M})$ is then defined by
\begin{equation} \label{eq:invfuncts}
	A(z) \cdot \omega^*(v) = \omega^*(\omega^{-1}(A(z)) v), \quad A(z) \in \ubpvoak,\ v \in \Mod{M}.
\end{equation}
In other words, $\omega(A(z)) \cdot \omega^*(v) = \omega^*(A(z) v)$.  In view of this, we shall hereafter drop the star that distinguishes the automorphism $\omega$ from the corresponding vector space isomorphism $\omega^*$.

Each $\ubpvoak$-automorphism $\omega$ thus lifts to an autoequivalence of any category of $\ubpvoak$-modules that is closed under $\omega$-twists.  The examples we have in mind are the category $\wcat{\kk}$ of weight modules, with \fdim\ weight spaces (see \cref{def:weight} below), and the analogous category $\twcat{\kk}$ of twisted modules.  In particular, the conjugation and spectral flow automorphisms lift to invertible endofunctors that provide an action of the infinite dihedral group on $\wcat{\kk}$ and $\twcat{\kk}$.  Extending the above formulae for $\sfsymb^{\ell}$ to allow $\ell \in \ZZ+\frac{1}{2}$, we see that the lift of $\sfsymb^{1/2}$ moreover defines an equivalence between $\wcat{\kk}$ and $\twcat{\kk}$.  We remark that one of the important consistency requirements for building a \cft\ from a module category over a \voa\ is that it is closed under twisting by automorphisms, especially conjugation.

\section{Identifying \bp\ modules} \label{sec:ubpmods}

Our aim is to classify the simple \rhwms, untwisted and twisted, for the \bp\ minimal models $\bpminmoduv$.  In order to have well defined characters, necessary to construct partition functions in \cft, we shall also require that the weight spaces of these simple \rhwms\ are \fdim.  By \cite{ZhuMod96}, it therefore suffices to classify the simple weight modules, with \fdim\ weight spaces, of the untwisted and twisted Zhu algebras of $\bpminmoduv$.

A direct assault on this classification seems quite difficult.  Our alternative strategy is threefold:  First, we understand the classification for certain associative algebras which have the untwisted and twisted Zhu algebras of $\bpminmoduv$ as quotients.  (These algebras turn out to be the untwisted and twisted Zhu algebras of the universal \bp\ \voas\ $\ubpvoak$, but this is inessential to the argument.)  This allows us to identify $\bpminmoduv$-modules in terms of data for these more easily understood associative algebras.  Second, we use Arakawa's results \cite{AraRep05} on minimal \qhrs\ to directly obtain the \hw\ classification for the $\bpminmoduv$, at present only known for $\vv=2$ \cite{AraRat13}.  Third, we combine these results to arrive at the relaxed classification by further developing the methods developed in \cite{KawRel18,KawRel19}.

In this \lcnamecref{sec:ubpmods}, we complete the first step of this strategy.  As nothing we do in this step is special to the minimal models, we shall work in the setting of $\ubpvoak$-modules.  Of course, all $\bpminmoduv$-modules are \emph{a priori} $\ubpvoak$-modules.

\subsection{Relaxed \hw\ $\ubpvoak$-modules} \label{sec:rhwms}

In \cref{sec:bpvoa}, we introduced the untwisted mode algebra $\modealg$ of the universal \bp\ \voa\ $\ubpvoak$ and its twisted version $\twmodealg$.  Any $\ubpvoak$-module is obviously a $\modealg$-module and, similarly, any twisted $\ubpvoak$-module is a $\twmodealg$-module.  As these algebras are graded by conformal weight (eigenvalue of $\comm{L_0}{\blank}$), we have the following generalised triangular decompositions, as in \cite{KacQua04}:
\begin{equation} \label{eq:pbw}
	\modealg = \modealg_< \otimes \modealg_0 \otimes \modealg_> \qquad \text{and} \qquad
	\twmodealg = \twmodealg_< \otimes \twmodealg_0 \otimes \twmodealg_>.
\end{equation}
Here, $\modealg_<$, $\modealg_0$ and $\modealg_>$ are the unital subalgebras generated by the modes $A_n$, for all homogeneous $A(z) \in \ubpvoak$, with $n<0$, $n=0$ and $n>0$, respectively (and similarly for their twisted versions).

\begin{definition} \label{def:weight}
	\leavevmode
	\begin{itemize}
		\item A vector $v$ in a twisted or untwisted $\ubpvoak$-module $\Mod{M}$ is a \emph{weight vector} of \emph{weight} $(j,\Delta)$ if it is a simultaneous eigenvector of $J_0$ and $L_0$ with eigenvalues $j$ and $\Delta$ called the \emph{charge} and \emph{conformal weight} of $v$, respectively.  The nonzero simultaneous eigenspaces of $J_0$ and $L_0$ are called the \emph{weight spaces} of $\Mod{M}$.  If $\Mod{M}$ has a basis of weight vectors and each weight space is \fdim, then $\Mod{M}$ is a \emph{weight module}.
		\item A vector in an untwisted $\ubpvoak$-module is a \emph{\hwv} if it is a simultaneous eigenvector of $J_0$ and $L_0$ that is annihilated by the action of $\modealg_>$.  An untwisted $\ubpvoak$-module generated by a single \hwv\ is called an \emph{untwisted \hwm}.
		\item A vector in a twisted $\ubpvoak$-module is a \emph{\hwv} if it is a simultaneous eigenvector of $J_0$ and $L_0$ that is annihilated by $G^+_0$ and the action of $\twmodealg_>$.  A twisted $\ubpvoak$-module generated by a single \hwv\ is called a \emph{twisted \hwm}.
		\item A vector in a twisted or untwisted $\ubpvoak$-module is a \emph{\rhwv} if it is a simultaneous eigenvector of $J_0$ and $L_0$ that is annihilated by the action of $\twmodealg_>$ or $\modealg_>$, respectively.  A $\ubpvoak$-module generated by a single \rhwv\ is called a \emph{\rhwm}.
	\end{itemize}
\end{definition}
\noindent As every $\bpminmoduv$-module is also a $\ubpvoak$-module (with $\kk+3=\frac{\uu}{\vv}$), these definitions descend to $\bpminmoduv$-modules in the obvious way.

A simple consequence of these definitions is that an untwisted \rhwv\ of $\ubpvoak$ is automatically a \hwv.  We shall therefore be concerned with classifying untwisted \hwms\ and twisted \rhwms.  The name ``\rhwm'' was originally coined in \cite{FeiEqu98} for the simple affine \voa\ $\saffvoa{\kk}{\sltwo}$ and now seems to be quite widespread.  Such modules had, however, appeared in earlier works such as \cite{AdaVer95}.  Here, we follow the definition proposed for quite general \voas\ in \cite{RidRel15}.

From the actions of the conjugation and spectral flow automorphisms, given explicitly in \eqref{eq:auts} and \eqref{eq:invfuncts}, we deduce the following useful facts.
\begin{proposition} \label{prop:twvsuntw}
	\leavevmode
	\begin{itemize}
		\item If $\Mod{M}$ is a twisted or untwisted $\ubpvoak$-module and $v \in \Mod{M}$ is a weight vector of weight $(j,\Delta)$, then $\conjsymb(v)$ and $\sfsymb^{\ell}(v)$ are weight vectors in $\conjmod{\Mod{M}}$ and $\sfmod{\ell}{\Mod{M}}$ of weights $(-j,\Delta)$ and $(j + \frac{2\kk+3}{3} \ell, \Delta + j \ell + \frac{2\kk+3}{6} \ell^2)$, respectively.
		\item Let $\Mod{M}$ be an untwisted $\ubpvoak$-module.  Then, $v \in \Mod{M}$ is a \hwv\ of weight $(j,\Delta)$ if and only if $\sfsymb^{1/2}(v)$ is a \hwv\ in the twisted module $\sfmod{1/2}{\Mod{M}}$ of weight $(j + \frac{2\kk+3}{6}, \Delta + \frac{1}{2} j + \frac{2\kk+3}{24})$.
		\item $\Mod{M}$ is a simple untwisted \hw\ $\ubpvoak$-module if and only if $\sfmod{1/2}{\Mod{M}}$ is a simple twisted \hw\ $\ubpvoak$-module.
	\end{itemize}
\end{proposition}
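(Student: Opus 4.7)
The plan is to reduce every claim to a direct application of the intertwining identity $A_n \cdot \omega(v) = \omega(\omega^{-1}(A_n) v)$ coming from \eqref{eq:invfuncts}, combined with the explicit formulae \eqref{eq:auts}. No deep ingredients are needed; the main thing is to stay disciplined about whether $G^\pm$ modes live in $\ZZ+\frac{1}{2}$ or $\ZZ$.

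For part~(a), I would take $\omega \in \set{\conjsymb, \sfsymb^\ell}$ and evaluate the identity above with $A_n = J_0$ and $A_n = L_0$. From \eqref{eq:auts}, $\conjsymb^{-1}(J_0) = -J_0$ and $\conjsymb^{-1}(L_0) = L_0$, which immediately gives the weight $(-j,\Delta)$ for $\conjsymb(v)$. Similarly, $\sfsymb^{-\ell}(J_0) = J_0 + \frac{2\kk+3}{3}\ell \wun$ and $\sfsymb^{-\ell}(L_0) = L_0 + \ell J_0 + \frac{2\kk+3}{6}\ell^2 \wun$, and substituting these into the intertwining identity yields the claimed weight $\bigl(j+\frac{2\kk+3}{3}\ell,\ \Delta + j\ell + \frac{2\kk+3}{6}\ell^2\bigr)$ for $\sfsymb^\ell(v)$. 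This also specialises to the specific weight shift in part~(b) at $\ell=\frac{1}{2}$.

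For part~(b), the task is to translate the annihilation conditions across the functor $\sfsymb^{1/2}$. Using the formulae $\sfsymb^\ell(G^+_r) = G^+_{r-\ell}$ and $\sfsymb^\ell(G^-_r) = G^-_{r+\ell}$ at $\ell=\frac{1}{2}$, I would verify three things in turn. First, for any $n \ge 1$, the actions of $J_n$, $L_n$ on $\sfsymb^{1/2}(v)$ pull back to $\sfsymb^{1/2}\bigl((J_n + c\delta_{n,0})v\bigr)$ and $\sfsymb^{1/2}\bigl((L_n + \tfrac{1}{2}J_n + c'\delta_{n,0})v\bigr)$, which vanish because $v$ is annihilated by $\modealg_>$. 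Second, for $n \ge 1$, $G^+_n \sfsymb^{1/2}(v) = \sfsymb^{1/2}(G^+_{n-1/2} v) = 0$ and $G^-_n \sfsymb^{1/2}(v) = \sfsymb^{1/2}(G^-_{n+1/2} v) = 0$, again because the relevant modes lie in $\modealg_>$. Third, and most importantly, $G^+_0 \sfsymb^{1/2}(v) = \sfsymb^{1/2}(G^+_{1/2} v) = 0$ since $G^+_{1/2} \in \modealg_>$. Together these confirm that $\sfsymb^{1/2}(v)$ is a twisted \hwv. The reverse direction is identical, tracking modes under $\sfsymb^{-1/2}$, and crucially here $G^+_0$ pulls back precisely to $G^+_{1/2}$, reflecting that the extra annihilation of $G^+_0$ in the twisted sector is automatic in the untwisted one.

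For part~(c), I would appeal to the observation in \cref{sec:bpsf} that $\sfsymb^{1/2}$ lifts to an equivalence of categories between untwisted and twisted modules. Equivalences preserve simplicity, and by part~(b) they send untwisted \hwvs\ to twisted \hwvs. Since a simple untwisted \hwm\ is by definition generated (as a $\modealg$-module) by a \hwv, its image under $\sfsymb^{1/2}$ is generated (as a $\twmodealg$-module) by a twisted \hwv, hence is a simple twisted \hwm, and conversely via $\sfsymb^{-1/2}$.

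I do not anticipate any genuine obstacle: the entire argument is bookkeeping. The only subtlety worth stressing is the half-integer index shift produced by $\sfsymb^{1/2}$, which is exactly what turns the \emph{two} annihilation conditions $G^+_{1/2} v = G^-_{1/2} v = 0$ in the untwisted sector into the \emph{one} extra annihilation condition $G^+_0 \sfsymb^{1/2}(v) = 0$ beyond $\twmodealg_>$ in the twisted sector (since $G^-_0$ becomes $G^-_{1/2}$, which is in $\modealg_>$, while $G^+_0$ becomes $G^+_{1/2}$, also in $\modealg_>$).
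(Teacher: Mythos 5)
Your strategy is exactly the one the paper intends (it states this proposition as an immediate consequence of \eqref{eq:auts} and \eqref{eq:invfuncts}), and parts (a) and (c), together with your decisive step $G^+_0\,\sfsymb^{1/2}(v) = \sfsymb^{1/2}(G^+_{1/2}v) = 0$, are fine. However, the mode bookkeeping in part (b) is internally inconsistent and one of your explanatory claims is false. With the convention \eqref{eq:invfuncts}, an operator acting on $\sfmod{1/2}{\Mod{M}}$ is pulled back through $\sfsymb^{-1/2}$, so $G^+_n\,\sfsymb^{1/2}(v) = \sfsymb^{1/2}(G^+_{n+1/2}v)$ and $G^-_n\,\sfsymb^{1/2}(v) = \sfsymb^{1/2}(G^-_{n-1/2}v)$. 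Your stated shifts for $n\ge1$ are the other way around --- harmless there, since $n\pm\frac{1}{2}>0$ in either case, but inconsistent with your own $n=0$ computation, which uses the correct convention.

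More seriously, your closing parenthesis asserts that $G^-_0$ ``becomes $G^-_{1/2}$''. It does not: $G^-_0\,\sfsymb^{1/2}(v) = \sfsymb^{1/2}(G^-_{-1/2}v)$, which is generically nonzero. Nor could it be otherwise, since spectral flow shifts $G^+$- and $G^-$-modes in opposite directions; if both $G^+_0$ and $G^-_0$ annihilated $\sfsymb^{1/2}(v)$, then so would $\comm{G^+_0}{G^-_0} = 3\no{JJ}_0 - (\kk+3)L_0 - \frac{1}{8}(\kk+1)(2\kk+3)\wun$, forcing a polynomial relation between the charge and conformal weight that certainly does not hold for all highest weights (it is precisely the condition $h^1_{\kk}=0$ of \eqref{eq:defh}). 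The correct accounting of the ``two conditions into one'' phenomenon is: $G^+_{1/2}v=0$ corresponds to the extra twisted condition $G^+_0\,\sfsymb^{1/2}(v)=0$, while $G^-_{1/2}v=0$ corresponds to $G^-_1\,\sfsymb^{1/2}(v)=0$, which is already contained in the $\twmodealg_>$-annihilation. With these corrections your argument goes through as intended.
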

\noindent In particular, to classify all simple \hw\ $\ubpvoak$-modules, it is enough to only classify the untwisted ones.

We remark that there are simple weight $\ubpvoak$-modules that are not \hw, nor even \rhw.  In particular, if $\Mod{M}$ is a simple \rhw\ $\ubpvoak$-module, then $\sfmod{\ell}{\Mod{M}}$ is simple and weight, but is usually only \rhw\ for a few choices of $\ell$.  We believe, however, that the simple objects of the categories $\wcat{\kk}$ and $\twcat{\kk}$ of untwisted and twisted weight $\ubpvoak$-modules are all spectral flows of simple \rhw\ $\ubpvoak$-modules.

\subsection{The untwisted Zhu algebra} \label{sec:zhu}

The main tools that we shall use to classify \bp\ modules are the functors induced between these modules and those of the corresponding (untwisted) Zhu algebra.  Although originally introduced by Zhu \cite{ZhuMod96}, the idea behind this unital associative algebra was already well known to physicists (see \cite{FeiAnn92} for example).  Here, we use a (slightly restricted) abstract definition that is based on the physicists' ``zero-modes acting on ground states'' approach to Zhu algebras.  We refer to \cite[App.~B]{RidRel15} for further details (and motivation).

Suppose that $\VOA{V}$ is a \voa\ with conformally graded mode algebra $\modealg = \modealg_< \otimes \modealg_0 \otimes \modealg_>$, as in \eqref{eq:pbw}.  Let $\modealg'_>$ denote the ideal of $\modealg_>$ generated by the modes $A_n$ (so that $\modealg_> = \CC \wun \oplus \modealg'_>$ as vector spaces).
\begin{definition} \label{def:zhu}
	The \emph{untwisted Zhu algebra} of $\VOA{V}$ is the vector space
	\begin{equation}
		\zhu{\VOA{V}} = \frac{\modealg_0}{\modealg_0 \cap (\modealg \modealg'_>)},
	\end{equation}
	equipped with the multiplication (defined for homogeneous $A$ of conformal weight $\Delta_A$ and extended linearly)
	\begin{equation} \label{eq:zhumult}
		\zhuq{A_0} \zhuq{B_0} = \zhuq{A_0 B_0} = \sum_{n=0}^{\infty} \binom{\Delta_A}{n} \zhuq{(A_{-\Delta_A+n} B)_0},
	\end{equation}
	where $\zhuq{U_0}$ is the image in $\zhu{\VOA{V}}$ of $U_0 \in \modealg_0$.
\end{definition}

Zhu defined two functors between the categories of $\VOA{V}$- and $\zhu{\VOA{V}}$-modules.  We shall refer to them as the Zhu functor and the Zhu induction functor.  The first is quite easy to define.
\begin{definition}
	The \emph{Zhu functor} assigns to any $\VOA{V}$-module $\Mod{M}$, the $\zhu{\VOA{V}}$-module $\zhu{\Mod{M}} = \Mod{M}^{\modealg'_>}$, the subspace of $\Mod{M}$ whose elements are annihilated by $\modealg'_>$.
\end{definition}
\noindent The second is not so easily defined, but morally amounts to inducing a $\zhu{\VOA{V}}$-module, treating it as a $\modealg_0$-module equipped with a trivial $\modealg'_>$-action, and taking a quotient that imposes, among other things, the generalised commutation relations (Borcherds relations) of $\VOA{V}$.  The details may be found in \cite{ZhuMod96,LiRep94}.
\begin{proposition}[\cite{ZhuMod96}] \label{prop:zhuind}
	There exists a functor, which we call the \emph{Zhu induction functor}, that assigns to any $\zhu{\VOA{V}}$-module $\Mod{N}$ a $\VOA{V}$-module $\ind{\Mod{N}}$ such that $\zhu{\ind{\Mod{N}}} \cong \Mod{N}$.
\end{proposition}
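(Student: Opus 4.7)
The plan is to construct $\ind{\Mod{N}}$ by inducing from a suitable subalgebra of $\modealg$ and then passing to an appropriate quotient that imposes the Borcherds identities.  Since $\zhu{\VOA{V}}$ is by definition a quotient of $\modealg_0$, the $\zhu{\VOA{V}}$-action on $\Mod{N}$ lifts to a $\modealg_0$-action.  Extend this to the subalgebra $\modealg_0 \otimes \modealg_>$ of $\modealg$ by letting the augmentation ideal $\modealg'_>$ act as zero, and form the induced module
\begin{equation*}
    \widetilde{\Mod{N}} = \modealg \otimes_{\modealg_0 \otimes \modealg_>} \Mod{N},
\end{equation*}
which by the triangular decomposition \eqref{eq:pbw} is isomorphic to $\modealg_< \otimes \Mod{N}$ as a vector space.

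The module $\widetilde{\Mod{N}}$ is not in general a $\VOA{V}$-module because the generalised commutation identities (Borcherds identities) among the modes of $\VOA{V}$ need not hold.  I would define $\ind{\Mod{N}}$ as the quotient of $\widetilde{\Mod{N}}$ by the smallest $\modealg$-submodule $\Mod{R}$ whose image records the failure of these identities on elements of the form $1 \otimes n$, $n \in \Mod{N}$.  Concretely, this can be carried out via Li's construction using the local universal enveloping algebra \cite{LiRep94}, which guarantees both that $\ind{\Mod{N}}$ is a $\VOA{V}$-module and that $\Mod{R} \cap (1 \otimes \Mod{N}) = 0$.

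To establish $\zhu{\ind{\Mod{N}}} \cong \Mod{N}$, identify $\Mod{N}$ with the image of $1 \otimes \Mod{N}$ in $\ind{\Mod{N}}$.  By construction this image is annihilated by $\modealg'_>$, so it embeds in $\zhu{\ind{\Mod{N}}}$; the Zhu multiplication formula \eqref{eq:zhumult} ensures that the induced $\modealg_0$-action on this image factors through $\zhu{\VOA{V}}$ and recovers the original $\zhu{\VOA{V}}$-module structure on $\Mod{N}$.  The reverse inclusion amounts to ruling out further $\modealg'_>$-invariants outside the top component, which follows from an $L_0$-grading argument using that $\ind{\Mod{N}}$ is generated over $\modealg_<$ by the image of $1 \otimes \Mod{N}$, combined with the observation that the action of a single mode in $\modealg_<$ strictly decreases $L_0$-eigenvalues.

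The principal obstacle is showing that the quotient really does yield a $\VOA{V}$-module \emph{and} leaves $1 \otimes \Mod{N}$ intact: the Borcherds identities must be imposed in a way that is consistent with the relations $\modealg_0 \cap (\modealg \modealg'_>)$ used to define the Zhu product.  This compatibility is precisely what makes \eqref{eq:zhumult} well defined on $\zhu{\VOA{V}}$, and verifying the matching in the opposite direction (i.e., while building $\ind{\Mod{N}}$) is the technical heart of the argument and occupies the bulk of \cite{ZhuMod96,LiRep94}.
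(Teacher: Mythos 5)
Your construction follows the same route that the paper sketches (and, like the paper, defers the hard technical content to \cite{ZhuMod96,LiRep94}), but the step where you argue $\zhu{\ind{\Mod{N}}} \cong \Mod{N}$ has a genuine gap. Quotienting only by the failure of the Borcherds identities produces the \emph{universal} lower-bounded $\VOA{V}$-module generated by $\Mod{N}$ as its top space, and such universal modules generically contain singular vectors at positive conformal weight, i.e.\ vectors outside the image of $1 \otimes \Mod{N}$ that are nonetheless annihilated by $\modealg'_>$.  A minimal example: for the universal Virasoro \voa, $\zhu{\VOA{V}}$ is (a quotient of) $\CC[L]$, and inducing the one-dimensional module on which $L$ acts as $0$ gives the Verma module of highest weight $0$, in which $L_{-1}v$ is annihilated by every element of $\modealg'_>$ but does not lie in the top space; the same phenomenon occurs for the \bp\ Verma modules $\vhw{j,\Delta}$ whenever they are nonsimple.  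Your $L_0$-grading argument (note also that modes in $\modealg_<$ \emph{raise} rather than lower $L_0$-eigenvalues, since $\comm{L_0}{A_n} = -nA_n$ with $n<0$) only establishes that $\ind{\Mod{N}}$ is lower bounded with top space $\Mod{N}$; it cannot exclude $\modealg'_>$-invariants at higher grades, so the claimed ``reverse inclusion'' fails for the module you have defined.

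The missing ingredient is a further quotient: one must also kill the sum of all submodules whose intersection with the image of $1 \otimes \Mod{N}$ is zero (equivalently, take Zhu's simple quotient when $\Mod{N}$ is simple, or the functor $\overline{M}$ of Dong--Li--Mason in general).  A maximality argument then shows that no $\modealg'_>$-invariant vectors survive outside the top space, which is exactly what $\zhu{\ind{\Mod{N}}} \cong \Mod{N}$ requires.  This is what the paper's phrase ``a quotient that imposes, \emph{among other things}, the generalised commutation relations'' is signalling, and it is precisely the device the paper itself uses later when constructing the modules $\twpmrhw{[j],\lambda}$ in \cref{sec:indec}.  With that extra quotient your outline becomes correct; without it, the functor you describe does not satisfy the stated property.
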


The Zhu functor is thus a left inverse of the Zhu induction functor, at the level of isomorphism classes of modules.  However, it is not a right inverse in general.  Nevertheless, it is if we restrict to a certain class of simple $\VOA{V}$-modules.
\begin{definition}
	A (twisted or untwisted) $\VOA{V}$-module $\Mod{M}$ is \emph{lower-bounded} if it decomposes into (generalised) eigenspaces for the Virasoro zero-mode $L_0$ and the corresponding eigenvalues are bounded below.  If $\Mod{M}$ is lower-bounded, then the (generalised) eigenspace of minimal $L_0$-eigenvalue is called the \emph{top space} of $\Mod{M}$ and will be denoted by $\top{\Mod{M}}$.
\end{definition}
\noindent If $\Mod{M}$ is a lower-bounded $\VOA{V}$-module, then $\top{\Mod{M}}$ is naturally a $\zhu{\VOA{V}}$-module.  In fact, it may be identified with $\zhu{\Mod{M}}$ if $\Mod{M}$ is also simple, though this will not be true in general.  Simple lower-bounded $\VOA{V}$-modules have the following property.
\begin{theorem}[\cite{ZhuMod96}] \label{thm:zhubij}
	$\zhu{\blank}$ and $\ind{\blank}$ induce a bijection between the sets of isomorphism classes of simple lower-bounded $\VOA{V}$-modules and simple $\zhu{\VOA{V}}$-modules.
\end{theorem}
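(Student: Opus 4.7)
The plan is to show that the Zhu functor $\zhu{\blank}$ and the Zhu induction functor $\ind{\blank}$ become mutually inverse on isomorphism classes once restricted to the simple objects indicated. I begin from a simple lower-bounded $\VOA{V}$-module $\Mod{M}$ and argue that $\zhu{\Mod{M}} = \top{\Mod{M}}$. One inclusion is immediate: positive modes strictly lower conformal weight, so every vector in $\top{\Mod{M}}$ is annihilated by $\modealg'_>$. For the reverse, decompose any $\modealg'_>$-annihilated vector into its $L_0$-(generalised-)eigencomponents; each component is itself annihilated by grading, and by simplicity the submodule it generates is all of $\Mod{M}$, which via the triangular decomposition \eqref{eq:pbw} forces its weight to be the minimum. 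I then verify that $\top{\Mod{M}}$ is simple as a $\zhu{\VOA{V}}$-module: any nonzero $\zhu{\VOA{V}}$-submodule $\Mod{N} \subseteq \top{\Mod{M}}$ generates a $\VOA{V}$-submodule whose intersection with $\top{\Mod{M}}$ recovers $\Mod{N}$, and simplicity of $\Mod{M}$ then forces $\Mod{N} = \top{\Mod{M}}$.

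In the other direction, given a simple $\zhu{\VOA{V}}$-module $\Mod{N}$, \cref{prop:zhuind} provides $\ind{\Mod{N}}$ with $\zhu{\ind{\Mod{N}}} \cong \Mod{N}$. I consider the sum $J$ of all $\VOA{V}$-submodules of $\ind{\Mod{N}}$ whose intersection with the top space vanishes. Using lower-boundedness and the fact that the top space of any submodule lies in $\top{\ind{\Mod{N}}}$, the sum $J$ is itself such a submodule; hence the quotient $\Mod{L}(\Mod{N}) = \ind{\Mod{N}}/J$ is a lower-bounded $\VOA{V}$-module whose top space still identifies with $\Mod{N}$. To see that $\Mod{L}(\Mod{N})$ is simple, take any nonzero submodule: by the maximality built into $J$, its intersection with the top space is a nonzero $\zhu{\VOA{V}}$-submodule of the simple module $\Mod{N}$, so it equals $\Mod{N}$; since $\ind{\Mod{N}}$ (and hence its quotient) is generated as a $\VOA{V}$-module by its top space, the submodule must be everything.

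Finally, I verify that $\Mod{M} \mapsto \top{\Mod{M}}$ and $\Mod{N} \mapsto \Mod{L}(\Mod{N})$ are mutually inverse on isomorphism classes. The isomorphism $\top{\Mod{L}(\Mod{N})} \cong \Mod{N}$ is immediate from the construction. Conversely, for a simple lower-bounded $\Mod{M}$, the universal property of $\ind{\top{\Mod{M}}}$ yields a surjection onto $\Mod{M}$ whose kernel meets the top space trivially and so lies in $J$; this induces a surjection from $\Mod{L}(\top{\Mod{M}})$ onto $\Mod{M}$, which is an isomorphism since both sides are simple. I expect the main obstacle to be the construction in the second paragraph: establishing that the sum $J$ is genuinely a submodule disjoint from the top space, that passage to the quotient preserves the identification of the top space with $\Mod{N}$, and that the resulting quotient is simple. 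This rests delicately on the fact that $\ind{\Mod{N}}$ is generated over $\VOA{V}$ by its top space together with the simplicity of $\Mod{N}$ at the Zhu level; once these are in hand, the bijection drops out.
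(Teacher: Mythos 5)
The paper does not prove this statement at all --- it is quoted directly from Zhu \cite{ZhuMod96} --- so there is nothing internal to compare against; what you have written is essentially the standard proof from the cited literature: identify $\zhu{\Mod{M}}$ with $\top{\Mod{M}}$ for simple lower-bounded $\Mod{M}$, and in the other direction pass from $\ind{\Mod{N}}$ to its quotient by the maximal submodule meeting the top space trivially. Your argument is correct in outline, and the graded-component manipulations (each submodule of a lower-bounded module is $L_0$-graded, so the sum $J$ again misses the top space, and the degree-zero piece survives the quotient) are exactly the right points to check. Two small remarks. First, you lean on properties of $\ind{\blank}$ that go beyond what \cref{prop:zhuind} literally states, namely that $\ind{\Mod{N}}$ is generated by its top space and that it admits a map to any lower-bounded module with the prescribed top space (the universal property); both hold for Zhu's actual construction, and the first can in any case be arranged by replacing $\ind{\Mod{N}}$ with the submodule generated by its top space, but it is worth saying so explicitly. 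Second, in the final step the inference is stated backwards: the containment $K \subseteq J$ of the kernel $K$ of $\ind{\top{\Mod{M}}} \sra \Mod{M}$ in $J$ gives a surjection $\Mod{M} \cong \ind{\top{\Mod{M}}}/K \sra \ind{\top{\Mod{M}}}/J$, not the reverse; the repair is immediate, since simplicity of $\Mod{M}$ makes $K$ maximal, and $K \subseteq J$ with $J$ proper (as $J$ misses the top space) forces $K = J$, whence the two simple quotients coincide. With these touch-ups the proposal is a complete and faithful reconstruction of the cited result.
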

\noindent To classify the simple lower-bounded $\VOA{V}$-modules, it is therefore sufficient to classify the simple $\zhu{V}$-modules and apply $\ind{\blank}$.  We remark that for $\VOA{V} = \ubpvoak$ or $\bpminmoduv$, the simple lower-bounded weight modules coincide precisely with the simple \rhwms.

The first order of business is therefore to get information about the untwisted Zhu algebra $\zhu{\ubpvoak}$.
\begin{proposition} \label{prop:untwzhu}
	$\zhu{\ubpvoak}$ is a quotient of $\CC[J,L]$.
\end{proposition}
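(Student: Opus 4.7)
The plan is to verify two facts: that $[J_0]$ and $[L_0]$ commute in $\zhu{\ubpvoak}$, and that they together generate it as an algebra. These yield the desired surjective algebra homomorphism $\CC[J,L] \twoheadrightarrow \zhu{\ubpvoak}$, $J\mapsto[J_0]$, $L\mapsto[L_0]$.

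Commutativity is immediate from the bracket $\comm{L_0}{J_0}=0$ read off \eqref{cr:bp}: then $J_0 L_0 = L_0 J_0$ in $\modealg_0$, and the equality descends to $\zhu{\ubpvoak}$.

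For generation, the crucial observation is that the half-integer conformal weight $\frac{3}{2}$ of $G^\pm$ forces their untwisted modes $G^\pm_r$ to be indexed by $r\in\ZZ+\frac{1}{2}$, so $G^\pm$ contributes no zero mode to $\modealg_0$. Thus the only ``elementary'' zero-mode generators are $J_0$ and $L_0$, although $\modealg_0$ also contains the zero modes $A_0$ of composite integer-weight elements $A\in\ubpvoak$ such as $\no{JJ}$, $\no{LL}$, $\no{LJ}$, $\no{G^+G^-}$, $\no{G^+\pd G^-}$, and so on. It therefore remains to show that each such $A_0$ reduces, modulo $\modealg_0\cap\modealg\modealg'_>$, to a polynomial in $J_0$ and $L_0$.

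I would argue this by induction on the number of $G$-factors in $A$, with a secondary induction on total absolute mode-index. In the base case ($A$ has no $G$-factors), mode expansion of $\no{JJ}$, $\no{LJ}$, $\no{LL}$, etc.\ splits $A_0$ into terms with a positive mode on the right (which vanish in $\zhu{\ubpvoak}$) and a ``diagonal'' part that reduces via the Heisenberg and Virasoro brackets in \eqref{cr:bp} to a polynomial in $J_0, L_0$: for example $\no{JJ}_0 \equiv J_0^2$ and $\no{LL}_0 \equiv L_0^2 + 2L_0$. For the inductive step, the $G^+G^-$ commutator in \eqref{cr:bp} lets one exchange an adjacent pair $G^-_sG^+_r$ into $G^+_rG^-_s$ plus a term whose $G$-count is strictly smaller. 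Iteration either places a positive mode on the right of a term (killing it) or strictly lowers the $G$-count, so the induction closes; the illustrative reduction $\no{G^+G^-}_0\equiv-3J_0^2+(\kk+3)L_0+\tfrac{3}{2}(\kk+1)J_0$ captures the mechanism.

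The main obstacle is bookkeeping for $A_0$'s involving many interleaved $G^\pm$-modes and derivatives of generators. The well-ordering — primary by $G$-count, secondary by total absolute mode-index — ensures termination, since each application of the $G^+G^-$ commutator strictly lowers the primary invariant while commutations of pure $J, L$ modes strictly lower the secondary one.
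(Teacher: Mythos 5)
Your proposal is correct and takes essentially the same route as the paper's proof: note that $G^\pm$ contribute no zero modes in the untwisted sector, use the commutation relations \eqref{cr:bp} to reorder each composite zero mode so that positive modes sit on the right (where they die in the quotient), and conclude that everything reduces to a polynomial in $\zhuq{J_0}$ and $\zhuq{L_0}$, giving the surjection from $\CC[J,L]$. The only difference is that you make explicit the termination bookkeeping (induction on $G$-count and mode indices) that the paper simply asserts is "easy to see", and your sample reductions check out.
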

\begin{proof}
	Since the fields $G^\pm(z)$ have half-integer conformal weights, they do not have zero modes when acting on untwisted modules.  More generally, only the (homogeneous) fields of integer conformal weight have zero modes.  Express the zero mode of such a field as a linear combination of monomials in the modes of the generating fields $J(z)$, $G^{\pm}(z)$ and $L(z)$.  Next, use the commutation relations to order the modes so that the mode index weakly increases from left to right --- it is easy to see that this is always possible despite the nonlinear nature of the commutation relations \eqref{cr:bp}.  Now remove any monomial which contains a positive mode.  The image of the zero mode in $\zhu{\ubpvoak}$ is thus a polynomial in $\zhuq{J_0}$ and $\zhuq{L_0}$.  Since $\zhuq{L_0}$ is central in $\zhu{\ubpvoak}$, the multiplication \eqref{eq:zhumult} of $\zhu{\ubpvoak}$ matches that of $\CC[J,L]$.  There is therefore a surjective homomorphism $\CC[J,L] \to \zhu{\ubpvoak}$ determined by $J \mapsto \zhuq{J_0}$ and $L \mapsto \zhuq{L_0}$.
\end{proof}
\noindent It is in fact easy to show that $\zhu{\ubpvoak} \cong \CC[J,L]$, though we will not need this result in what follows.

\subsection{Identifying simple untwisted \hw\ $\ubpvoak$-modules} \label{sec:untwmod}

Having identified $\zhu{\ubpvoak}$ as a quotient of the free abelian algebra $\CC[J,L]$, we may identify its \fdim\ simple modules as $\CC[J,L]$-modules.
\begin{definition}
	A $\CC[J,L]$-module is said to be \emph{weight} if $J$ and $L$ act semisimply and their simultaneous eigenspaces are all \fdim.
\end{definition}
\noindent The simple weight modules of $\CC[J,L]$ are therefore one-dimensional.  We shall denote them by $\CC v_{j,\Delta}$, where $\lambda$ and $\Delta$ are the eigenvalues of $J$ and $L$, respectively, on $v_{j,\Delta}$.  As every simple $\zhu{\ubpvoak}$-module must also be simple as a $\CC[J,L]$-module, we arrive at our first identification result.
\begin{proposition}
	Every simple weight $\zhu{\ubpvoak}$-module, and hence every simple weight $\zhu{\bpminmoduv}$-module, is isomorphic to some $\CC v_{j,\Delta}$, where $\lambda,\Delta \in \CC$.
\end{proposition}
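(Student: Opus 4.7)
The plan is to reduce the classification to the corresponding statement for the commutative polynomial algebra $\CC[J,L]$, exploiting the surjection supplied by \cref{prop:untwzhu}. Concretely, I would first note that any $\zhu{\ubpvoak}$-module pulls back along the surjective algebra homomorphism $\CC[J,L] \sra \zhu{\ubpvoak}$ (sending $J \mapsto \zhuq{J_0}$, $L \mapsto \zhuq{L_0}$) to a $\CC[J,L]$-module. Because the images of $J$ and $L$ generate $\zhu{\ubpvoak}$, this pullback preserves simplicity, the semisimple action of $J$ and $L$, and the dimensions of the simultaneous eigenspaces; in particular, it takes simple weight $\zhu{\ubpvoak}$-modules to simple weight $\CC[J,L]$-modules. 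So it suffices to show that every simple weight $\CC[J,L]$-module is one-dimensional.

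For this final step the key input is commutativity. Given a simple weight $\CC[J,L]$-module, I would pick any nonzero simultaneous eigenvector $v$ for $J$ and $L$ (such a vector exists by the weight hypothesis), with eigenvalues $j$ and $\Delta$ respectively. Since $\CC[J,L]$ is commutative, every element acts on $v$ by the scalar obtained by evaluating the corresponding polynomial at $(j,\Delta)$, and so the line $\CC v$ is a nonzero submodule. Simplicity then forces the whole module to coincide with $\CC v \cong \CC v_{j,\Delta}$. The statement for $\zhu{\bpminmoduv}$ follows at once, since $\zhu{\bpminmoduv}$ is a further quotient of $\zhu{\ubpvoak}$, hence also a quotient of $\CC[J,L]$, and the same argument applies verbatim.

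There is no substantial obstacle here; the entire content of the statement is the commutativity supplied by \cref{prop:untwzhu}, and the proof is essentially a soft corollary of that structural observation. The real work in the classification begins only when one asks \emph{which} pairs $(j,\Delta)$ actually correspond to simple $\zhu{\ubpvoak}$- or $\zhu{\bpminmoduv}$-modules (equivalently, which $\CC v_{j,\Delta}$ descend through the quotient), and that question is deferred to the subsequent sections via Arakawa's minimal reduction machinery.
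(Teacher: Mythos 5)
Your proposal is correct and follows essentially the same route as the paper, which likewise deduces the result from the surjection $\CC[J,L] \sra \zhu{\ubpvoak}$ of \cref{prop:untwzhu} and the (elementary) fact that simple weight modules over the commutative algebra $\CC[J,L]$ are one-dimensional. You simply spell out the details (eigenvector spans a submodule, simplicity forces one-dimensionality, and the $\zhu{\bpminmoduv}$ case follows as a further quotient) that the paper leaves implicit in the discussion preceding the proposition.
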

\noindent \cref{prop:zhuind,thm:zhubij} then guarantee that if $\CC v_{j,\Delta}$ is a $\zhu{\ubpvoak}$-module, then there exists a simple untwisted $\ubpvoak$-module $\ihw{j,\Delta}$ which is uniquely determined (up to isomorphism) by the fact that its top space is isomorphic to $\CC v_{j,\Delta}$ (as a $\CC[J,L]$-module).  As this top space is one-dimensional, $\ihw{j,\Delta}$ is a \hwm.
\begin{theorem} \label{thm:untwubpclass}
	Every simple untwisted \rhw\ $\ubpvoak$-module, and hence every simple untwisted \rhw\ $\bpminmoduv$-module, is isomorphic to some $\ihw{j,\Delta}$, where $\lambda,\Delta \in \CC$.
\end{theorem}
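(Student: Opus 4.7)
The plan is to package the results of this subsection: \cref{prop:untwzhu} (commutativity of $\zhu{\ubpvoak}$), the classification of its simple weight modules proved just above, and Zhu's bijection \cref{thm:zhubij}. First, since $G^{\pm}(z)$ have half-integer conformal weight, they contribute no zero modes on untwisted modules, so $\modealg_0$ is generated by $J_0$ and $L_0$ and, in particular, every untwisted \rhwv\ is automatically a \hwv. Consequently, a simple untwisted \rhw\ $\ubpvoak$-module $\Mod{M}$ is generated by a single \hwv\ $v$ with some weight $(j,\Delta) \in \CC^2$. Using the triangular decomposition \eqref{eq:pbw} together with the facts that $\modealg_> \cdot v \subseteq \CC v$ (as $v$ is \hw), that $\modealg_0 \cdot v = \CC v$ (as $J_0$ and $L_0$ act diagonally), and that every nonidentity element of $\modealg_<$ strictly raises conformal weight, it follows that $\Mod{M} = \modealg_< \cdot v$ is lower-bounded with one-dimensional top space $\top{\Mod{M}} = \CC v$.

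Next I would observe that $\top{\Mod{M}}$, being one-dimensional, is trivially a simple weight $\zhu{\ubpvoak}$-module on which $\zhuq{J_0}$ and $\zhuq{L_0}$ act as $j$ and $\Delta$, respectively. By the simple weight classification of $\zhu{\ubpvoak}$-modules established just above, $\top{\Mod{M}} \cong \CC v_{j,\Delta}$, and \cref{thm:zhubij} then yields $\Mod{M} \cong \ihw{j,\Delta}$. The analogous statement for $\bpminmoduv$ follows at once, since every $\bpminmoduv$-module is a $\ubpvoak$-module through the quotient $\ubpvoak \sra \bpminmoduv$, and the image of a simple untwisted \rhwm\ remains simple and \rhw.

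I do not expect any genuine obstacle here: the theorem is essentially a bookkeeping exercise given the preceding machinery. The only point requiring any real care is verifying that $\top{\Mod{M}}$ is one-dimensional, so that it is automatically a weight module in the required sense; this is ensured by the absence of zero modes from $G^{\pm}(z)$ in the untwisted setting, together with the commutativity of $\zhu{\ubpvoak}$ acting on a cyclic joint eigenvector of $J_0$ and $L_0$.
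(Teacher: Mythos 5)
Your proposal is correct and follows essentially the same route as the paper: the untwisted Zhu algebra is a quotient of $\CC[J,L]$, its simple weight modules are the one-dimensional $\CC v_{j,\Delta}$, and Zhu's bijection (\cref{thm:zhubij}) then identifies the simple untwisted \rhwms\ with the induced modules $\ihw{j,\Delta}$, with the $\bpminmoduv$ case following because any $\bpminmoduv$-module is a $\ubpvoak$-module. One small correction: $\modealg_0$ is \emph{not} generated by $J_0$ and $L_0$ alone (it contains the zero modes of all integer-weight fields, e.g.\ $\no{JJ}_0$), but this is immaterial since, exactly as in the proof of \cref{prop:untwzhu}, every such zero mode acts on a vector annihilated by $\modealg'_>$ as a polynomial in $J_0$ and $L_0$, so $\modealg_0 \cdot v = \CC v$ and your identification of the one-dimensional top space stands.
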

\noindent Note that there will be other simple weight $\ubpvoak$- and $\bpminmoduv$-modules such as those obtained from the $\ihw{j,\Delta}$ by applying spectral flow.  Simple nonweight modules also exist in general \cite{AdaCla19}, but they will not concern us here.

\subsection{The twisted Zhu algebra} \label{sec:twzhu}

The theory that extends Zhu algebras and functors to twisted modules was developed independently, and in different levels of generality, by Kac and Wang \cite{KacVer94} and by Dong, Li and Mason \cite{DonTwi98}.  From the point of view of ``zero modes acting on ground states'' however, the twisted story is almost identical to the untwisted one.  This is discussed in detail in \cite[App.~A]{BloSVir16}.

Given a \voa\ $\VOA{V}$ with twisted mode algebra $\twmodealg = \twmodealg_< \otimes \twmodealg_0 \otimes \twmodealg_>$, let ${\twmodealg_>}'$ be the ideal of $\twmodealg_>$ generated by the modes $A_n$.  Then, the twisted Zhu algebra and twisted Zhu functor of $\VOA{V}$ may be characterised as follows.
\begin{definition}
	\leavevmode
	\begin{itemize}
		\item The \emph{twisted Zhu algebra} of $\VOA{V}$ is the vector space
		\begin{equation}
			\twzhu{\VOA{V}} = \frac{\twmodealg_0}{\twmodealg_0 \cap (\twmodealg {\twmodealg_>}')},
		\end{equation}
		equipped with the multiplication defined in \eqref{eq:zhumult}, but where $\zhuq{U_0}$ is now the image in $\twzhu{\VOA{V}}$ of $U_0 \in \twmodealg_0$.
		\item The \emph{twisted Zhu functor} assigns to any twisted $\VOA{V}$-module $\Mod{M}$ the $\twzhu{\VOA{V}}$-module $\twzhu{\Mod{M}} = \Mod{M}^{{\twmodealg_>}'}$ of elements of $\Mod{M}$ that are annihilated by ${\twmodealg_>}'$.
	\end{itemize}
\end{definition}
\noindent The obvious analogues of Zhu's theorems for the twisted setting then hold.
\begin{theorem}[\cite{DonTwi98}] \label{thm:twzhubij}
	\leavevmode
	\begin{itemize}
		\item There exists a \emph{twisted Zhu induction functor} that takes a $\twzhu{\VOA{V}}$-module $\Mod{N}$ to a $\VOA{V}$-module $\twind{\Mod{N}}$ satisfying $\twzhu{\twind{\Mod{N}}} \cong \Mod{N}$.
		\item $\twzhu{\blank}$ and $\twind{\blank}$ induce a bijection between the sets of isomorphism classes of simple lower-bounded twisted $\VOA{V}$-modules and simple $\twzhu{\VOA{V}}$-modules.
	\end{itemize}
\end{theorem}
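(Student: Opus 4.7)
The plan is to adapt Zhu's original construction to the twisted setting, following the ``zero modes acting on ground states'' approach described in \cite{BloSVir16,DonTwi98}. The construction of $\twind{\blank}$ proceeds in direct analogy with Li's construction of the untwisted induction functor \cite{LiRep94}; once this is in place, the bijection of simple objects is a standard categorical argument.

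Given a $\twzhu{\VOA{V}}$-module $\Mod{N}$, first view it as a module over the subalgebra $\twmodealg_0 \oplus {\twmodealg_>}'$ by letting ${\twmodealg_>}'$ act as zero. This is consistent because the defining relations of $\twzhu{\VOA{V}}$ arising from $\twmodealg_0 \cap (\twmodealg\, {\twmodealg_>}')$ become trivial in this extended action. Next, form the induced $\twmodealg$-module
\begin{equation*}
  \widetilde{\twind{\Mod{N}}} = \twmodealg \otimes_{\twmodealg_0 \oplus {\twmodealg_>}'} \Mod{N},
\end{equation*}
which carries a natural $\twmodealg$-action but need not yet be a twisted $\VOA{V}$-module, as the twisted Borcherds identities have not been imposed. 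Define $\twind{\Mod{N}}$ as the quotient of $\widetilde{\twind{\Mod{N}}}$ by its maximal $\twmodealg$-submodule meeting $1 \otimes \Mod{N}$ trivially; equivalently, by the $\twmodealg$-submodule generated by all the twisted Borcherds relations required to make it a twisted $\VOA{V}$-module. The conformal grading of $\twmodealg$ descends to $\twind{\Mod{N}}$, and its degree-zero component recovers $\Mod{N}$, because the relations imposed in higher degrees reduce in degree zero to precisely the relations in $\twmodealg_0 \cap (\twmodealg\,{\twmodealg_>}')$ already killed in $\twzhu{\VOA{V}}$. This yields $\twzhu{\twind{\Mod{N}}} \cong \Mod{N}$ and gives the functor of the first part.

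For the bijection, I argue in both directions. If $\Mod{M}$ is a simple lower-bounded twisted $\VOA{V}$-module, then $\top{\Mod{M}} \cong \twzhu{\Mod{M}}$ is naturally a $\twzhu{\VOA{V}}$-module, and simplicity of $\Mod{M}$ forces it to be simple: any proper $\twzhu{\VOA{V}}$-submodule of the top would generate, under $\twmodealg_<$, a proper nonzero $\twmodealg$-submodule of $\Mod{M}$. Conversely, given a simple $\twzhu{\VOA{V}}$-module $\Mod{N}$, the induced module $\twind{\Mod{N}}$ may fail to be simple, but it possesses a unique maximal proper submodule $\Mod{M}'$ whose intersection with the degree-zero component $\Mod{N}$ is zero (the sum of all such submodules). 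The quotient $\twind{\Mod{N}} / \Mod{M}'$ is then the unique simple lower-bounded twisted $\VOA{V}$-module whose top space is $\Mod{N}$, and the two assignments are mutually inverse on isomorphism classes.

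The main obstacle, and the bulk of the work in \cite{DonTwi98}, is verifying that the twisted Borcherds relations imposed in defining $\twind{\Mod{N}}$ do not collapse the degree-zero component beyond the $\twzhu{\VOA{V}}$ relations already present. This requires tracking how the bookkeeping of half-integer mode indices interacts with the binomial expansions defining the Zhu product; once this compatibility is established (following verbatim the untwisted arguments with the mode shifts dictated by the twisted sector), both parts of the theorem follow immediately.
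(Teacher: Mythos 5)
The paper does not actually prove this statement: it is imported wholesale from Dong--Li--Mason \cite{DonTwi98} (the untwisted case being \cite{ZhuMod96,LiRep94}), and your outline is precisely the standard generalised-Verma construction underlying that citation, so in approach there is nothing to distinguish. The one genuine imprecision is your claim that quotienting by the maximal $\twmodealg$-submodule meeting $1 \otimes \Mod{N}$ trivially is \emph{equivalent} to quotienting by the submodule generated by the twisted Borcherds relations: it is not. Imposing the Borcherds relations yields the universal (generalised Verma) twisted module, which in general still contains singular vectors at positive conformal weight, hence nonzero submodules intersecting the top trivially; conversely, quotienting the bare induced $\twmodealg$-module only by the maximal submodule missing $\Mod{N}$ need not yield a twisted $\VOA{V}$-module at all. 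One must do both, in that order, and the paper itself trades on exactly this distinction in \cref{sec:indec}, where Zhu induction is followed by a further quotient by the sum of all submodules meeting the top space trivially. Finally, note that the genuine mathematical content --- that imposing the twisted Borcherds relations does not collapse the degree-zero component below $\Mod{N}$ --- is exactly the step you defer ``verbatim'' to the untwisted argument; that is acceptable here only because the paper likewise offers nothing beyond the citation.
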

\noindent Again, the simple lower-bounded twisted weight $\VOA{V}$-modules coincide with the simple twisted \rhwms\ when $\VOA{V} = \ubpvoak$ or $\bpminmoduv$.

Our aim is to show that $\twzhu{\ubpvoak}$ is a quotient of some reasonably accessible associative algebra.  In contrast to the untwisted case detailed in \cref{sec:zhu}, the fields $G^\pm(z)$ do have zero modes when acting on twisted modules.  We therefore expect that $\twzhu{\ubpvoak}$ will be more complicated than $\zhu{\ubpvoak}$ --- in particular, we expect it to be nonabelian --- and so its representation theory will be more interesting.

\begin{definition}
	Let $\zk$ denote the (complex) unital associative algebra generated by $J, G^+, G^-$ and $L$, subject to $L$ being central and
	\begin{equation} \label{cr:zk}
		\comm{J}{G^\pm} = \pm G^{\pm}, \qquad
		\comm{G^+}{G^-} = f_{\kk}(J,L), \quad \text{where}\ f_{\kk}(J,L) = 3J^2 - (\kk+3)L - \frac{1}{8}(\kk+1)(2\kk+3) \wun.
	\end{equation}
\end{definition}
\begin{proposition} \label{prop:twzhu}
	$\twzhu{\ubpvoak}$ is a quotient of $\zk$.
\end{proposition}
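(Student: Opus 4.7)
The plan is to mirror the strategy of \cref{prop:untwzhu}: I will show that $\twzhu{\ubpvoak}$ is generated, as an associative algebra, by the classes $\zhuq{J_0}$, $\zhuq{G^+_0}$, $\zhuq{G^-_0}$ and $\zhuq{L_0}$, and then verify that these classes satisfy the three defining relations \eqref{cr:zk} of $\zk$.  Together, these facts yield a well-defined surjective algebra homomorphism $\zk \sra \twzhu{\ubpvoak}$ sending each generator of $\zk$ to the class of its namesake.

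For the generation step, the argument of \cref{prop:untwzhu} applies essentially verbatim, the only substantive change being that the half-integer weight fields $G^{\pm}(z)$ now contribute zero modes in the twisted sector.  Any element of $\twmodealg_0$ is a linear combination of monomials in the modes of the generating fields; the twisted commutation relations \eqref{cr:bp} allow each monomial to be reordered so that the mode indices weakly increase from left to right; and any reordered monomial containing a positive-index factor lies in $\twmodealg \cdot {\twmodealg_>}'$, and so represents the zero class in $\twzhu{\ubpvoak}$.  What survives are polynomials in the four generators above.  The nonlinear $\no{JJ}_{r+s}$ term appearing in $\comm{G^+_r}{G^-_s}$ does not obstruct this argument, because once its modes are expanded into products of $J$-modes the resulting monomials are themselves brought into ordered form by iteration, and their positive-mode parts die in $\twzhu{\ubpvoak}$.

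For the relations, centrality of $\zhuq{L_0}$ follows from $\comm{L_0}{A_0} = 0$ for any homogeneous zero mode, and the bracket $\comm{\zhuq{J_0}}{\zhuq{G^\pm_0}} = \pm \zhuq{G^\pm_0}$ is read directly from \eqref{cr:bp}.  The crucial computation is the bracket of $\zhuq{G^+_0}$ with $\zhuq{G^-_0}$: setting $r = s = 0$ in the last line of \eqref{cr:bp} yields
\begin{equation*}
	\comm{G^+_0}{G^-_0} = 3 \no{JJ}_0 - (\kk+3) L_0 - \tfrac{1}{8}(\kk+1)(2\kk+3) \wun,
\end{equation*}
so it remains to identify $\zhuq{\no{JJ}_0}$ with $\zhuq{J_0}^2$ in $\twzhu{\ubpvoak}$.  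This follows by direct application of the Zhu product formula \eqref{eq:zhumult} with $A = B = J$ (so $\Delta_A = 1$): only the $n = 0$ and $n = 1$ terms contribute, giving $\zhuq{J_0}\zhuq{J_0} = \zhuq{(J_{-1}J)_0} + \zhuq{(J_0 J)_0}$; by definition $J_{-1}J = \no{JJ}$, while $J_0 J = 0$ by the self-OPE of $J(z)$.  The displayed bracket therefore reduces in $\twzhu{\ubpvoak}$ to exactly $f_{\kk}(\zhuq{J_0}, \zhuq{L_0})$, the relation \eqref{cr:zk} defining $\zk$.  This identification of $\zhuq{\no{JJ}_0}$ with $\zhuq{J_0}^2$ is the main technical point of the proof, since everything else follows essentially by inspection of \eqref{cr:bp}.
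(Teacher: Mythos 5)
Your proof is correct and follows essentially the same route as the paper: generation of $\twzhu{\ubpvoak}$ by the images of the zero modes of the generating fields exactly as in \cref{prop:untwzhu}, followed by verification of the relations \eqref{cr:zk}.  The only (harmless) difference is organisational: you read the commutators directly from \eqref{cr:bp} at $r=s=0$ (legitimate, since the Zhu product is induced from the product of zero modes) and invoke \eqref{eq:zhumult} only to identify $\zhuq{\no{JJ}_0}$ with $\zhuq{J_0}^2$, which is precisely the ``somewhat more tedious'' part of the computation that the paper omits.
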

\begin{proof}
	Every homogeneous field of $\ubpvoak$ has a zero mode when acting on a twisted module.  As in the proof of \cref{prop:untwzhu}, it follows that the zero modes of the generating fields have images that generate $\twzhu{\ubpvoak}$.  The fact that the generator $\zhuq{L_0}$ is central is standard \cite{KacVer94,DonTwi98}, but is also easy to verify directly in this case.

	We therefore start by using \eqref{eq:zhumult} to compute the products of the images of $J_0$ and $G^{\pm}_0$ in $\twzhu{\ubpvoak}$:
	\begin{align}
		\zhuq{J_0} \zhuq{G^{\pm}_0} &= \sum_{n=0}^{\infty} \binom{1}{n} \zhuq{(J_{n-1} G^{\pm})_0}
		= \zhuq{(J_0 G^{\pm})_0} + \zhuq{(J_{-1} G^{\pm})_0}
		= \pm \zhuq{G^{\pm}_0} + \zhuq{\no{J G^{\pm}}_0}, \\
		\zhuq{G^{\pm}_0} \zhuq{J_0} &= \sum_{n=0}^{\infty} \binom{3/2}{n} \zhuq{(G^{\pm}_{n-3/2} J)_0}
		= \zhuq{(G^{\pm}_{-3/2} J)_0} + \frac{3}{2} \zhuq{(G^{\pm}_{-1/2} J)_0} \\
		&= \zhuq{(J_{-1} G^{\pm})_0} \pm \zhuq{(\pd G^{\pm})_0} \pm \zhuq{G^{\pm}_0}
		= \zhuq{\no{J G^{\pm}}_0}. \notag
	\end{align}
	Here, we have noted that $G^{\pm}_{-3/2} J = G^{\pm}_{-3/2} J_{-1} \wun = J_{-1} G^{\pm}_{-3/2} \wun \mp G^{\pm}_{-5/2} \wun = \no{J G^{\pm}} \mp \pd G^{\pm}$, that $G^{\pm}_{-1/2} J = \mp G^{\pm}$ (similarly) and that $(\pd G^{\pm})_0 = -\frac{3}{2} G^{\pm}_0$.  With the surjection induced by $A \mapsto \zhuq{A_0}$, $A = J, G^{\pm}, L$, this proves the first relation in \eqref{cr:zk}.  The same method works for the second relation; we omit the somewhat more tedious details.
\end{proof}

It turns out that $\zk$ is in fact isomorphic to $\twzhu{\ubpvoak}$, though again we do not need this for what follows.  One can establish this isomorphism by combining the fact that $\twzhu{\ubpvoak}$ is known \cite{DeSFin06} to be isomorphic to the finite W-algebra associated to $\slthree$ and the minimal nilpotent orbit, while an explicit presentation of this finite W-algebra is given in \cite{TjiFin92}.  Either way, $\zk$ is a central extension of a Smith algebra, these algebras being introduced and studied in \cite{SmiCla90} as examples of associative algebras generalising the \uea\ of $\sltwo$.  This is of course well known, see \cite{AraRat13,AdaCla19} for instance.  The representation theory of $\zk$ is therefore quite tractable, a fact that we shall exploit in the next \lcnamecref{sec:twmod}.

\subsection{Identifying simple twisted \rhw\ $\ubpvoak$-modules} \label{sec:twmod}

As in the untwisted case, we wish to identify simple $\twzhu{\ubpvoak}$-modules as $\zk$-modules.  For this, we need a classification of the simple $\zk$-modules.  As $\zk$ is ``$\sltwo$-like'', similar classification methods may be used.  We shall mostly follow the approach presented in \cite{MazLec10} for $\sltwo$.

To begin, a triangular decomposition for $\zk$ is given by
\begin{equation} \label{eq:zktridec}
	\zk = \CC[G^-] \otimes \CC[J,L] \otimes \CC[G^+].
\end{equation}
The existence of this decomposition is an easy extension of \cite[Cor.~1.3]{SmiCla90}, which guarantees a \pbw-style basis for $\zk$.  The analogue of the Cartan subalgebra of $\sltwo$ is then spanned by $J$ and $L$.
\begin{definition}
	\leavevmode
	\begin{itemize}
		\item A vector in a $\zk$-module is a \emph{weight vector} of \emph{weight} $(j,\Delta)$ if it is a simultaneous eigenvector of $J$ and $L$ with eigenvalues $j$ and $\Delta$, respectively.  The nonzero simultaneous eigenspaces of $J$ and $L$ are called the \emph{weight spaces}.  If the $\zk$-module has a basis of weight vectors and its weight spaces are all \fdim, then it is a \emph{weight module}.
		\item A vector in a $\zk$-module is a \emph{\hwv} (\emph{\lwv}) if it is a weight vector that is annihilated by $G^+$ (by $G^-$).  A \emph{\hwm} (\emph{\lwm}) is a $\zk$-module that is generated by a single \hwv\ (by a single \lwv).
		\item A weight $\zk$-module is \emph{dense} if its weights coincide with the set $[j] \times \set{\Delta}$, for some coset $[j] \in \CC/\ZZ$ and some $\Delta \in \CC$.
	\end{itemize}
\end{definition}

We note that $\zk$ possesses a ``conjugation'' automorphism $\zconjsymb$ defined by
\begin{equation} \label{eq:defzconj}
	\zconjsymb(J) = -J, \quad \zconjsymb(G^+) = +G^-, \quad \zconjsymb(G^-) = -G^+, \quad \zconjsymb(L) = L.
\end{equation}
Conjugating a \hw\ $\zk$-module of highest weight $(j,\Delta)$ then results in a \lwm\ of lowest weight $(-j,\Delta)$ and vice versa.  The structures of highest- and \lw\ $\zk$-modules are therefore equivalent.

To construct \hw\ $\zk$-modules, we realise them as quotients of Verma $\zk$-modules.  Let $\zk^{\ge}$ denote the (unital) subalgebra of $\zk$ generated by $J$, $L$ and $G^+$.  Let $\CC_{j,\Delta}$, with $j,\Delta \in \CC$, be the one-dimensional $\zk^{\ge}$-module, spanned by $v$, on which we have $Jv = jv$, $Lv = \Delta v$ and $G^+ v=0$.  The Verma $\zk$-module $\zvhw{j,\Delta}$ is then the induced module $\zk \otimes_{\zk^{\ge}} \CC_{j,\Delta}$, as usual.  It is easy to check that $\zvhw{j,\Delta}$ is a \hwm\ with \hwv\ $v = \wun \otimes v$ and one-dimensional weight spaces of weights $(j-n,\Delta)$, $n \in \NN$.  Let $\zihw{j,\Delta}$ denote the unique simple quotient of $\zvhw{j,\Delta}$.

For convenience, we define
\begin{equation} \label{eq:defh}
	h_{\kk}^n(J,L) = \sum_{m=0}^{n-1} f_{\kk}(J-m\wun,L) = n \brac*{n^2 \wun - \frac{3}{2} n (2J+\wun) + \frac{1}{2} (6J^2+6J+\wun) - (\kk+3) L - \frac{1}{8} (\kk+1) (2\kk+3) \wun},
\end{equation}
where the $f_{\kk}$ were defined in \eqref{cr:zk}.
\begin{proposition} \label{prop:hwzkmod}
	\leavevmode
	\begin{itemize}
		\item The Verma module $\zvhw{j,\Delta}$ is simple, so $\zihw{j,\Delta} = \zvhw{j,\Delta}$, unless $h_{\kk}^n(j,\Delta) = 0$ for some $n \in \ZZ_{\ge1}$.
		\item Verma $\zk$-modules may have at most three composition factors.  Exactly one of these is \infdim.
		\item If $h_{\kk}^n(j,\Delta) = 0$ for some $n \in \ZZ_{\ge1}$ and $N$ is the minimal such $n$, then $\zihw{j,\Delta} \cong \zvhw{j,\Delta} \big/ \zvhw{j-N,\Delta}$ and $\dim \zihw{j,\Delta} = N$.
	\end{itemize}
\end{proposition}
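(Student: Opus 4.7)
The plan is to analyse $\zvhw{j,\Delta}$ using its natural basis $\{(G^-)^n v : n \in \NN\}$, which exists by the triangular decomposition \eqref{eq:zktridec}, and to detect submodules by identifying the singular vectors.

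First, I would establish the commutator identity
\begin{equation*}
  [G^+, (G^-)^n] = (G^-)^{n-1} h_{\kk}^n(J,L) \qquad (n \ge 1)
\end{equation*}
by induction on $n$. The base case is the defining relation in \eqref{cr:zk}, and the inductive step combines $[G^+,(G^-)^{n+1}] = [G^+,(G^-)^n]G^- + (G^-)^n[G^+,G^-]$ with the fact that $[J,G^-]=-G^-$ (hence $J(G^-)^m = (G^-)^m(J-m\wun)$) and the centrality of $L$. Applying this identity to the generating highest-weight vector $v$ (on which $G^+$ acts by zero and $J,L$ act by $j,\Delta$) gives $G^+(G^-)^n v = h_{\kk}^n(j,\Delta)\,(G^-)^{n-1} v$. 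Since $J$ and $L$ already act diagonally on each $(G^-)^n v$, the vector $(G^-)^N v$ is singular exactly when $h_{\kk}^N(j,\Delta)=0$, and in that case the submodule it generates is isomorphic to $\zvhw{j-N,\Delta}$, while the quotient $\zvhw{j,\Delta}/\zvhw{j-N,\Delta}$ has basis $\{v,G^-v,\ldots,(G^-)^{N-1}v\}$, hence dimension $N$.

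Next, I would use the closed form in \eqref{eq:defh} to read off that, for fixed $(j,\Delta)$, the map $n\mapsto h_{\kk}^n(j,\Delta)$ is a cubic polynomial in $n$ with leading term $n^3$ and with $n=0$ as an automatic root. Consequently it has at most two positive integer zeros. If there are none, $\zvhw{j,\Delta}$ is simple. If there is a single zero $N$, the structure chain $\zvhw{j,\Delta}\supset \zvhw{j-N,\Delta}$ terminates and yields two composition factors, one finite-dimensional of dimension $N$ and one infinite-dimensional. If there are two zeros $0 < N_1 < N_2$, then by telescoping
\begin{equation*}
  h_{\kk}^{N_2-N_1}(j-N_1,\Delta) = \sum_{m=N_1}^{N_2-1} f_{\kk}(j-m,\Delta) = h_{\kk}^{N_2}(j,\Delta) - h_{\kk}^{N_1}(j,\Delta) = 0,
\end{equation*}
so the chain continues one more step to $\zvhw{j,\Delta}\supset \zvhw{j-N_1,\Delta}\supset \zvhw{j-N_2,\Delta}$. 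The quadratic factor has now been exhausted, so $\zvhw{j-N_2,\Delta}$ has no further positive integer zeros of its own $h_{\kk}^n$ and is therefore simple. This produces exactly three composition factors, of which only the deepest submodule is infinite-dimensional. In every case, the simple quotient corresponding to the minimal positive zero $N$ is $\zvhw{j,\Delta}/\zvhw{j-N,\Delta}$, of dimension $N$, confirming the three bullets.

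The main obstacle is the commutator identity: once it is in hand, the rest is polynomial bookkeeping. The computation itself is routine but must be executed carefully because $f_{\kk}(J,L)$ is quadratic in $J$, so commuting past $(G^-)^{n-1-m}$ produces the shift $J\mapsto J-(n-1-m)\wun$ that is exactly what makes the sum telescope into the closed form \eqref{eq:defh}.
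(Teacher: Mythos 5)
Your proof is correct and takes essentially the same route as the paper: the computation $G^+ (G^-)^n v = h_{\kk}^n(j,\Delta)\,(G^-)^{n-1} v$ identifies the singular vectors, and counting roots of the cubic $h_{\kk}^n$ in $n$ bounds the number of composition factors. You simply make explicit some steps the paper leaves as ``clear'' (that $n=0$ is an automatic root, so at most two positive roots occur, and the telescoping identity $h_{\kk}^{N_2-N_1}(j-N_1,\Delta)=h_{\kk}^{N_2}(j,\Delta)-h_{\kk}^{N_1}(j,\Delta)$ placing the second singular vector inside the first submodule).
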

\begin{proof}
	The first statement follows easily by noting that every proper nonzero submodule of $\zvhw{j,\Delta}$ is generated by a \sv\ of the form $(G^-)^n v$, $n \in \ZZ_{\ge1}$.  The condition to be a \sv\ is
	\begin{align}
		0 &= G^+ (G^-)^n v = \sum_{m=0}^{n-1} (G^-)^{n-1-m} \comm{G^+}{G^-} (G^-)^m v = \sum_{m=0}^{n-1} (G^-)^{n-1-m} f_{\kk}(J,L) (G^-)^m v \\
		&= \sum_{m=0}^{n-1} (G^-)^{n-1} f_{\kk}(J-m\wun,L) v = (G^-)^{n-1} \sum_{m=0}^{n-1} f_{\kk}(j-m\wun,\Delta) v = h_{\kk}^n(j,\Delta) (G^-)^{n-1} v. \notag
	\end{align}
	Since $h_{\kk}^n$ is a cubic polynomial in $n$, there can be at most three roots in $\ZZ_{\ge1}$, hence at most three \hwvs.  The remaining statements are now clear.
\end{proof}
\noindent Unlike $\sltwo$, there exist nonsemisimple \fdim\ $\zk$-modules.  Examples include the \hwms\ obtained by quotienting a Verma module with three composition factors by its socle.

This \lcnamecref{prop:hwzkmod} completes the classification of \fdim\ $\zk$-modules and \hw\ $\zk$-modules.  To obtain the analogous classification of \lw\ $\zk$-modules, we apply the conjugation automorphism $\zconjsymb$.  The conjugate of a simple Verma module $\zvhw{j,\Delta}$ is the \lw\ Verma module of lowest weight $(-j,\Delta)$.  However, if $\zvhw{j,\Delta}$ is not simple and $N$ is the smallest positive integer such that $h_{\kk}^N(j,\Delta) = 0$, then the conjugate of $\zihw{j,\Delta}$ is isomorphic to $\zihw{N-j-1,\Delta}$.

It remains to determine the simple weight $\zk$-modules that are neither highest- nor \lw.  Such modules are necessarily dense.  As for $\sltwo$, the classification of simple dense $\zk$-modules is greatly simplified by identifying the centraliser $\ck$ of the subalgebra $\CC[J,L]$ in $\zk$.
\begin{lemma}
	The centraliser $\ck$ is the polynomial algebra $\CC[J,L,G^+G^-]$.
\end{lemma}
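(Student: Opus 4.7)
My plan is to reduce the identification of $\ck$ to a weight-space analysis using the PBW-type triangular decomposition \eqref{eq:zktridec}, and then show inductively that every weight-zero element can be expressed as a polynomial in $J$, $L$ and $G^+G^-$.

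The triangular decomposition gives a vector-space basis of $\zk$ consisting of the monomials $(G^-)^a J^b L^c (G^+)^d$ with $a, b, c, d \in \NN$. The inner derivation $\comm{J}{\blank}$ acts diagonally on this basis with eigenvalue $d - a$, since $\comm{J}{G^\pm} = \pm G^\pm$ and $\comm{J}{L} = 0$. Because $L$ is central, the centraliser of $\CC[J, L]$ coincides with the centraliser of $J$, which is this weight-zero subspace; so $\ck$ has basis $\set{(G^-)^a J^b L^c (G^+)^a \st a, b, c \in \NN}$. Iterating $G^- J = (J + \wun)G^-$ gives $(G^-)^a p(J) = p(J + a\wun)(G^-)^a$ for any polynomial $p$, which together with centrality of $L$ lets me rewrite each basis element as $L^c (J+a\wun)^b (G^-)^a (G^+)^a$. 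The question therefore reduces to showing $(G^-)^a(G^+)^a \in \CC[J, L, G^+G^-]$ for every $a \in \NN$.

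I would prove this by induction on $a$; the cases $a = 0$ and $a = 1$ (using $G^-G^+ = G^+G^- - f_\kk(J, L)$) are immediate. For the inductive step, normal-order $(G^+G^-)^a$ by iteratively substituting $G^+G^- = G^-G^+ + f_\kk(J, L)$ to push every $G^+$ to the right of every $G^-$. This produces a relation of the form
\begin{equation*}
	(G^+G^-)^a = (G^-)^a(G^+)^a + \sum_{a' = 0}^{a-1} q_{a, a'}(J, L)\, (G^-)^{a'}(G^+)^{a'},
\end{equation*}
with polynomial coefficients $q_{a, a'} \in \CC[J, L]$; solving for $(G^-)^a(G^+)^a$ and invoking the inductive hypothesis on the lower-order terms completes the induction. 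Algebraic independence of $J$, $L$ and $G^+G^-$ in $\ck$ then follows from a leading-term argument: any nontrivial relation $\sum c_{ijk} J^i L^j (G^+G^-)^k = 0$, restricted to its highest-$k$ component, would yield a linear dependence among the distinct PBW basis monomials $(G^-)^k (J - k\wun)^i L^j (G^+)^k$, contradicting the basis property from the first step.

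The main obstacle is verifying that the coefficient of $(G^-)^a(G^+)^a$ in the normal-ordered expansion of $(G^+G^-)^a$ is a nonzero constant—indeed, exactly $1$—since otherwise one cannot solve for $(G^-)^a(G^+)^a$ in terms of lower-order quantities without leaving $\CC[J, L, G^+G^-]$. Conceptually, this reflects the fact that the associated graded of $\zk$ with respect to the total $G$-degree filtration has $G^+$ and $G^-$ commuting, so $(G^+G^-)^a$ and $(G^-)^a(G^+)^a$ agree modulo lower-order corrections; the remainder of the argument is a careful bookkeeping of these corrections.
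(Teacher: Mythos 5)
Your proof is correct and rests on the same two pillars as the paper's: the PBW-type basis coming from \eqref{eq:zktridec} identifies $\ck$ with the span of the weight-zero monomials (equal numbers of $G^+$ and $G^-$ factors), and an induction on the number of such factors shows the balanced products lie in the subalgebra generated by $J$, $L$ and $G^+G^-$. The only substantive difference is mechanical: the paper runs the induction directly on $(G^+)^c(G^-)^c$, peeling off one factor of $G^+G^-$ and controlling the correction with an explicit expansion of $G^+\comm{(G^+)^{c-1}}{G^-}(G^-)^{c-1}$ into terms $f_{\kk}(J-n\wun,L)(G^+)^{c-1}(G^-)^{c-1}$, whereas you normal-order $(G^+G^-)^a$, use the $G$-degree filtration (in whose associated graded $G^+$ and $G^-$ commute) to see that the coefficient of $(G^-)^a(G^+)^a$ is exactly $1$, and then solve for $(G^-)^a(G^+)^a$; both routes are sound, the paper's being more explicit and yours deferring the bookkeeping to the filtration argument. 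You additionally prove algebraic independence of $J$, $L$ and $G^+G^-$ via a leading-term comparison in the PBW basis, a point the paper's proof (which only establishes generation) leaves implicit even though the lemma asserts a polynomial algebra; that is a worthwhile supplement.
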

\begin{proof}
	Note first that $G^+ G^-$ obviously commutes with $J$, by \eqref{cr:zk}.  Consider a \pbw\ basis of $\zk$ given by elements of the form $J^a L^b (G^+)^c (G^-)^d$, for $a,b,c,d \in \NN$.  It is easy to check that such a basis element belongs to $\ck$ if and only if $c=d$.  To show that $J$, $L$ and $G^+G^-$ generate $\ck$, it therefore suffices to show that $(G^+)^c (G^-)^c$ may be written as a polynomial in $J$, $L$ and $G^+G^-$, for each $c \in \NN$.

	Proceeding by induction, this is clear for $c=0$.  So take $c \ge 1$ and assume that $(G^+)^{c-1} (G^-)^{c-1}$ is a polynomial in $J$, $L$ and $G^+G^-$.  Then, the commutation rules \eqref{cr:zk} give
	\begin{align}
		(G^+)^c (G^-)^c &= (G^+ G^-) (G^+)^{c-1} (G^-)^{c-1} + G^+ \comm{(G^+)^{c-1}}{G^-} (G^-)^{c-1} \notag \\
		&= (G^+ G^-) (G^+)^{c-1} (G^-)^{c-1} + \sum_{n=1}^{c-1} (G^+)^n f_{\kk}(J,L) (G^+)^{c-1-n} (G^-)^{c-1}.
	\end{align}
	The first term on the \rhs\ is a polynomial in $J$, $L$ and $G^+G^-$, by the inductive hypothesis.  For the remaining terms, note that as $L$ is central and $G^+ J = (J-\wun) G^+$, we have $(G^+)^n J = (J-n\wun) (G^+)^n$ and hence
	\begin{equation}
		\sum_{n=1}^{c-1} (G^+)^n f_{\kk}(J,L) (G^+)^{c-1-n} (G^-)^{c-1}
		= \sum_{n=1}^{c-1} f_{\kk}(J-n\wun,L) (G^+)^{c-1} (G^-)^{c-1},
	\end{equation}
	which is likewise a polynomial in $J$, $L$ and $G^+G^-$.
\end{proof}

Recall that the weight spaces of a simple weight $\zk$-module are simple $\ck$-modules (see \cite[Lem.~3.4.2]{MazLec10} for example).  The fact that $\ck$ is abelian now gives the following result.
\begin{proposition} \label{prop:zkwtsp}
	The weight spaces of a simple weight $\zk$-module are one-dimensional.
\end{proposition}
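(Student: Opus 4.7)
The plan is to combine the cited fact that each weight space is a simple $\ck$-module with the commutativity of $\ck$ and a Schur-type argument, exploiting that we work over $\CC$ and that weight modules have finite-dimensional weight spaces by definition.

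Concretely, I would first observe why the cited lemma applies in our setting: the generators $J$, $L$ and $G^+G^-$ of $\ck$ each commute with $J$ and $L$, so $\ck$ preserves every simultaneous eigenspace $\Mod{M}_{(j,\Delta)}$ of a weight module $\Mod{M}$. To see that these eigenspaces are simple $\ck$-modules when $\Mod{M}$ is simple, I would unpack the standard argument behind \cite[Lem.~3.4.2]{MazLec10} in the present setting: given a nonzero $\ck$-submodule $\Mod{N} \subseteq \Mod{M}_{(j,\Delta)}$, the triangular decomposition $\zk = \CC[G^-] \otimes \CC[J,L] \otimes \CC[G^+]$ lets me write any element of $\zk$ as a linear combination of \pbw\ monomials $(G^-)^a J^b L^c (G^+)^d$, each of which shifts the $J$-weight by $d-a$ while fixing $L$. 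It follows that $\zk \cdot \Mod{N} \cap \Mod{M}_{(j,\Delta)}$ is obtained only from the monomials with $a = d$, i.e.\ from $\ck \cdot \Mod{N} = \Mod{N}$. Simplicity of $\Mod{M}$ then forces $\zk \cdot \Mod{N} = \Mod{M}$, so intersecting back gives $\Mod{N} = \Mod{M}_{(j,\Delta)}$.

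Having established that $\Mod{M}_{(j,\Delta)}$ is simple as a $\ck$-module, I would close the argument by Schur's lemma: $\ck = \CC[J,L,G^+G^-]$ is a commutative $\CC$-algebra, and $\Mod{M}_{(j,\Delta)}$ is a \fdim\ simple module over it (finite-dimensionality is part of the definition of weight module). Since $\CC$ is algebraically closed, every element of $\ck$ must therefore act as a scalar; any one-dimensional subspace is then a $\ck$-submodule, and simplicity forces $\dim \Mod{M}_{(j,\Delta)} = 1$.

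The only potentially delicate point is the verification that $\ck$-submodules of $\Mod{M}_{(j,\Delta)}$ generate $\zk$-submodules whose intersection back with the weight space is unchanged; this is the content of the cited lemma, and with the explicit triangular decomposition \eqref{eq:zktridec} at hand it should pose no real obstacle. The rest is an application of Schur, so the proof will be very short.
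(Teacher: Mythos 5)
Your proof is correct and follows essentially the same route as the paper, which simply cites \cite[Lem.~3.4.2]{MazLec10} for the simplicity of weight spaces as $\ck$-modules and then invokes the commutativity of $\ck$. Your unpacking of the cited lemma via the triangular decomposition \eqref{eq:zktridec} (weight-preserving \pbw\ monomials lie in $\ck$) and the explicit Schur-type step over $\CC$ are exactly the details left implicit there, so nothing further is needed.
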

\noindent To understand these weight spaces, one therefore needs to know the eigenvalues of $J$, $L$ and $G^+G^-$ on a given simple weight $\zk$-module.  The latter will vary with the weight $(j,\Delta)$ in general, so it is convenient to note that we may replace $G^+G^-$ by a central element of $\zk$, something like a Casimir operator, whose eigenvalue is therefore constant.
\begin{lemma} \label{lem:idcent}
	The element
	\begin{equation} \label{eq:DefCas}
		\Omega = G^+ G^- + G^- G^+ + 2J^3 + J - 2J \brac*{(\kk+3)L + \frac{1}{8}(\kk+1)(2\kk+3)\wun}
	\end{equation}
	is central in $\zk$ and we have $\zconjsymb(\Omega) = -\Omega$ and $\ck = \CC[J,L,\Omega]$.
\end{lemma}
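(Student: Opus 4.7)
The plan is to establish the three claims in the lemma: the conjugation identity $\zconjsymb(\Omega)=-\Omega$, centrality of $\Omega$ in $\zk$, and the equality $\ck=\CC[J,L,\Omega]$. I would take these in this order, because the conjugation identity can then be used to halve the centrality computation. For the first, I apply \eqref{eq:defzconj} term by term: $G^+G^-+G^-G^+$ is sent to $(-G^-G^+)+(-G^+G^-)=-(G^+G^-+G^-G^+)$, while each of $2J^3$, $J$, and $-2J\brac*{(\kk+3)L+\tfrac{1}{8}(\kk+1)(2\kk+3)\wun}$ flips sign because $\zconjsymb(J)=-J$ and $L$, $\wun$ are $\zconjsymb$-fixed. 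Every summand of $\Omega$ is thus antifixed by $\zconjsymb$.

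For centrality, $L$ is central by definition of $\zk$, and $[J,\Omega]=0$ is immediate because every summand has $J$-weight zero: the terms lying in $\CC[J,L]$ obviously commute with $J$, and $[J,G^+G^-+G^-G^+]=G^+G^--G^+G^--G^-G^++G^-G^+=0$ follows from $[J,G^\pm]=\pm G^\pm$. The real work is $[G^+,\Omega]=0$, which I would verify by expanding term by term using $[G^+,G^-]=f_\kk(J,L)$ together with the rule $G^+J^n=(J-\wun)^nG^+$ to push $J$-factors past $G^+$. The coefficients in \eqref{eq:DefCas} have been engineered precisely so that the resulting contributions proportional to $J^2G^+$, $JG^+$, $LG^+$, and $G^+$ cancel in pairs; this explicit (if unenlightening) cancellation is the one substantial computation in the proof, and constitutes the main obstacle. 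Once $[G^+,\Omega]=0$ is in hand, applying $\zconjsymb$ and invoking the conjugation identity immediately yields $[G^-,\Omega]=0$, completing centrality.

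Finally, to identify $\ck$, I would invoke the previous lemma which gives $\ck=\CC[J,L,G^+G^-]$, and reduce to showing $\CC[J,L,G^+G^-]=\CC[J,L,\Omega]$. Substituting $G^-G^+=G^+G^--f_\kk(J,L)$ into the definition \eqref{eq:DefCas} gives
\begin{equation}
2G^+G^-=\Omega+f_\kk(J,L)-2J^3-J+2J\brac*{(\kk+3)L+\tfrac{1}{8}(\kk+1)(2\kk+3)\wun},
\end{equation}
so $G^+G^-\in\CC[J,L,\Omega]$; the reverse inclusion $\Omega\in\CC[J,L,G^+G^-]$ is clear directly from the definition. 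Hence the two subalgebras coincide, which completes the proof.
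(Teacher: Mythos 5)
Your proposal is correct and is essentially the paper's own argument: the paper likewise deduces $\comm{\Omega}{G^-}=0$ from $\comm{\Omega}{G^+}=0$ via $\zconjsymb(\Omega)=-\Omega$ and identifies $\ck$ through the preceding lemma, the only (cosmetic) difference being that it arrives at \eqref{eq:DefCas} constructively, by adding counterterms to $G^+G^-$ to cancel $\comm{G^+G^-}{G^+}=-G^+ f_{\kk}(J,L)$ and then symmetrising, rather than verifying the stated formula directly. The computation you deferred does close as claimed: using $J^n G^+ = G^+ (J+\wun)^n$ one gets $\comm{G^+G^-+G^-G^+}{G^+} = -G^+\brac*{f_{\kk}(J,L)+f_{\kk}(J+\wun,L)}$, which is exactly cancelled by the commutator of $2J^3+J-2J\brac*{(\kk+3)L+\frac{1}{8}(\kk+1)(2\kk+3)\wun}$ with $G^+$, both being $\pm G^+\brac*{6J^2+6J+3\wun-2(\kk+3)L-\frac{1}{4}(\kk+1)(2\kk+3)\wun}$.
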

\begin{proof}
	We start by noting that
	\begin{equation}
		\comm{G^+G^-}{G^+} = -G^+ f_{\kk}(J,L) = -G^+ \brac*{3J^2 - (\kk+3)L - \tfrac{1}{8}(\kk+1)(2\kk+3)\wun}.
	\end{equation}
	Since $\comm{J^n}{G^+} = G^+ \brac*{(J+\wun)^n - J^n}$, we can cancel the terms appearing on the \rhs\ (starting with $3J^2$) by adding counterterms to $G^+G^-$.  In this way, we arrive at an element $\widetilde{\Omega} \in \zk$ that commutes with $J$, $G^+$ and $L$:
	\begin{equation}
		\widetilde{\Omega} = G^+G^- + J^3 - \frac{3}{2} J^2 + \frac{1}{2} J - J \brac*{(\kk+3)L + \frac{1}{8}(\kk+1)(2\kk+3)\wun}.
	\end{equation}
	By using $G^+G^- = G^-G^+ + f_{\kk}(J,L)$, we obtain a second expression for $\widetilde{\Omega}$.  Adding the two expressions, we see that
	\begin{equation}
		\Omega = 2 \widetilde{\Omega} + (\kk+3)L + \frac{1}{8}(\kk+1)(2\kk+3)\wun
	\end{equation}
	also commutes with $J$, $G^+$ and $L$.  But, the explicit form \eqref{eq:DefCas} shows that it also commutes with $G^-$ because the conjugation automorphism \eqref{eq:defzconj} gives $\zconjsymb(\Omega) = -\Omega$.
\end{proof}

By \eqref{eq:DefCas}, the eigenvalue of $\Omega$ on a \hwv\ ($+$) or \lwv\ ($-$) of weight $(j,\Delta)$ is given by
\begin{equation} \label{eq:defomegapm}
	\omega^{\pm}_{j,\Delta} = (2j\pm1) \brac*{j(j\pm1) - (\kk+3) \Delta - \frac{1}{8} (\kk+1) (2\kk+3)}.
\end{equation}
These eigenvalues satisfy the following relations:
\begin{equation} \label{eq:omegasymm}
	\omega^-_{-j,\Delta} = -\omega^+_{j,\Delta} = \omega^+_{-j-1,\Delta}.
\end{equation}
We note that the first equality is consistent with conjugation.

We now construct dense $\zk$-modules by induction.  Let $\CC_{j,\Delta,\omega}$ be a one-dimensional $\ck$-module, spanned by $v$, on which we have $Jv=jv$, $Lv=\Delta v$ and $\Omega v = \omega v$, for some $j,\Delta,\omega \in \CC$.  Define the induced module $\zrhw{j,\Delta,\omega} = \zk \otimes_{\ck} \CC_{j,\Delta,\omega}$ and note that a basis of $\zrhw{j,\Delta,\omega}$ is given by $v = \wun \otimes v$ and the $(G^{\pm})^n v$ with $n \in \ZZ_{\ge1}$.  The weights therefore coincide with $[j] \times \set{\Delta}$ and so $\zrhw{j,\Delta,\omega}$ is a dense $\zk$-module generated by $v$.

\begin{proposition} \label{prop:zkdense}
	\leavevmode
	\begin{itemize}
		\item For each $n \in \NN$, $(G^-)^{n+1} v$ is a \hwv\ of $\zrhw{j,\Delta,\omega}$ if and only if $\omega = \omega^+_{j-n-1,\Delta}$.
		\item For each $n \in \NN$, $(G^+)^{n+1} v$ is a \lwv\ of $\zrhw{j,\Delta,\omega}$ if and only if $\omega = \omega^-_{j+n+1,\Delta}$.
		\item The dense $\zk$-module $\zrhw{j,\Delta,\omega}$ is simple if and only if $\omega \ne \omega^+_{i,\Delta}$ (equivalently $\omega \ne \omega^-_{i,\Delta}$) for any $i \in [j]$.
		\item $\zrhw{j,\Delta,\omega}$ has at most four composition factors.  If it is not simple, then one composition factor is \infdim\ \hw\ and another is \infdim\ \lw; any other composition factors are \fdim.
	\end{itemize}
\end{proposition}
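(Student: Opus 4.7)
The plan is to exploit that $\zrhw{j,\Delta,\omega}$ has one-dimensional weight spaces. A PBW argument applied to $\zk = \CC[G^-] \otimes \ck \otimes \CC[G^+]$, together with the observation that $G^+G^-, G^-G^+ \in \ck$, shows that the scalar action of $\ck$ on the generator $v$ forces the basis $\{w_m : m \in \ZZ\}$ with $w_0 = v$, $w_m = (G^+)^m v$ for $m > 0$ and $w_{-m} = (G^-)^m v$ for $m > 0$. Writing $G^+ w_m = \gamma_m w_{m+1}$ and $G^- w_m = \beta_m w_{m-1}$, the definitions immediately give $\gamma_m = 1$ for $m \ge 0$ and $\beta_m = 1$ for $m \le 0$, so the main task is to compute the remaining scalars.

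To that end, I would apply $\Omega$ to $w_m$ and use \eqref{eq:DefCas} to obtain the relation
\begin{equation*}
    \beta_m \gamma_{m-1} + \gamma_m \beta_{m+1} = \omega - g(j+m,\Delta), \quad \text{where } g(x,\Delta) = 2x^3 + x - 2x\bigl((\kk+3)\Delta + \tfrac{1}{8}(\kk+1)(2\kk+3)\bigr).
\end{equation*}
The commutator \eqref{cr:zk} supplies the complementary relation $\beta_m \gamma_{m-1} - \gamma_m \beta_{m+1} = f_{\kk}(j+m,\Delta)$, and solving this pair together with the boundary values yields closed formulae for all $\beta_m$ and $\gamma_m$. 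Inspection of when $\gamma_{-n-1}$ or $\beta_{n+1}$ vanishes matches the claimed identities $\omega = \omega^+_{j-n-1,\Delta}$ and $\omega = \omega^-_{j+n+1,\Delta}$, respectively, after invoking the identity $\omega^-_{i,\Delta} = \omega^+_{i-1,\Delta}$ implicit in \eqref{eq:omegasymm}. This settles the first two bullets.

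For the third bullet, one-dimensionality of weight spaces forces any proper nonzero submodule to be $\vspn\{w_m : m \in S\}$ for some proper $S \subsetneq \ZZ$. Closure under $G^{\pm}$ then forces every boundary of $S$ to arise from a vanishing $\gamma_m$ (yielding a \hwv) or a vanishing $\beta_m$ (yielding a \lwv). Thus $\zrhw{j,\Delta,\omega}$ is simple if and only if none of the $\beta_m$ or $\gamma_m$ vanishes, which by the first two bullets and \eqref{eq:omegasymm} is equivalent to $\omega \ne \omega^+_{i,\Delta}$ for every $i \in [j]$.

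The fourth bullet will follow from a barrier analysis. The indices $m$ at which $\gamma_m$ or $\beta_m$ vanishes are in bijection with the roots of the cubic $x \mapsto \omega^+_{x,\Delta} - \omega$ lying in $[j]$, so there are at most three of them. The smallest such barrier cleaves off an \infdim\ \hw\ submodule, while the largest corresponds to an \infdim\ \lw\ top quotient (a new \lwv\ appears at the bottom of this quotient because the lowered image has been collapsed into a smaller submodule). Any intermediate barriers produce \fdim\ composition subquotients, so the total composition length is at most four. The principal bookkeeping obstacle is enumerating the distributions of the up-to-three barriers between the \hw\ side $\{i < j\}$ and the \lw\ side $\{i \ge j\}$, but the cubic bound keeps this case analysis finite and explicit.
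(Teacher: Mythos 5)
Your argument is correct and follows essentially the same route as the paper: the highest-/lowest-weight criteria come from combining the $\Omega$-eigenvalue \eqref{eq:DefCas} with the commutator $\comm{G^+}{G^-}=f_{\kk}(J,L)$ (which the paper compresses into a ``straightforward calculation using \eqref{eq:omegasymm}''), simplicity is reduced to the absence of such vectors via the one-dimensional weight spaces, and the bound of four composition factors comes from the cubic $\omega^+_{i,\Delta}-\omega$ having at most three roots in $[j]$. The only cosmetic quibble is in your last paragraph: whether the extreme \infdim\ factor sits as a submodule or as a quotient depends on which of $\gamma_m$ or $\beta_{m+1}$ vanishes at the outermost broken bond, but this has no bearing on the composition-factor statement being proved.
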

\begin{proof}
	The existence criteria for highest- and \lwvs\ is straightforward calculation using \eqref{eq:omegasymm}.  The simplicity of $\zrhw{j,\Delta,\omega}$ is equivalent to the absence of highest- and \lwvs.  However, $\omega \ne \omega^-_{j-n,\Delta}$ for all $n \in \NN$ implies that $\omega \ne \omega^+_{j-n-1,\Delta}$ for all $n \in \NN$, by \eqref{eq:omegasymm}.  Combining with $\omega \ne \omega^+_{j+n,\Delta}$ for all $n \in \NN$, we get the desired condition.  The statements about composition factors now follow from the fact that $\omega - \omega^{\pm}_{i,\Delta}$ is a cubic polynomial in $i$, so it can have at most three roots $i \in [j]$.
\end{proof}
\noindent It follows from this \lcnamecref{prop:zkdense} that we have isomorphisms $\zrhw{j,\Delta,\omega} \cong \zrhw{j+1,\Delta,\omega}$ when these modules are simple.  We shall therefore denote these simple dense $\zk$-modules by $\zrhw{[j],\Delta,\omega}$, where $[j] \in \CC/\ZZ$.

\begin{theorem} \label{thm:classzkmod}
	Every simple weight $\zk$-module is isomorphic to one of the modules in the following list of pairwise-inequivalent modules:
	\begin{itemize}
		\item The \fdim\ \hwms\ $\zihw{j,\Delta}$ with $j,\Delta \in \CC$ such that $h_{\kk}^n(j,\Delta) = 0$ for some $n \in \ZZ_{\ge1}$.
		\item The \infdim\ \hwms\ $\zihw{j,\Delta} = \zvhw{j,\Delta}$ with $j,\Delta \in \CC$ such that $h_{\kk}^n(j,\Delta) \ne 0$ for all $n \in \ZZ_{\ge1}$.
		\item The \infdim\ \lwms\ $\zconjmod{\zihw{j,\Delta}} = \zconjmod{\zvhw{j,\Delta}}$ with $j,\Delta \in \CC$ such that $h_{\kk}^n(j,\Delta) \ne 0$ for all $n \in \ZZ_{\ge1}$.
		\item The \infdim\ dense modules $\zrhw{[j],\Delta,\omega}$ with $[j] \in \CC/\ZZ$ and $\Delta,\omega \in \CC$ such that $\omega \ne \omega^+_{i,\Delta}$ for any $i \in [j]$.
	\end{itemize}
\end{theorem}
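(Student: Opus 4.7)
The plan is to reduce the classification to the structural results already established: that simple weight $\zk$-modules have one-dimensional weight spaces (\cref{prop:zkwtsp}), that Verma modules are controlled by the polynomial $h_\kk^n$ (\cref{prop:hwzkmod}), and that dense induced modules are controlled by the Casimir $\Omega$ (\cref{prop:zkdense}). Let $\Mod{M}$ be a simple weight $\zk$-module. Since $L$ is central, it acts as a single scalar $\Delta$ on $\Mod{M}$, and since $\comm{J}{G^\pm} = \pm G^\pm$, the map $G^\pm$ shifts the $J$-eigenvalue by $\pm 1$. Consequently the support of $\Mod{M}$ lies in $[j] \times \set{\Delta}$ for some coset $[j] \in \CC/\ZZ$, and by \cref{prop:zkwtsp} every weight space is one-dimensional.

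I then split on whether $\Mod{M}$ contains a highest- and/or \lwv. If $\Mod{M}$ has a \hwv\ of weight $(j,\Delta)$, then by simplicity $\Mod{M}$ is a quotient of $\zvhw{j,\Delta}$, hence $\Mod{M} \cong \zihw{j,\Delta}$; \cref{prop:hwzkmod} then places $\Mod{M}$ in the first family (if additionally $\Mod{M}$ has a \lwv, equivalently is \fdim) or in the second family (otherwise). Dually, if $\Mod{M}$ has only a \lwv, applying $\zconjsymb$ yields a simple \hwm\ whose Verma module is simple, so $\Mod{M} \cong \zconjmod{\zvhw{j,\Delta}}$ lies in the third family. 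The remaining case is when $\Mod{M}$ contains neither a \hwv\ nor a \lwv, which forces $\Mod{M}$ to be dense. Here, by \cref{lem:idcent}, $\Omega$ is central in $\zk$, so it acts by a scalar $\omega$ on $\Mod{M}$ (either by Schur's lemma, or directly from one-dimensionality of weight spaces). Picking any nonzero weight vector $v$ of weight $(j,\Delta)$, the universal property of induction supplies a surjection $\zrhw{j,\Delta,\omega} \sra \Mod{M}$. Since $\Mod{M}$ has no highest- or \lwv, its kernel is generated by none, so by \cref{prop:zkdense} the induced module $\zrhw{j,\Delta,\omega}$ is itself simple, forcing $\Mod{M} \cong \zrhw{[j],\Delta,\omega}$ with $\omega \ne \omega^+_{i,\Delta}$ for any $i \in [j]$.

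It remains to verify pairwise inequivalence. Modules in different families are distinguished by intrinsic features: finite- versus \infdim\ support, presence of a \hwv, presence of a \lwv, and boundedness of the support in the $J$-direction. Within each family, the parameters are recovered from the module: $\Delta$ is the $L$-eigenvalue, $j$ is the highest (respectively lowest) $J$-eigenvalue for families one through three, while for the dense family $[j]$ is the coset supporting $\Mod{M}$ and $\omega$ is the scalar by which $\Omega$ acts. The main obstacle I anticipate is the dense case, specifically ensuring that $\Omega$ does indeed act by a global scalar and that the resulting induced module is simple; both reduce to known results (Schur's lemma via one-dimensional weight spaces, and \cref{prop:zkdense}), so the classification follows.
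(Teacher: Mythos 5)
Your proposal is correct and follows essentially the same route as the paper's proof: split according to the presence of highest- or lowest-weight vectors, settle those cases with \cref{prop:hwzkmod} and conjugation, and handle the dense case by inducing from a one-dimensional weight space (\cref{prop:zkwtsp}, \cref{lem:idcent}) and appealing to \cref{prop:zkdense}. The only loose point is the sentence ``since $\Mod{M}$ has no highest- or \lwv, its kernel is generated by none'': the absence of such vectors in $\Mod{M}$ says nothing about the kernel of $\zrhw{j,\Delta,\omega} \sra \Mod{M}$, since any highest- or \lwvs\ of $\zrhw{j,\Delta,\omega}$ lying in the kernel simply map to zero. The correct justification is either the paper's (implicit) one --- $G^{\pm}$ act injectively on $v$ because $\Mod{M}$ has no \hwv\ or \lwv, so every weight space of $\Mod{M}$ is nonzero and one-dimensional and the weight-preserving surjection is therefore injective --- or the observation that a nonsimple $\zrhw{j,\Delta,\omega}$ has only \hw, \lw\ or \fdim\ composition factors (the last bullet of \cref{prop:zkdense}), so it cannot have a dense simple quotient; either patch is one line and uses only results you already cite.
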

\begin{proof}
	The classification was already completed after \cref{prop:hwzkmod} for the first three cases, that is when the simple weight module has either a highest- or \lw\ (or both).  If the simple weight module has no highest- or \lw, choose an arbitrary weight space.  This is a simple $\ck$-module, hence it is one-dimensional (\cref{prop:zkwtsp}) and spanned by $v$ say.  As there are no highest- or \lwvs, $G^+$ and $G^-$ act freely on $v$ and so the simple weight module is dense and so isomorphic to one of the $\zrhw{[j],\Delta,\omega}$ in the list.
\end{proof}

As in the untwisted case, the fact that $\twzhu{\ubpvoak}$ is a quotient of $\zk$ means that every simple $\twzhu{\ubpvoak}$-module is also simple as a $\zk$-module.  \cref{thm:twzhubij} then guarantees that every simple weight $\twzhu{\ubpvoak}$-module $\zversion{\Mod{M}}$ corresponds to a simple twisted \rhw\ $\ubpvoak$-module $\Mod{M} = \twind{\zversion{\Mod{M}}}$ which is uniquely determined (up to isomorphism) by the fact that its top space is isomorphic to $\zversion{\Mod{M}}$ (as a $\zk$-module).

\begin{theorem} \label{thm:classtwrhw}
	Every simple twisted \rhw\ $\ubpvoak$-module, and hence every simple twisted \rhw\ $\bpminmoduv$-module, is isomorphic to one of the modules in the following list of pairwise-inequivalent modules:
	\begin{itemize}
		\item The \hwms\ $\twihw{j,\Delta}$ with $j,\Delta \in \CC$ such that $h_{\kk}^n(j,\Delta) = 0$ for some $n \in \ZZ_{\ge1}$.
		\item The \hwms\ $\twihw{j,\Delta} = \twvhw{j,\Delta}$ with $j,\Delta \in \CC$ such that $h_{\kk}^n(j,\Delta) \ne 0$ for all $n \in \ZZ_{\ge1}$.
		\item The conjugate \hwms\ $\conjmod{\twihw{j,\Delta}} = \conjmod{\twvhw{j,\Delta}}$ with $j,\Delta \in \CC$ such that $h_{\kk}^n(j,\Delta) \ne 0$ for all $n \in \ZZ_{\ge1}$.
		\item The \rhwms\ $\twrhw{[j],\Delta,\omega}$ with $[j] \in \CC/\ZZ$ and $\Delta,\omega \in \CC$ such that $\omega \ne \omega^+_{i,\Delta}$ for all $i \in [j]$.
	\end{itemize}
\end{theorem}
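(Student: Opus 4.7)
The plan is to combine the bijection from \cref{thm:twzhubij} with the $\zk$-module classification in \cref{thm:classzkmod}, essentially reading the theorem off as a transported copy. First I would recall that, for $\ubpvoak$ (or any of its quotients $\bpminmoduv$), a twisted weight module is lower-bounded exactly when its top space is nonzero, and a nonzero weight vector in the top space is automatically a \rhwv\ because $\twmodealg_>'$ strictly raises $L_0$-eigenvalues while preserving the weight-space decomposition. Thus the simple twisted \rhwms\ coincide with the simple lower-bounded twisted weight $\ubpvoak$-modules, so by \cref{thm:twzhubij} they are in bijection with the simple weight $\twzhu{\ubpvoak}$-modules via $\twzhu{\blank}$ and $\twind{\blank}$.

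Next I would use \cref{prop:twzhu}, which identifies $\twzhu{\ubpvoak}$ as a quotient of $\zk$; pullback along this surjection identifies the simple weight $\twzhu{\ubpvoak}$-modules with a subclass of the simple weight $\zk$-modules. \cref{thm:classzkmod} already lists all simple weight $\zk$-modules in four families (finite-dimensional \hw, infinite-dimensional \hw\ Verma, their conjugates, and the dense modules $\zrhw{[j],\Delta,\omega}$), and these are pairwise inequivalent as $\zk$-modules, hence also as $\twzhu{\ubpvoak}$-modules.

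I would then apply $\twind{\blank}$ family by family. The induced modules inherit their top spaces from the $\zk$-classification (by the isomorphism $\twzhu{\twind{\Mod{N}}} \cong \Mod{N}$ in \cref{thm:twzhubij}), so the naming is dictated by the action of $G^+_0$ and $G^-_0$ on the top: if the top space is \hw\ in the $\zk$-sense (the top has a vector killed by $G^+_0$, generating it under $G^-_0$), the induced module is by definition a twisted \hw\ $\ubpvoak$-module, giving the modules $\twihw{j,\Delta}$ (finite-dimensional top) and $\twihw{j,\Delta} = \twvhw{j,\Delta}$ (Verma top); if the top is \lw, we get the conjugates $\conjmod{\twihw{j,\Delta}} = \conjmod{\twvhw{j,\Delta}}$; and a dense top gives precisely $\twrhw{[j],\Delta,\omega}$.

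The only step that needs a little care is the pairwise inequivalence claim: within the induced side it follows immediately from the Zhu bijection once one checks that the four types of $\zk$-modules above never coincide, which is built into \cref{thm:classzkmod} (the existence of a \hwv\ versus a \lwv\ versus neither distinguishes the classes, and within each class the parameters $(j,\Delta)$ or $([j],\Delta,\omega)$ uniquely label the isomorphism type). I do not anticipate a genuine obstacle here; the substance of the argument was already done in establishing \cref{prop:twzhu,thm:classzkmod,thm:twzhubij}, and this \lcnamecref{thm:classtwrhw} is essentially a bookkeeping step that transports that classification across the twisted Zhu correspondence.
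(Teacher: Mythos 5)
Your proposal is correct and follows essentially the same route as the paper, which offers no separate proof but derives the theorem exactly as you do: simple twisted \rhwms\ are the simple lower-bounded twisted weight modules, these correspond under $\twzhu{\blank}$ and $\twind{\blank}$ (\cref{thm:twzhubij}) to simple weight $\twzhu{\ubpvoak}$-modules, which by \cref{prop:twzhu} are simple weight $\zk$-modules and hence appear in the list of \cref{thm:classzkmod}, with pairwise inequivalence transported through the bijection via the top spaces. No gaps; this is the intended bookkeeping argument.
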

\noindent Again, we remark that spectral flow will allow us to construct simple twisted weight $\ubpvoak$-modules that are not \rhw, in general.

\subsection{Coherent families} \label{sec:cohfam}

A crucial observation of Mathieu \cite{MatCla00} concerning simple dense $\alg{g}$-modules, for $\alg{g}$ a simple Lie algebra, is that they may be naturally arranged into coherent families.  Here, we extend this observation to dense $\zk$-modules in preparation for showing that it also extends to $\twzhu{\bpminmoduv}$-modules.  While Mathieu's general results rely heavily on the properties of his twisted localisation functors, our discussion of this simple case will be quite elementary.

\begin{definition}
	A \emph{coherent family of $\zk$-modules} is a weight module $\zcfam{}$ for which:
	\begin{itemize}
		\item $L$ and $\Omega$ act as multiples, $\Delta$ and $\omega$ respectively, of the identity on $\zcfam{}$.
		\item There exists $d \in \NN$ such that for all $j \in \CC$, the dimension of the weight space $\zcfam{}(j,\Delta)$ of weight $(j,\Delta)$ is $d$.
		\item For each $U \in \ck$, the function taking $j \in \CC$ to $\tr_{\zcfam{}(j,\Delta)} U$ is polynomial in $j$.
	\end{itemize}
\end{definition}

Coherent families are highly decomposable.  Indeed, a coherent family of $\zk$-modules necessarily has the form
\begin{equation}
	\zcfam{} = \bigoplus_{[j] \in \CC/\ZZ} \zcfam{[j]}.
\end{equation}
If all of the $\zcfam{[j]}$ are semisimple as $\zk$-modules, then $\zcfam{}$ is said to be \emph{semisimple}.  If any of the $\zcfam{[j]}$ are simple as $\zk$-modules, then $\zcfam{}$ is said to be \emph{irreducible}.  It follows immediately from \cref{prop:zkwtsp} that the common dimension $d$ of the weight spaces of an irreducible coherent family of $\zk$-modules is $1$.

We would like to form a coherent family of $\zk$-modules by summing over some collection of dense modules $\zrhw{[j],\Delta,\omega}$, $[j] \in \CC/\ZZ$, whilst holding $\Delta$ and $\omega$ fixed.  However, this is mildly ambiguous because there will always be at least one $[j]$ (generically three) for which the corresponding element in the collection will not be simple and so we should then specify precisely which module we mean.  For such $j$, we shall specify this in three distinct ways (though there are others).
\begin{itemize}
	\item The first is to define $\zrhw{[j],\Delta,\omega}$ to be $\semi{\zrhw{j,\Delta,\omega}}$, where the \emph{semisimplification} $\semi{\Mod{M}}$ of a (finite-length) module $\Mod{M}$ is the direct sum of its composition factors.  This is well defined as $\semi{\zrhw{j,\Delta,\omega}} \cong \semi{\zrhw{j+1,\Delta,\omega}}$.
	\item An alternative is to define $\zrhw{[j],\Delta,\omega}$ to be $\zprhw{[j],\Delta,\omega} = \zrhw{j^+,\Delta,\omega}$, where we choose $j^+ \in [j]$ to have smaller real part than those of the solutions $i \in [j]$ of $\omega = \omega^+_{i,\Delta}$.  This ensures that $\zprhw{[j],\Delta,\omega}$ has no \hwvs.
	\item We may instead define $\zrhw{[j],\Delta,\omega}$ to be $\zmrhw{[j],\Delta,\omega} = \zrhw{j^-,\Delta,\omega}$, where we choose $j^- \in [j]$ to have larger real part than those of the solutions $i \in [j]$ of $\omega = \omega^-_{i,\Delta}$.  This ensures that $\zmrhw{[j],\Delta,\omega}$ has no \lwvs.
\end{itemize}

For each of the three choices above, we take the direct sum of the $\zrhw{[j],\Delta,\omega}$ over $[j] \in \CC/\ZZ$.  The result is easily verified to be an irreducible coherent family of $\zk$-modules.  It will be denoted by $\zsscfam{\Delta,\omega}$, $\zpcfam{\Delta,\omega}$ or $\zmcfam{\Delta,\omega}$, respectively.  The first is semisimple, whilst the second is nonsemisimple with $G^+$ acting injectively and the third is nonsemisimple with $G^-$ acting injectively.  It is easy to check that the conjugates of these irreducible coherent families are
\begin{equation} \label{eq:conjcfam}
	\zconjmod{\zsscfam{\Delta,\omega}} \cong \zsscfam{\Delta,-\omega}, \quad
	\zconjmod{\zpcfam{\Delta,\omega}} \cong \zmcfam{\Delta,-\omega} \quad \text{and} \quad
	\zconjmod{\zmcfam{\Delta,\omega}} \cong \zpcfam{\Delta,-\omega}.
\end{equation}

For classifying simple $\bpminmoduv$-modules, the semisimple coherent families $\zsscfam{\Delta,\omega}$ are most suitable.  Note that $\zsscfam{\Delta,\omega}$ is the unique irreducible semisimple coherent family of $\zk$-modules on which $L$ acts as multiplication by $\Delta$ and $\Omega$ acts as multiplication by $\omega$, up to isomorphism.  We shall return to $\zpcfam{\Delta,\omega}$ and $\zmcfam{\Delta,\omega}$ in \cref{sec:indec} when considering the existence of nonsemisimple $\bpminmoduv$-modules.

\begin{proposition} \label{prop:cfamuniv}
	\leavevmode
	\begin{itemize}
		\item Every simple weight $\zk$-module embeds into a unique irreducible semisimple coherent family.
		\item Every irreducible semisimple coherent family of $\zk$-modules contains an \infdim\ \hw\ submodule.
	\end{itemize}
\end{proposition}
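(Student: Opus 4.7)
My plan is to reduce both parts to the composition-factor structure of dense modules described in \cref{prop:zkdense}, combined with the classification \cref{thm:classzkmod} of simple weight $\zk$-modules. The key observation is that since $\zsscfam{\Delta,\omega} = \bigoplus_{[j] \in \CC/\ZZ} \semi{\zrhw{[j],\Delta,\omega}}$, any simple weight $\zk$-module which arises as a composition factor of some dense module $\zrhw{[j_0],\Delta,\omega}$ is automatically a direct summand of $\zsscfam{[j_0]}$ and hence embeds into $\zsscfam{\Delta,\omega}$.

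For part (1), let $\Mod{M}$ be a simple weight $\zk$-module. Since $L$ is central by definition and $\Omega$ is central by \cref{lem:idcent}, both act on $\Mod{M}$ as scalars $\Delta$ and $\omega$. I would then handle the four cases of \cref{thm:classzkmod} in turn: a dense $\Mod{M} \cong \zrhw{[j_0],\Delta,\omega}$ is itself a coset summand of $\zsscfam{\Delta,\omega}$; a highest-weight $\Mod{M} \cong \zihw{j,\Delta}$, whether finite- or infinite-dimensional, forces $\omega = \omega^+_{j,\Delta}$, and by the $n=0$ case of \cref{prop:zkdense} the vector $G^-v \in \zrhw{j+1,\Delta,\omega}$ is a highest-weight vector of weight $(j,\Delta)$, so the submodule it generates has unique simple quotient $\zihw{j,\Delta}$, exhibiting $\zihw{j,\Delta}$ as a composition factor of $\zrhw{j+1,\Delta,\omega}$; the conjugate case $\Mod{M} \cong \zconjmod{\zihw{j,\Delta}}$ follows by applying $\zconjsymb$ to the preceding embedding and using \cref{eq:conjcfam}. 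Uniqueness is immediate: any irreducible semisimple coherent family containing $\Mod{M}$ must have the same scalar actions of $L$ and $\Omega$, and such a family is unique up to isomorphism by the characterisation recorded just after \cref{eq:conjcfam}.

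For part (2), the equation $\omega = \omega^+_{i,\Delta}$ is cubic in $i$ and therefore admits a complex root $j_0$; then $\zrhw{[j_0],\Delta,\omega}$ is non-simple by \cref{prop:zkdense}, and that same proposition guarantees an infinite-dimensional highest-weight composition factor, which appears as a direct summand of $\zsscfam{[j_0]}$ and hence provides the desired infinite-dimensional highest-weight submodule of $\zsscfam{\Delta,\omega}$. The main point needing care is confirming that a \emph{finite}-dimensional $\zihw{j,\Delta}$ genuinely appears as a composition factor of some dense module in part (1); the uniform construction using $G^- v \in \zrhw{j+1,\Delta,\omega^+_{j,\Delta}}$ handles both the finite- and infinite-dimensional highest-weight cases together, so I do not expect any serious obstacle beyond careful case bookkeeping.
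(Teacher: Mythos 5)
Your proposal is correct and follows essentially the same route as the paper: a case analysis via \cref{thm:classzkmod}, embeddings obtained from (semisimplifications of) the dense modules $\zrhw{[j],\Delta,\omega}$ using \cref{prop:zkdense}, conjugation together with \eqref{eq:conjcfam} for the lowest-weight case, uniqueness from the scalar actions of $L$ and $\Omega$, and the cubic $\omega-\omega^+_{i,\Delta}$ for the second statement. The only cosmetic difference is that you exhibit $\zihw{j,\Delta}$ as a composition factor of $\zrhw{j+1,\Delta,\omega^+_{j,\Delta}}$ via the highest-weight vector $G^-v$, whereas the paper locates a highest-weight vector of weight $(j,\Delta)$ directly in $\semi{\zrhw{j,\Delta,\omega^+_{j,\Delta}}}$.
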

\begin{proof}
	By \cref{thm:classzkmod}, a simple dense $\zk$-module $\Mod{M}$ is isomorphic to some $\zrhw{[j],\Delta,\omega}$, where $[j] \in \CC/\ZZ$ and $\Delta,\omega \in \CC$ satisfy $\omega \ne \omega^+_{i,\Delta}$ for any $i \in [j]$.  As $\semi{\zrhw{[j],\Delta,\omega}} = \zrhw{[j],\Delta,\omega}$, we have an embedding $\Mod{M} \ira \zsscfam{\Delta,\omega}$.  The target is obviously unique, up to isomorphism, since no other irreducible semisimple coherent family has the correct $L$- and $\Omega$-eigenvalues.

	A simple \hw\ $\zk$-module $\Mod{M}$ is isomorphic to $\zihw{j,\Delta}$, for some $j,\Delta \in \CC$.  Take $\omega = \omega^+_{j,\Delta}$, so that $\zrhw{j,\Delta,\omega}$ is not simple and there is a \hwv\ of weight $(j,\Delta)$ in $\semi{\zrhw{j,\Delta,\omega}}$, by \cref{prop:zkdense}.  This vector generates a copy of $\zihw{j,\Delta}$, so we again have an embedding $\Mod{M} \ira \zsscfam{\Delta,\omega}$ with unique target.

	Finally, if $\Mod{M}$ is a simple \lw\ $\zk$-module, then we have an embedding $\zconjmod{\Mod{M}} \ira \zsscfam{\Delta,\omega}$ for some unique $\Delta,\omega \in \CC$.  By \eqref{eq:conjcfam}, we have $\Mod{M} \ira \zsscfam{\Delta,-\omega}$.  This covers all possibilities, by \cref{thm:classzkmod}, so the first statement is established.

	For the second, a given irreducible semisimple coherent family $\zsscfam{\Delta,\omega}$ is uniquely specified by choosing $\Delta,\omega \in \CC$.  As $\omega - \omega^+_{i,\Delta}$ is a cubic polynomial in $i$, there is at least one solution in $\CC$, $i=j$ say.  Then, $\zrhw{j,\Delta,\omega}$ is not simple and has an \infdim\ \hw\ submodule, by \cref{prop:zkdense}, hence so does $\semi{\zrhw{j,\Delta,\omega}} \subset \zsscfam{\Delta,\omega}$.
\end{proof}

\section{Modules of the simple admissible-level \bp\ algebras} \label{sec:sbpmods}

Recall \cite{FreVer92} that if $\VOA{I}$ is an ideal of a \voa\ $\VOA{V}$, then $\zhu{\VOA{V}/\VOA{I}} \cong \zhu{\VOA{V}} / \zhu{\VOA{I}}$.  If $\bpmpik$ denotes the maximal ideal of $\ubpvoak$, then classifying the \rhwms\ of $\sbpvoak = \ubpvoak / \bpmpik$ is then just a matter of classifying those of $\ubpvoak$ and then testing which have Zhu-images annihilated by $\zhu{\bpmpik}$.  The twisted classification then follows, roughly speaking, from spectral flow.  Unfortunately, it is hard to compute $\zhu{\bpmpik}$ in general.

Instead, we shall combine Arakawa's celebrated classification \cite{AraRat16} of the \hwms\ of all simple admissible-level affine \voas\ $\saffvoa{\kk}{\alg{g}}$, specialised to $\alg{g} = \slthree$, with his results \cite{AraRep05} on minimal quantum hamiltonian reduction.  The result will be a classification of the \hwms\ for the \bp\ minimal models from which we will extract the full (twisted and untwisted) \rhw\ classification.

\subsection{Admissible-level $\slthree$ minimal models} \label{sec:sl3}

Recall from \eqref{eq:fraclev} the fractional levels of $\ubpvoak$ and their parametrisation in terms of $\uu$ and $\vv$.  These are also the fractional levels for the affine \voas\ associated to $\slthree$ --- $\uslvoak$ is not simple \cite[Thm.~0.2.1]{GorSim07} when $\kk$ is a fractional level.  For such $\kk$, the simple quotient will be denoted by $\sslvoak = \slminmoduv$.

\begin{definition}
	An \emph{admissible level} $\kk$ for the affine \voas\ associated to $\slthree$, and the \bp\ algebras, is a fractional level for which $\uu\ge3$.
\end{definition}

Every \hwm\ for the affine \km\ algebra $\aslthree$ is a $\uslvoak$-module \cite{FreVer92}.  Let $\slihw{\lambda}$ denote the simple \hw\ $\aslthree$-module of highest weight $\lambda = \lambda_0 \fwt{0} +\lambda_1 \fwt{1} + \lambda_2 \fwt{2}$, where the $\lambda_i$ are the Dynkin labels and the $\fwt{i}$ are the fundamental weights.  To be a level-$\kk$ module, we must have $\lambda_0 + \lambda_1 + \lambda_2 = \kk$.  Let $\pwlat{\ell}$ denote the set of dominant integral level-$\ell$ weights of $\aslthree$, that is the set of weights $\lambda$ satisfying $\lambda_i \in \NN$ and $\lambda_0 + \lambda_1 + \lambda_2 = \ell$.  This set is obviously empty unless $\ell \in \NN$.  Let $\wref{i}$, $i=0,1,2$, denote the Weyl reflection corresponding to the simple root $\sroot{i}$ of $\aslthree$.

The following definition specialises that of \cite{KacMod88} to $\aslthree$ (see also \cite[App.~18.B]{DiFCon97}).
\begin{definition} \label{def:admsl3wt}
	Let $\kk$ be an admissible level.  A level-$\kk$ \emph{admissible weight} $\lambda$ of $\aslthree$ is one of the form
	\begin{equation} \label{eq:admsl3wt}
		\lambda = w \cdot \brac*{\lambda^I - \frac{\uu}{\vv} \lambda^{F,w}},
	\end{equation}
	where $w \in \set{\wun,\wref{1}}$ is a Weyl transformation of $\slthree$, $\cdot$ is the shifted Weyl group action, $\lambda^I \in \pwlat{\uu-3}$, $\lambda^{F,w} \in \pwlat{\vv-1}$ and $\lambda_1^{F,\wref{1}} \ge 1$.  A weight of the form \eqref{eq:admsl3wt} will be called a $w=\wun$ or $w=\wref{1}$ admissible weight according as to which $w$ is used.
\end{definition}
\noindent We remark that one may allow $w$ to range over the full Weyl group, adding appropriate restrictions on the $\lambda^{F,w}$, but this gives no further admissible weights.  In fact, every set of $w=w'$ admissible weights is equal to either the $w=\wun$ or $w=\wref{1}$ sets and, moreover, these two sets are disjoint \cite[Prop.~2.1]{KacCla89}.

Arakawa's \hw\ classification for affine \voas\ now specialises as follows.
\begin{theorem}[\cite{AraRat16}] \label{thm:affhwclass}
	For $\kk$ admissible, the simple level-$\kk$ \hwm\ $\slihw{\lambda}$ is an $\slminmoduv$-module if and only if $\lambda$ is admissible.
\end{theorem}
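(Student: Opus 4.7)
The plan is to deduce the statement as the specialisation to $\alg{g} = \slthree$ of Arakawa's general classification of simple \hwms\ in category $\slocat{\kk}$ over an admissible-level simple affine \voa. The central problem is to identify which simple $\aslthree$-modules $\slihw{\lambda}$ are annihilated by the maximal ideal $\slmpik \subset \uslvoak$, for those are exactly the modules that descend to $\slminmoduv = \sslvoak$.

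First I would recall that at admissible level the vacuum Verma module of $\aslthree$ admits, by Kac--Wakimoto, a distinguished singular vector whose translates under the shifted action of the affine Weyl group generate $\slmpik$. Unlike the integrable-type case $2\kk+3 \in \NN$ handled in \cite{AraRat13}, no closed expression for this singular vector is available at general admissible level, so the annihilation condition on $\slihw{\lambda}$ cannot be tested by direct computation. The strategy instead is to invoke the Kashiwara--Tanisaki localisation equivalence, which converts the annihilation condition into a geometric support condition for a twisted $D$-module on the affine flag variety, controlled entirely by shifted affine Weyl-group combinatorics. This geometric analysis singles out a finite collection of ``exceptional'' highest weights and shows that $\slmpik$ annihilates $\slihw{\lambda}$ precisely when $\lambda$ belongs to this collection.

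The second step is to match this collection with the Kac--Wakimoto admissible weights of \cref{def:admsl3wt}. For $\slthree$ the Weyl group is small enough that one can enumerate the shifted Weyl orbits of admissible weights directly: every admissible $\lambda$ has the form $w \cdot (\lambda^I - \tfrac{\uu}{\vv}\lambda^{F,w})$, and a short case analysis shows that only the two choices $w = \wun$ and $w = \wref{1}$ produce distinct orbits, the disjointness being enforced by the condition $\lambda_1^{F,\wref{1}} \ge 1$. For the ``if'' direction one then verifies that the vacuum $\slminmoduv = \slihw{\kk\fwt{0}}$ itself corresponds to $w = \wun$, $\lambda^I = (\uu-3)\fwt{0}$, $\lambda^F = (\vv-1)\fwt{0}$, so the admissible family is nonempty, and one propagates this through the whole family by exploiting the fact that the admissible modules are stable under the tensor operations available in $\slocat{\kk}$, which is enough to force every admissible $\slihw{\lambda}$ to descend.

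The hard part will be the Kashiwara--Tanisaki input used for the ``only if'' direction. Without this geometric machinery, the absence of an explicit singular-vector formula precludes a direct computation at general admissible levels, which is precisely why the classification cannot be obtained by the elementary singular-vector manipulations that suffice at integrable-like levels.
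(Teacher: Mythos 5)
You should first be aware that this paper does not prove \cref{thm:affhwclass} at all: it is imported verbatim from Arakawa \cite{AraRat16} (with the admissibility condition specialised to $\slthree$ via \cref{def:admsl3wt} and \cite{KacCla89}), and the only completeness statement the authors prove themselves is the downstream surjectivity result \cref{thm:surjection}, whose proof occupies \cref{app:proof}. So there is no ``paper's approach'' to match; the relevant benchmark is Arakawa's proof, which is a deep theorem and not something the present paper attempts to reconstruct.

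Measured against that benchmark, your sketch has genuine gaps in both directions. For the ``if'' direction, the claim that admissibility of $\lambda$ ``propagates through the whole family by exploiting the fact that the admissible modules are stable under the tensor operations available in $\slocat{\kk}$'' is not an argument: being an $\slminmoduv$-module means being annihilated by the maximal ideal $\slmpik \subset \uslvoak$, and there is no tensor structure on $\slocat{\kk}$ at nonintegral admissible level that transports annihilation by $\slmpik$ from the vacuum module $\slihw{\kk\fwt{0}}$ (which is an $\slminmoduv$-module by definition, so nothing is gained there) to an arbitrary admissible $\slihw{\lambda}$. This is exactly the hard content of Arakawa's theorem, and it cannot be waved through. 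For the ``only if'' direction, deferring everything to ``the Kashiwara--Tanisaki localisation equivalence'' and an unspecified ``geometric analysis'' that ``singles out a finite collection of exceptional highest weights'' is a placeholder, not a proof: you neither state the support condition, nor explain why it is governed by the shifted affine Weyl combinatorics, nor why the resulting set coincides with the Kac--Wakimoto admissible weights of \cref{def:admsl3wt}. Finally, a smaller inaccuracy: at admissible level the maximal ideal $\slmpik$ is generated by a \emph{single} \sv\ (this is the paper's own \cref{lem:chi}, via \cite{KacMod88}), not by a family of Weyl-group translates of one; and the unavailability of a closed formula for this \sv\ is precisely why the present paper's appendix argument for \cref{thm:surjection} is delicate, so any honest writeup at the $\slthree$ level would need comparable detail rather than an appeal to machinery by name.
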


Denote by $\qhrfun{\blank}$ the minimal \qhr\ functor \cite{KacQua03} taking $\uslvoak$-modules to $\ubpvoak$-modules, so that $\qhrfun{\uslvoak} = \ubpvoak$.  For definiteness, we take the nilpotent element of $\slthree$ defining this functor to be the negative highest-root vector $\nrvec{\hroot}$.  We assemble some useful results about this functor, specialised to our setting.
\begin{theorem} \label{thm:qhr}
	\leavevmode
	\begin{itemize}
		\item\cite[Thm.~6.3]{KacQua04} If $\slvhw{\lambda}$ denotes the Verma module of $\uslvoak$ with highest weight $\lambda$, then $\qhrfun{\slvhw{\lambda}}$ is isomorphic to the Verma module $\vhw{j,\Delta}$ of $\ubpvoak$ with
		\begin{equation} \label{eq:bphwfromsl3}
			j = \frac{\lambda_1 - \lambda_2}{3} \quad \text{and} \quad
			\Delta = \frac{(\lambda_1-\lambda_2)^2 - 3(\lambda_1+\lambda_2) \brac[\big]{2(\kk+1)-\lambda_1-\lambda_2}}{12(\kk+3)}.
		\end{equation}
		\item\cite[Thm.~6.7.4]{AraRep05} $\qhrfun{\slihw{\lambda}} = 0$ if and only if $\lambda_0 \in \NN$.  For $\lambda_0 \notin \NN$, we have instead $\qhrfun{\slihw{\lambda}} \cong \ihw{j,\Delta}$, where $j$ and $\Delta$ are given by \eqref{eq:bphwfromsl3}.
		\item\cite[Cor.~6.7.3]{AraRep05} The restriction of $\qhrfun{\blank}$ to the category $\slocat{\kk}$ of level-$\kk$ $\aslthree$-modules is exact.
		\item $\qhrfun{\blank}$ induces a surjection from the set of isomorphism classes of simple \hw\ $\uslvoak$-modules to the union of $\set{0}$ and the set of isomorphism classes of simple \hw\ $\ubpvoak$-modules.  Moreover, there are at most two inequivalent $\slihw{\lambda}$ mapping onto the same $\ihw{j,\Delta}$.
	\end{itemize}
\end{theorem}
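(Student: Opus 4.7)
The plan is to treat the second bullet of this \lcnamecref{thm:qhr} as the key input: it tells us that $\qhrfun{\slihw{\lambda}}$ is either zero (if $\lambda_0 \in \NN$) or the simple \hwm\ $\ihw{j,\Delta}$ with $(j,\Delta)$ determined by \eqref{eq:bphwfromsl3}. So both statements reduce to inverting this relationship: given $(j,\Delta) \in \CC^2$, classify the level-$\kk$ weights $\lambda$ that produce $(j,\Delta)$ via \eqref{eq:bphwfromsl3} and determine whether at least one such $\lambda$ admits $\lambda_0 \notin \NN$.

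I would reparametrise $\lambda$ by $d = \lambda_1 - \lambda_2$ and $s = \lambda_1 + \lambda_2$, so that the level condition $\lambda_0 + \lambda_1 + \lambda_2 = \kk$ becomes $\lambda_0 = \kk - s$, the first equation in \eqref{eq:bphwfromsl3} forces $d = 3j$, and the second becomes the quadratic
\begin{equation}
3s^2 - 6(\kk+1)s + \bigl(9j^2 - 12(\kk+3)\Delta\bigr) = 0
\end{equation}
in the single unknown $s$. This quadratic has at most two roots $s_1,s_2 \in \CC$ (counted with multiplicity), each of which determines $\lambda^{(i)} = \bigl(\kk - s_i,\,\tfrac{s_i+3j}{2},\,\tfrac{s_i-3j}{2}\bigr)$ uniquely; distinct roots give inequivalent \hwms\ $\slihw{\lambda^{(i)}}$. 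This immediately yields the ``at most two'' half of the statement.

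For the surjectivity half, the key observation is a Vieta computation: the sum of the roots of the quadratic is $s_1 + s_2 = 2(\kk+1)$, so
\begin{equation}
\lambda_0^{(1)} + \lambda_0^{(2)} = 2\kk - (s_1+s_2) = -2.
\end{equation}
If both $\lambda_0^{(i)}$ belonged to $\NN$, their sum would be a non-negative integer, contradicting the value $-2$. Hence at least one of the two weights $\lambda^{(i)}$ has $\lambda_0^{(i)} \notin \NN$, and the second bullet of this \lcnamecref{thm:qhr} then guarantees $\qhrfun{\slihw{\lambda^{(i)}}} \cong \ihw{j,\Delta}$.

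There is no real obstacle here; the argument is essentially a one-line application of Vieta's formulae once the weights have been reparametrised. The only thing to double-check is that I have not missed any $\lambda$ in the fibre: any solution must be obtained from one of the two values of $s$ and from $d = 3j$, so these really are all the preimages, and the bound of two is tight in general (saturated precisely when the discriminant is nonzero and both roots yield $\lambda_0 \notin \NN$).
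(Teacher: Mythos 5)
Your proposal is correct and follows essentially the same route as the paper: the paper also only proves the last bullet, by inverting \eqref{eq:bphwfromsl3} to get two candidate weights and observing (there via the explicit solution $\lambda_0 = -1 \pm \sqrt{4(\kk+3)\Delta + (\kk+1)^2 - 3j^2}$, in your case via Vieta on the quadratic in $s = \lambda_1+\lambda_2$) that the two zeroth Dynkin labels sum to $-2$, so at least one lies outside $\NN$. The difference between solving the quadratic explicitly and using Vieta's formula is purely cosmetic.
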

\begin{proof}
	We only prove the last assertion.  It follows from the second assertion above and by inverting \eqref{eq:bphwfromsl3} to obtain two solutions $(\lambda_1,\lambda_2)$ for each $(j,\Delta)$.  We have to ensure that at least one solution gives $\lambda_0 \notin \NN$.  But, a simple calculation gives
	\begin{equation} \label{eq:solvebphw}
		\lambda_0 = \kk - \lambda_1 - \lambda_2 = -1 \pm \sqrt{4(\kk+3)\Delta + (\kk+1)^2 - 3j^2},
	\end{equation}
	so the zeroth Dynkin labels of the two solutions sum to $-2$.
\end{proof}

\begin{definition}
	For $\kk$ admissible, we shall call a level-$\kk$ weight $\lambda$ of $\aslthree$ \emph{surviving} if it is admissible and $\lambda_0 \notin \NN$.  \cref{thm:qhr} then ensures that $\qhrfun{\slihw{\lambda}}$ is nonzero (and is moreover a simple $\ubpvoak$-module).
\end{definition}
\begin{lemma} \label{lem:survivors}
	\leavevmode
	\begin{itemize}
    \item Every $w=\wref{1}$ admissible weight is surviving.
    \item A $w=\wun$ admissible weight $\lambda$ is surviving if and only if $\lambda^{F,\wun}_0 \ge 1$.
    \item $\wref{0} \cdot$ gives a $(j,\Delta)$-preserving bijection between the $w=\wun$ surviving weights and the $w=\wref{1}$ admissible weights.
    \item If $\lambda$ and $\mu$ are distinct $w=\wun$ surviving weights, then $\qhrfun{\slihw{\lambda}}$ and $\qhrfun{\slihw{\mu}}$ are not isomorphic.
	\end{itemize}
\end{lemma}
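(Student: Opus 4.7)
The plan is to prove the four bullets in order; the first, second, and fourth are largely accounting exercises, while the third carries the real content.

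For the first two bullets, I would compute $\lambda_0$ directly for each type of admissible weight. A $w = \wun$ admissible weight has $\lambda_0 = \lambda^I_0 - \frac{\uu}{\vv} \lambda^{F,\wun}_0$, and since $\lambda^{F,\wun}_0 \in \set{0, 1, \ldots, \vv-1}$ with $\gcd\set{\uu,\vv} = 1$, this is integral if and only if $\lambda^{F,\wun}_0 = 0$, whereupon $\lambda_0 = \lambda^I_0 \in \NN$; this settles the second bullet. For a $w = \wref{1}$ admissible weight, applying the shifted reflection gives $\lambda_0 = \lambda^I_0 + \lambda^I_1 + 1 - \frac{\uu}{\vv}(\lambda^{F,\wref{1}}_0 + \lambda^{F,\wref{1}}_1)$, and since $1 \le \lambda^{F,\wref{1}}_0 + \lambda^{F,\wref{1}}_1 \le \vv-1$, the same coprimality argument forces $\lambda_0 \notin \ZZ$, hence surviving.

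The third bullet is where the real work lies. The $(j,\Delta)$-invariance is immediate once one notes that $\wref{0} \cdot$ fixes $\lambda_1 - \lambda_2$ and sends $\lambda_1 + \lambda_2$ to $2(\kk+1) - (\lambda_1 + \lambda_2)$, both symmetries of the formulae in \eqref{eq:bphwfromsl3}. Since $\wref{0} \cdot$ is an involution on all $\aslthree$ weights, bijectivity boils down to writing an explicit formula for the map and its inverse, and this is the main obstacle. Based on small-example computations, my candidate is: given $w = \wun$ surviving $\lambda = \lambda^I - \frac{\uu}{\vv}\lambda^{F,\wun}$, set $\mu^I = [\lambda^I_1, \lambda^I_2, \lambda^I_0]$ and $\mu^F = [\lambda^{F,\wun}_1, \lambda^{F,\wun}_2 + 1, \lambda^{F,\wun}_0 - 1]$. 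The surviving hypothesis $\lambda^{F,\wun}_0 \ge 1$ is exactly what ensures $\mu^F \in \pwlat{\vv-1}$, while $\mu^F_1 = \lambda^{F,\wun}_2 + 1 \ge 1$ holds automatically; the formula is manifestly invertible, delivering the bijection. Verifying the identity $\wref{0}\cdot\lambda = \wref{1} \cdot (\mu^I - \frac{\uu}{\vv}\mu^F)$ is then a routine componentwise calculation using the level constraints $\sum_i \lambda^I_i = \uu-3$ and $\sum_i \lambda^{F,\wun}_i = \vv-1$.

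For the fourth bullet, the last clause of \cref{thm:qhr} implies that if $\qhrfun{\slihw{\lambda}} \cong \qhrfun{\slihw{\mu}}$ for $w = \wun$ surviving $\lambda, \mu$, then $\set{\lambda, \mu}$ lies among the at most two preimages of a single simple \bp\ module. These preimages are read off from \eqref{eq:solvebphw}, and a one-line calculation identifies them as the pair $\set{\lambda, \wref{0}\cdot\lambda}$, so either $\lambda = \mu$ or $\mu = \wref{0}\cdot\lambda$. The latter would, by the third bullet, force $\mu$ to be $w = \wref{1}$ admissible, contradicting the disjointness of the $w = \wun$ and $w = \wref{1}$ admissible sets noted after \cref{def:admsl3wt}.
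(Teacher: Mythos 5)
Your proposal is correct and follows essentially the same route as the paper: the same coprimality computation of $\lambda_0$ for the first two bullets, the same explicit Dynkin-label description of the $\wref{0}\,\cdot$ bijection (you construct it from the $w=\wun$ side whereas the paper computes $\wref{0}\cdot\wref{1}\cdot$ from the $w=\wref{1}$ side --- your formula $\mu^I = [\lambda^I_1,\lambda^I_2,\lambda^I_0]$, $\mu^F = [\lambda^F_1,\lambda^F_2+1,\lambda^F_0-1]$ is exactly the inverse of the paper's), and the same ``at most two preimages, the other being $\wref{0}\cdot\lambda$, which is excluded by disjointness of the $w=\wun$ and $w=\wref{1}$ admissible sets'' argument for the final bullet. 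I see no gaps.
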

\begin{proof}
	The zeroth Dynkin label of a level-$\kk$ admissible $\aslthree$-weight $\lambda$ has one of the following two forms:
	\begin{equation}
		\lambda_0 = \lambda_0^I -\frac{\uu}{\vv} \lambda_0^{F,\wun} \quad \text{($w = \wun$)} \qquad \text{or} \qquad
		\lambda_0 = \lambda_0^I+\lambda_1^I - \frac{\uu}{\vv}\brac*{\lambda_0^{F,\wref{1}}+\lambda_1^{F,\wref{1}}} + 1 \quad \text{($w = \wref{1}$)}.
	\end{equation}
	Consider first a $w=\wun$ admissible weight $\lambda$.  Since $\lambda^{F,\wun} \in \pwlat{\vv-1}$, we clearly have $\lambda_0 \in \ZZ$ if and only if $\lambda^{F,\wun}_0 = 0$.  On the other hand, a $w=\wref{1}$ admissible weight $\lambda$ necessarily has $0 < \lambda_0^{F,\wref{1}} + \lambda_1^{F,\wref{1}} < \vv$, since $\lambda^{F,\wref{1}} \in \pwlat{\vv-1}$ and $\lambda_1^{F,\wref{1}} \ge 1$.  It follows that the Dynkin label $\lambda_0$ can never be an integer in this case.  The first two statements are thus established.

	For the third, let $\mu$ be a level-$\kk$ weight.  Explicit calculation shows that the Dynkin labels of $\wref{0} \cdot \wref{1} \cdot \mu$ are
	\begin{equation}
		\sqbrac[\Big]{\mu_2 - \frac{\uu}{\vv},\ \mu_0,\ \mu_1 + \frac{\uu}{\vv}}.
	\end{equation}
	Let $\lambda = \wref{1} \cdot \brac*{\lambda^I - \frac{\uu}{\vv} \lambda^{F,\wref{1}}}$ be a $w=\wref{1}$ admissible weight.  Then, $\wref{0} \cdot \lambda$ has the form $\mu = \mu^I - \frac{\uu}{\vv} \mu^{F,\wun}$ with
	\begin{equation}
			\mu^I = \sqbrac[\Big]{\lambda^I_2, \ \lambda^I_0, \ \lambda^I_1} \quad \text{and} \quad
			\mu^{F,\wun} = \sqbrac[\Big]{\lambda^{F,\wref{1}}_2 +1, \ \lambda^{F,\wref{1}}_0, \ \lambda^{F,\wref{1}}_1 -1}.
	\end{equation}
	It is easy to see that $\mu^I \in \pwlat{\uu-3}$ and $\mu^{F,\wun} \in \pwlat{\vv-1}$, so $\mu$ is a $w=\wun$ admissible weight.  Moreover, $\mu^{F,\wun}_0 \ge 1$ implies that $\mu$ is surviving.  Since $\wref{0} \cdot (\blank)$ is clearly self-inverse, we have the desired bijection between $w=\wun$ surviving weights and $w=\wref{1}$ admissible weights.  To show that it is $(j,\Delta)$-preserving, we show that the functions $j(\lambda)$ and $\Delta(\lambda)$ defined by \eqref{eq:bphwfromsl3} are invariant under $\lambda \mapsto \wref{0} \cdot \lambda$.  This is clear from $(\wref{0} \cdot \lambda)_1 = \kk+1-\lambda_2$ and $(\wref{0} \cdot \lambda)_2 = \kk+1-\lambda_1$.

	Finally, let $\lambda$ and $\mu$ be surviving weights and suppose that $\qhrfun{\slihw{\lambda}} \cong \qhrfun{\slihw{\mu}}$, so that $j(\lambda) = j(\mu)$ and $\Delta(\lambda) = \Delta(\mu)$.  We have just seen that $\lambda$ and $\wref{0} \cdot \lambda$ always give the same $j$ and $\Delta$.  But, if $\lambda$ is a $w=\wun$ surviving weight, then $\mu = \wref{0} \cdot \lambda$ is a $w=\wref{1}$ surviving weight.  Since the intersection of the sets of $w=\wun$ and $w=\wref{1}$ admissible weights is empty \cite[Prop.~2.1]{KacCla89}, we have $\lambda \ne \mu$.  As there are at most two weights corresponding to a given choice of $j$ and $\Delta$ (\cref{thm:qhr}), this shows that there are never two distinct $w=\wun$ surviving weights giving the same $j$ and $\Delta$.
\end{proof}
\noindent In what follows, a surviving weight shall be understood to mean a $w=\wun$ surviving weight unless otherwise indicated.  The set of ($w=\wun$) surviving level-$\kk$ weights will be denoted by $\survk$.  We shall also start dropping the label $w$ from $\lambda^{F,w}$, understanding that we mean $w=\wun$ unless otherwise indicated.

Let $\slmpik$ denote the maximal ideal of $\uslvoa{\kk}$, so that $\sslvoa{\kk} = \uslvoa{\kk} / \slmpik$.  If $\kk$ is an admissible level, then \cref{thm:affhwclass} says that $\slmpik \cdot \slihw{\lambda} = 0$ if and only if $\lambda$ is an admissible weight.  If, in addition, $\vv\ge2$, then
\begin{equation}
	\qhrfun{\sslvoa{\kk}} = \qhrfun{\slihw{\kk \fwt{0}}} \cong \ihw{0,0} = \sbpvoak,
\end{equation}
by \cref{thm:qhr}.  Moreover, the exactness of $\qhrfun{\blank}$ means that the maximal ideal $\bpmpik$ of $\ubpvoak$ is then isomorphic to $\qhrfun{\slmpik}$.  It follows that $\qhrfun{\slihw{\lambda}}$ is a $\sbpvoak$-module if and only if $\qhrfun{\slmpik} \cdot \qhrfun{\slihw{\lambda}} = 0$.

Recall that $\qhrfun{\blank}$ corresponds to tensoring with a ghost \svoa\ $\ghvoa$, graded by the fermionic ghost number, and taking the degree-$0$ cohomology with respect to a given differential (see \cref{sec:qhr} for the details).  Denote the cohomology class of a (degree-$0$) cocycle $a$ by $[a]$ (we trust that this notation will not be confused with the notation for Zhu algebra images in \cref{sec:ubpmods}).  Given (degree-$0$) cocycles $a$ and $v$ of the BRST complexes $\slmpik\otimes \ghvoa$ and $\slihw{\lambda}\otimes \ghvoa$, respectively, the action of $[a] \in \qhrfun{\slmpik}$ on $[v] \in \qhrfun{\slihw{\lambda}}$ is given by $[a] \cdot [v] \equiv [a](z) [v] = [a(z) v] \in \qhrfun{\slmpik \cdot \slihw{\lambda}}$.  For $\lambda$ admissible, we therefore obtain
\begin{equation}
	\qhrfun{\slmpik} \cdot \qhrfun{\slihw{\lambda}} \subseteq \qhrfun{\slmpik \cdot \slihw{\lambda}} = 0.
\end{equation}
This proves the following assertion.
\begin{proposition} \label{prop:sl=>bp}
	Let $\kk$ be admissible with $\vv\ge2$.  If $\slihw{\lambda}$ is an $\sslvoak$-module, then $\qhrfun{\slihw{\lambda}}$ is a $\sbpvoak$-module.
\end{proposition}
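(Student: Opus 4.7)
The aim is to show that the maximal ideal $\bpmpik \subset \ubpvoak$ annihilates $\qhrfun{\slihw{\lambda}}$. My plan is to split this into two subgoals: first identify $\bpmpik$ with $\qhrfun{\slmpik}$, and then show that the induced action of $\qhrfun{\slmpik}$ on $\qhrfun{\slihw{\lambda}}$ lands inside $\qhrfun{\slmpik \cdot \slihw{\lambda}}$, which vanishes by the hypothesis.

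For the first subgoal, I would feed the short exact sequence $\ses{\slmpik}{\uslvoak}{\sslvoak}$ through the exact functor $\qhrfun{\blank}$ from \cref{thm:qhr}, obtaining $\ses{\qhrfun{\slmpik}}{\ubpvoak}{\qhrfun{\sslvoak}}$. It then remains to identify $\qhrfun{\sslvoak}$ with $\sbpvoak$. Since $\sslvoak \cong \slihw{\kk\fwt{0}}$ and the constraint $\vv \ge 2$ (together with $\gcd\set{\uu,\vv}=1$) forces $\kk$ to be nonintegral, the zeroth Dynkin label $\kk$ of $\kk\fwt{0}$ lies outside $\NN$; \cref{thm:qhr} together with \eqref{eq:bphwfromsl3} then yields $\qhrfun{\sslvoak} \cong \ihw{0,0}$. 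The latter agrees with $\sbpvoak$ because $\ubpvoak$ is, as a module over itself, \hw\ with top vector $\wun$ of weight $(0,0)$, so its unique simple quotient must be $\ihw{0,0}$. This gives $\qhrfun{\slmpik} \cong \bpmpik$.

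For the second subgoal, I would use the BRST realisation of the minimal reduction: $\qhrfun{\blank}$ is the degree-zero cohomology of the complex obtained by tensoring with a ghost \svoa\ $\ghvoa$ and equipping it with the appropriate differential. Given degree-zero cocycles $a \in \slmpik \otimes \ghvoa$ and $v \in \slihw{\lambda} \otimes \ghvoa$ representing classes $[a] \in \qhrfun{\slmpik}$ and $[v] \in \qhrfun{\slihw{\lambda}}$, the vertex operator output $a(z)v$ lies in $(\slmpik \cdot \slihw{\lambda}) \otimes \ghvoa$ and is itself a cocycle, so the natural definition $[a] \cdot [v] := [a(z) v]$ lands in $\qhrfun{\slmpik \cdot \slihw{\lambda}}$. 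Since the hypothesis that $\slihw{\lambda}$ is an $\sslvoak$-module means precisely that $\slmpik \cdot \slihw{\lambda} = 0$, the \rhs\ vanishes, so $\qhrfun{\slmpik} \cdot \qhrfun{\slihw{\lambda}} = 0$ as desired.

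The main obstacle I anticipate is justifying the cohomology-level action at the start of the second step: one must verify that $a(z)$ sends cocycles to cocycles and that the resulting cohomology class is representative-independent. This reduces to the BRST differential being a (super)derivation of the vertex-algebra operations, but its careful verification uses the explicit structure of the minimal \qhr\ differential of \cite{KacQua03,AraRep05}. Once that is in place, the remaining steps are essentially functorial bookkeeping with the already-established facts.
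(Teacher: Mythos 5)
Your proposal is correct and follows essentially the same route as the paper: identify $\bpmpik$ with $\qhrfun{\slmpik}$ via exactness of the reduction functor and the identification $\qhrfun{\sslvoak} \cong \ihw{0,0} = \sbpvoak$, then use the BRST description to see that $\qhrfun{\slmpik} \cdot \qhrfun{\slihw{\lambda}} \subseteq \qhrfun{\slmpik \cdot \slihw{\lambda}} = 0$. The cohomology-level action you flag as a potential obstacle is exactly the standard $[a]\cdot[v] = [a(z)v]$ argument the paper uses without further ado.
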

\noindent This also motivates the following assumption, which we shall understand to be in force for everything that follows.
\begin{assumption} \label{assume}
	In what follows, we shall restrict to fractional levels $\kk=-3+\frac{\uu}{\vv}$ with $\uu \ge 3$ and $\vv \ge 2$.  The restriction on $\uu$ means that $\kk$ is an admissible level for $\slthree$, whilst the restriction on $\vv$ guarantees that the minimal \qhr\ of $\sslvoak = \slminmoduv$ is $\sbpvoak = \bpminmoduv$ (for $\uu \ge 3$, we have $\qhrfun{\slminmod{\uu}{1}} = 0$ instead).
\end{assumption}

Of course, to obtain a classification of simple \hw\ $\sbpvoak$-modules from Arakawa's classification of simple \hw\ $\sslvoak$-modules (\cref{thm:affhwclass}), we need a converse of \cref{prop:sl=>bp}.  This is much more subtle.
\begin{theorem} \label{thm:surjection}
	Let $\kk$ be as in \cref{assume}.  Then, every simple \hw\ $\sbpvoak$-module is isomorphic to the minimal \qhr\ of some simple \hw\ $\sslvoak$-module.
\end{theorem}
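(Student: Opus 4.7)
Let $\Mod{M} = \ihw{j,\Delta}$ be a simple highest-weight $\sbpvoak$-module. The surjectivity statement in \cref{thm:qhr} yields at most two level-$\kk$ weights $\lambda$ of $\aslthree$ with $\lambda_0 \notin \NN$ and $\qhrfun{\slihw{\lambda}} \cong \Mod{M}$, interchanged by the involution from \eqref{eq:solvebphw}. The plan is to show that at least one such $\lambda$ is admissible, which by \cref{thm:affhwclass} is equivalent to $\slihw{\lambda}$ being an $\sslvoak$-module.

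The key structural input, beyond \cref{thm:qhr}, is that for admissible $\kk$ the maximal ideal $\slmpik \subset \uslvoak$ is principally generated (as a $\uslvoak$-submodule of $\uslvoak$) by a single singular vector $\chi$. Combined with the exactness of $\qhrfun{\blank}$ on $\slocat{\kk}$ and the identity $\qhrfun{\sslvoak} = \sbpvoak$, this gives $\bpmpik = \qhrfun{\slmpik}$, cyclically generated inside $\ubpvoak$ by the singular vector $\qhrfun{\chi}$, whose conformal data are read off from the weight of $\chi$ via \eqref{eq:bphwfromsl3}.

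The next step is to translate the condition $\bpmpik \cdot \Mod{M} = 0$ into a statement about Verma modules. Using $\qhrfun{\slvhw{\lambda}} \cong \vhw{j,\Delta}$ together with exactness, the maximal submodule of $\vhw{j,\Delta}$ is $\qhrfun{\widetilde{K}}$, where $\widetilde{K}$ is the maximal submodule of $\slvhw{\lambda}$. The hypothesis then reads $\qhrfun{\chi} \cdot v_{j,\Delta} \in \qhrfun{\widetilde{K}}$. I would aim to lift this to the corresponding vanishing $\chi \cdot v_\lambda \in \widetilde{K}$ inside $\slvhw{\lambda}$. Once this is secured, cyclicity of $v_\lambda$ together with simplicity of $\slihw{\lambda} = \slvhw{\lambda}/\widetilde{K}$ immediately force $\slmpik \cdot \slihw{\lambda} = 0$, so $\lambda$ is admissible.

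The principal obstacle is precisely this last descent: $\qhrfun{\blank}$ kills every simple $\slihw{\mu}$ with $\mu_0 \in \NN$, so a vanishing at the level of $\vhw{j,\Delta}$ does not automatically imply the corresponding vanishing at the level of $\slvhw{\lambda}$---the discrepancy could be supported on QHR-invisible composition factors. Overcoming this requires fine information about the local submodule structure of $\slvhw{\lambda}$ near $\chi \cdot v_\lambda$, most plausibly via the Kac--Kazhdan classification of singular vectors for affine Verma modules. A complementary safety net would be to use the involution $\lambda \leftrightarrow \lambda'$ from \eqref{eq:solvebphw} together with a direct check against \cref{def:admsl3wt}, ensuring that whenever one candidate suffers from such a pathology, the paired weight cleanly satisfies the admissibility conditions. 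I expect that the bulk of the appendix is devoted to making this descent rigorous.
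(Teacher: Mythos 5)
Your reduction is correct, but it is essentially the part of the argument the paper already disposes of in the main text before deferring to \cref{app:proof}: granting \cref{thm:qhr} and the fact that $\slmpik$ is generated by a single singular vector $\chi$ (\cref{lem:chi}), the entire content of \cref{thm:surjection} is the implication that $\lambda_0 \notin \NN$ and $\slmpik \cdot \slihw{\lambda} \ne 0$ force $\qhrfun{\slmpik} \cdot \qhrfun{\slihw{\lambda}} \ne 0$ --- equivalently, your ``descent''. That is exactly the step you leave open, so the proposal has a genuine gap at its core. Moreover, the avenues you sketch for closing it are doubtful. The Kac--Kazhdan/linkage data of $\slvhw{\lambda}$ tells you which highest weights can appear in its submodules, but not whether the particular vectors $\chi_n v_\lambda$ lie in the maximal submodule once you only know their images modulo QHR-invisible composition factors; that is your own obstacle restated, not a method for removing it. The ``safety net'' via the involution of \eqref{eq:solvebphw} also proves nothing: the two preimages' zeroth Dynkin labels sum to $-2$, but there is no a priori reason that a failure of the descent for one candidate implies admissibility of the other.

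The paper's actual proof runs in the contrapositive direction and never analyses the submodule structure of $\slvhw{\lambda}$. Assuming $\chi_{-(\uu-2)-N} v \ne 0$ with $N$ minimal, simplicity of $\slihw{\lambda}$ yields a PBW monomial returning this vector to $v$; weight and minimality arguments force it to be $(\nrvec{\hroot}_1)^{\uu-2-N} (\nrvec{\sroot{2}}_1)^N (\nrvec{\sroot{1}}_1)^N$, whence $(\nrvec{\sroot{2}}_1)^N (\nrvec{\sroot{1}}_1)^N \chi_{-(\uu-2)-N} v = (\rvec{\hroot}_{-1})^{\uu-2-N} v$ (\cref{prop:thekey}). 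The key idea you are missing is then cohomological: the powers of $\nrvec{\sroot{1}}_1$ and $\nrvec{\sroot{2}}_1$ can be replaced by powers of modes of the closed fields $\widetilde{G}^{\pm}$ acting on $\chi_{-(\uu-2)-N} v \otimes \ghvac$ (\cref{lem:G=f}), and closed operators preserve exactness; since $(\rvec{\hroot}_{-1})^{\uu-2-N} v \otimes \ghvac$ is inexact by Arakawa's results (\cref{lem:evnotexact}), the vector $\chi_{-(\uu-2)-N} v \otimes \ghvac$ is inexact, so $\qhrfun{\slmpik} \cdot \qhrfun{\slihw{\lambda}} \ne 0$. In short, the descent you hope to make rigorous through Verma-module combinatorics is instead achieved by transporting an explicit ``return to the highest-weight vector'' relation through the BRST complex; without that (or a genuine substitute), your proposal does not prove the theorem.
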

\noindent Note that if $\lambda_0 \in \NN$, then $\qhrfun{\slihw{\lambda}} = 0$ is a $\sbpvoak$-module, irrespective of whether or not it is an $\sslvoak$-module.  It is therefore enough to show that if $\lambda_0 \notin \NN$ and $\slihw{\lambda}$ is not a $\sbpvoak$-module, then $\qhrfun{\slihw{\lambda}}$ is not a $\sbpvoak$-module.  Equivalently, we must show that $\lambda_0 \notin \NN$ and $\slmpik \cdot \slihw{\lambda} \ne 0$ implies that $\qhrfun{\slmpik} \cdot \qhrfun{\slihw{\lambda}} \ne 0$.  We defer the somewhat intricate proof of this assertion to \cref{app:proof}.

\subsection{Simple \hw\ $\bpminmoduv$-modules} \label{sec:hwmod}

From \cref{thm:affhwclass,lem:survivors,thm:surjection}, we conclude that the $\qhrfun{\slihw{\lambda}}$, with $\lambda \in \survk$, form a complete set of mutually nonisomorphic simple untwisted \hw\ modules for the \bp\ minimal model \voa\ $\bpminmoduv$ (assuming that the level is as in \cref{assume}).  The charge ($J_0$-eigenvalue) $j$ and conformal weight ($L_0$-eigenvalue) $\Delta$ of the \hwv\ of $\qhrfun{\slihw{\lambda}}$ was given in \eqref{eq:bphwfromsl3}: $\qhrfun{\slihw{\lambda}} \cong \ihw{j,\Delta}$.  This is then a classification of the simple untwisted \hw\ $\bpminmoduv$-modules.  Moreover, \cref{prop:twvsuntw} extends this to a classification of their twisted cousins.
\begin{theorem} \label{thm:hwclass}
	Let $\kk$ be as in \cref{assume}.  Then:
	\begin{itemize}
		\item Every simple untwisted \hw\ $\bpminmoduv$-module is isomorphic to one of the $\ihw{j,\Delta}$, where $j$ and $\Delta$ are determined from the Dynkin labels of a unique surviving weight $\lambda \in \survk$ by \eqref{eq:bphwfromsl3}.
		\item Every simple twisted \hw\ $\bpminmoduv$-module is isomorphic to one of the $\twihw{j,\Delta}$, where $j$ and $\Delta$ are determined from the Dynkin labels of a unique surviving weight $\lambda \in \survk$ by
		\begin{equation} \label{eq:bptwhwfromsl3}
			j = \frac{\lambda_1 - \lambda_2}{3} + \frac{2\kk+3}{6} \quad \text{and} \quad
			\Delta = \frac{(\lambda_1-\lambda_2)^2 - 3(\lambda_1+\lambda_2) \brac[\big]{2(\kk+1)-\lambda_1-\lambda_2}}{12(\kk+3)} + \frac{\lambda_1 - \lambda_2}{6} + \frac{2\kk+3}{24}.
		\end{equation}
	\end{itemize}
	Moreover, the $\ihw{j,\Delta}$ and $\twihw{j,\Delta}$ determined by the surviving weights are all mutually nonisomorphic.
\end{theorem}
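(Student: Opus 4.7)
The plan is to assemble the theorem from previously established ingredients; the deep content has been exported to \cref{thm:surjection} and to Arakawa's classification, leaving only bookkeeping. For the untwisted claim I would argue in two directions. In the forward direction, \cref{thm:surjection} reduces any simple untwisted \hwm\ over $\bpminmoduv$ to the form $\qhrfun{\slihw{\mu}}$ for a simple \hw\ $\sslvoak$-module $\slihw{\mu}$; \cref{thm:affhwclass} then forces $\mu$ to be admissible, the second bullet of \cref{thm:qhr} forces $\mu_0 \notin \NN$ (otherwise the reduction vanishes) and identifies $\qhrfun{\slihw{\mu}}$ with $\ihw{j,\Delta}$ via \eqref{eq:bphwfromsl3}, and the third bullet of \cref{lem:survivors} allows me, without altering $(j,\Delta)$, to replace a $w = \wref{1}$ surviving weight by its $\wref{0}\cdot$ image so that $\mu$ may be taken in $\survk$. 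In the reverse direction, for each $\lambda \in \survk$ the weight is admissible, so \cref{thm:affhwclass} makes $\slihw{\lambda}$ a simple $\sslvoak$-module, \cref{prop:sl=>bp} then promotes $\qhrfun{\slihw{\lambda}}$ to a $\bpminmoduv$-module, and \cref{thm:qhr} identifies it with the simple $\ihw{j(\lambda),\Delta(\lambda)}$. Uniqueness of the parametrising weight is precisely the last bullet of \cref{lem:survivors}.

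For the twisted case I would transport everything by spectral flow. \cref{prop:twvsuntw} provides a bijection between the simple untwisted and twisted \hw\ $\ubpvoak$-modules, given by $\sfsymb^{1/2}$, with $(j,\Delta)$ shifted by $\bigl(\tfrac{2\kk+3}{6},\,\tfrac{j}{2} + \tfrac{2\kk+3}{24}\bigr)$. Since $\sfsymb^{1/2}$ is a mode algebra isomorphism, it descends to the corresponding bijection on simple quotients of $\sbpvoak$. Substituting \eqref{eq:bphwfromsl3} into this shift yields exactly \eqref{eq:bptwhwfromsl3}, and the parametrisation by $\lambda \in \survk$ is inherited from the untwisted classification just established.

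The final mutual non-isomorphism claim decomposes into three pieces. Distinct $\lambda \in \survk$ give non-isomorphic $\ihw{j,\Delta}$ by uniqueness in the previous step; distinct $\lambda$ give non-isomorphic $\twihw{j,\Delta}$ by the same argument transported by $\sfsymb^{1/2}$; and no $\ihw{j,\Delta}$ is isomorphic to any $\twihw{j',\Delta'}$ because the two sit in disjoint module categories, distinguished by whether $G^\pm(z)$ acts with half-integer or integer modes. I do not expect any real obstacle, because the hard analytic input is quarantined inside \cref{thm:surjection} (proved in \cref{app:proof}) and the matching between surviving weights and admissible weights has already been handled in \cref{lem:survivors}; the present proof is essentially a diagram-chase through the quoted statements.
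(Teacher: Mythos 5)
Your proposal is correct and follows essentially the same route as the paper: the untwisted classification is assembled from \cref{thm:affhwclass}, \cref{thm:qhr}, \cref{lem:survivors}, \cref{prop:sl=>bp} and \cref{thm:surjection} exactly as the paper does at the start of \cref{sec:hwmod}, and the twisted statement is obtained by twisting with $\sfsymb^{1/2}$ via \cref{prop:twvsuntw}, which is also the paper's argument. The only difference is that you spell out the bookkeeping (including the $\wref{0}\cdot$ replacement and the uniqueness/non-isomorphism checks) that the paper compresses into a single paragraph.
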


In light of this classification, we let $\ihw{\lambda} = \ihw{j,\Delta}$ and $\twihw{\lambda} = \twihw{j,\Delta}$, where $j$ and $\Delta$ are given in terms of $\lambda \in \survk$ by \eqref{eq:bphwfromsl3} and \eqref{eq:bptwhwfromsl3}, respectively.  Note that this implies that 
\begin{equation} \label{eq:tw=sf}
	\twihw{\lambda} \cong \sfmod{1/2}{\ihw{\lambda}},
\end{equation}
by \cref{prop:twvsuntw}.  With this new notation, the vacuum module $\ihw{0,0}$ is identified as $\ihw{\lambda}$, where $\lambda = [k,0,0]$ has $\lambda^I = [\uu-3,0,0]$ and $\lambda^F = [\vv-1,0,0]$.

We record the following strengthening of \cref{thm:hwclass}, following \cite[Thm.~10.10]{AraRat15}, for later use.
\begin{theorem} \label{thm:hwrat}
	Let $\kk$ be as in \cref{assume}.  Then, every \hw\ $\bpminmoduv$-module, untwisted or twisted, is simple.
\end{theorem}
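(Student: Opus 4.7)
The strategy mirrors Arakawa's proof of the analogous result for the simple affine vertex algebra in \cite[Thm.~10.10]{AraRat15}, namely to transfer the statement across the exact minimal reduction functor $\qhrfun{\blank}$, after first reducing the twisted case to the untwisted one. By the discussion surrounding \eqref{eq:tw=sf}, the half-integer spectral flow $\sfsymb^{1/2}$ lifts to an equivalence $\wcat{\kk} \to \twcat{\kk}$ that sends untwisted \hw\ modules onto twisted ones. Since equivalences preserve submodule lattices, it suffices to prove simplicity in the untwisted sector.

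Fix an untwisted \hw\ $\bpminmoduv$-module $\Mod{M}$ of highest weight $(j,\Delta)$. By \cref{thm:hwclass}, its simple quotient is $\ihw{\lambda}$ for a unique $\lambda \in \survk$, and by \cref{thm:qhr} one has $\vhw{j,\Delta} \cong \qhrfun{\slvhw{\lambda}}$. The aim is then to prove $\Mod{M} \cong \ihw{\lambda}$. Invoke the rationality result \cite[Thm.~10.10]{AraRat15}, specialising to the admissible simple affine vertex algebra $\sslvoak$: every \hw\ $\sslvoak$-module is simple, so $\slvhw{\lambda}/\slmpik \cdot \slvhw{\lambda} \cong \slihw{\lambda}$. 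Applying the exact functor $\qhrfun{\blank}$ (\cref{thm:qhr}) to the corresponding short exact sequence, and using $\qhrfun{\slihw{\lambda}} \cong \ihw{\lambda}$ (also \cref{thm:qhr}), yields
\begin{equation*}
	0 \lra \qhrfun{\slmpik \cdot \slvhw{\lambda}} \lra \vhw{j,\Delta} \lra \ihw{\lambda} \lra 0,
\end{equation*}
identifying $\qhrfun{\slmpik \cdot \slvhw{\lambda}}$ as the unique maximal proper submodule of $\vhw{j,\Delta}$. The BRST computation preceding \cref{prop:sl=>bp} records the inclusion $\bpmpik \cdot \vhw{j,\Delta} \subseteq \qhrfun{\slmpik \cdot \slvhw{\lambda}}$. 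Granting the reverse inclusion, $\vhw{j,\Delta}/\bpmpik \cdot \vhw{j,\Delta}$ coincides with $\ihw{\lambda}$, and $\Mod{M}$, being a nonzero quotient of this universal \hw\ $\bpminmoduv$-module, is isomorphic to $\ihw{\lambda}$.

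The main obstacle is therefore establishing the reverse inclusion $\qhrfun{\slmpik \cdot \slvhw{\lambda}} \subseteq \bpmpik \cdot \vhw{j,\Delta}$. The direct route is to strengthen the BRST argument: show that every cocycle in $(\slmpik \cdot \slvhw{\lambda}) \otimes \ghvoa$ can, modulo exact terms, be written as a finite sum of normally ordered products of cocycle representatives of $\bpmpik = \qhrfun{\slmpik}$ acting on cocycle representatives of $\vhw{j,\Delta} = \qhrfun{\slvhw{\lambda}}$. An alternative, more indirect, route is to compute the composition factor multiplicities of $\vhw{j,\Delta}/\bpmpik \cdot \vhw{j,\Delta}$ by applying the exactness of $\qhrfun{\blank}$ to the BGG-type composition series of $\slvhw{\lambda}$, and to use the uniqueness assertion in \cref{thm:hwclass} (no two distinct surviving weights share the same $(j,\Delta)$) to rule out any composition factor besides $\ihw{\lambda}$.
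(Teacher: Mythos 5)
Your overall architecture — reducing the twisted case via \eqref{eq:tw=sf}, realising $\vhw{j,\Delta} \cong \qhrfun{\slvhw{\lambda}}$, and exploiting exactness of $\qhrfun{\blank}$ — matches the paper's, but the argument does not close: the ``reverse inclusion'' $\qhrfun{\slmpik \cdot \slvhw{\lambda}} \subseteq \bpmpik \cdot \vhw{j,\Delta}$ that you grant is not an auxiliary technicality, it \emph{is} the theorem.  Since $\qhrfun{\slmpik \cdot \slvhw{\lambda}}$ is the unique maximal proper submodule of the Verma module $\vhw{j,\Delta}$ and already contains $\bpmpik \cdot \vhw{j,\Delta}$, the reverse inclusion is equivalent to the simplicity of $\vhw{j,\Delta} / \bpmpik \cdot \vhw{j,\Delta}$, i.e.\ to the statement being proved for the highest weight $(j,\Delta)$.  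Your ``direct route'' (rewriting every cocycle of $(\slmpik \cdot \slvhw{\lambda}) \otimes \ghvoa$, modulo exact terms, as products of cocycles for $\bpmpik$ and $\vhw{j,\Delta}$) is only a programme, and it is precisely the kind of delicate BRST assertion that the paper needed all of \cref{app:proof} to establish in the more limited setting of \cref{thm:surjection}; nothing in the material you cite supplies it.

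Your ``indirect route'' is close in spirit to the paper's actual proof, but the ingredient you name cannot do the job.  The uniqueness assertion in \cref{thm:hwclass} only says that distinct surviving weights yield nonisomorphic simples; it says nothing about whether some $\ihw{\mu}$ with $\mu \in \survk$, $\mu \ne \lambda$, occurs as a composition factor of $\vhw{\lambda}$, which is what must be excluded.  The paper closes this gap on the affine side: exactness of $\qhrfun{\blank}$ and \cref{thm:qhr} give $\mult{\vhw{\lambda}}{\ihw{\mu}} = \mult{\slvhw{\lambda}}{\slihw{\mu}} + \mult{\slvhw{\lambda}}{\slihw{\wref{0} \cdot \mu}}$, and then one notes that $\lambda$, $\mu$ and $\wref{0} \cdot \mu$ are all admissible (hence dominant) $\aslthree$-weights, so the linkage principle for Verma $\aslthree$-modules forbids $\slihw{\mu}$ or $\slihw{\wref{0} \cdot \mu}$ from appearing in $\slvhw{\lambda}$ when $\mu \ne \lambda$.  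It is this dominance-plus-linkage argument, not the injectivity of $\lambda \mapsto (j,\Delta)$ on $\survk$, that rules out extra composition factors; without it (or some equivalent) your proof has a genuine gap.  A minor point: identifying $\slvhw{\lambda}/\slmpik \cdot \slvhw{\lambda} \cong \slihw{\lambda}$ via rationality in category $\mathscr{O}$ is legitimate, though the relevant reference is \cite{AraRat16}; the paper's proof does not even need this step, as it works directly with composition factors of $\slvhw{\lambda}$.
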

\begin{proof}
	We prove this for untwisted modules as the twisted case follows immediately from \eqref{eq:tw=sf} and the invertibility of spectral flow.  Since the simple quotient of any \hw\ $\bpminmoduv$-module $\ihw{}$ is isomorphic to some $\ihw{\lambda}$ with $\lambda \in \survk$, by \cref{thm:hwclass}, it is enough to show that $\ihw{}$ cannot have a composition factor isomorphic to $\ihw{\mu}$ for some $\mu \in \survk$ distinct from $\lambda$.  Indeed, it is enough to show that the Verma module $\vhw{\lambda} = \vhw{j,\Delta}$ of $\ubpvoak$ does not have such a composition factor.

	Recall that $\slvhw{\lambda}$ denotes the Verma module of $\uslvoak$ of highest weight $\lambda$ and let $\mult{\slvhw{\lambda}}{\slihw{\nu}}$ denote the multiplicity with which $\slihw{\nu}$ appears as a composition factor of $\slvhw{\lambda}$.  By \cref{thm:qhr}, \qhr\ takes $\slvhw{\lambda}$ to $\vhw{\lambda}$ and only $\slihw{\mu}$ and $\slihw{\wref{0} \cdot \mu}$ are sent to $\ihw{\mu}$.  As reduction is exact, we must have $\mult{\vhw{\lambda}}{\ihw{\mu}} = \mult{\slvhw{\lambda}}{\slihw{\mu}} + \mult{\slvhw{\lambda}}{\slihw{\wref{0} \cdot \mu}}$ (noting that $\mu$ and $\wref{0} \cdot \mu$ are distinct since $\mu \in \survk$).

	It follows that if $\vhw{\lambda}$ has $\ihw{\mu}$, $\mu \ne \lambda$, as a composition factor, then $\slvhw{\lambda}$ has either $\slihw{\mu}$ or $\slihw{\wref{0} \cdot \mu}$ as a composition factor.  But, $\lambda$, $\mu$ and $\wref{0} \cdot \mu$ are all admissible $\aslthree$-weights (corresponding to $w=\wun$, $\wun$ and $\wref{1}$, respectively, see \cref{lem:survivors}), hence they are dominant.  This is therefore impossible by the linkage principle for Verma $\aslthree$-modules.
\end{proof}
\noindent Because the \bgg\ category $\bpocat{\uu,\vv}$ of level-$\kk$ $\bpminmoduv$-modules admits contragredient duals, it follows from \cref{thm:hwrat} that every extension between $\ihw{\lambda}$ and $\ihw{\mu}$, with $\lambda \ne \mu$, splits.  It is likewise easy to see that a nonsplit self-extension of $\ihw{\lambda}$ requires a nonsemisimple action of $J_0$ or $L_0$ (which is forbidden in $\bpocat{\uu,\vv}$).  $\bpocat{\uu,\vv}$ is thus semisimple and, by \cref{thm:hwclass}, has finitely many isomorphism classes of simple objects.  We may therefore summarise this as follows: $\bpminmoduv$ is \emph{rational in category $\bpocat{\uu,\vv}$}.

In order to extend the \hw\ classification of \cref{thm:hwclass} to twisted \rhw\ $\bpminmoduv$-modules, we need to know when the top space $\top{(\twihw{j,\Delta})} = \twzhu{\twihw{j,\Delta}}$ is \infdim.  The condition for this is beautifully succinct when expressed in terms of surviving weights.

\begin{proposition} \label{prop:fdimtop}
	The top space of the simple twisted \hw\ $\bpminmoduv$-module $\twihw{\lambda}$ is \fdim\ if and only if $\lambda^F_1 = 0$.  When $\lambda^F_1 = 0$, the dimension of this top space is $\lambda^I_1 + 1$.
\end{proposition}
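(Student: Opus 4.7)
The plan is to identify $\twzhu{\twihw{\lambda}}$ explicitly as a highest-weight $\zk$-module and apply the finite-dimensionality criterion of \cref{prop:hwzkmod}. By the bijection in \cref{thm:twzhubij}, the top space of $\twihw{\lambda}$ is a simple $\twzhu{\bpminmoduv}$-module, hence (via \cref{prop:twzhu}) also simple as a $\zk$-module, and necessarily a simple highest-weight module whose highest weight $(j,\Delta)$ is the one specified by \eqref{eq:bptwhwfromsl3}. The top space is therefore $\zihw{j,\Delta}$, and \cref{prop:hwzkmod} reduces the problem to finding the smallest positive integer $n$ with $h_\kk^n(j,\Delta) = 0$ (with the dimension equal to this $n$, when it exists).

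The second step is a direct computation. Substituting \eqref{eq:bptwhwfromsl3} into \eqref{eq:defh} and simplifying, I expect the cubic $h_\kk^n(j,\Delta)$ in $n$ to factor as
\begin{equation*}
h_\kk^n(j,\Delta) = n \bigl(n - \lambda_1 - 1\bigr)\Bigl(n + \lambda_2 + 1 - \tfrac{\uu}{\vv}\Bigr).
\end{equation*}
This can be verified efficiently using Vieta's formulas applied to the quadratic $h_\kk^n/n$: the sum of the two nonzero roots should equal $\lambda_1 - \lambda_2 + \uu/\vv$ and their product should equal $\lambda_1(\uu/\vv - \lambda_2) + \lambda_0 + 2$, using $\lambda_0 + \lambda_1 + \lambda_2 = \kk = \uu/\vv - 3$ freely. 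The algebra is slightly tedious due to many quadratic cross-terms in $\lambda_1,\lambda_2,\kk$, and this is likely the most intricate bookkeeping step of the proof.

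The final step is to analyse when the nonzero roots $n_1 = \lambda_1 + 1$ and $n_2 = \uu/\vv - \lambda_2 - 1$ are positive integers, given that $\lambda = \lambda^I - (\uu/\vv)\lambda^F$ with $\lambda^I \in \pwlat{\uu-3}$, $\lambda^F \in \pwlat{\vv-1}$ and $\lambda^F_0 \ne 0$. Writing $n_1 = (\lambda^I_1 + 1) - (\uu/\vv)\lambda^F_1$ and using $\gcd(\uu,\vv) = 1$ with $0 \le \lambda^F_1 \le \vv-1$, we see that $n_1 \in \ZZ_{\ge1}$ if and only if $\lambda^F_1 = 0$, in which case $n_1 = \lambda^I_1 + 1$. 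Similarly, $n_2 = (\uu/\vv)(\lambda^F_2 + 1) - \lambda^I_2 - 1 \in \ZZ$ forces $\lambda^F_2 = \vv - 1$, which together with $\lambda^F_0 + \lambda^F_1 + \lambda^F_2 = \vv - 1$ and $\lambda^F_i \ge 0$ would give $\lambda^F_0 = \lambda^F_1 = 0$, contradicting $\lambda^F_0 \ne 0$. Consequently, a positive integer root of $h_\kk^n(j,\Delta)$ exists precisely when $\lambda^F_1 = 0$, in which case $n_1 = \lambda^I_1 + 1$ is the only (hence smallest) such root, yielding both assertions of the proposition.
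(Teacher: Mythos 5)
Your proposal is correct and follows essentially the same route as the paper's proof: substitute \eqref{eq:bptwhwfromsl3} into \eqref{eq:defh} to obtain the factorisation $h_{\kk}^n(j,\Delta) = n(n-\lambda_1-1)(n+\lambda_2+1-\tfrac{\uu}{\vv})$, then use $\gcd(\uu,\vv)=1$ together with $\lambda^F \in \pwlat{\vv-1}$ and $\lambda^F_0 \ne 0$ to show the root $\lambda_1+1$ lies in $\ZZ_{\ge1}$ exactly when $\lambda^F_1=0$ (giving dimension $\lambda^I_1+1$) while the other root is never a positive integer. The only cosmetic difference is your preliminary Zhu-theoretic identification of the top space and the Vieta-style consistency check in place of a full simplification, neither of which changes the argument.
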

\begin{proof}
	By \cref{prop:hwzkmod}, $\top{(\twihw{j,\Delta})}$ is \fdim\ if and only if $h_{\kk}^n(j,\Delta) = 0$ for some $n \in \ZZ_{\ge1}$ and, if it is \fdim, then the dimension is the smallest such $n$.  Substituting \eqref{eq:bptwhwfromsl3} into the definition \eqref{eq:defh} of $h_{\kk}^n$ and simplifying, we find that
	\begin{equation}
		h_{\kk}^n(j,\Delta) = n \brac*{n-\lambda_1-1} \brac*{n+\lambda_2+1-\frac{\uu}{\vv}}.
	\end{equation}
	The only possible roots in $\ZZ_{\ge1}$ are thus $n=\lambda_1+1$ and $n=\frac{\uu}{\vv}-\lambda_2-1$.  As $\lambda = \lambda^I - \frac{\uu}{\vv} \lambda^F$, the former requires $\lambda_1 \in \ZZ$ so $\lambda^F_1 = 0$ and $n = \lambda^I_1+1 \in \ZZ_{\ge1}$.  On the other hand, the latter requires $n=-(\lambda^I_2+1) + \frac{\uu}{\vv}(\lambda^F_2+1)$ which is only an integer if $\lambda^F_2 = \vv-1$.  However, this contradicts $\lambda^F \in \pwlat{\vv-1}$ and $\lambda^F_0 \ge 1$ (\cref{lem:survivors}).
\end{proof}
\begin{corollary} \label{cor:howmanyhwms}
	Given $\kk$ as in \cref{assume}, there are (up to isomorphism):
	\begin{itemize}
		\item $\frac{1}{4} (\uu-1)(\uu-2)\vv(\vv-1)$ simple untwisted \hw\ $\bpminmoduv$-modules;
		\item $\frac{1}{2} (\uu-1)(\uu-2)(\vv-1)$ simple twisted \hw\ $\bpminmoduv$-modules with \fdim\ top spaces;
		\item $\frac{1}{4} (\uu-1)(\uu-2)(\vv-1)(\vv-2)$ simple twisted \hw\ $\bpminmoduv$-modules with \infdim\ top spaces;
	\end{itemize}
\end{corollary}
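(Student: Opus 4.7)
The plan is to read off all three counts directly from the bijective classifications already established, namely \cref{thm:hwclass} (simple untwisted and twisted \hw\ $\bpminmoduv$-modules are each in bijection with $\survk$) together with \cref{prop:fdimtop} (a twisted \hwm\ $\twihw{\lambda}$ has \fdim\ top space iff $\lambda^F_1 = 0$). With these in hand, the problem reduces to enumerating the surviving weights $\lambda = \lambda^I - \frac{\uu}{\vv}\lambda^F$, possibly with the extra constraint $\lambda^F_1 = 0$ or $\lambda^F_1 \neq 0$. Since the Dynkin labels of $\lambda^I$ and $\lambda^F$ are independent, every count factors as (number of choices of $\lambda^I$)$\,\times\,$(number of choices of $\lambda^F$).

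For the $\aslthree$-factor: $\pwlat{\ell}$ consists of triples $(\mu_0,\mu_1,\mu_2) \in \NN^3$ with $\mu_0+\mu_1+\mu_2 = \ell$, and a standard stars-and-bars count gives $\abs{\pwlat{\ell}} = \binom{\ell+2}{2} = \frac{(\ell+1)(\ell+2)}{2}$. Thus $\lambda^I$ contributes a factor of $\frac{(\uu-1)(\uu-2)}{2}$ to each of the three counts.

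For the $\lambda^F$-factor, recall that $\lambda \in \survk$ requires $\lambda^F \in \pwlat{\vv-1}$ with $\lambda^F_0 \ge 1$ (by \cref{lem:survivors}, this is equivalent to $\lambda^F_0 \neq 0$ since $\lambda^F_0 \in \NN$). I would handle the three cases by reparametrisation:
\begin{itemize}
\item \emph{All surviving:} substitute $\lambda^F_0 = 1 + a$ with $a \in \NN$ to get $a + \lambda^F_1 + \lambda^F_2 = \vv-2$; hence $\abs{\pwlat{\vv-2}} = \frac{\vv(\vv-1)}{2}$ choices.
\item \emph{\fdim\ top space:} impose additionally $\lambda^F_1 = 0$, leaving $\lambda^F_0 + \lambda^F_2 = \vv-1$ with $\lambda^F_0 \in \{1,\ldots,\vv-1\}$; hence $\vv-1$ choices.
\item \emph{\infdim\ top space:} impose $\lambda^F_1 \ge 1$, substitute $\lambda^F_1 = 1+b$ in addition to $\lambda^F_0 = 1+a$, giving $a+b+\lambda^F_2 = \vv-3$; hence $\abs{\pwlat{\vv-3}} = \frac{(\vv-1)(\vv-2)}{2}$ choices.
\end{itemize}

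Multiplying each $\lambda^F$-count by the $\lambda^I$-count $\frac{(\uu-1)(\uu-2)}{2}$ yields the three claimed formulae. There is no real obstacle here: the work has been done in the preceding results, and what remains is purely combinatorial bookkeeping, with the only (very mild) point to note being that the $\lambda^F_1 = 0$ and $\lambda^F_1 \ge 1$ cases together partition the surviving choices of $\lambda^F$, which is consistent with the identity $\frac{\vv(\vv-1)}{2} = (\vv-1) + \frac{(\vv-1)(\vv-2)}{2}$.
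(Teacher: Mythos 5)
Your proposal is correct and is exactly the argument the paper intends: the corollary is stated without proof precisely because it follows from the bijection of \cref{thm:hwclass} with $\survk$ and the criterion $\lambda^F_1=0$ of \cref{prop:fdimtop} by the elementary stars-and-bars counts you perform, and your three $\lambda^F$-counts and the common $\lambda^I$-factor $\frac{1}{2}(\uu-1)(\uu-2)$ reproduce the stated formulae. Nothing further is needed.
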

\noindent In particular, there are no simple twisted \hw\ $\bpminmoduv$-modules with \infdim\ top spaces when $\vv=2$.  This is in accord with the fact that the $\bpminmod{\uu}{2}$ with $\uu \ge 3$ are rational \cite{AraRat13}.

Recall that the conjugation automorphism $\conjsymb$ of $\bpminmoduv$, given in \eqref{eq:auts}, negates $J_0$ and preserves $L_0$.  At the level of their eigenvalues, this is effected in \eqref{eq:bphwfromsl3} by exchanging the Dynkin labels $\lambda_1$ and $\lambda_2$ of $\lambda$.  The result of this exchange is clearly still a surviving weight, by \cref{lem:survivors}.
\begin{proposition} \label{prop:hwconj}
	For each $\lambda \in \survk$, we have:
	\begin{itemize}
		\item $\conjmod{\ihw{[\lambda_0,\lambda_1,\lambda_2]}} \cong \ihw{[\lambda_0,\lambda_2,\lambda_1]}$.
		\item If $\lambda^F_1 = 0$, then $\conjmod{\twihw{\lambda}} \cong \twihw{\mu}$, where $\mu = [\lambda_2 - \frac{\uu}{\vv}, \lambda_1, \lambda_0 + \frac{\uu}{\vv}]$, hence $\mu^I = [\lambda^I_2,\lambda^I_1,\lambda^I_0]$ and $\mu^F = [\lambda^F_2+1,0,\lambda^F_0-1]$.  Otherwise, $\conjmod{\twihw{[\lambda_0,\lambda_1,\lambda_2]}}$ is not \hw\ (though it is \rhw).
	\end{itemize}
\end{proposition}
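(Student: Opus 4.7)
The plan is to exploit the fact that $\conjsymb$ is a $\ubpvoak$-automorphism, hence an autoequivalence of the module category preserving simplicity, and to identify each conjugate via its top space using the $\zk$-classification of \cref{thm:classzkmod} together with the $\bpminmoduv$-classification of \cref{thm:hwclass}. For the untwisted statement, the action \eqref{eq:auts} shows that $\conjsymb$ preserves $\modealg_>'$ while negating $J_0$ and fixing $L_0$, so $\conjsymb$ sends a \hwv\ of $\ihw{\lambda}$ to a \hwv\ of $\conjmod{\ihw{\lambda}}$ with charge $-j$ and conformal weight $\Delta$.  Since exchanging $\lambda_1 \leftrightarrow \lambda_2$ in \eqref{eq:bphwfromsl3} negates $j$ while fixing $\Delta$, and the conditions defining $\survk$ are symmetric in indices $1$ and $2$, the weight $[\lambda_0,\lambda_2,\lambda_1]$ lies in $\survk$, and \cref{thm:hwclass} gives the identification.

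For the twisted statement I would pass to the top space, noting that $\top{\twihw{\lambda}} \cong \zihw{j,\Delta}$ as a $\zk$-module and that conjugation exchanges it for $\zconjmod{\zihw{j,\Delta}}$ via \eqref{eq:defzconj}. When $\lambda^F_1 = 0$, \cref{prop:fdimtop} gives that $\zihw{j,\Delta}$ is \fdim\ of dimension $N = \lambda^I_1+1$, and the discussion after \cref{prop:hwzkmod} identifies its conjugate as the simple \fdim\ \hw\ $\zk$-module $\zihw{N-j-1,\Delta}$.  Thus $\conjmod{\twihw{\lambda}}$ is \hw\ and, by \cref{thm:hwclass}, equals $\twihw{\mu}$ for a unique $\mu \in \survk$.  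Direct substitution into \eqref{eq:bptwhwfromsl3} will verify that $\mu = [\lambda_2 - \frac{\uu}{\vv},\lambda_1,\lambda_0 + \frac{\uu}{\vv}]$ reproduces the charge $N-j-1$ and the conformal weight $\Delta$; the claimed decomposition $\mu^I = [\lambda^I_2,\lambda^I_1,\lambda^I_0]$ and $\mu^F = [\lambda^F_2+1,0,\lambda^F_0-1]$ is then immediate, and the surviving conditions $\mu^I \in \pwlat{\uu-3}$, $\mu^F \in \pwlat{\vv-1}$ and $\mu^F_0 \ge 1$ follow at once from the analogous conditions on $\lambda$ together with $\lambda^F_0 + \lambda^F_2 = \vv - 1 - \lambda^F_1$.

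When $\lambda^F_1 \ne 0$, instead $\zihw{j,\Delta} = \zvhw{j,\Delta}$ is \infdim\ by \cref{prop:hwzkmod,prop:fdimtop}, so $\zconjmod{\zvhw{j,\Delta}}$ is an \infdim\ simple \lwm\ for $\zk$ on which $G^+$ acts freely; in particular it contains no nonzero vector annihilated by $G^+$.  Since any twisted \hwv\ of a simple lower-bounded module must sit in the top space and be killed by $G^+_0$, no such vector exists, so $\conjmod{\twihw{\lambda}}$ is not \hw.  It is nevertheless \rhw, since its top space is generated as a $\zk$-module by the lowest-weight vector (which is itself a \rhwv\ for the $\bpminmoduv$-module, as it lies in the top space) and this in turn generates the whole module.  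The main obstacle in the proof is purely the algebraic bookkeeping in the finite-dimensional case --- matching the candidate $\mu$ to the conjugate top space via \eqref{eq:bptwhwfromsl3} and simultaneously checking surviving-ness --- while everything else follows formally from the structural results already assembled.
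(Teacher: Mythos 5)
Your proposal is correct and follows essentially the same route as the paper: conjugation negates the charge and fixes the conformal weight, the finite-dimensionality criterion of \cref{prop:fdimtop} decides when the conjugate top space is again a highest-weight $\zk$-module (and hence when the conjugate is highest-weight rather than merely relaxed), and the new surviving weight is pinned down via \eqref{eq:bptwhwfromsl3}. The only cosmetic difference is that you verify the stated candidate $\mu$ directly and check it lies in $\survk$, relying on the uniqueness in \cref{thm:hwclass}, whereas the paper solves for $\mu$, finds two solutions and selects the $w=\wun$ survivor as in the proof of \cref{lem:survivors}.
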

\begin{proof}
	The result of conjugating a simple untwisted \hw\ $\bpminmoduv$-module is clear from the above remarks, because the top spaces are one-dimensional.  For the twisted case, first note that the conjugate of $\twihw{\lambda}$ will be again \hw\ if its top space is \fdim\ (otherwise the top space of the conjugate module will be an \infdim\ \lw\ $\zk$-module).  By \cref{prop:fdimtop}, this requires $\lambda^F_1 = 0$, hence $\lambda_1 = \lambda^I_1$.  Assuming this, let $j$ and $\Delta$ denote the charge and conformal weight, respectively, of the \hwv\ of $\twihw{\lambda}$.  Then, the \hwv\ of $\conjmod{\twihw{\lambda}}$ has charge $\lambda_1-j$ and conformal weight $\Delta$.

	We therefore need to find $\mu \in \survk$ corresponding to these eigenvalues under \eqref{eq:bptwhwfromsl3}.  Solving for $\mu$, we find two solutions:
	\begin{equation}
		\begin{aligned}
			\mu_0 &= \lambda_2-\kk-3, & \mu_1 &= \lambda_1 & &\text{and} & \mu_2 &= \lambda_0+\kk+3, \\
			\text{or} \quad
			\mu_0 &= \kk+1-\lambda_2, & \mu_1 &= -\lambda_0-2 & &\text{and} & \mu_2 &= \kk+1-\lambda_1.
		\end{aligned}
	\end{equation}
	We know from the proof of \cref{lem:survivors} that only one of these is a $w=\wun$ surviving weight and the other is a $w=\wref{1}$ survivor obtained from the $w=\wun$ one by applying the shifted action of $\wref{0}$.  It is easy to check that the first solution is the $w=\wun$ survivor by writing it in the form
	\begin{equation}
		\mu_0 = \lambda^I_2 - \frac{\uu}{\vv} (\lambda^F_2+1), \quad
		\mu_1 = \lambda^I_1 \quad \text{and} \quad
		\mu_2 = \lambda^I_0 - \frac{\uu}{\vv} (\lambda^F_0-1).
	\end{equation}
	Indeed, $\lambda^F_0 \ge 1$ implies that $\mu^I = [\lambda^I_2,\lambda^I_1,\lambda^I_0] \in \pwlat{\uu-3}$, $\mu^F = [\lambda^F_2+1,0,\lambda^F_0-1] \in \pwlat{\vv-1}$ and $\mu^F_0 \ge 1$, hence that $\mu \in \survk$.
\end{proof}

It remains to determine when the spectral flow of a simple \hw\ $\bpminmoduv$-module is another such module.  By \cref{prop:twvsuntw}, it suffices to consider the untwisted case.  Again the key is the \fdim ity of the top space: $\sfmod{}{\ihw{\lambda}}$ will be \hw\ if and only if $\twihw{\lambda} = \sfmod{1/2}{\ihw{\lambda}}$ has a \fdim\ top space, that is if and only if $\lambda^F_1 = 0$ (\cref{prop:fdimtop}).  Indeed, if $\lambda^F_1 = 0$ and $v$ denotes the \hwv\ of $\ihw{\lambda}$, then that of $\sfmod{}{\ihw{\lambda}}$ is easily checked to be $(G^-_{1/2})^{\lambda^I_1} \sfmod{}{v}$.  We compute its charge and conformal weight, then determine the (unique $w=\wun$) surviving weight that gives these eigenvalues under \eqref{eq:bphwfromsl3}, as in the proof of \cref{prop:hwconj}.  We thereby obtain the following \lcnamecref{prop:sfhw}.
\begin{proposition} \label{prop:sfhw}
	If $\lambda \in \survk$ satisfies $\lambda^F_1 = 0$, then $\sfmod{}{\ihw{\lambda}} \cong \ihw{\mu}$, where $\mu = [\lambda_2 - \frac{\uu}{\vv}, \lambda_0 + \frac{\uu}{\vv}, \lambda_1] \in \survk$, hence $\mu^I = [\lambda^I_2,\lambda^I_0,\lambda^I_1]$ and $\mu^F = [\lambda^F_2+1,\lambda^F_0-1,0]$.  If $\lambda^F_1 \ne 0$, then $\sfmod{}{\ihw{\lambda}}$ is not \hw\ (nor \rhw).
\end{proposition}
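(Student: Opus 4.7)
The plan is to use the identification $\sfmod{1}{\ihw{\lambda}} = \sfmod{1/2}{\twihw{\lambda}}$ to reduce the question to an $L_0$--$J_0$ weight analysis of the twisted \hwm\ $\twihw{\lambda}$.  Since every untwisted \rhwv\ is automatically a \hwv\ (see \cref{sec:rhwms}), it suffices to determine when $\sfmod{1}{\ihw{\lambda}}$ has a minimum $L_0$-eigenvalue: any vector realising this minimum is then annihilated by $\modealg_>$ (positive modes strictly lower $L_0$), hence a \hwv, and by simplicity of $\sfmod{1}{\ihw{\lambda}}$ it generates the whole module.

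By \eqref{eq:auts}, a vector $w \in \twihw{\lambda}$ of weight $(q,E)$ yields $\sfmod{1/2}{w}$ with $L_0$-eigenvalue $E + q/2 + \frac{2\kk+3}{24}$ in $\sfmod{1}{\ihw{\lambda}}$.  Write $E = \Delta_{\text{tw}} + m$ with $\Delta_{\text{tw}}$ the top conformal weight of $T = \top{\twihw{\lambda}}$ and $m \in \NN$.  The triangular decomposition \eqref{eq:pbw} realises $\twihw{\lambda}$ as a quotient of the induced module $\twmodealg_< \otimes T$, so the $J_0$-charges at level $m$ are bounded below by $q^T_{\min} - m$; the $-m$ reflects the fact that each $G^-_{-n}$ contributes $-1$ to the charge and $n \ge 1$ to the level.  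Minimising $E + q/2$ over $m \ge 0$ now yields $\Delta_{\text{tw}} + q^T_{\min}/2 + \frac{2\kk+3}{24}$, attained (when $q^T_{\min}$ is finite) at $m=0$.

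By \cref{prop:fdimtop}, $T$ is \fdim\ if and only if $\lambda^F_1 = 0$.  If $\lambda^F_1 \ne 0$, then $T$ is \infdim\ \hw\ with charges $j_{\text{tw}}, j_{\text{tw}}-1, \ldots$, so $q^T_{\min} = -\infty$ and $\sfmod{1}{\ihw{\lambda}}$ is not $L_0$-lower-bounded, hence not \rhw.  If $\lambda^F_1 = 0$, then $\dim T = N := \lambda^I_1 + 1$ and $q^T_{\min} = j_{\text{tw}} - (N-1)$ is realised by the $\zk$-\lwv\ $(G^-_0)^{N-1} w_{\text{top}}$ of $T$.  The $L_0$-minimum of $\sfmod{1}{\ihw{\lambda}}$ is therefore achieved by $\sfmod{1/2}{(G^-_0)^{N-1} w_{\text{top}}} = \sfmod{1}{(G^-_{-1/2})^{\lambda^I_1} v}$, where $v$ is the \hwv\ of $\ihw{\lambda}$, and this is the desired generating \hwv.

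Finally, to identify $\mu$, compute the charge $j' = \frac{\lambda_1 - \lambda_2}{3} - \lambda_1 + \frac{2\kk+3}{3}$ and conformal weight $\Delta'$ of the new \hwv\ via \eqref{eq:auts} and invert \eqref{eq:bphwfromsl3} for the unique $w = \wun$ surviving weight, as in the proof of \cref{prop:hwconj}.  This yields $\mu = [\lambda_2 - \frac{\uu}{\vv}, \lambda_0 + \frac{\uu}{\vv}, \lambda_1]$, and a direct check using $\lambda^F_0 \ge 1$ together with $\lambda^F_0 + \lambda^F_2 = \vv - 1$ confirms $\mu^I = [\lambda^I_2, \lambda^I_0, \lambda^I_1] \in \pwlat{\uu-3}$ and $\mu^F = [\lambda^F_2+1, \lambda^F_0-1, 0] \in \pwlat{\vv-1}$ with $\mu^F_0 \ge 1$, so $\mu \in \survk$.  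The main technical point is the uniform charge bound $q \ge q^T_{\min} - m$ at each level of $\twihw{\lambda}$, which is automatic from the presentation as a quotient of the induced module, so no delicate singular-vector analysis is required.
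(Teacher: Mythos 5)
Your proposal is correct and follows essentially the same route as the paper's own argument (sketched in the paragraph preceding the proposition): reduce to the twisted module $\twihw{\lambda} = \sfmod{1/2}{\ihw{\lambda}}$, use \cref{prop:fdimtop} to tie being \hw\ to the \fdim ity of its top space, identify the new \hwv\ as the spectral flow image of $(G^-_{-1/2})^{\lambda^I_1} v$, and recover $\mu$ by inverting \eqref{eq:bphwfromsl3} as in \cref{prop:hwconj}. The only difference is that you spell out, via the charge-versus-level bound on the induced module, the minimisation argument that the paper compresses into ``easily checked'', which is a legitimate filling-in of the same idea rather than a different proof.
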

\noindent Combining this with the dihedral relation \eqref{eq:dihedral} and \cref{prop:hwconj}, we obtain the following characterisation of the spectral flow orbit of a simple untwisted \hw\ $\bpminmoduv$-module $\ihw{\lambda}$.  We recall from \cref{prop:twvsuntw} that a twisted member $\sfmod{\ell+1/2}{\ihw{\lambda}}$, $\ell \in \ZZ$, of this orbit is \hw\ if and only if its untwisted predecessor $\sfmod{\ell}{\ihw{\lambda}}$ is.
\begin{theorem} \label{thm:sfhworbits}
	Take $\lambda \in \survk$ and define $\mu, \nu, \bar{\mu}, \bar{\nu} \in \survk$ by
	\begin{equation} \label{eq:auxsurv}
		\begin{aligned}
			\mu^I &= [\lambda^I_2,\lambda^I_0,\lambda^I_1], & \mu^F &= [\lambda^F_2+1,\lambda^F_0-1,0], \\
			\bar{\mu}^I &= [\lambda^I_1,\lambda^I_2,\lambda^I_0], & \bar{\mu}^F &= [\lambda^F_1+1,0,\lambda^F_0-1]
		\end{aligned}
		\qquad \text{and} \qquad
		\begin{aligned}
			\nu^I &= [\lambda^I_1,\lambda^I_2,\lambda^I_0], & \nu^F &= [1,\vv-2,0], \\
			\bar{\nu}^I &= [\lambda^I_2,\lambda^I_0,\lambda^I_1], & \bar{\nu}^F &= [1,0,\vv-2].
		\end{aligned}
	\end{equation}
	\begin{itemize}
		\item $\sfmod{}{\ihw{\lambda}}$ is \hw\ if and only if $\lambda^F_1=0$.  In this case, $\sfmod{}{\ihw{\lambda}} \cong \ihw{\mu}$.
		\item $\sfmod{-1}{\ihw{\lambda}}$ is \hw\ if and only if $\lambda^F_2=0$.  In this case, $\sfmod{-1}{\ihw{\lambda}} \cong \ihw{\bar{\mu}}$.
		\item $\sfmod{2}{\ihw{\lambda}}$ is \hw\ if and only if $\lambda^F=[1,0,\vv-2]$.  In this case, $\sfmod{2}{\ihw{\lambda}} \cong \ihw{\nu}$.
		\item $\sfmod{-2}{\ihw{\lambda}}$ is \hw\ if and only if $\lambda^F=[1,\vv-2,0]$.  In this case, $\sfmod{-2}{\ihw{\lambda}} \cong \ihw{\bar{\nu}}$.
		\item For $\abs{\ell} \in \ZZ_{\ge3}$, $\sfmod{\ell}{\ihw{\lambda}}$ is \hw\ if and only if $\vv=2$.  In this case, $\sfmod{\pm3}{\ihw{\lambda}} \cong \ihw{\lambda}$.
	\end{itemize}
\end{theorem}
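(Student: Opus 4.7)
The plan is to iterate \cref{prop:sfhw} in the positive direction and to reduce the negative spectral flows to positive ones via the dihedral relation \eqref{eq:dihedral} together with the untwisted case of \cref{prop:hwconj}. The latter gives $\conjmod{\ihw{[\lambda_0,\lambda_1,\lambda_2]}} \cong \ihw{[\lambda_0,\lambda_2,\lambda_1]}$ and in particular makes conjugation an involution (hence a bijection) on untwisted \hw\ isomorphism classes. Since $\sfsymb^{-\ell} = \conjsymb \sfsymb^{\ell} \conjsymb^{-1}$, it follows that $\sfmod{-\ell}{\ihw{\lambda}}$ is \hw\ if and only if $\sfmod{\ell}{\ihw{\lambda'}}$ is \hw, where $\lambda' = [\lambda_0,\lambda_2,\lambda_1]$ so that $(\lambda')^F = [\lambda^F_0, \lambda^F_2, \lambda^F_1]$. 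This immediately converts each negative case into the corresponding positive case, producing the conditions $\lambda^F_2 = 0$ and $\lambda^F = [1,\vv-2,0]$ together with the output weights $\bar{\mu}$ and $\bar{\nu}$; the symmetric condition $\vv = 2$ governs $|\ell|\ge 3$ in both directions.

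I then tackle the positive cases by composing \cref{prop:sfhw} with itself. The first application requires $\lambda^F_1 = 0$ and outputs $\ihw{\mu}$ with $\mu^F = [\lambda^F_2+1,\, \lambda^F_0-1,\, 0]$. Applying \cref{prop:sfhw} to $\ihw{\mu}$ then requires $\mu^F_1 = 0$, that is $\lambda^F_0 = 1$, which combined with $\lambda^F_1 = 0$ forces $\lambda^F = [1, 0, \vv-2]$; composing the label maps recovers $\nu$ as stated. A third iteration requires the middle entry of the new $F$-label, computed to equal $\vv - 2$, to vanish, forcing $\vv = 2$. When $\vv = 2$ the only admissible $\lambda^F$ is $[1,0,0]$, and three iterations of \cref{prop:sfhw} cyclically permute $\lambda^I$ while preserving $\lambda^F$, so $\sfmod{3}{\ihw{\lambda}} \cong \ihw{\lambda}$; for general $|\ell| \ge 3$ and $\vv = 2$ one reduces $\ell$ modulo $3$.

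The main obstacle is the converse: ruling out that $\sfmod{\ell}{\ihw{\lambda}}$ could be \hw\ while some iterative condition fails. I would address this by analysing the $L_0$-spectrum directly. Writing $v$ for the \hwv\ of $\ihw{\lambda}$, the tower $(G^-_{-1/2})^n v \in \ihw{\lambda}$ has weights $(j-n,\Delta + n/2)$, so the corresponding images in $\sfmod{\ell}{\ihw{\lambda}}$ carry $L_0$-eigenvalues $\Delta + \ell j + \tfrac{2\kk+3}{6}\ell^2 + (\tfrac{1}{2} - \ell) n$. For $\ell \ge 1$ these tend to $-\infty$ unless the tower is truncated in $\ihw{\lambda}$ by a singular vector, and the calculation underlying \cref{prop:sfhw} (compare \cref{prop:fdimtop}) identifies such a truncation precisely with $\lambda^F_1 = 0$. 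Once $\lambda^F_1 = 0$, we have $\sfmod{}{\ihw{\lambda}} \cong \ihw{\mu}$ and may repeat the tower argument in $\ihw{\mu}$, extracting $\mu^F_1 = 0$ as the necessary condition for $\sfmod{\ell}{\ihw{\lambda}} \cong \sfmod{\ell-1}{\ihw{\mu}}$ to remain bounded below, and so on. Thus bounded-below $L_0$-spectrum on $\sfmod{\ell}{\ihw{\lambda}}$ is equivalent to all iterative conditions holding; combined with \cref{thm:zhubij} and \cref{prop:untwzhu} (commutativity of the untwisted Zhu algebra, which forces simple Zhu-modules to be one-dimensional), this shows that $\sfmod{\ell}{\ihw{\lambda}}$ is simple \hw\ precisely in the cases listed.
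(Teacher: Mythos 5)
Your proposal is correct and takes essentially the same route as the paper, which obtains this theorem precisely by iterating \cref{prop:sfhw} and converting the negative flows into positive ones via the dihedral relation \eqref{eq:dihedral} together with \cref{prop:hwconj}. Your tower/$L_0$-unboundedness argument simply makes explicit the ``only if'' step that the paper leaves implicit (compare the remark in the caption of \cref{fig:sforbits} that the remaining spectral flows have conformal weights unbounded below).
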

\noindent Note that when $\vv=2$, every $\lambda \in \survk$ has $\lambda^F = [1,0,0]$.  The spectral flow orbits thus take the form
\begin{equation}
	\cdots \lma \ihw{\lambda} \lma \twihw{\lambda} \lma \ihw{\mu} \lma \twihw{\mu} \lma \ihw{\nu} \lma \twihw{\nu} \lma \ihw{\lambda} \lma \cdots,
\end{equation}
where $\mu$ and $\nu$ are as in \eqref{eq:auxsurv} (with $\mu^F = \nu^F = [1,0,0]$).  We picture the $\vv\ge3$ spectral flow orbits in \cref{fig:sforbits}.
\begin{figure}
	\makebox[\textwidth]{
		\begin{tikzpicture}[scale=0.7]
			\begin{scope}[shift={(-6.5,0)}]
				\node at (0,0.5) {$\cdots \lma$};
			\end{scope}
			\begin{scope}[shift={(-5.5,0)}]
				\draw (1,1.5) -- (0,0.5) -- (-0.33,-0.5);
				\fill[lightgray] (1,1.4) -- (0.1,0.47) -- (-0.23,-0.5) -- (1,-0.5);
			\end{scope}
			\begin{scope}[shift={(-4,0)}]
				\node at (0,0.5) {$\lma$};
			\end{scope}
			\begin{scope}[shift={(-3,0)}]
				\draw (1,1) -- (0,1) -- (-0.5,0);
				\fill[lightgray] (1,0.9) -- (0.05,0.9) -- (-0.4,0) -- (1,0);
			\end{scope}
			\begin{scope}[shift={(-1.5,0)}]
				\node at (0,0.5) {$\lma$};
			\end{scope}
			\begin{scope}[shift={(0,0)}]
				\draw (-1,0) -- (0,1) -- (1,0);
				\fill[lightgray] (-0.9,0) -- (0,0.9) -- (0.9,0);
				\node at (0,-0.5) {$\lambda^F_1, \lambda^F_2 \ne 0$};
			\end{scope}
			\begin{scope}[shift={(1.5,0)}]
				\node at (0,0.5) {$\lma$};
			\end{scope}
			\begin{scope}[shift={(3,0)}]
				\draw (-1,1) -- (0,1) -- (0.5,0);
				\fill[lightgray] (-1,0.9) -- (-0.05,0.9) -- (0.4,0) -- (-1,0);
			\end{scope}
			\begin{scope}[shift={(4,0)}]
				\node at (0,0.5) {$\lma$};
			\end{scope}
			\begin{scope}[shift={(5.5,0)}]
				\draw (-1,1.5) -- (0,0.5) -- (0.33,-0.5);
				\fill[lightgray] (-1,1.4) -- (-0.1,0.47) -- (0.23,-0.5) -- (-1,-0.5);
			\end{scope}
			\begin{scope}[shift={(6.5,0)}]
				\node at (0,0.5) {$\lma \cdots$};
			\end{scope}
			\begin{scope}[shift={(-2.75,-3)}]
				\begin{scope}[shift={(-6.5,0)}]
					\node at (0,0.5) {$\cdots \lma$};
				\end{scope}
				\begin{scope}[shift={(-5.5,0)}]
					\draw (1,1.5) -- (0,0.5) -- (-0.33,-0.5);
					\fill[lightgray] (1,1.4) -- (0.1,0.47) -- (-0.23,-0.5) -- (1,-0.5);
				\end{scope}
				\begin{scope}[shift={(-4,0)}]
					\node at (0,0.5) {$\lma$};
				\end{scope}
				\begin{scope}[shift={(-3,0)}]
					\draw (1,1) -- (0,1) -- (-0.5,0);
					\fill[lightgray] (1,0.9) -- (0.05,0.9) -- (-0.4,0) -- (1,0);
				\end{scope}
				\begin{scope}[shift={(-1.5,0)}]
					\node at (0,0.5) {$\lma$};
				\end{scope}
				\begin{scope}[shift={(0,0)}]
					\draw (-1,0) -- (0,1) -- (1,0);
					\fill[lightgray] (-0.9,0) -- (0,0.9) -- (0.9,0);
					\node at (0,-0.75) {$\begin{matrix} \lambda^F_1=0 \\ \lambda^F_0 \ne 1,\ \lambda^F_2 \ne 0 \end{matrix}$};
				\end{scope}
				\begin{scope}[shift={(1.5,0)}]
					\node at (0,0.5) {$\lma$};
				\end{scope}
				\begin{scope}[shift={(2.75,0)}]
					\draw (-0.75,0) -- (-0.25,1) -- (0.25,1) -- (0.75,0);
					\fill[lightgray] (-0.65,0) -- (-0.2,0.9) -- (0.2,0.9) -- (0.65,0);
				\end{scope}
				\begin{scope}[shift={(4,0)}]
					\node at (0,0.5) {$\lma$};
				\end{scope}
				\begin{scope}[shift={(5.5,0)}]
					\draw (-1,0) -- (0,1) -- (1,0);
					\fill[lightgray] (-0.9,0) -- (0,0.9) -- (0.9,0);
					\node at (0,-0.75) {$\begin{matrix} \lambda^F_2=0 \\ \lambda^F_0 \ne 1,\ \lambda^F_1 \ne 0 \end{matrix}$};
				\end{scope}
				\begin{scope}[shift={(7,0)}]
					\node at (0,0.5) {$\lma$};
				\end{scope}
				\begin{scope}[shift={(8.5,0)}]
					\draw (-1,1) -- (0,1) -- (0.5,0);
					\fill[lightgray] (-1,0.9) -- (-0.05,0.9) -- (0.4,0) -- (-1,0);
				\end{scope}
				\begin{scope}[shift={(9.5,0)}]
					\node at (0,0.5) {$\lma$};
				\end{scope}
				\begin{scope}[shift={(11,0)}]
					\draw (-1,1.5) -- (0,0.5) -- (0.33,-0.5);
					\fill[lightgray] (-1,1.4) -- (-0.1,0.47) -- (0.23,-0.5) -- (-1,-0.5);
				\end{scope}
				\begin{scope}[shift={(12,0)}]
					\node at (0,0.5) {$\lma \cdots$};
				\end{scope}
			\end{scope}
			\begin{scope}[shift={(-5.5,-6.5)}]
				\begin{scope}[shift={(-6.5,0)}]
					\node at (0,0.5) {$\cdots \lma$};
				\end{scope}
				\begin{scope}[shift={(-5.5,0)}]
					\draw (1,1.5) -- (0,0.5) -- (-0.33,-0.5);
					\fill[lightgray] (1,1.4) -- (0.1,0.47) -- (-0.23,-0.5) -- (1,-0.5);
				\end{scope}
				\begin{scope}[shift={(-4,0)}]
					\node at (0,0.5) {$\lma$};
				\end{scope}
				\begin{scope}[shift={(-3,0)}]
					\draw (1,1) -- (0,1) -- (-0.5,0);
					\fill[lightgray] (1,0.9) -- (0.05,0.9) -- (-0.4,0) -- (1,0);
				\end{scope}
				\begin{scope}[shift={(-1.5,0)}]
					\node at (0,0.5) {$\lma$};
				\end{scope}
				\begin{scope}[shift={(0,0)}]
					\draw (-1,0) -- (0,1) -- (1,0);
					\fill[lightgray] (-0.9,0) -- (0,0.9) -- (0.9,0);
					\node at (0,-0.5) {$\lambda^F = [1,0,\vv-2]$};
				\end{scope}
				\begin{scope}[shift={(1.5,0)}]
					\node at (0,0.5) {$\lma$};
				\end{scope}
				\begin{scope}[shift={(2.75,0)}]
					\draw (-0.75,0) -- (-0.25,1) -- (0.25,1) -- (0.75,0);
					\fill[lightgray] (-0.65,0) -- (-0.2,0.9) -- (0.2,0.9) -- (0.65,0);
				\end{scope}
				\begin{scope}[shift={(4,0)}]
					\node at (0,0.5) {$\lma$};
				\end{scope}
				\begin{scope}[shift={(5.5,0)}]
					\draw (-1,0) -- (0,1) -- (1,0);
					\fill[lightgray] (-0.9,0) -- (0,0.9) -- (0.9,0);
					\node at (0,-0.5) {$\lambda^F = [\vv-1,0,0]$};
				\end{scope}
				\begin{scope}[shift={(7,0)}]
					\node at (0,0.5) {$\lma$};
				\end{scope}
				\begin{scope}[shift={(8.25,0)}]
					\draw (-0.75,0) -- (-0.25,1) -- (0.25,1) -- (0.75,0);
					\fill[lightgray] (-0.65,0) -- (-0.2,0.9) -- (0.2,0.9) -- (0.65,0);
				\end{scope}
				\begin{scope}[shift={(9.5,0)}]
					\node at (0,0.5) {$\lma$};
				\end{scope}
				\begin{scope}[shift={(11,0)}]
					\draw (-1,0) -- (0,1) -- (1,0);
					\fill[lightgray] (-0.9,0) -- (0,0.9) -- (0.9,0);
					\node at (0,-0.5) {$\lambda^F = [1,\vv-2,0]$};
				\end{scope}
				\begin{scope}[shift={(12.5,0)}]
					\node at (0,0.5) {$\lma$};
				\end{scope}
				\begin{scope}[shift={(14,0)}]
					\draw (-1,1) -- (0,1) -- (0.5,0);
					\fill[lightgray] (-1,0.9) -- (-0.05,0.9) -- (0.4,0) -- (-1,0);
				\end{scope}
				\begin{scope}[shift={(15,0)}]
					\node at (0,0.5) {$\lma$};
				\end{scope}
				\begin{scope}[shift={(16.5,0)}]
					\draw (-1,1.5) -- (0,0.5) -- (0.33,-0.5);
					\fill[lightgray] (-1,1.4) -- (-0.1,0.47) -- (0.23,-0.5) -- (-1,-0.5);
				\end{scope}
				\begin{scope}[shift={(17.5,0)}]
					\node at (0,0.5) {$\lma \cdots$};
				\end{scope}
			\end{scope}
		\end{tikzpicture}
	}
\caption{A picture of the weights of the three types of spectral flow orbits through a simple \hw\ $\bpminmoduv$-module with $\vv \ge 3$.  The charge increases from left to right, whilst the conformal weight increases from top to bottom.  The given constraints on the Dynkin labels of $\lambda^F$ must be satisfied by the simple untwisted \hw\ $\bpminmoduv$-module $\ihw{\lambda}$ appearing at that point in the orbit.  Note that the unpictured modules in each infinite orbit, indicated by $\cdots$, are neither \hw\ nor \rhw: their conformal weights are unbounded below.} \label{fig:sforbits}
\end{figure}

\subsection{Simple \rhw{} $\bpminmoduv$-modules} \label{sec:rhwmod}

As we noted in \cref{thm:untwubpclass}, every simple untwisted \rhw\ $\bpminmoduv$-module is \hw.  The classification of simple untwisted \rhwms\ was therefore completed in \cref{thm:hwclass}.  It remains to classify the simple twisted \rhwms, specifically those whose top spaces are simple dense $\zk$-modules (those whose top spaces are simple \lw\ $\zk$-modules are conjugates of the simple twisted \hw\ $\bpminmoduv$-modules classified in \cref{thm:hwclass}).

A simple twisted \rhw\ $\ubpvoak$-module $\Mod{M}$ is a $\sbpvoak$-module if and only if its top space $\top{\Mod{M}} = \twzhu{\Mod{M}}$ is annihilated by $\twzhu{\bpmpik}$, where $\bpmpik$ denotes the maximal ideal of $\ubpvoak$.  An obvious consequence of \cref{thm:hwclass} is that $\twzhu{\bpmpik}$ annihilates $\twzhu{\twihw{\lambda}} \cong \zihw{j,\Delta}$, with $j$ and $\Delta$ determined by $\lambda$ as in \eqref{eq:bptwhwfromsl3}, if and only if $\lambda \in \survk$.  We extend this to the simple \rhwms\ $\twrhw{[j],\Delta,\omega}$ of \cref{thm:classtwrhw} using an argument similar to that of \cite[Prop.~4.2]{KawRel19}.

\begin{proposition} \label{thm:rhwclass}
	The irreducible semisimple coherent family $\zsscfam{\Delta,\omega}$ of $\zk$-modules is a $\twzhu{\bpminmoduv}$-module if and only if one of its \emph{\infdim} submodules is.
\end{proposition}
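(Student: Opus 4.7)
The ``only if'' direction is immediate: any submodule of a $\twzhu{\bpminmoduv}$-module is itself a $\twzhu{\bpminmoduv}$-module. For the converse, let $\Mod{N} \subseteq \zsscfam{\Delta,\omega}$ be an infinite-dimensional submodule with $\twzhu{\bpmpik} \cdot \Mod{N} = 0$, and pick a representative $a \in \zk$ of an element of $\twzhu{\bpmpik}$ (viewed inside $\twzhu{\ubpvoak}$ via $\twzhu{\bpminmoduv} \cong \twzhu{\ubpvoak} / \twzhu{\bpmpik}$). The goal is to show that $a$ annihilates all of $\zsscfam{\Delta,\omega}$. Decomposing $a$ into finitely many eigenvectors for the adjoint action of $J$ and treating each piece separately, I may assume $a$ is homogeneous of some integer charge $m$. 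By \cref{prop:zkwtsp}, each weight space of $\zsscfam{\Delta,\omega}$ is one-dimensional, so once a weight basis $\set{v_j}_{j \in \CC}$ is fixed, $a$ acts by $a v_j = \phi_a(j) v_{j+m}$ for some function $\phi_a \colon \CC \to \CC$.

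The technical heart, following the strategy of \cite[Prop.~4.2]{KawRel19}, is to show that $\phi_a$ is polynomial in $j$. Using the triangular decomposition \eqref{eq:zktridec}, one reduces to monomials of the form $(G^-)^p P(J,L) (G^+)^n$ with $n - p = m$ and $P \in \CC[J,L]$. On those cosets $[j] \in \CC/\ZZ$ (all but at most three) where $\zrhw{[j],\Delta,\omega}$ is already simple, the basis can be normalized so that $G^{\pm}$ act as $v_j \mapsto v_{j \pm 1}$. The eigenvalues of $J$, $L$ and of the central element $\Omega$ from \eqref{eq:DefCas} on $v_j$ are then polynomial in $j$, since for a one-dimensional weight space such an eigenvalue coincides with the trace, and the coherent-family axioms force this trace to be polynomial in $j$ for every element of $\ck = \CC[J,L,\Omega]$. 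Because $\Omega$ encodes $G^+ G^- + G^- G^+$ via \eqref{eq:DefCas}, the action of any of the monomials above on $v_j$ yields a polynomial in $j$. This gives polynomiality of $\phi_a$ on a Zariski-dense subset of $\CC$, which then extends to all of $\CC$.

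Finally, any infinite-dimensional submodule $\Mod{N}$ of the semisimple coherent family decomposes as a direct sum of simple submodules, at least one of which must itself be infinite-dimensional; such a summand is a simple dense module, an infinite-dimensional \hwm, or an infinite-dimensional \lwm, and in every case it contains weight vectors at infinitely many values of $j$ all lying in a single coset $[j_0] \in \CC/\ZZ$. The hypothesis $a \cdot \Mod{N} = 0$ therefore forces $\phi_a$ to vanish on an infinite arithmetic progression, so $\phi_a \equiv 0$ and $a$ annihilates all of $\zsscfam{\Delta,\omega}$.

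The step I expect to be the main obstacle is the polynomiality of $\phi_a$. The coherent-family axioms supply polynomial control only over traces (equivalently eigenvalues) of elements of $\ck$, whereas a homogeneous element of $\zk$ of nonzero charge lies outside $\ck$. Translating centralizer control into polynomial action data for charge-$m$ elements will require a careful choice of normalizations on the simple-dense components together with a limiting argument to handle the exceptional non-simple cosets uniformly.
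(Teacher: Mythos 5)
Your broad strategy (an action that is polynomial in $j$, vanishing at infinitely many points, must vanish identically) is indeed the engine of the paper's proof, but the step you yourself flag as the obstacle is a genuine gap, and it occurs exactly where the paper does something different. The coherent-family axioms only control elements of the centraliser $\ck = \CC[J,L,\Omega]$: such an element preserves each one-dimensional weight space, so it acts there by its trace, which is polynomial in $j$, and this holds on \emph{every} weight space of $\zsscfam{\Delta,\omega}$, including those in the exceptional (semisimplified) summands. For a homogeneous element of nonzero charge you have no such control. Your proposed normalisation does not even exist: $G^+$ and $G^-$ cannot both act as unit shifts, since $\comm{G^+}{G^-} = f_{\kk}(J,L)$ acts on $v_j$ by the nonconstant polynomial $f_{\kk}(j,\Delta)$. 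One can fix, say, $G^-$ to be a unit shift on the simple dense summands and deduce a polynomial formula for your $\phi_a$ there, but this formula does not transfer to the exceptional cosets, where $G^{\pm}$ have nontrivial kernels at the junctions between composition factors of $\semi{\zrhw{j,\Delta,\omega}}$; no ``limiting argument'' is supplied, and constructing one amounts to redoing Mathieu's localisation analysis. This gap bites at both ends of your argument: the annihilated \infdim\ submodule in the intended application is the \hw\ submodule $\zihw{j,\Delta} \subset \semi{\zrhw{j,\Delta,\omega}}$, which lives in an exceptional coset, so you cannot conclude that $\phi_a$ vanishes at infinitely many points; and conversely, even granting $\phi_a \equiv 0$, annihilating the simple summands does not by itself annihilate the exceptional summands, which you need in order to conclude that all of $\zsscfam{\Delta,\omega}$ is a $\twzhu{\bpminmoduv}$-module.

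The missing idea is the reduction the paper borrows from \cite[Lem.~4.1]{KawRel19}: one never works with general elements of $\twzhu{\bpmpik}$, but only with the commutative subalgebra $\ak = \twzhu{\bpmpik} \cap \ck$, together with the criterion that a simple weight $\twzhu{\ubpvoak}$-module is a $\twzhu{\bpminmoduv}$-module if and only if $\ak$ annihilates some nonzero vector of it. For $a \in \ak \subset \CC[J,L,\Omega]$ the polynomial action $p_a(j,\Delta,\omega)$ on every one-dimensional weight space is immediate from the coherent-family definition and \cref{prop:zkwtsp}, so the hypothesis gives infinitely many zeros of $p_a(\blank,\Delta,\omega)$, hence $p_a \equiv 0$, hence $\ak$ kills all of $\zsscfam{\Delta,\omega}$; the criterion then upgrades this to every simple summand, and so the whole family, being a $\twzhu{\bpminmoduv}$-module. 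Without this intersection-with-$\ck$ device (or a worked-out substitute for polynomiality of nonzero-charge matrix coefficients across the exceptional cosets), your argument does not close.
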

\begin{proof}
	Obviously, $\zsscfam{\Delta,\omega}$ being a $\twzhu{\bpminmoduv}$-module implies that every one of its submodules are too, in particular the \infdim\ ones.

	To prove the converse, we lean heavily on the general methodology developed in \cite{KawRel19} to classify \rhwms\ for affine \voas, though the argument here is easier because the relevant coherent families have one-dimensional weight spaces.  The first step is to consider the subalgebra $\ak = \twzhu{\bpmpik} \cap \ck$, where we recall that $\ck = \CC[J,L,\Omega]$ (\cref{lem:idcent}).  The relevance is that a simple weight $\twzhu{\ubpvoak}$-module $\Mod{M}$ is a $\twzhu{\bpminmoduv}$-module if and only if $\ak$ annihilates some nonzero element of $\Mod{M}$.  This fact is proved in exactly the same way that \cite[Lem.~4.1]{KawRel19} is (see also \cite{AdaSom94}) and so we omit the details.

	We next note that the action of $\ak$ preserves each of the one-dimensional weight spaces of the irreducible semisimple coherent family $\zsscfam{\Delta,\omega}$ and that this action is polynomial: for each $a \in \ak \subset \CC[J,L,\Omega]$, there is a polynomial $p_a$ in three variables such that $a$ acts on the weight space $\zsscfam{\Delta,\omega}(j,\Delta,\omega)$ as multiplication by $p_a(j,\Delta,\omega)$.  Since $\Delta$ and $\omega$ are fixed by the choice of coherent family, we may regard $p_a$ as a single-variable polynomial.

	If we now assume that one of the \infdim\ submodules of $\zsscfam{\Delta,\omega}$ is a $\twzhu{\bpminmoduv}$-module, then it is annihilated by $\twzhu{\bpmpik}$ and thus by $\ak$.  Thus, for every $a \in \ak$, we have $p_a(j,\Delta,\omega) = 0$ for infinitely many distinct values of $j$, whence $p_a(\blank,\Delta,\omega)$ must be the zero polynomial.  But, then $a$ annihilates all of $\zsscfam{\Delta,\omega}$, whence $\zsscfam{\Delta,\omega}$ is a $\twzhu{\bpminmoduv}$-module.
\end{proof}

Note that the top space of every (simple) $\twrhw{[j],\Delta,\omega}$ embeds into some irreducible semisimple coherent family and that every such family has an \infdim\ \emph{\hw} submodule $\zihw{j',\Delta}$, by \cref{prop:cfamuniv}.  From \cref{thm:hwclass}, we have classified all the simple \hw\ $\bpminmoduv$-modules in terms of surviving weights.  \cref{thm:rhwclass} thus determines the irreducible semisimple coherent families that are $\twzhu{\bpminmoduv}$-modules and in this way we find all the $\twrhw{[j],\Delta,\omega}$ that are simple $\bpminmoduv$-modules.  Algorithmically, this classification proceeds as follows.

Let $\infwtsk$ denote the set of ($w=\wun$) admissible $\aslthree$-weights $\lambda$ of level $\kk$ with $\lambda^F_0 \ne 0$, so that $\lambda \in \survk$ (\cref{lem:survivors}), and $\lambda^F_1 \ne 0$, so that $\twihw{\lambda}$ has an \infdim\ top space (\cref{prop:fdimtop}).  Then, $\infwtsk$ parametrises the isomorphism classes of the simple \hw\ $\bpminmoduv$-modules with \infdim\ top spaces.
\begin{itemize}
	\item For each $\lambda \in \infwtsk$, compute $j$ and $\Delta$ using \eqref{eq:bptwhwfromsl3}, then substitute into \eqref{eq:defomegapm} to compute $\omega$:
	\begin{equation} \label{eq:bprhwfromsl3}
		\omega = \omega^+_{j,\Delta}
		= -\frac{2}{27} \brac*{\lambda_1 - \lambda_2 + \kk + 3} \brac*{2 \lambda_1 + \lambda_2 - \kk} \brac*{\lambda_1 + 2 \lambda_2 - 2 \kk - 3}.
	\end{equation}
	This gives the eigenvalues of $J$, $L$ and $\Omega$ on the \hwv\ of $\top{(\twihw{\lambda})}$.
	\item Then, the $\twrhw{[j'],\Delta,\omega}$ are, for all $[j'] \in \CC/\ZZ$ satisfying $\omega^+_{i,\Delta} \ne \omega$ for every $i \in [j']$, simple \rhw\ $\bpminmoduv$-modules (by \cref{thm:classtwrhw,thm:rhwclass}) and all such modules are obtained, up to isomorphism, in this way.
\end{itemize}
As with the \hw\ $\bpminmoduv$-modules classified in \cref{sec:hwmod}, it is convenient to let $\twrhw{[j],\lambda} = \twrhw{[j],\Delta,\omega}$, where $\Delta$ and $\omega$ are given in terms of $\lambda$ by \eqref{eq:bptwhwfromsl3} and \eqref{eq:bprhwfromsl3}, respectively.

We may now summarise this classification as follows.
\begin{theorem} \label{thm:simplerhwclass}
	Let $\kk$ be as in \cref{assume} and let $j$ be such that $\twrhw{[j],\lambda}$ is simple.  Then, $\twrhw{[j],\lambda}$ is a (twisted) $\bpminmoduv$-module if and only if $\lambda \in \infwtsk$.
\end{theorem}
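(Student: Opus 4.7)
The plan is to invoke \cref{thm:rhwclass} to reduce the statement to one about infinite-dimensional submodules of the irreducible semisimple coherent family $\zsscfam{\Delta,\omega}$, where $\Delta$ and $\omega$ are determined from $\lambda$ by \eqref{eq:bptwhwfromsl3} and \eqref{eq:bprhwfromsl3}. By \cref{thm:twzhubij}, $\twrhw{[j],\lambda}$ is a $\bpminmoduv$-module if and only if its top space $\zrhw{[j],\Delta,\omega}$ is a $\twzhu{\bpminmoduv}$-module; since this top space is a direct summand of $\zsscfam{\Delta,\omega}$, this is in turn equivalent to $\zsscfam{\Delta,\omega}$ itself being a $\twzhu{\bpminmoduv}$-module, and \cref{thm:rhwclass} then further reduces this to the existence of an infinite-dimensional $\twzhu{\bpminmoduv}$-submodule of $\zsscfam{\Delta,\omega}$.

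For the ``if'' direction, suppose $\lambda \in \infwtsk$. Then \cref{thm:hwclass} produces the simple twisted highest-weight $\bpminmoduv$-module $\twihw{\lambda}$, and \cref{prop:fdimtop}, applied with $\lambda^F_1 \ne 0$, ensures that its top space is infinite-dimensional. This top space is a simple infinite-dimensional highest-weight $\zk$-module with precisely the $(\Delta,\omega)$-data stipulated by \eqref{eq:bptwhwfromsl3} and \eqref{eq:bprhwfromsl3}, so by the uniqueness in \cref{prop:cfamuniv} it embeds into $\zsscfam{\Delta,\omega}$. This realises an infinite-dimensional $\twzhu{\bpminmoduv}$-submodule of $\zsscfam{\Delta,\omega}$, and the reduction above concludes that $\twrhw{[j],\lambda}$ is a $\bpminmoduv$-module.

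For the converse, assume $\twrhw{[j],\lambda}$ is a $\bpminmoduv$-module, so that $\zsscfam{\Delta,\omega}$ is a $\twzhu{\bpminmoduv}$-module by the reduction. By \cref{prop:cfamuniv}, $\zsscfam{\Delta,\omega}$ contains an infinite-dimensional highest-weight submodule $\zihw{j_0,\Delta}$, which is therefore itself a $\twzhu{\bpminmoduv}$-module. Twisted Zhu induction then yields a simple twisted highest-weight $\bpminmoduv$-module $\twihw{j_0,\Delta}$ with infinite-dimensional top space, and the combination of \cref{thm:hwclass} and \cref{prop:fdimtop} identifies this as $\twihw{\mu}$ for some $\mu \in \infwtsk$ whose $(\Delta,\omega)$-data agrees with that of $\lambda$. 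The main obstacle, and the only nontrivial point beyond assembling the preceding machinery, is to pass from the existence of such a $\mu \in \infwtsk$ to the conclusion $\lambda \in \infwtsk$. This is handled by exploiting the free $\ZZ_3$-action on $\infwtsk$ generated by $\lambda \mapsto [\lambda_2-\tfrac{\uu}{\vv},\lambda_0,\lambda_1+\tfrac{\uu}{\vv}]$ (\cref{lem:Z3action}) and the fact, recorded in the algorithmic discussion preceding the theorem, that the parametrisation $\lambda \mapsto (\Delta,\omega)$ is three-to-one on $\infwtsk$ with fibres given precisely by $\ZZ_3$-orbits: any $\lambda$ with the same $(\Delta,\omega)$-data as $\mu$ must therefore belong to the $\ZZ_3$-orbit of $\mu$, which lies entirely inside $\infwtsk$.
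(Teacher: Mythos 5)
Your reduction to coherent families and your ``if'' direction reproduce the paper's argument faithfully: the equivalence of ``$\twrhw{[j],\lambda}$ is a $\bpminmoduv$-module'' with ``$\zsscfam{\Delta,\omega}$ is a $\twzhu{\bpminmoduv}$-module'' (via \cref{thm:twzhubij} and \cref{thm:rhwclass}), and the embedding of the infinite-dimensional top space of $\twihw{\lambda}$, $\lambda\in\infwtsk$, into $\zsscfam{\Delta,\omega}$ (via \cref{thm:hwclass}, \cref{prop:fdimtop} and \cref{prop:cfamuniv}), are exactly the steps the paper assembles before stating the theorem.

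However, the last step of your ``only if'' direction is circular as written. You correctly produce a weight $\mu\in\infwtsk$ with the same $(\Delta,\omega)$-data as $\lambda$, and then conclude $\lambda\in\infwtsk$ by invoking the fact that $\lambda\mapsto(\Delta,\omega)$ is three-to-one on $\infwtsk$ with fibres equal to $\ZZ_3$-orbits (\cref{lem:Z3action}). But that statement only describes the fibres of the map \emph{restricted to} $\infwtsk$; it says nothing about a weight not yet known to lie in $\infwtsk$ --- for instance some $\lambda\in\survk$ with $\lambda^F_1=0$ --- sharing its $(\Delta,\omega)$-data with $\mu$. In other words, you use membership of $\lambda$ in $\infwtsk$ (implicitly, through the fibre description) to deduce membership of $\lambda$ in $\infwtsk$, and this is precisely the point you single out as the only nontrivial one. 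The gap is closable, but it needs an extra argument: since $\omega=\omega^+_{j,\Delta}$ with $j$ given by \eqref{eq:bptwhwfromsl3}, $j$ is a root of the cubic $\omega^+_{i,\Delta}-\omega$; the proof of \cref{lem:Z3action} shows that the $\ZZ_3$-orbit of $\mu$ yields three mutually distinct infinite-dimensional highest-weight submodules of $\zsscfam{\Delta,\omega}$, hence three distinct roots of this cubic, which exhausts its roots; so $(j,\Delta)$ must coincide with the highest weight of one of these submodules, and the injectivity of the parametrisation of simple twisted highest-weight modules by surviving weights (\cref{thm:hwclass}) then forces $\lambda$ to lie in the $\ZZ_3$-orbit of $\mu$, hence in $\infwtsk$. (Equivalently, if $\lambda^F_1=0$ then $\zihw{j,\Delta}$ is finite-dimensional by \cref{prop:fdimtop}, contradicting the fact that every root of the cubic supports an infinite-dimensional highest-weight submodule.) Without an argument of this kind, your proof does not exclude the possibility that a weight outside $\infwtsk$ labels one of the families of simple relaxed $\bpminmoduv$-modules.
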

\noindent In fact, we shall see that a complete classification does not require considering every possible weight $\lambda \in \infwtsk$.  First however, we recall from \cref{cor:howmanyhwms} that there are no \hw\ $\bpminmoduv$-modules with \infdim\ top spaces, hence $\infwtsk = \varnothing$, when $\vv=2$.
\begin{corollary} \label{cor:norelaxed}
	Let $\kk$ be as in \cref{assume} with $\vv=2$.  Then, every simple (twisted) \rhw\ $\bpminmoduv$-module is \hw.
\end{corollary}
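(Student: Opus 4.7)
The plan is to reduce the statement to two observations: that the parametrising set $\infwtsk$ is empty when $\vv=2$, and that conjugation does not produce genuinely new (non\hw) modules in this regime.

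First, I would unpack the constraint $\vv=2$ on the fractional part. Since $\lambda^F \in \pwlat{\vv-1} = \pwlat{1}$, the Dynkin labels of $\lambda^F$ are nonnegative integers summing to $1$, so $\lambda^F$ is one of the three fundamental weights $[1,0,0]$, $[0,1,0]$, $[0,0,1]$. The surviving condition $\lambda^F_0 \ge 1$ (from \cref{lem:survivors}) then forces $\lambda^F = [1,0,0]$ for every $\lambda \in \survk$. In particular, $\lambda^F_1 = 0$ throughout $\survk$.

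From $\lambda^F_1=0$ and \cref{prop:fdimtop}, every simple twisted \hw\ $\bpminmoduv$-module has a \fdim\ top space, so $\infwtsk = \varnothing$ (consistent with \cref{cor:howmanyhwms}). Then \cref{thm:simplerhwclass} immediately rules out the existence of any simple twisted $\bpminmoduv$-module of the form $\twrhw{[j],\lambda}$ with dense top space. Referring back to the four families in \cref{thm:classtwrhw}, this eliminates the fourth family entirely, and also eliminates the second and third (\infdim\ Verma \hwms\ and their conjugates), since those would have \infdim\ top spaces incompatible with $\infwtsk = \varnothing$.

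It remains to address simple twisted \rhw\ $\bpminmoduv$-modules in the third family, the \fdim\ conjugate modules $\conjmod{\twihw{\lambda}}$. For these I would invoke \cref{prop:hwconj}: since $\lambda^F_1 = 0$ for every $\lambda \in \survk$ (as shown in the first step), the \lcnamecref{prop:hwconj} guarantees $\conjmod{\twihw{\lambda}} \cong \twihw{\mu}$ for some $\mu \in \survk$, hence is itself a \hwm. The only surviving class of simple twisted \rhwms\ is therefore the \fdim\ \hwms, which completes the argument.

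No real obstacle arises here; the work was done in establishing \cref{prop:fdimtop,prop:hwconj,thm:simplerhwclass}. The only subtle point to flag is the conjugate case, which could superficially look like it might produce non\hw\ \lwms, but \cref{prop:hwconj} rules this out precisely in the $\vv=2$ regime.
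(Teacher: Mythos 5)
Your proposal is correct and follows essentially the same route as the paper: the paper simply recalls (via \cref{cor:howmanyhwms}, which rests on exactly your observation that $\vv=2$ forces $\lambda^F=[1,0,0]$ and hence \fdim\ top spaces by \cref{prop:fdimtop}) that $\infwtsk=\varnothing$, and then the corollary follows from \cref{thm:simplerhwclass} together with the classification of \cref{thm:classtwrhw}. The only blemish is cosmetic: what you call ``the third family'' in your final step is really the conjugates of the \fdim-top \hwms, which already appear among the \hwms\ of \cref{thm:classtwrhw} (the genuine third family, conjugates of \infdim\ Vermas, you had correctly eliminated earlier), so that step is harmless redundancy rather than a gap.
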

\noindent Again, this is consistent with the fact \cite{AraRat13} that $\bpminmod{\uu}{2}$ is rational for every $\uu \in 2\NN+3$.  It is therefore convenient to slightly refine \cref{assume} as follows.
\begin{assumption} \label{assume2}
	In what follows, we shall restrict to fractional levels $\kk = -3 + \frac{\uu}{\vv}$ with $\uu,\vv \ge 3$.
\end{assumption}
\noindent The levels of \cref{assume2} are also known as nondegenerate admissible levels in the literature.  We shall understand that \cref{assume2} is in force for the rest of this \lcnamecref{sec:sbpmods}.

Given an irreducible semisimple coherent family $\zsscfam{\Delta,\omega}$ of $\twzhu{\bpminmoduv}$-modules, we ask how many inequivalent \infdim\ \hw\ submodules it possesses.  By \cref{prop:zkdense}, the direct summands $\zrhw{[j],\Delta,\omega}$ are not simple for at least one, and at most three, $[j] \in \CC/\ZZ$ and each nonsimple summand has precisely one \infdim\ \hw\ submodule.  The answer to our question is therefore either one, two or three.  In fact, for $\kk$ as in \cref{assume2}, the answer is always three.
\begin{lemma} \label{lem:Z3action}
	If $\kk$ is as in \cref{assume2}, then each irreducible semisimple coherent family $\zsscfam{\Delta,\omega}$ of $\twzhu{\bpminmoduv}$-modules has precisely three \infdim\ \hw\ submodules.  The map $\infwtsk \to \CC^2$ given by $\lambda \mapsto (\Delta,\omega)$ is thus $3$-to-$1$.  Moreover, the highest weights $\lambda = \lambda^I - \frac{\uu}{\vv} \lambda^F$ of these three submodules are related by the following $\ZZ_3$-action:
	\begin{equation} \label{eq:Z3action}
		\begin{gathered}
			\cdots \longmapsto [\lambda_0,\lambda_1,\lambda_2] \longmapsto [\lambda_2 - \tfrac{\uu}{\vv},\lambda_0,\lambda_1 + \tfrac{\uu}{\vv}] \longmapsto [\lambda_1,\lambda_2 - \tfrac{\uu}{\vv},\lambda_0 + \tfrac{\uu}{\vv}] \longmapsto \cdots, \\
			\cdots \longmapsto [\lambda^I_0,\lambda^I_1,\lambda^I_2] \longmapsto [\lambda^I_2,\lambda^I_0,\lambda^I_1] \longmapsto [\lambda^I_1,\lambda^I_2,\lambda^I_0] \longmapsto \cdots, \\
			\cdots \longmapsto [\lambda^F_0,\lambda^F_1,\lambda^F_2] \longmapsto [\lambda^F_2+1,\lambda^F_0,\lambda^F_1-1] \longmapsto [\lambda^F_1,\lambda^F_2+1,\lambda^F_0-1] \longmapsto \cdots.
		\end{gathered}
	\end{equation}
\end{lemma}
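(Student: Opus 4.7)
The plan is to verify that the $\ZZ_3$-action in the statement is well-defined on $\infwtsk$, preserves the map $\lambda \mapsto (\Delta(\lambda),\omega(\lambda))$ of \eqref{eq:bptwhwfromsl3}--\eqref{eq:bprhwfromsl3}, and produces three $j$-values lying in pairwise-distinct cosets of $\CC/\ZZ$. Combined with the fact that $i \mapsto \omega - \omega^+_{i,\Delta}$ is a cubic polynomial in $i$, these observations force the 3-to-1 conclusion. The equivalence with the count of inf-dim highest-weight submodules of $\zsscfam{\Delta,\omega}$ is immediate from \cref{prop:fdimtop} and \cref{thm:hwclass}: such submodules correspond bijectively to elements of the preimage of $(\Delta,\omega)$ under $\infwtsk \to \CC^2$.

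For well-definedness, I would read off the integer and fractional parts of the image $\mu = [\lambda_2 - \tfrac{\uu}{\vv}, \lambda_0, \lambda_1 + \tfrac{\uu}{\vv}]$, obtaining $\mu^I = [\lambda^I_2,\lambda^I_0,\lambda^I_1]$ and $\mu^F = [\lambda^F_2+1,\lambda^F_0,\lambda^F_1-1]$. The hypothesis $\lambda^F_0,\lambda^F_1 \geq 1$ (from $\lambda \in \infwtsk$) places $\mu^I \in \pwlat{\uu-3}$, $\mu^F \in \pwlat{\vv-1}$ and $\mu^F_0,\mu^F_1 \geq 1$, so $\mu \in \infwtsk$; iterating gives the third orbit member $\nu \in \infwtsk$. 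For invariance of $\omega$, the level constraint $\lambda_0 + \lambda_1 + \lambda_2 = \kk$ rewrites \eqref{eq:bprhwfromsl3} as
\begin{equation*}
\omega(\lambda) = \tfrac{2}{27}(2\lambda_1 + \lambda_0 + 3)(\lambda_1 - \lambda_0)(2\lambda_0 + \lambda_1 + 3),
\end{equation*}
after which direct substitution shows that the $\ZZ_3$-action cyclically permutes the three factors up to signs whose product is $+1$. Invariance of $\Delta$ is a parallel polynomial identity extracted from \eqref{eq:bptwhwfromsl3} and the same level constraint.

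Direct computation of the pairwise differences of $j$-values, again using the level constraint, yields $j(\lambda) - j(\mu) = \lambda_1 + 1$, $j(\mu) - j(\nu) = \lambda_0 + 1$ and $j(\nu) - j(\lambda) = \lambda_2 + 1 - \tfrac{\uu}{\vv}$, whose sum vanishes as required. Written in terms of integer and fractional parts, the three bounds $1 \leq \lambda^F_0,\lambda^F_1,\lambda^F_2+1 \leq \vv-1$ (the third requiring $\vv \geq 3$) together with $\gcd(\uu,\vv)=1$ guarantee that none of these differences is integral. Hence $j(\lambda), j(\mu), j(\nu)$ lie in three pairwise-distinct $\ZZ$-cosets and the action is free. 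By \cref{prop:zkdense}, each such coset indexes a nonsimple direct summand of $\zsscfam{\Delta,\omega}$ with precisely one inf-dim highest-weight composition factor; since the cubic $i \mapsto \omega - \omega^+_{i,\Delta}$ admits at most three roots in $\CC$, exactly three such summands occur. Finally, by the last bullet of \cref{lem:survivors}, each $(j,\Delta)$ pair arises from at most one $w=\wun$ admissible $\aslthree$-weight, so $\lambda,\mu,\nu$ exhaust the preimage of $(\Delta,\omega)$ in $\infwtsk$, giving the 3-to-1 statement.

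The main obstacle is the $\Delta$-invariance identity: unlike $\omega$, whose factorised form makes invariance manifest, $\Delta$ is a sum of rational expressions in $\lambda_1,\lambda_2,\kk$, and the calculation must be carried out directly. The key manipulation is to substitute $\kk = \lambda_0 + \lambda_1 + \lambda_2$ throughout both $\Delta(\lambda)$ and $\Delta(\mu)$, collect the difference of numerators over the common denominator $12(\kk+3)$, and verify that it factorises as $-6(\lambda_1+1)(\kk+3)$, so that the first-term contribution to $\Delta(\lambda)-\Delta(\mu)$ is $-\tfrac{1}{2}(\lambda_1+1)$, which is cancelled exactly by the corresponding difference of the second terms. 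This is the most computationally involved single step, but it remains a tractable polynomial identity.
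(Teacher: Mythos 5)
Your proposal is correct and follows essentially the same route as the paper: check that the $\ZZ_3$-action preserves $\infwtsk$, verify $(\Delta,\omega)$-invariance by direct substitution using the level constraint (your factorised form of $\omega$ and the numerator identity $-6(\lambda_1+1)(\kk+3)$ both check out), and combine distinctness of the three orbit members with the at-most-three bound coming from \cref{prop:zkdense} and the cubic $\omega-\omega^+_{i,\Delta}$. The only real difference is cosmetic: you establish distinctness by computing that the three $j$-values differ by $\lambda_1+1$, $\lambda_0+1$ and $\lambda_2+1-\frac{\uu}{\vv}$, none of which is an integer, whereas the paper notes that coincidence of the highest weights would force $3\mid\uu$ and $3\mid\vv$, contradicting $\gcd\set{\uu,\vv}=1$.
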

\begin{proof}
	It is easy to see from \eqref{eq:Z3action} that if $\lambda \in \infwtsk$, then so do its images under the $\ZZ_3$-action.  The three \hwms\ corresponding to the $\ZZ_3$-orbit are thus $\bpminmoduv$-modules with \infdim\ top spaces if any is.  Moreover, substituting $\lambda_1 \mapsto \lambda_0 = \kk - \lambda_1 - \lambda_2$ and $\lambda_2 \mapsto \lambda_1 + \kk+3$ into \eqref{eq:bptwhwfromsl3} and \eqref{eq:bprhwfromsl3} shows that $\Delta$ and $\omega$ are invariant under this $\ZZ_3$-action.  The three \hwms\ therefore arise as submodules of the same irreducible semisimple coherent family.  These modules are mutually inequivalent because their highest weights can only coincide if $\lambda^I_0 = \lambda^I_1 = \lambda^I_2 = \frac{\uu-3}{3}$ and $\lambda^F_0 = \lambda^F_1 = \lambda^F_2 + 1 = \frac{\vv}{3}$.  But, this requires both $\uu$ and $\vv$ to be divisible by $3$.
\end{proof}

From \cref{cor:howmanyhwms}, we now have a precise count of the number of irreducible semisimple coherent families of $\twzhu{\bpminmoduv}$-modules.  Each direct summand of such a family is the top space of a simple twisted \rhw\ $\bpminmoduv$-module, by \cref{thm:twzhubij}.  With \cref{eq:auts,lem:idcent,thm:rhwclass}, we have the following \lcnamecref{quotientsofcoherents}.
\begin{theorem} \label{quotientsofcoherents}
	Let $\kk$ be as in \cref{assume2}.  Then:
	\begin{itemize}
		\item There are $\frac{1}{3} \abs*{\infwtsk} = \frac{1}{12} (\uu-1)(\uu-2)(\vv-1)(\vv-2)$ irreducible semisimple coherent families of $\twzhu{\bpminmoduv}$-modules $\zsscfam{\Delta,\omega}$, up to isomorphism.
		\item The families of twisted \rhw\ $\bpminmoduv$-modules $\twrhw{[j],\lambda} = \twrhw{[j],\Delta,\omega}$ are in $1$-to-$1$ correspondence with $\infwtsk / \ZZ_3$, where $\ZZ_3$ acts freely as in \eqref{eq:Z3action}.
		\item For each $\lambda \in \infwtsk$, the twisted \rhwm\ $\twrhw{[j],\lambda}$ is a simple $\bpminmoduv$-module for all cosets $[j] \in \CC/\ZZ$ except three, namely the three distinct cosets that contain a root $i$ of the polynomial $\omega^+_{i,\Delta} - \omega$.
		\item The conjugate of the simple twisted \rhw\ $\bpminmoduv$-module $\twrhw{[j],\Delta,\omega}$ is $\conjmod{\twrhw{[j],\Delta,\omega}} \cong \twrhw{[-j],\Delta,-\omega}$.
	\end{itemize}
\end{theorem}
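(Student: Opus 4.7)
The plan is to build on the infrastructure already assembled: by \cref{thm:rhwclass} an irreducible semisimple coherent family descends to $\bpminmoduv$ precisely when any one of its infinite-dimensional highest-weight submodules does, so the classification reduces to the highest-weight classification of \cref{thm:hwclass} organised via the $\ZZ_3$-action of \cref{lem:Z3action}.

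For the first two assertions I would argue as follows. By \cref{prop:cfamuniv}, every irreducible semisimple coherent family $\zsscfam{\Delta,\omega}$ contains an infinite-dimensional highest-weight submodule, and \cref{prop:zkdense} implies it has between one and three such submodules (indexed by the roots $i \in \CC$ of the cubic $\omega^{+}_{i,\Delta}-\omega$). Such a submodule is a $\twzhu{\bpminmoduv}$-module iff it is $\top{(\twihw{\lambda})}$ for some $\lambda \in \infwtsk$, by \cref{prop:fdimtop,thm:hwclass}, and \cref{thm:rhwclass} then promotes this to the whole coherent family. \cref{lem:Z3action} identifies the (up to three) infinite-dimensional highest-weight submodules of a given family with a single $\ZZ_3$-orbit in $\infwtsk$, $\Delta$ and $\omega$ being $\ZZ_3$-invariants, which yields the bijection between families and $\infwtsk / \ZZ_3$. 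The cardinality $\abs{\infwtsk}$ is then an elementary count: $\abs{\pwlat{\uu-3}} = \binom{\uu-1}{2}$, and the constraint $\lambda^F_0, \lambda^F_1 \ge 1$ on $\lambda^F \in \pwlat{\vv-1}$ reduces, after shifting both coordinates down by one, to counting $\pwlat{\vv-3}$, which has $\binom{\vv-1}{2}$ elements. Multiplying and dividing by three gives the stated formula.

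For part (3), \cref{thm:classtwrhw,prop:zkdense} together imply that $\twrhw{[j],\lambda}$ fails to be simple precisely when $[j]$ contains a root of $\omega^{+}_{i,\Delta}-\omega$. I would verify that these roots lie in three distinct cosets as follows: by \cref{lem:Z3action}, they are exactly the $j$-values assigned via \eqref{eq:bptwhwfromsl3} to the three members of the $\ZZ_3$-orbit of $\lambda$; a direct calculation of pairwise differences using $\lambda_0+\lambda_1+\lambda_2 = \kk = -3+\frac{\uu}{\vv}$ reduces each difference to the form $\lambda^I_i + 1 - \frac{\uu}{\vv}\lambda^F_i$ for some $i \in \{0,1,2\}$, and the bounds $1 \le \lambda^F_i \le \vv-2$ (forced by $\lambda \in \infwtsk$) together with $\gcd(\uu,\vv)=1$ prevent this quantity from being an integer.

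Part (4) is then a straightforward transport of conjugation through the twisted Zhu induction of \cref{thm:twzhubij}. The algebra automorphism $\zconjsymb$ of \cref{lem:idcent} negates both $J$ and $\Omega$ while fixing $L$, hence carries the top space $\zrhw{[j],\Delta,\omega}$ to a simple dense $\zk$-module of parameters $([-j],\Delta,-\omega)$; since $\conjsymb$ and $\zconjsymb$ are compatible under the presentation of \cref{prop:twzhu}, induction gives $\conjmod{\twrhw{[j],\Delta,\omega}} \cong \twrhw{[-j],\Delta,-\omega}$. The main obstacle is the bookkeeping in part (3): one must confirm that the three excluded cosets are genuinely distinct, which is precisely the place where the nondegeneracy of \cref{assume2} together with $\gcd(\uu,\vv)=1$ enters the argument; everything else is bookkeeping on top of \cref{thm:hwclass,thm:rhwclass,lem:Z3action}.
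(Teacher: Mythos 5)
Your proposal is correct and follows essentially the same route as the paper: the classification is obtained by combining \cref{thm:rhwclass} with the highest-weight classification (\cref{thm:hwclass}, \cref{prop:fdimtop}), the coherent-family facts (\cref{prop:cfamuniv}, \cref{prop:zkdense}), the $\ZZ_3$-structure of \cref{lem:Z3action}, twisted Zhu induction (\cref{thm:twzhubij}), and conjugation via \eqref{eq:auts} and \cref{lem:idcent}; your count of $\abs{\infwtsk}$ reproduces \cref{cor:howmanyhwms}. The one place you genuinely diverge is the distinctness of the three exceptional cosets in part (3): the paper gets this structurally, since \cref{lem:Z3action} provides three mutually inequivalent \infdim\ \hw\ submodules while \cref{prop:zkdense} allows each nonsimple summand only one \infdim\ \hw\ composition factor, forcing the three submodules into three different summands; you instead compute the pairwise differences of the $j$-values directly. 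Your computation works, but note a small imprecision: two of the differences are $\lambda^I_1+1-\frac{\uu}{\vv}\lambda^F_1$ and $\lambda^I_0+1-\frac{\uu}{\vv}\lambda^F_0$, while the third is $\lambda^I_2+1-\frac{\uu}{\vv}\brac{\lambda^F_2+1}$ (the fractional label of the $\ZZ_3$-image, not $\lambda^F_2$ itself); it is rescued because $\lambda^F_0,\lambda^F_1\ge1$ force $\lambda^F_2\le\vv-3$, so $1\le\lambda^F_2+1\le\vv-2$ and coprimality of $\uu$ and $\vv$ again rules out integrality. With that adjustment, your argument is a valid, more computational substitute for the paper's appeal to \cref{lem:Z3action} and \cref{prop:zkdense}, and the remaining parts (the $1$-to-$1$ correspondence with $\infwtsk/\ZZ_3$ and the conjugation formula) match the paper's reasoning.
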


Note that if $(\Delta,\omega)$ corresponds to a coherent family of $\twzhu{\bpminmoduv}$-modules, then the conjugation functor requires that so must $(\Delta,-\omega)$.  In fact, it is easy to check that $\Delta$ is invariant and $\omega$ is antiinvariant under the $\ZZ_2$-action $[\lambda_0,\lambda_1,\lambda_2] \leftrightarrow [\lambda_2 - \frac{\uu}{\vv},\lambda_1,\lambda_0 + \frac{\uu}{\vv}]$, that is
\begin{equation} \label{eq:Z2action}
	[\lambda^I_0,\lambda^I_1,\lambda^I_2] \longleftrightarrow [\lambda^I_2,\lambda^I_1,\lambda^I_0], \quad
	[\lambda^F_0,\lambda^F_1,\lambda^F_2] \longleftrightarrow [\lambda^F_2+1,\lambda^I_1,\lambda^I_0-1],
\end{equation}
which obviously preserves belonging to $\infwtsk$.  With \eqref{eq:Z3action}, this defines an action of $\wgrp = \grp{S}_3$ on $\infwtsk$.  The orbits clearly have length $6$ unless $\omega=0$, in which case \cref{lem:Z3action} forces them to have length $3$.  It is easy to check that this is consistent with the explicit factorisation of $\omega$ given in \eqref{eq:bprhwfromsl3}.

We remark that the spectral flow images $\sfmod{\ell}{\twrhw{[j],\lambda}}$, $\ell \ne 0$, of these simple twisted \rhw\ $\bpminmoduv$-modules are likewise simple $\bpminmoduv$-modules, but they are not \rhw\ because their conformal weights are not bounded below.

\subsection{Nonsimple \rhw{} $\bpminmoduv$-modules} \label{sec:indec}

In \cref{sec:cohfam}, we introduced three classes of irreducible coherent families of $\zk$-modules.  The first, the semisimple class, was the key ingredient in the classification arguments of the previous \lcnamecref{sec:rhwmod}.  Here, we will analyse the other two classes in order to demonstrate the existence of certain nonsemisimple twisted \rhw\ $\bpminmod{u}{v}$-modules, assuming that $\kk$ is as in \cref{assume2}.  We will also describe the structure of these nonsemisimple modules in terms of short exact sequences.

Consider therefore the irreducible nonsemisimple coherent family $\zpmcfam{\Delta,\omega}$ of $\zk$-modules on which $G^{\pm}$ acts injectively.  Recall that its simple direct summands are the $\zrhw{[j],\Delta,\omega}$, for all but (up to) three $[j] \in \CC/\ZZ$, and that its nonsimple direct summands are denoted by $\zpmrhw{[j],\Delta,\omega}$.  We begin by determining the structure of these nonsimple $\zk$-modules in the case relevant to studying $\bpminmoduv$-modules.

\begin{proposition} \label{rootsincoset}
	Let $\lambda \in \infwtsk$ and let $j$, $\Delta$ and $\omega$ be defined by \eqref{eq:bptwhwfromsl3} and \eqref{eq:bprhwfromsl3}.  Then, the nonsimple $\zk$-module $\zpmrhw{[j],\Delta,\omega}$ has exactly two composition factors, $\zihw{j,\Delta}$ and $\zconjmod{\zihw{-j-1,\Delta}}$, both of which are $\twzhu{\bpminmoduv}$-modules.  Moreover, we have the following nonsplit short exact sequences:
	\begin{equation} \label{es:zkmod}
		\dses{\zconjmod{\zihw{-j-1,\Delta}}}{}{\zprhw{[j],\Delta,\omega}}{}{\zihw{j,\Delta}}, \qquad
		\dses{\zihw{j,\Delta}}{}{\zmrhw{[j],\Delta,\omega}}{}{\zconjmod{\zihw{-j-1,\Delta}}}.
	\end{equation}
\end{proposition}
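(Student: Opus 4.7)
The plan is to analyse the submodule structure of the induced dense module $\zprhw{[j],\Delta,\omega} = \zrhw{j^+,\Delta,\omega}$ directly and then carry out the symmetric analysis for $\zmrhw{[j],\Delta,\omega}$. First I would locate its highest- and \lwvs: by \cref{quotientsofcoherents}, the three roots of the cubic $\omega - \omega^+_{i,\Delta}$ sit in three distinct cosets of $\CC/\ZZ$, so exactly one, $i=j$, lies in $[j]$. Combining this with \cref{prop:zkdense} and the defining inequality $\Re j^+ < \Re j$ shows that $\zprhw{[j],\Delta,\omega}$ has no \hwvs, whereas it admits a unique (up to scalar) \lwv\ $w = (G^+)^{N+1} v$ at weight $(j+1,\Delta)$, where $N = j - j^+ \in \ZZ_{\ge 1}$ and $v$ denotes the cyclic generator.

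Second, I would identify the two composition factors. Since $\lambda \in \infwtsk$ satisfies $\lambda^F_1 \ne 0$, \cref{prop:fdimtop} and \cref{prop:hwzkmod} together force the Verma $\zvhw{j,\Delta}$ to be simple, so $\zihw{j,\Delta} = \zvhw{j,\Delta}$ is \infdim. For the submodule $\langle w \rangle$, a direct commutator calculation from \eqref{cr:zk} using $G^- w = 0$ yields
\[
	G^- (G^+)^m w = -\Bigl[\sum_{k=1}^{m} f_{\kk}(j+k,\Delta)\Bigr] (G^+)^{m-1} w,
\]
and the vanishing of the bracketed sum for some $m \ge 1$ would force $(G^+)^m w$ to be a second \lwv\ of $\zprhw{[j],\Delta,\omega}$, contradicting the uniqueness from the first step. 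Hence $\langle w \rangle$ is simple, and its lowest weight $(j+1,\Delta)$ together with \eqref{eq:omegasymm} identifies it with $\zconjmod{\zihw{-j-1,\Delta}}$. The quotient $\zprhw{[j],\Delta,\omega}/\langle w \rangle$ is generated by the image of $(G^+)^N v$, which is a \hwv\ of weight $(j,\Delta)$; it is therefore a nonzero quotient of $\zvhw{j,\Delta}$, and simplicity of the Verma pins it down as $\zihw{j,\Delta}$.

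Finally, I would argue nonsplitting and construct the $-$-type sequence. A splitting would produce a \hwv\ of weight $(j,\Delta)$ inside $\zprhw{[j],\Delta,\omega}$; but the one-dimensional weight spaces of $\zrhw{j^+,\Delta,\omega}$ leave $(G^+)^N v$ (up to scalar) as the only candidate, and $G^+ (G^+)^N v = w \ne 0$ excludes it. The $-$-type sequence is produced by applying the same scheme to $\zmrhw{[j],\Delta,\omega} = \zrhw{j^-,\Delta,\omega}$: the choice $\Re j^- > \Re j$ now leaves a unique \hwv\ $u = (G^-)^{M+1} v'$ at weight $(j,\Delta)$ (with $M = j^- - j - 1 \ge 0$) and no \lwvs, so that $\langle u \rangle \cong \zihw{j,\Delta}$ is a simple submodule and $\zmrhw{[j],\Delta,\omega}/\langle u \rangle \cong \zconjmod{\zihw{-j-1,\Delta}}$ by the conjugate-Verma version of the same argument. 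To promote both factors to $\twzhu{\bpminmoduv}$-modules, observe that $\zihw{j,\Delta}$ is the top space of $\twihw{\lambda}$, itself a $\bpminmoduv$-module by \cref{thm:hwclass}; then \cref{thm:rhwclass} promotes the enveloping coherent family $\zsscfam{\Delta,\omega}$, and with it every composition factor, to a $\twzhu{\bpminmoduv}$-module. The main obstacle is the nonvanishing assertion for the bracket in the displayed formula; it hinges on the cubic $\omega - \omega^+_{i,\Delta}$ admitting the single root $i = j$ in $[j]$, itself a consequence of the arithmetic input $\lambda^F_1 \ne 0$.
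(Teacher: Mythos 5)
Your proposal is correct, but it reaches the central conclusion --- that there are exactly two composition factors and what they are --- by a different mechanism than the paper. You import the distinctness of the three exceptional cosets from \cref{quotientsofcoherents} (ultimately \cref{lem:Z3action}) to conclude that $j$ is the unique root of $\omega^+_{i,\Delta}-\omega$ in $[j]$, and then read off the full submodule structure of $\zrhw{j^{\pm},\Delta,\omega}$ from \cref{prop:zkdense} together with an explicit commutator computation: no \hwvs\ (resp.\ no \lwvs), a unique \lwv\ (resp.\ \hwv), simplicity of the submodule it generates, and identification of the quotient with the simple Verma $\zvhw{j,\Delta}$, whose simplicity you extract from \cref{prop:fdimtop,prop:hwzkmod}.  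The paper instead works at the level of the semisimplification: both $\zihw{j,\Delta}$ and $\zconjmod{\zihw{-j-1,\Delta}}$ are composition factors, and any further (necessarily \fdim) factor is excluded by showing that $\zihw{-j-1,\Delta}$ is \infdim; this is done by solving for its surviving weight, $\mu=[\lambda_0,\lambda_2-\frac{\uu}{\vv},\lambda_1+\frac{\uu}{\vv}]$, and observing $\mu^F_1=\lambda^F_2+1\ne0$, so $\mu\in\infwtsk$.  Both routes rest on the arithmetic of $\lambda\in\infwtsk$; yours leans on the earlier coherent-family counting (which is legitimate, since \cref{quotientsofcoherents} is proved independently of this proposition), while the paper's argument is self-contained here and, as a bonus, produces the explicit weight $\mu$ of the conjugate factor that is quoted in \cref{mthm4}.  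Two small elaborations you would want: the claim that $\zprhw{[j],\Delta,\omega}/\langle w\rangle$ is generated by the image of $(G^+)^N v$ should be justified, e.g.\ by noting that $G^-(G^+)^m v$ is a nonzero multiple of $(G^+)^{m-1}v$ for $1\le m\le N$ (again by uniqueness of the root) or by comparing weight-space dimensions with the simple Verma; and the ``conjugate-Verma version'' step for $\zmrhw{[j],\Delta,\omega}$ needs the analogous no-second-\hwv/\lwv\ argument rather than simplicity of the conjugate Verma, which is not known a priori.  Your nonsplitting argument and your promotion of both factors to $\twzhu{\bpminmoduv}$-modules via \cref{thm:hwclass,thm:rhwclass} coincide with the paper's.
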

\begin{proof}
	We only consider $\zprhw{[j],\Delta,\omega}$ as the argument for $\zmrhw{[j],\Delta,\omega}$ is identical.  First, note that $\zihw{j,\Delta}$ is an \infdim\ $\twzhu{\bpminmoduv}$-module, by \cref{thm:hwclass}.  The irreducible semisimple coherent family $\zsscfam{\Delta,\omega}$ is therefore a $\twzhu{\bpminmoduv}$-module too, by \cref{thm:rhwclass}, hence so is the \lwm\ $\zconjmod{\zihw{-j-1,\Delta}} \subset \zssrhw{[j],\Delta,\omega}$.  As $\zssrhw{[j],\Delta,\omega}$ is the semisimplification of $\zprhw{[j],\Delta,\omega}$, they have the same composition factors.  To demonstrate that there are no more factors beyond the two already found, it suffices to show that $\zihw{-j-1,\Delta}$ is \infdim.

	Since the conjugate of $\zihw{-j-1,\Delta}$ is a $\twzhu{\bpminmoduv}$-module, $\zihw{-j-1,\Delta}$ must correspond to some $\mu \in \survk$, by \cref{thm:hwclass}.  Proceeding as in the proof of \cref{lem:survivors}, we find that the unique solution is $\mu = [\lambda_0, \lambda_2 - \frac{\uu}{\vv}, \lambda_1 + \frac{\uu}{\vv}]$, hence $\mu^I = [\lambda^I_0, \lambda^I_2, \lambda^I_1]$ and $\mu^F = [\lambda^F_0, \lambda^F_2+1, \lambda^F_1-1]$.  Because $\mu^F_1 = \lambda^F_2+1 \ne 0$, it follows that $\mu \in \infwtsk$ and so $\zihw{-j-1,\Delta}$ is \infdim, as desired.  This establishes the first exact sequence in \eqref{es:zkmod}.  It is clearly nonsplit because $G^+$ acts injectively on $\zprhw{[j],\Delta,\omega}$.
\end{proof}

At this point, it is not clear if the $\zpmrhw{[j],\Delta,\omega}$ corresponding to $\lambda \in \infwtsk$ are $\twzhu{\bpminmoduv}$-modules, even though their composition factors are.  We settle this using a simplified version of the argument of \cite[Thm.~5.3]{KawRel19}.
\begin{proposition} \label{prop:isamod}
	Let $\lambda \in \infwtsk$ and let $j$, $\Delta$ and $\omega$ be defined by \eqref{eq:bptwhwfromsl3} and \eqref{eq:bprhwfromsl3}.  Then, the nonsimple $\zk$-module $\zpmrhw{[j],\Delta,\omega}$ is a $\twzhu{\bpminmoduv}$-module.
\end{proposition}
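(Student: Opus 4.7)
The plan is to adapt the polynomial action argument used in the proof of Proposition~\ref{thm:rhwclass} to the nonsemisimple setting. The starting observation is that the coherent family $\zpmcfam{\Delta,\omega}$ and its semisimple counterpart $\zsscfam{\Delta,\omega}$ share identical direct summands $\zrhw{[j'],\Delta,\omega}$ at every coset $[j']$ except the three exceptional ones (those containing a root of $\omega^+_{i,\Delta} - \omega$). By \cref{quotientsofcoherents}, $\zsscfam{\Delta,\omega}$ is a $\twzhu{\bpminmoduv}$-module, so all the generic summands of $\zpmcfam{\Delta,\omega}$ are annihilated by $\twzhu{\bpmpik}$, and in particular by $\ak = \twzhu{\bpmpik} \cap \ck$.

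First I would show that $\ak$ annihilates the whole of $\zpmcfam{\Delta,\omega}$. Each $a \in \ak$ acts on the one-dimensional weight space of weight $(j_0,\Delta)$ by multiplication by some polynomial $p_a(j_0)$ (with $\Delta$ and $\omega$ fixed), since the action of $\ck = \CC[J,L,\Omega]$ is scalar on these weight spaces. The polynomial $p_a$ has infinitely many roots, coming from the generic cosets at which it must vanish, so $p_a$ is identically zero. Hence $\ak$ annihilates $\zpmrhw{[j],\Delta,\omega}$.

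The harder step is to promote annihilation by $\ak$ to annihilation by the full ideal $\twzhu{\bpmpik}$. The simple-module criterion implicit in the proof of \cref{thm:rhwclass} does not apply directly because $\zpmrhw{[j],\Delta,\omega}$ is not simple. Instead, I would decompose an arbitrary $x \in \twzhu{\bpmpik}$ using the triangular decomposition \eqref{eq:zktridec} into a sum of monomials of the form $(G^-)^a p(J,L) (G^+)^b$, each of definite charge-weight $b-a$. The action of such a monomial on a weight space of $\zpmcfam{\Delta,\omega}$ lands in another one-dimensional weight space, and once bases are chosen consistently across the family, this action is again polynomial in $j_0$. This polynomial vanishes on all generic cosets of the coherent family (where $x$ is known to act trivially), so it vanishes identically by the same infinite-root argument; in particular, it vanishes on the three exceptional cosets that make up the nonsimple summands $\zpmrhw{[j],\Delta,\omega}$.

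The main obstacle will be arranging a genuinely polynomial basis across the weight spaces of $\zpmcfam{\Delta,\omega}$, so that the off-diagonal action of $\twzhu{\bpmpik}$ really is polynomial in $j_0$. This amounts to fixing structure constants (for the generators $G^+$ and $G^-$ especially) that depend polynomially on the charge, which is delicate at the exceptional cosets where the summands fail to be simple. This is the technical heart of the analogous argument in \cite[Thm.~5.3]{KawRel19} for the admissible-level $\saffvoa{\kk}{\sltwo}$ setting, and that proof should provide a close template for ours.
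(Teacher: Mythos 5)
Your plan is sound, but it takes a genuinely different route from the paper, whose proof is deliberately a \emph{simplified} version of \cite[Thm.~5.3]{KawRel19} designed to avoid exactly the step you flag as the main obstacle.  Your route — every charge-homogeneous monomial $(G^-)^a p(J,L)(G^+)^b$ acts between the one-dimensional weight spaces of $\zpmcfam{\Delta,\omega}$ by a polynomial in the charge, these polynomials vanish at the uncountably many generic weights and hence identically, so $\twzhu{\bpmpik}$ kills the exceptional summands too — does work, and the obstacle is milder than you fear: because the weight spaces are one-dimensional, $\zpcfam{\Delta,\omega}$ can be realised globally as $\bigoplus_{j' \in \CC} \CC v_{j'}$ with $J v_{j'} = j' v_{j'}$, $G^+ v_{j'} = v_{j'+1}$ and $G^- v_{j'} = g(j')\, v_{j'-1}$, where $g$ is the single cubic polynomial forced by \eqref{cr:zk} and by $\Omega = \omega$ (its roots mark the exceptional cosets), so polynomiality of all matrix coefficients is immediate and nothing delicate happens at the exceptional cosets; the $-$ family follows by conjugation.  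The paper argues quite differently: by \cref{rootsincoset}, $\twzhu{\bpmpik}$ annihilates the quotient $\zihw{j,\Delta}$ in \eqref{es:zkmod}, so it maps $\zprhw{[j],\Delta,\omega}$ into the submodule $\zconjmod{\zihw{-j-1,\Delta}}$, whose $J$-eigenvalues are bounded below by $j+1$; since $\zk$ (hence its quotient $\twzhu{\ubpvoak}$) is noetherian, $\twzhu{\bpmpik}$ is generated by finitely many $J$-eigenvectors, so a weight space of $\zprhw{[j],\Delta,\omega}$ of sufficiently negative charge is annihilated by all the generators, hence by the ideal; that weight space generates the module, and the ideal property finishes the proof.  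Your approach buys a uniform conclusion (the whole ideal annihilates the entire coherent family, $+$ and $-$ alike, in one stroke) without needing \cref{rootsincoset} or noetherianity, at the cost of setting up the weight-polynomial realisation; the paper's approach buys a short, self-contained argument from the exact sequences and finite generation.  Note also that your first step (annihilation by $\ak$) is subsumed by the second and can be dropped.
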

\begin{proof}
	Again, we shall only detail the argument for $\zprhw{[j],\Delta,\omega}$.  Recall that $\bpmpik$ denotes the maximal ideal of $\ubpvoak$ and so $\twzhu{\bpmpik} \cdot \zihw{j,\Delta} = 0$, by virtue of $\zihw{j,\Delta}$ being a $\twzhu{\bpminmoduv}$-module.  From the first exact sequence in \eqref{es:zkmod}, we conclude that $\twzhu{\bpmpik} \cdot \zprhw{[j],\Delta,\omega} \subseteq \zconjmod{\zihw{-j-1,\Delta}}$.

	As $\zk$ is noetherian (this is an easy generalisation of \cite[Cor.~1.3]{SmiCla90}), so is its quotient $\twzhu{\ubpvoak}$ (\cref{prop:twzhu}).  The ideal $\twzhu{\bpmpik} \subset \twzhu{\ubpvoak}$ is therefore generated by a finite number of elements $a_1, \dots, a_n$ which we may, without loss of generality, choose to be eigenvectors of $J$.  Let $j_i$ denote the $J$-eigenvalue of $a_i$, $i=1,\dots,n$.

	Choose $j' \in [j]$ such that $j' \le j - \max \set{j_1,\dots,j_n}$.  Then, $a_i$ takes the $J$-eigenspace of $\zprhw{[j],\Delta,\omega}$ of eigenvalue $j'$ into the $J$-eigenspace of $\zconjmod{\zihw{-j-1,\Delta}}$ of eigenvalue $j'+a_i \le j$.  But, the eigenvalues of $J$ acting on $\zconjmod{\zihw{-j-1,\Delta}}$ are bounded below by $j+1$, hence $a_i$ annihilates the $J$-eigenspace of $\zprhw{[j],\Delta,\omega}$ of eigenvalue $j'$, for each $i$.  It follows that $\twzhu{\bpmpik}$ annihilates this eigenspace.  But, this eigenspace generates $\zprhw{[j],\Delta,\omega}$, hence $\twzhu{\bpmpik}$ (being an ideal) annihilates $\zprhw{[j],\Delta,\omega}$.
\end{proof}

By Zhu-induction (\cref{thm:twzhubij}), one may construct from each $\twzhu{\bpminmoduv}$-module $\zpmrhw{[j],\Delta,\omega}$ a twisted $\bpminmoduv$-module whose twisted Zhu image (its top space) is $\zpmrhw{[j],\Delta,\omega}$.  Consider the submodule of this induced module obtained by summing all the submodules whose intersection with the top space $\zpmrhw{[j],\Delta,\omega}$ is zero.  Quotienting by this submodule results in a twisted $\bpminmoduv$-module, which we shall denote by $\twpmrhw{[j],\lambda} = \twpmrhw{[j],\Delta,\omega}$, that has $\zpmrhw{[j],\Delta,\omega}$ as its top space and has the property that its nonzero submodules intersect this top space nontrivially.  In a sense, $\twpmrhw{[j],\Delta,\omega}$ is the smallest $\bpminmoduv$-module whose top space is $\zpmrhw{[j],\Delta,\omega}$.

The $\twpmrhw{[j],\lambda}$ are clearly nonsemisimple, because their top spaces are.  This proves the following result.
\begin{theorem} \label{thm:bpminmodsarelogcfts}
	When $\kk$ is as in \cref{assume2}, the simple \voa\ $\bpminmoduv$ admits nonsemisimple modules.  In physical language, the corresponding minimal model \cft\ is \emph{logarithmic}.
\end{theorem}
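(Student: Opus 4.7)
The plan is to verify that the construction carried out in the paragraph preceding the theorem actually produces a nonsemisimple $\bpminmoduv$-module, for which it suffices to exhibit an element of $\infwtsk$ and then to argue that nonsemisimplicity of the top space forces nonsemisimplicity of the whole module.

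First I would check that $\infwtsk$ is nonempty under \cref{assume2}. Since $\uu\ge 3$ and $\vv\ge 3$, both $\pwlat{\uu-3}$ and $\pwlat{\vv-1}$ are nonempty, and within $\pwlat{\vv-1}$ one can choose a weight with $\lambda^F_0\ge 1$ and $\lambda^F_1\ge 1$ (possible because $\vv-1\ge 2$; for example $\lambda^F = [1,1,\vv-3]$). Any corresponding $\lambda = \lambda^I - \frac{\uu}{\vv}\lambda^F$ lies in $\infwtsk$. Given such a $\lambda$, let $j$, $\Delta$ and $\omega$ be defined by \eqref{eq:bptwhwfromsl3} and \eqref{eq:bprhwfromsl3}. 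By \cref{rootsincoset,prop:isamod}, the $\zk$-modules $\zpmrhw{[j],\Delta,\omega}$ are $\twzhu{\bpminmoduv}$-modules fitting into the nonsplit short exact sequences \eqref{es:zkmod}, and are therefore nonsemisimple. The inductive construction of $\twpmrhw{[j],\lambda}$ via twisted Zhu induction (\cref{thm:twzhubij}), as described in the text immediately before the theorem, then yields twisted $\bpminmoduv$-modules whose top spaces coincide with these nonsemisimple $\zk$-modules.

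The remaining step is to observe that $\twpmrhw{[j],\lambda}$ itself cannot be semisimple: a decomposition into simple $\bpminmoduv$-submodules would, after taking top spaces (the minimal $L_0$-eigenspace), express $\zpmrhw{[j],\Delta,\omega}$ as a direct sum of top spaces of simple $\bpminmoduv$-modules, hence as a direct sum of simple $\twzhu{\bpminmoduv}$-modules, contradicting the nonsplit sequences \eqref{es:zkmod}. Since ``logarithmic'' is defined by the existence of nonsemisimple objects in the module category of the \voa, this completes the proof. There is no real obstacle at this stage, as the substantive work has already been carried out in \cref{rootsincoset,prop:isamod}; the only thing to ensure is the bookkeeping that $\infwtsk$ actually contains a weight once $\vv\ge 3$, which is why \cref{assume2} is needed rather than just \cref{assume}.
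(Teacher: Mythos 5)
Your proposal is correct and is essentially the paper's own argument: the paper constructs $\twpmrhw{[j],\lambda}$ from the nonsemisimple top spaces $\zpmrhw{[j],\Delta,\omega}$ (via \cref{rootsincoset,prop:isamod} and twisted Zhu induction) and concludes nonsemisimplicity of the modules directly from that of their top spaces. Your additional bookkeeping --- verifying $\infwtsk \ne \varnothing$ when $\uu,\vv \ge 3$ and spelling out why a semisimple decomposition would force the top space to be a semisimple $\twzhu{\bpminmoduv}$-module --- just makes explicit what the paper leaves as ``clearly''.
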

\noindent As we have mentioned before, the \bp\ minimal models corresponding to $\bpminmod{\uu}{2}$, with $\uu\in2\NN+3$, were shown to be rational in \cite{AraRat13}.

Our final task is then to determine the structure of these nonsemisimple $\bpminmoduv$-modules.  For this, it is convenient to introduce new modules $\twpmrv{[j],\lambda} = \twpmrv{[j],\Delta,\omega}$ that are obtained by treating $\zpmrhw{[j],\Delta,\omega}$ as a module over the twisted mode algebra $\twmodealg_0$ of \eqref{eq:pbw}, letting $\twmodealg_>$ act as $0$, and then inducing to a $\twmodealg$-module.  It follows that $\twpmrv{[j],\lambda}$ is a ``relaxed Verma'' $\ubpvoak$-module whose top space is $\zpmrhw{[j],\Delta,\omega}$.  In a sense, it is the largest $\ubpvoak$-module with this top space.

As such, we may consider the sum $\twpmnax{[j],\lambda}$ of all the submodules of $\twpmrv{[j],\lambda}$ whose intersection with the top space $\zpmrhw{[j],\Delta,\omega}$ is zero.  Because this top space is nonsemisimple, $\twpmnax{[j],\lambda}$ is a proper submodule of the maximal submodule $\twpmmax{[j],\lambda}$ of $\twpmrv{[j],\lambda}$.  Its utility lies in the fact that it provides an alternative construction of the $\bpminmoduv$-module $\twpmrhw{[j],\lambda}$:
\begin{equation} \label{eq:constructnonssrhwms}
	 \twpmrhw{[j],\lambda} \cong \twpmrv{[j],\lambda} \Big/ \twpmnax{[j],\lambda}.
\end{equation}
This exploits the fact that $\twpmrhw{[j],\lambda}$ is, in a sense, the smallest $\ubpvoak$-module with top space $\zpmrhw{[j],\Delta,\omega}$.

We now proceed in an analogous fashion to \cite[Sec.~4]{KawRel18}.
\begin{theorem} \label{thm:nonssstructure}
	Let $\kk$ be as in \cref{assume2} and let $\lambda \in \infwtsk$ define $j$, $\Delta$ and $\omega$ via \eqref{eq:bptwhwfromsl3} and \eqref{eq:bprhwfromsl3}.  We then have the following nonsplit short exact sequences of $\bpminmoduv$-modules:
	\begin{equation} \label{es:bpmod}
		\dses{\conjmod{\twihw{-j-1,\Delta}}}{}{\twprhw{[j],\Delta,\omega}}{}{\twihw{j,\Delta}}, \qquad
		\dses{\twihw{j,\Delta}}{}{\twmrhw{[j],\Delta,\omega}}{}{\conjmod{\twihw{-j-1,\Delta}}}.
	\end{equation}
\end{theorem}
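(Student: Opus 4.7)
The plan is to establish the first short exact sequence in \eqref{es:bpmod}; the second follows by an entirely analogous argument with the roles of $G^+$ and $G^-$ interchanged. Throughout, write $T = \zprhw{[j],\Delta,\omega}$ for the top space of $\twprhw{[j],\Delta,\omega}$, $T' \cong \zconjmod{\zihw{-j-1,\Delta}}$ for its $\zk$-submodule, and $T'' = T/T' \cong \zihw{j,\Delta}$ for the quotient, as in the first sequence of \eqref{es:zkmod}.

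First I would construct and identify a submodule. Let $N$ be the $\bpminmoduv$-submodule of $\twprhw{[j],\Delta,\omega}$ generated by $T'$. Because the $\twmodealg_0$-action on the top space factors through the $\twzhu{\bpminmoduv}$-action, under which $T'$ is stable, the conformal-weight-$\Delta$ subspace of $N$ is exactly $T'$, so $N$ is lower-bounded with top space $T'$. If $N' \subseteq N$ is a nonzero submodule, then $N'$ is also a nonzero submodule of $\twprhw{[j],\Delta,\omega}$, so by the defining smallness property $N' \cap T \ne 0$; since $N' \cap T \subseteq T'$ and $T'$ is simple over $\zk$, we have $N' \cap T = T'$. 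But $T'$ generates $N$ as a $\bpminmoduv$-module, hence $N' = N$ and $N$ is simple. By the Zhu bijection \cref{thm:twzhubij}, $N$ must be the unique simple lower-bounded $\bpminmoduv$-module with top space $T'$, which is $\conjmod{\twihw{-j-1,\Delta}}$.

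The main obstacle is to identify the quotient $Q = \twprhw{[j],\Delta,\omega}/N$ as $\twihw{j,\Delta}$. Its top space is $T'' \cong \zihw{j,\Delta}$ and it is generated by $T''$ (since $\twprhw{[j],\Delta,\omega}$ is generated by $T$), so by \cref{thm:twzhubij} there is a surjection $Q \sra \twihw{j,\Delta}$; it remains to prove $Q$ itself is simple. The plan, following the methodology of \cite{KawRel18}, is to show that $\twprhw{[j],\Delta,\omega}$ has composition length exactly two by a character comparison: the graded character of $\twprhw{[j],\Delta,\omega}$, computable from \eqref{eq:constructnonssrhwms} via the \pbw\ basis of $\twmodealg$ acting on $T$, should equal the sum of the characters of $\conjmod{\twihw{-j-1,\Delta}}$ and $\twihw{j,\Delta}$. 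Granted this identity, length exactly two is forced and, combined with the embedding above, the quotient must be $\twihw{j,\Delta}$.

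Finally, the sequence cannot split: if it did, $G^+$ would annihilate the \hwv\ of the $\twihw{j,\Delta}$-summand, contradicting the injective action of $G^+$ on $\zpcfam{\Delta,\omega}$ and thus on $\twprhw{[j],\Delta,\omega}$. The sharpest technical hurdle is the character computation, where one must rule out hidden composition factors whose top spaces sit strictly above conformal weight $\Delta$ via a detailed analysis of how the negative modes of $\twmodealg$ act on the dense top space $T$.
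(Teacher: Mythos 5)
Your construction of the submodule $N$ generated by $T'$ and its identification with $\conjmod{\twihw{-j-1,\Delta}}$ via the Zhu bijection is sound, and your nonsplitting argument via the injectivity of $G^+$ on the top space is fine. The genuine gap is at the decisive step: identifying the quotient $Q=\twprhw{[j],\Delta,\omega}/N$ with $\twihw{j,\Delta}$. You reduce this to the simplicity of $Q$ and propose to get it from a character identity, but that computation is never carried out and is in fact the hard part of the problem: a \pbw\ count over the dense top space computes the character of the relaxed Verma-type module $\twprv{[j],\lambda}$, not of $\twprhw{[j],\lambda}$, so you would have to control the submodule $\twpnax{[j],\lambda}$ appearing in \eqref{eq:constructnonssrhwms} --- exactly the ``hidden composition factors'' you concede must be ruled out. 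The character formulae for these relaxed modules are not available by elementary means here (they are obtained in the companion paper \cite{AdaRea20} by inverse \qhr), so as written the argument is circularly close to assuming what it must prove.

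The paper closes this gap with no character computation, and the same device is available in your setup: since the conformal-weight-$\Delta$ subspace of $N$ is exactly $T'$, the quotient $Q$ has top space $T''\cong\zihw{j,\Delta}$ and is generated by it; as $T''$ is generated over $\zk$ by its \hwv\ $v$, which is annihilated by $\twmodealg_>$ and by $G^+_0$, the module $Q$ is generated by the single twisted \hwv\ $v$ and is therefore a twisted \hwm\ (in the paper this is seen by exhibiting $Q$ as a quotient of the Verma module $\twvhw{j,\Delta}$, using \eqref{es:zkmod} and the exactness of induction).  \cref{thm:hwrat}, the rationality of $\bpminmoduv$ in category $\mathscr{O}$, then forces $Q$ to be simple, whence $Q\cong\twihw{j,\Delta}$ because its \hwv\ has weight $(j,\Delta)$.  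In short, your architecture (submodule, quotient, nonsplitting) parallels the paper's, but the missing ingredient is the appeal to \cref{thm:hwrat} in place of the unproved character identity; without one of these, the composition length of $\twprhw{[j],\Delta,\omega}$ is not controlled and the proof is incomplete.
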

\begin{proof}
	Once again, we only give the argument for $\twprhw{[j],\Delta,\omega}$.  First, note that the twisted Verma module $\twvhw{j,\Delta}$ is clearly isomorphic to the quotient $\twprv{[j],\Delta,\omega} \big/ \conjmod{\twvhw{-j-1,\Delta}}$, by \eqref{es:zkmod} and the exactness of induction.  Hence, $\twihw{j,\Delta}$ is also a quotient and \eqref{eq:constructnonssrhwms} gives
	\begin{equation} \label{eq:surjects}
		\frac{\twprhw{[j],\Delta,\omega}}{\twpmax{[j],\Delta,\omega} \big/ \twpnax{[j],\Delta,\omega}}
		\cong \frac{\twprv{[j],\Delta,\omega}}{\twpmax{[j],\Delta,\omega}} \cong \twihw{j,\Delta},
	\end{equation}
	since \rhwms\ have unique irreducible quotients.  Thus, $\twihw{j,\Delta}$ is a quotient of $\twprhw{[j],\Delta,\omega}$.

	Next, note that the (unique) maximal submodule of $\conjmod{\twvhw{-j-1,\Delta}}$ is $\conjmod{\twvhw{-j-1,\Delta}} \cap \twpnax{[j],\Delta,\omega}$, because the only submodule of $\conjmod{\twvhw{-j-1,\Delta}}$ intersecting its top space nontrivially is $\conjmod{\twvhw{-j-1,\Delta}}$ itself.  We therefore have
	\begin{equation} \label{eq:embeds}
		\conjmod{\twihw{-j-1,\Delta}}
		= \frac{\conjmod{\twvhw{-j-1,\Delta}}}{\conjmod{\twvhw{-j-1,\Delta}} \cap \twpnax{[j],\Delta,\omega}}
		\cong \frac{\conjmod{\twvhw{-j-1,\Delta}} + \twpnax{[j],\Delta,\omega}}{\twpnax{[j],\Delta,\omega}},
	\end{equation}
	which is clearly a submodule of $\twprv{[j],\Delta,\omega} \big/ \twpnax{[j],\Delta,\omega} \cong \twprhw{[j],\Delta,\omega}$.  Thus, $\conjmod{\twihw{-j-1,\Delta}}$ embeds into $\twprhw{[j],\Delta,\omega}$.

	To demonstrate exactness of the first sequence of \eqref{es:bpmod}, we note that
	\begin{equation} \label{eq:R/H=W/V+N}
		\frac{\twprhw{[j],\Delta,\omega}}{\conjmod{\twihw{-j-1,\Delta}}}
		\cong \frac{\twprv{[j],\Delta,\omega}}{\conjmod{\twvhw{-j-1,\Delta}} + \twpnax{[j],\Delta,\omega}}
		\cong \frac{\twvhw{j,\Delta}}{\brac[\big]{\conjmod{\twvhw{-j-1,\Delta}} + \twpnax{[j],\Delta,\omega}} \big/ \conjmod{\twihw{-j-1,\Delta}}}
	\end{equation}
	using \eqref{eq:constructnonssrhwms} and \eqref{eq:embeds}.  This shows that $\twprhw{[j],\Delta,\omega} \big/ \conjmod{\twihw{-j-1,\Delta}}$ is a twisted \hw\ $\bpminmoduv$-module.  By \cref{thm:hwrat}, it is simple and therefore isomorphic to $\twihw{j,\Delta}$, by \eqref{eq:surjects}.  This completes the proof.
\end{proof}

\section{Examples} \label{sec:ex}

We conclude by illustrating the above classification results with some specific examples of \bp\ minimal models.  The examples with $\vv=2$ extend the results of \cite{AraRat13} whilst the $(\uu,\vv) = (3,4)$ and $(4,3)$ examples extend those of \cite{AdaCla19}.

\subsection*{Example: $\bpminmod{3}{2}$}

For $\kk=-\frac{3}{2}$, the central charge of the minimal model is $\cc=0$.  Since $\lambda^I \in \pwlat{0} = \set{[0,0,0]}$ and $\lambda^F \in \pwlat{1}$ is constrained by $\lambda^F_0 \ge 0$ so that $\lambda^F = [1,0,0]$, we only have $\lambda = [0,0,0] - \frac{3}{2} [1,0,0] = [\kk,0,0]$.  There is therefore a unique simple untwisted \hwm\ $\ihw{-3 \fwt{0}/2} = \ihw{0,0}$ and a unique simple twisted \hwm\ $\twihw{-3 \fwt{0}/2} = \twihw{0,0}$ (up to isomorphism).  This is clearly the trivial minimal model.

\subsection*{Example: $\bpminmod{5}{2}$}

For $\kk=-\frac{1}{2}$, the central charge is instead $\cc=\frac{2}{5}$ and we have $\lambda^I \in \pwlat{2}$ and $\lambda^F = [1,0,0]$.  There are thus $\abs*{\pwlat{2}} = 6$ simple untwisted \hwms\ and so $6$ simple twisted \hwms, all with \fdim\ top spaces.  We illustrate these modules in \cref{fig:bp52}, arranging them according to $\lambda^I$ and listing the charges and conformal weights of their \hwvs.  We also indicate the effect of the conjugation and spectral flow automorphisms in this arrangement.
\begin{figure}
	\begin{tikzpicture}[>=stealth',scale=1.4]
		\draw[->] (0:0) -- (30:0.5) node[below right] {$\lambda^I_1$};
		\draw[->] (0:0) -- (90:0.5) node[left] {$\lambda^I_2$};
		\draw[->] (0:0) -- (-120:0.433) node[left] {$\lambda^I_0$};
		\begin{scope}[shift={(-3,0)}]
			\node at (0:0) {$\brac[\big]{0,0}$};
			\node at (30:1) {$\brac[\big]{\frac{1}{3},\frac{1}{30}}$};
			\node at (30:2) {$\brac[\big]{\frac{2}{3},\frac{1}{3}}$};
			\node at (90:1) {$\brac[\big]{-\frac{1}{3},\frac{1}{30}}$};
			\node at (90:2) {$\brac[\big]{-\frac{2}{3},\frac{1}{3}}$};
			\node at (60:1.732) {$\brac[\big]{0,\frac{1}{5}}$};
			\draw[dashed] (60:-0.55) -- (60:-0.15);
			\draw[dashed] (60:0.25) -- (60:1.6);
			\draw[dashed] (60:2) -- (60:2.4);
			\draw[<->] (52:2.5) -- node[auto,swap] {$\conjsymb$} (68:2.5);
		\end{scope}
		\begin{scope}[shift={(2,0)}]
			\node at (0:0) {$\brac[\big]{\frac{1}{3},\frac{1}{12}}_1$};
			\node at (30:1) {$\brac[\big]{\frac{2}{3},\frac{17}{60}}_2$};
			\node at (30:2) {$\brac[\big]{1,\frac{3}{4}}_3$};
			\node at (90:1) {$\brac[\big]{0,-\frac{1}{20}}_1$};
			\node at (90:2) {$\brac[\big]{-\frac{1}{3},\frac{1}{12}}_1$};
			\node at (60:1.732) {$\brac[\big]{\frac{1}{3},\frac{17}{60}}_2$};
			\draw[dashed] (-0.9,1) -- (-0.5,1);
			\draw[dashed] (0.5,1) -- (1.35,1);
			\draw[dashed] (2.1,1) -- (2.5,1);
			\draw[<->] (2.6,1.3) -- node[right] {$\conjsymb$} (2.6,0.7);
		\end{scope}
	\end{tikzpicture}
	\caption{The charges and conformal weights $(j,\Delta)$ of the untwisted (left) and twisted (right) simple \hw\ $\bpminmod{5}{2}$-modules, arranged by the Dynkin labels of the integral parts $\lambda^I$ of the corresponding surviving weights $\lambda$.  The subscript on the twisted labels gives the dimension of the top space.  Conjugation $\conjsymb$ is indicated by reflection about the dashed line and spectral flow $\sfsymb$ by $120^{\circ}$ anticlockwise rotation about each triangle's centre.} \label{fig:bp52}
\end{figure}

\subsection*{Example: $\bpminmod{9}{2}$}

We discuss one further minimal model with $\vv=2$, that with $\kk=\frac{3}{2}$ and $\cc=-\frac{22}{3}$.  This time, there are $\abs*{\pwlat{6}} = 28$ simple untwisted \hwms\ and, of course, each has a single twisted cousin.  As always when $\vv=2$, the top spaces are all \fdim\ and the fractional part $\lambda^F$ of the corresponding $\aslthree$-weights is $[1,0,0]$.

An interesting feature of this minimal model is that the (integer) spectral flows of the vacuum module $\ihw{0,0}$ correspond to $\lambda^I = [0,6,0]$ and $[0,0,6]$, hence $(j,\Delta) = (2,1)$ and $(-2,1)$.  Recalling that spectral flows of the vacuum module are always simple currents \cite{LiPhy97}, it follows that $\bpminmod{9}{2}$ admits an order-$3$ simple current extension $\VOA{A}$.  Moreover, if $E$ and $F$ denote the \hwvs\ of the simple current modules $\ihw{2,1}$ and $\ihw{-2,1}$, respectively, then it is easy to check that $E$, $F$ and $J$ define a (nonconformal) embedding of the $\sltwo$ minimal model $\sltwominmod{3}{1} = \saffvoa{1}{\sltwo}$ into $\VOA{A}$.

Defining $\overline{G}^{\,+} = G^-_{-1/2} E$ and $\overline{G}^{\,-} = G^+_{-1/2} F$, we see that $\VOA{A}$ has four linearly independent fields of conformal weight $\frac{3}{2}$ and that they decompose into two $\sltwo$-doublets $(G^-,\overline{G}^{\,+})$ and $(\overline{G}^{\,-},G^+)$.  $\VOA{A}$ may thus be regarded as some sort of bosonic analogue of the $N=4$ superconformal \svoa, see \cref{fig:bp92}.  However, a major difference is that the elements $E$, $J$, $F$, $G^{\pm}$ $\overline{G}^{\,\pm}$ and $L$ do not strongly generate $\VOA{A}$.  For example, the singular part of the \ope\ of $G^+(z)$ and $\overline{G}^{\,-}(w)$ is a simple pole whose coefficient is the $(j,\Delta) = (0,2)$ field corresponding to $T^- = (G^+_{-1/2})^2 F$.
\begin{figure}
	\begin{tikzpicture}[>=stealth',xscale=2,yscale=1.5]
		\draw[->] (-3,2) -- (-2.5,2) node[above right] {$j$};
		\draw[->] (-3,2) -- (-3,1.33) node[below left] {$\Delta$};
		\draw[white] (3,2) -- (2.5,2);
		\node at (0,2) {$\wun$};
		\node at (0,1) {$J$};
		\node at (1,0.5) {$G^+\ \overline{G}^{\,+}$};
		\node at (-1,0.5) {$\overline{G}^{\,-}\ G^-$};
		\node at (0,0) {$T^-\ \begin{matrix} L \ \pd J \\ \no{JJ} \end{matrix}\ T^+$};
		\node at (2,1) {$E$};
		\node at (-2,1) {$F$};
		\node at (2,0) {$\begin{matrix} \pd E \\ \no{JE} \end{matrix}$};
		\node at (-2,0) {$\begin{matrix} \pd F \\ \no{JF} \end{matrix}$};
		\node at (1,-0.5) {$\vdots$};
		\node at (-1,-0.5) {$\vdots$};
	\end{tikzpicture}
\caption{The states with conformal weight $\Delta \le 2$ of the ``$N=4$-like'' \voa\ $\VOA{A} = \ihw{-2,1} \oplus \ihw{0,0} \oplus \ihw{2,1}$ that extends $\bpminmod{9}{2}$.  Here, $T^+ = G^-_{-1/2} \overline{G}^{\,+}$ and $T^- = G^+_{-1/2} \overline{G}^{\,-}$.} \label{fig:bp92}
\end{figure}

It is nevertheless easy to explore the representation theory of $\VOA{A}$.  The set of $28$ (isomorphism classes of) simple untwisted \hw\ $\bpminmod{9}{2}$-modules decomposes into $10$ spectral flow orbits: $9$ of length $3$ and one fixed point.  It is easy to check from the charges and conformal weights that only four of these orbits define untwisted $\VOA{A}$-modules.  There are therefore precisely $4$ simple untwisted $\VOA{A}$-modules:
\begin{equation}
	\VOA{A} = \ihw{-2,1} \oplus \ihw{0,0} \oplus \ihw{2,1}, \quad \ihw{-1,1/6} \oplus \ihw{0,-1/3} \oplus \ihw{1,1/6}, \quad
	\ihw{-1,-1/6} \oplus \ihw{0,1/3} \oplus \ihw{1,-1/6} \quad \text{and} \quad \ihw{0,-2/9}.
\end{equation}
One can also classify the simple twisted $\VOA{A}$-modules, but now there are several more twisted sectors to consider.

\subsection*{Example: $\bpminmod{3}{4}$}

Consider next the \bp\ minimal model with $\kk=-\frac{9}{4}$ and $\cc=-\frac{23}{2}$.  This model arises as the $p=4$ member of a series $\mathcal{B}_p$ of interesting \voas\ constructed in \cite{CreCos13}.  As $\lambda^I \in \pwlat{0}$ and $\lambda^F \in \pwlat{3}$ satisfies $\lambda^F_0 \ge 1$, there are $\abs*{\pwlat{2}} = 6$ simple untwisted \hwms\ and $6$ simple twisted \hwms, $3$ of which have \fdim\ top spaces.  We illustrate these in \cref{fig:bp34} as we did for $\bpminmod{5}{2}$, but arranging the data according to $\lambda^F$ instead of $\lambda^I$.  One can check that this recovers the \hw\ classification of \cite{AdaCla19}.

In this illustration, the spectral flow functor $\sfsymb$ is again represented by a $120^{\circ}$ anticlockwise rotation, but does not preserve being \hw\ (because $\vv \ne 2$).  Indeed, the three spectral flow orbits through the simple \hw\ $\bpminmod{3}{4}$-modules are
\begin{equation}
	\begin{gathered}
		\cdots \lma \ihw{0,-1/2} \lma \twihw{-1/4,-9/16} \lma \cdots, \\
		\cdots \lma \ihw{1/4,-3/8} \lma \twihw{0,-5/16} \lma \ihw{-1/4,-3/8} \lma \twihw{-1/2,-9/16} \lma \cdots, \\
		\cdots \lma \ihw{1/2,-1/4} \lma \twihw{1/4,-1/16} \lma \ihw{0,0} \lma \twihw{-1/4,-1/16} \lma \ihw{-1/2,-1/4} \lma \twihw{-3/4,-9/16} \lma \cdots,
	\end{gathered}
\end{equation}
where the $\cdots$ indicate simple $\bpminmod{3}{4}$-modules that are not \hw.
\begin{figure}
	\begin{tikzpicture}[>=stealth',scale=1.4]
		\draw[->] (0:0) -- (30:0.5) node[below right] {$\lambda^F_1$};
		\draw[->] (0:0) -- (90:0.5) node[left] {$\lambda^F_2$};
		\draw[->] (0:0) -- (-120:0.433) node[left] {$\lambda^F_0$};
		\begin{scope}[shift={(-3,0)}]
			\node at (0:0) {$\brac[\big]{0,0}$};
			\node at (30:1) {$\brac[\big]{-\frac{1}{4},-\frac{3}{8}}$};
			\node at (30:2) {$\brac[\big]{-\frac{1}{2},-\frac{1}{4}}$};
			\node at (90:1) {$\brac[\big]{\frac{1}{4},-\frac{3}{8}}$};
			\node at (90:2) {$\brac[\big]{\frac{1}{2},-\frac{1}{4}}$};
			\node at (60:1.732) {$\brac[\big]{0,-\frac{1}{2}}$};
			\draw[dashed] (60:-0.55) -- (60:-0.15);
			\draw[dashed] (60:0.25) -- (60:1.6);
			\draw[dashed] (60:2) -- (60:2.4);
			\draw[<->] (52:2.5) -- node[auto,swap] {$\conjsymb$} (68:2.5);
		\end{scope}
		\begin{scope}[shift={(2,0)}]
			\node at (0:0) {$\brac[\big]{-\frac{1}{4},-\frac{1}{16}}_1$};
			\node at (30:1) {$\brac[\big]{-\frac{1}{2},-\frac{9}{16}}_{\infty}$};
			\node at (30:2) {$\brac[\big]{-\frac{3}{4},-\frac{9}{16}}_{\infty}$};
			\node at (90:1) {$\brac[\big]{0,-\frac{5}{16}}_1$};
			\node at (90:2) {$\brac[\big]{\frac{1}{4},-\frac{1}{16}}_1$};
			\node at (60:1.732) {$\brac[\big]{-\frac{1}{4},-\frac{9}{16}}_{\infty}$};
			\draw[dashed] (-0.9,1) -- (-0.5,1);
			\draw[<->] (-1,1.3) -- node[left] {$\conjsymb$} (-1,0.7);
		\end{scope}
	\end{tikzpicture}
	\caption{The charges and conformal weights $(j,\Delta)$ of the untwisted (left) and twisted (right) simple \hw\ $\bpminmod{3}{4}$-modules, arranged by the Dynkin labels of the fractional parts $\lambda^F$ of the corresponding surviving weights $\lambda$.  The subscript on the twisted labels gives the dimension of the top space.  Conjugation $\conjsymb$ is indicated by reflection about the dashed line, restricted to the modules with \fdim\ top spaces (the conjugate of a \hwm\ with an \infdim\ top space is not \hw).} \label{fig:bp34}
\end{figure}

The three simple twisted \hwms\ with $\lambda^F_1 > 0$ have \infdim\ top spaces.  They also share the same conformal weight $\Delta = -\frac{9}{16}$ and $\omega$-parameter $\omega = \omega^+_{j,\Delta} = 0$, the latter computed as in \eqref{eq:bprhwfromsl3}.  It therefore follows that $\bpminmod{3}{4}$ admits one family of simple twisted \rhwms\ $\twrhw{[j],-9/16,0}$, $j \ne -\frac{1}{4}, -\frac{1}{2}, -\frac{3}{4} \pmod{1}$, as per \cref{quotientsofcoherents}.  As a consistency check, substituting $\kk=-\frac{9}{4}$ and $\Delta = -\frac{9}{16}$ into \eqref{eq:defomegapm} indeed gives
\begin{equation}
	\omega^+_{j,-9/16} - \omega = (2j+1)(j^2+j+\tfrac{3}{16}) - 0 = 2(j+\tfrac{1}{4})(j+\tfrac{1}{2})(j+\tfrac{3}{4}),
\end{equation}
as expected.

This family was first constructed in \cite[Thm.~7.2]{AdaCla19}, though four exceptional values of $j \pmod{1}$ were given there instead of three.  Here, we have also proven that there are no other families.  We also note that \cref{thm:nonssstructure} proves the existence of six nonsemisimple twisted \rhw\ $\bpminmod{3}{4}$-modules, each characterised by a nonsplit short exact sequence:
\begin{subequations}
	\begin{gather}
		\begin{gathered}
			\dses{\conjmod{\twihw{-3/4,-9/16}}}{}{\twprhw{[-1/4],-9/16,0}}{}{\twihw{-1/4,-9/16}}, \\
			\dses{\conjmod{\twihw{-1/2,-9/16}}}{}{\twprhw{[-1/2],-9/16,0}}{}{\twihw{-1/2,-9/16}}, \\
			\dses{\conjmod{\twihw{-1/4,-9/16}}}{}{\twprhw{[-3/4],-9/16,0}}{}{\twihw{-3/4,-9/16}},
		\end{gathered}
		\\
		\begin{gathered}
			\dses{\twihw{-1/4,-9/16}}{}{\twmrhw{[-1/4],-9/16,0}}{}{\conjmod{\twihw{-3/4,-9/16}}}, \\
			\dses{\twihw{-1/2,-9/16}}{}{\twmrhw{[-1/2],-9/16,0}}{}{\conjmod{\twihw{-1/2,-9/16}}}, \\
			\dses{\twihw{-3/4,-9/16}}{}{\twmrhw{[-3/4],-9/16,0}}{}{\conjmod{\twihw{-1/4,-9/16}}}.
		\end{gathered}
	\end{gather}
\end{subequations}

There are other nonsemisimple $\bpminmod{3}{4}$-modules.  In particular, there exist staggered (logarithmic) modules on which $J_0$ acts semisimply but $L_0$ has Jordan blocks of rank $2$.  This follows from the well known fact \cite{AdaLat09,NagTri11} that staggered modules exist for the triplet \voa\ $\VOA{W}(1,4)$ of central charge $-\frac{25}{2}$.  The connection is that the coset of $\bpminmod{3}{4} = \mathcal{B}_4$ by the Heisenberg subalgebra generated by $J$ is the singlet algebra $\VOA{I}(1,4)$ \cite{CreCos13} and that the latter has $\VOA{W}(1,4)$ as an (infinite-order) simple current extension \cite{RidMod13}.  We shall not study these staggered $\bpminmod{3}{4}$-modules here, but intend to investigate them more generally in a sequel.

\subsection*{Example: $\bpminmod{4}{3}$}

The minimal model with $\kk=-\frac{5}{3}$ and $\cc=-1$ was also studied in \cite{AdaCla19}.  This time, we have $\lambda^I \in \pwlat{1}$ and $\lambda^F \in \pwlat{2}$, hence there are $\abs*{\pwlat{1}} \abs*{\pwlat{1}} = 9$ simple untwisted \hwms.  Moreover, $6$ of the simple twisted \hwms\ have \fdim\ top spaces whilst the top spaces of the other $3$ are \infdim.  We arrange the \hw\ data in an $\slthree$-covariant fashion in \cref{fig:bp43}, making the scale for $\lambda^I$ significantly smaller than that for $\lambda^F$ to improve clarity.  It follows that there is again only one family of generically simple \rhw\ $\bpminmod{4}{3}$-modules.  This family must therefore be closed under conjugation and so $\omega = 0$.  This can of course be checked explicitly using \eqref{eq:bprhwfromsl3}.
\begin{figure}
	\begin{tikzpicture}[>=stealth',scale=1.4]
		\begin{scope}[shift={(-4,0)}]
			\node at (0:0) {$\brac[\big]{0,0}$};
			\node at (30:1) {$\brac[\big]{\frac{1}{3},\frac{1}{2}}$};
			\node at (90:1) {$\brac[\big]{-\frac{1}{3},\frac{1}{2}}$};
			\begin{scope}[shift={(30:3)}]
				\node at (0:0) {$\brac[\big]{-\frac{4}{9},\frac{1}{9}}$};
				\node at (30:1) {$\brac[\big]{-\frac{1}{9},-\frac{1}{18}}$};
				\node at (90:1) {$\brac[\big]{-\frac{7}{9},\frac{5}{18}}$};
			\end{scope}
			\begin{scope}[shift={(90:3)}]
				\node at (0:0) {$\brac[\big]{\frac{4}{9},\frac{1}{9}}$};
				\node at (30:1) {$\brac[\big]{\frac{7}{9},\frac{5}{18}}$};
				\node at (90:1) {$\brac[\big]{\frac{1}{9},-\frac{1}{18}}$};
			\end{scope}
		\end{scope}
		\begin{scope}[shift={(1.5,0)}]
			\node at (0:0) {$\brac[\big]{-\frac{1}{18},-\frac{1}{72}}_1$};
			\node at (30:1) {$\brac[\big]{\frac{5}{18},\frac{47}{72}}_2$};
			\node at (90:1) {$\brac[\big]{-\frac{7}{18},\frac{23}{72}}_1$};
			\begin{scope}[shift={(90:3)}]
				\node at (0:0) {$\brac[\big]{\frac{7}{18},\frac{23}{72}}_1$};
				\node at (30:1) {$\brac[\big]{\frac{13}{18},\frac{47}{72}}_2$};
				\node at (90:1) {$\brac[\big]{\frac{1}{18},-\frac{1}{72}}_1$};
			\end{scope}
			\begin{scope}[shift={(30:3)}]
				\node at (0:0) {$\brac[\big]{-\frac{1}{2},-\frac{1}{8}}_{\infty}$};
				\node at (30:1) {$\brac[\big]{-\frac{1}{6},-\frac{1}{8}}_{\infty}$};
				\node at (90:1) {$\brac[\big]{-\frac{5}{6},-\frac{1}{8}}_{\infty}$};
			\end{scope}
		\end{scope}
	\end{tikzpicture}
	\caption{The charges and conformal weights $(j,\Delta)$ of the untwisted (left) and twisted (right) simple \hw\ $\bpminmod{4}{3}$-modules, arranged by the Dynkin labels of the integral (small-scale) and fractional (large-scale) parts $\lambda^F$ of the corresponding surviving weights $\lambda$.  The subscript on the twisted labels gives the dimension of the top space.} \label{fig:bp43}
\end{figure}

Along with the simple twisted \rhw\ $\bpminmod{4}{3}$-modules $\twrhw{[j],-1/8,0}$, $j \ne -\frac{1}{6}, -\frac{1}{2}, -\frac{5}{6} \pmod{1}$, we also deduce the existence of six nonsemisimple twisted \rhw\ $\bpminmod{4}{3}$-modules, characterised by the following nonsplit short exact sequences:
\begin{subequations}
	\begin{gather}
		\begin{gathered}
			\dses{\conjmod{\twihw{-5/6,-1/8}}}{}{\twprhw{[-1/6],-1/8,0}}{}{\twihw{-1/6,-1/8}}, \\
			\dses{\conjmod{\twihw{-1/2,-1/8}}}{}{\twprhw{[-1/2],-1/8,0}}{}{\twihw{-1/2,-1/8}}, \\
			\dses{\conjmod{\twihw{-1/6,-1/8}}}{}{\twprhw{[-5/6],-1/8,0}}{}{\twihw{-5/6,-1/8}},
		\end{gathered}
		\\
		\begin{gathered}
			\dses{\twihw{-1/6,-1/8}}{}{\twmrhw{[-1/6],-1/8,0}}{}{\conjmod{\twihw{-5/6,-1/8}}}, \\
			\dses{\twihw{-1/2,-1/8}}{}{\twmrhw{[-1/2],-1/8,0}}{}{\conjmod{\twihw{-1/2,-1/8}}}, \\
			\dses{\twihw{-5/6,-1/8}}{}{\twmrhw{[-5/6],-1/8,0}}{}{\conjmod{\twihw{-1/6,-1/8}}}.
		\end{gathered}
	\end{gather}
\end{subequations}
As with the case $(\uu,\vv) = (3,4)$ discussed above, there are other nonsemisimple $\bpminmod{4}{3}$-modules, in particular there are staggered (logarithmic) modules (as was already noted in \cite{AdaCla19}).  We review the argument briefly for completeness.

First, note \cite[Sec.~5.2]{AdaCla19} that the \bp\ minimal model \voa\ $\bpminmod{4}{3}$ embeds in the symplectic bosons \voa\ $\VOA{B}$ (also known as the bosonic ghost system, $\beta\gamma$ ghosts and the Weyl vertex algebra) with $\cc=-1$.  We recall that $\VOA{B}$ is strongly generated by $\beta$ and $\gamma$, both of conformal weight $\frac{1}{2}$, subject to the \opes
\begin{equation}
	\beta(z) \beta(w) \sim 0 \sim \gamma(z) \gamma(w) \quad \text{and} \quad \beta(z) \gamma(w) \sim \frac{-\wun}{z-w}.
\end{equation}
An embedding $\bpminmod{4}{3} \ira \VOA{B}$ is then given by
\begin{equation} \label{eq:bp43->B}
	J \longmapsto \frac{1}{3} \no{\beta \gamma}, \quad G^+ \longmapsto \frac{1}{3\sqrt{3}} \no{\beta \beta \beta}, \quad
	G^- \longmapsto -\frac{1}{3\sqrt{3}} \no{\beta \beta \beta}, \quad L \longmapsto \frac{1}{2} \brac*{\no{\pd \beta \gamma} - \no{\pd \gamma \beta}}.
\end{equation}
This suggests, and it is easy to check \cite[Prop.~5.9]{AdaCla19}, that $\bpminmod{4}{3}$ is (isomorphic to) the $\ZZ_3$-orbifold of $\VOA{B}$ corresponding to the automorphism $\ee^{2 \pi \ii J_0}$.  As $\VOA{B}$ is known \cite{RidFus10,RidBos14,AllBos20} to admit a family of staggered modules, each member related to the others by spectral flow, so does $\bpminmod{4}{3}$.  In fact, $\bpminmod{4}{3}$ admits three such families.

\subsection*{Example: $\bpminmod{5}{3}$}

We conclude with the \bp\ minimal model with $\kk=-\frac{4}{3}$ and $\cc=\frac{3}{5}$.  With $\lambda^I \in \pwlat{2}$ and $\lambda^F \in \pwlat{2}$, there are $\abs*{\pwlat{2}} \abs{\pwlat{1}} = 18$ simple untwisted \hwms\ and the twisted \hwms\ divide into $12$ with \fdim\ top spaces and $6$ with \infdim\ top spaces.  We illustrate the \hw\ data in \cref{fig:bp53}.  There are thus two families of generically simple twisted \rhwms, one with $\Delta = \frac{1}{8}$ and one with $\Delta = -\frac{3}{40}$.  As these conformal weights differ, each family must be closed under conjugation and so we have $\omega = 0$ for both (again).  We therefore have simple twisted \rhw\ $\bpminmod{5}{3}$-modules $\twrhw{[j],1/8,0}$, $j \ne -\frac{7}{6}, -\frac{1}{2}, \frac{1}{6} \pmod{1}$, and $\twrhw{[j],-3/40,0}$, $j \ne -\frac{5}{6}, -\frac{1}{2}, -\frac{1}{6} \pmod{1}$, along with the $12$ nonsemisimple versions guaranteed by \cref{thm:nonssstructure}.
\begin{figure}
	\scalebox{0.8}{
		\begin{tikzpicture}[>=stealth',scale=1.4]
			\begin{scope}[shift={(-4,0)}]
				\node at (0:0) {$\brac[\big]{0,0}$};
				\node at (30:1) {$\brac[\big]{\frac{1}{3},\frac{3}{10}}$};
				\node at (30:2) {$\brac[\big]{\frac{2}{3},1}$};
				\node at (90:1) {$\brac[\big]{-\frac{1}{3},\frac{3}{10}}$};
				\node at (90:2) {$\brac[\big]{-\frac{2}{3},1}$};
				\node at (60:1.732) {$\brac[\big]{0,\frac{4}{5}}$};
				\begin{scope}[shift={(30:4)}]
					\node at (0:0) {$\brac[\big]{-\frac{5}{9},\frac{7}{18}}$};
					\node at (30:1) {$\brac[\big]{-\frac{2}{9},\frac{1}{45}}$};
					\node at (30:2) {$\brac[\big]{\frac{1}{9},\frac{1}{18}}$};
					\node at (90:1) {$\brac[\big]{-\frac{8}{9},\frac{16}{45}}$};
					\node at (90:2) {$\brac[\big]{-\frac{11}{9},\frac{13}{18}}$};
					\node at (60:1.732) {$\brac[\big]{-\frac{5}{9},\frac{17}{90}}$};
				\end{scope}
				\begin{scope}[shift={(90:4)}]
					\node at (0:0) {$\brac[\big]{\frac{5}{9},\frac{7}{18}}$};
					\node at (30:1) {$\brac[\big]{\frac{8}{9},\frac{16}{45}}$};
					\node at (30:2) {$\brac[\big]{\frac{11}{9},\frac{13}{18}}$};
					\node at (90:1) {$\brac[\big]{\frac{2}{9},\frac{1}{45}}$};
					\node at (90:2) {$\brac[\big]{-\frac{1}{9},\frac{1}{18}}$};
					\node at (60:1.732) {$\brac[\big]{\frac{5}{9},\frac{17}{90}}$};
				\end{scope}
			\end{scope}
			\begin{scope}[shift={(3,0)}]
				\node at (0:0) {$\brac[\big]{\frac{1}{18},\frac{1}{72}}_1$};
				\node at (30:1) {$\brac[\big]{\frac{7}{18},\frac{173}{360}}_2$};
				\node at (30:2) {$\brac[\big]{\frac{13}{18},\frac{97}{72}}_3$};
				\node at (90:1) {$\brac[\big]{-\frac{5}{18},\frac{53}{360}}_1$};
				\node at (90:2) {$\brac[\big]{-\frac{11}{18},\frac{49}{72}}_1$};
				\node at (60:1.732) {$\brac[\big]{\frac{1}{18},\frac{293}{360}}_2$};
				\begin{scope}[shift={(30:4)}]
					\node at (0:0) {$\brac[\big]{-\frac{1}{2},\frac{1}{8}}_{\infty}$};
					\node at (30:1) {$\brac[\big]{-\frac{1}{6},-\frac{3}{40}}_{\infty}$};
					\node at (30:2) {$\brac[\big]{\frac{1}{6},\frac{1}{8}}_{\infty}$};
					\node at (90:1) {$\brac[\big]{-\frac{5}{6},-\frac{3}{40}}_{\infty}$};
					\node at (90:2) {$\brac[\big]{-\frac{7}{6},\frac{1}{8}}_{\infty}$};
					\node at (60:1.732) {$\brac[\big]{-\frac{1}{2},-\frac{3}{40}}_{\infty}$};
				\end{scope}
				\begin{scope}[shift={(90:4)}]
					\node at (0:0) {$\brac[\big]{\frac{11}{18},\frac{49}{72}}_1$};
					\node at (30:1) {$\brac[\big]{\frac{17}{18},\frac{293}{360}}_2$};
					\node at (30:2) {$\brac[\big]{\frac{23}{18},\frac{97}{72}}_3$};
					\node at (90:1) {$\brac[\big]{\frac{5}{18},\frac{53}{360}}_1$};
					\node at (90:2) {$\brac[\big]{-\frac{1}{18},\frac{1}{72}}_1$};
					\node at (60:1.732) {$\brac[\big]{\frac{11}{18},\frac{173}{360}}_2$};
				\end{scope}
			\end{scope}
		\end{tikzpicture}
	}
	\caption{The charges and conformal weights $(j,\Delta)$ of the untwisted (left) and twisted (right) simple \hw\ $\bpminmod{5}{3}$-modules, arranged by the Dynkin labels of the integral (small-scale) and fractional (large-scale) parts $\lambda^F$ of the corresponding surviving weights $\lambda$.  The subscript on the twisted labels gives the dimension of the top space.} \label{fig:bp53}
\end{figure}

An interesting feature of this minimal model is the existence of modules $\ihw{\pm 2/3,1}$ corresponding to $\lambda^I = [0,2,0], [0,0,2]$ and $\lambda^F=[2,0,0]$.  These are not spectral flows of the vacuum module, but we nevertheless conjecture that they are simple currents generating an order-$3$ simple current extension $\VOA{C}$ of $\bpminmod{5}{3}$.  As with $\bpminmod{9}{2}$ (and assuming this conjecture), the \hwvs\ $E$ and $F$ of $\ihw{2/3,1}$ and $\ihw{-2/3,1}$, respectively, generate a copy of an $\sltwo$ minimal model, this time $\sltwominmod{5}{2} = \saffvoa{1/2}{\sltwo}$.  But unlike the situation for $\bpminmod{9}{2}$, the embedding $\sltwominmod{5}{2} \ira \VOA{C}$ is conformal.

We recall from \cite[Sec.~10]{CreMod13}, see also \cite{CreBra17}, that $\sltwominmod{5}{2}$ has a simple current whose top space is the four-dimensional simple $\sltwo$-module with conformal weight $\frac{3}{2}$.  We therefore conjecture that this order-$2$ simple current extension of $\sltwominmod{5}{2}$ is isomorphic to $\VOA{C}$, illustrating the low-conformal weight states of $\VOA{C}$ in \cref{fig:bp53'} for convenience (and noting that the $\sltwominmod{5}{2}$ Cartan element $H$ is identified with $3J$).  This extended \voa\ was conjectured to be the minimal \qhr\ of $\saffvoa{-3/2}{\alg{g}_2}$ in \cite[Sec.~10]{CreMod13}.  This was settled affirmatively in \cite[Thm.~6.8]{AdaCon17}.
\begin{figure}
	\begin{tikzpicture}[>=stealth',xscale=2.5,yscale=1.5]
		\draw[->] (-3,2) -- (-2.6,2) node[above right] {$j$};
		\draw[->] (-3,2) -- (-3,1.33) node[below left] {$\Delta$};
		\draw[white] (3,2) -- (2.6,2);
		\node at (0,2) {$\wun$};
		\node at (0,1) {$J$};
		\node at (1.5,0.5) {$G^+$};
		\node at (-1.5,0.5) {$G^-$};
		\node at (0.5,0.5) {$\overline{G}^{\,+}$};
		\node at (-0.5,0.5) {$\overline{G}^{\,-}$};
		\node at (0,0) {$\begin{matrix} L \ \pd J \\ \no{JJ} \end{matrix}$};
		\node at (1,1) {$E$};
		\node at (-1,1) {$F$};
		\node at (2,0) {$\no{EE}$};
		\node at (-2,0) {$\no{FF}$};
		\node at (1,0) {$\begin{matrix} \pd E \\ \no{JE} \end{matrix}$};
		\node at (-1,0) {$\begin{matrix} \pd F \\ \no{JF} \end{matrix}$};
		\node at (1,-0.5) {$\vdots$};
		\node at (-1,-0.5) {$\vdots$};
	\end{tikzpicture}
\caption{The states with conformal weight $\Delta \le 2$ of the extended algebra $\VOA{C} = \ihw{-2/3,1} \oplus \ihw{0,0} \oplus \ihw{2/3,1}$ of $\bpminmod{5}{3}$.  Here, $\overline{G}^{\,+} = G^+_{-1/2} F$ and $\overline{G}^{\,-} = G^-_{-1/2} E$, whilst $\no{EE}$ and $\no{FF}$ are proportional to $(G^+_{-1/2})^2 F$ and $(G^-_{-1/2})^2 E$, respectively.} \label{fig:bp53'}
\end{figure}

The conjectured embeddings $\bpminmod{5}{3} \ira \VOA{C} \hookleftarrow \sltwominmod{5}{2}$ may be tested through representation theory.  Indeed, $\sltwominmod{5}{2}$ has two simple \hwms\ with \fdim\ top spaces, in addition to the vacuum and simple current module.  Their direct sum may be identified with the simple $\VOA{C}$-module $\ihw{-1/3,3/10} \oplus \ihw{0,4/5} \oplus \ihw{1/3,3/10}$.  Likewise, there are four simple \hw\ $\sltwominmod{5}{2}$-modules with \infdim\ top spaces and they combine to give two simple $\VOA{C}$-modules $\ihw{-7/6,1/8} \oplus \ihw{-1/2,1/8} \oplus \ihw{1/6,1/8}$ and $\ihw{-5/6,-3/40} \oplus \ihw{-1/2,-3/40} \oplus \ihw{-1/6,-3/40}$.  The story is predictably similar for the \rhwms.

We finish by noting that $\bpminmod{5}{3}$ also admits staggered (logarithmic) modules because $\sltwominmod{5}{2}$ does \cite{AdaLat09,AdaRea17}, see also \cite{CreCos18}.  In fact, we expect that staggered $\bpminmoduv$-modules exist for all $\vv\ge3$ and hope to return to this in the future.

\appendix
\section{Proof of \cref{thm:surjection}} \label{app:proof}

In this \lcnamecref{app:proof}, we adopt the notation of \cref{sec:sl3} and assume throughout that $\lambda_0 \notin \NN$ so that $\qhrfun{\slihw{\lambda}} \ne 0$ (and that the level $\kk$ is as in \cref{assume}).  With these assumptions, the aim is to prove the following assertion:
\begin{equation} \label{eq:assertion}
	\slmpik \cdot \slihw{\lambda} \ne 0 \quad \Rightarrow \quad \qhrfun{\slmpik} \cdot \qhrfun{\slihw{\lambda}} \ne 0.
\end{equation}
$\qhrfun{\blank}$ is a cohomological functor which involves tensoring with a ghost \svoa\ whose vacuum element will be denoted by $\ghvac$.  With this, we shall prove \eqref{eq:assertion} by exhibiting elements $\chi \in \slmpik$ and $v \in \slihw{\lambda}$ for which $\chi\otimes \ghvac$ and $v\otimes \ghvac$ are (degree-$0$) closed elements of the appropriate BRST complexes and the (clearly closed) element $\chi_n v\otimes \ghvac$ is not exact, for some $n \in \ZZ$.  Using brackets to denote cohomology classes, $[\chi_n v\otimes \ghvac]$ then gives a nonzero element of $\qhrfun{\slmpik} \cdot \qhrfun{\slihw{\lambda}}$:
\begin{equation}
	[\chi \otimes \ghvac] \cdot [v \otimes \ghvac] \equiv [\chi \otimes \ghvac](z) [v \otimes \ghvac] = [\chi(z) v \otimes \ghvac] \ne 0.
\end{equation}
As noted at the end of \cref{sec:sl3}, this amounts to a proof of \cref{thm:surjection}.  To prove \eqref{eq:assertion} however, we need to delve a little deeper into the details of minimal \qhr\ for $\uslvoak$.

\subsection{Minimal \qhr} \label{sec:qhr}

Recall from \cite{KacQua03} that the minimal \qhr\ functor $\qhrfun{\blank}$ computes the cohomology of the tensor product of a given $\uslvoak$-module with certain ghost \svoas.  Specifically, we need a fermionic ghost system $\fgvoa{\alpha}$ for each positive root $\alpha \in \proots$ of $\slthree$ and one bosonic ghost system $\bgvoa$ corresponding to the two simple roots $\sroot{1}$ and $\sroot{2}$.  Denoting the fermionic ghosts by $b^{\alpha}$ and $c^{\alpha}$, $\alpha \in \proots$, and the bosonic ghosts by $\beta$ and $\gamma$, we take the defining \opes\ to be
\begin{equation}
	b^{\alpha}(z) c^{\alpha}(w) \sim \frac{\wun}{z-w} \quad \text{and} \quad \beta(z) \gamma(w) \sim \frac{\wun}{z-w},
\end{equation}
understanding that the remaining \opes\ between ghost generating fields are regular.  The tensor product of these ghost \svoas\ will be denoted by $\ghvoa =  \fgvoa{\sroot{1}} \otimes \fgvoa{\sroot{2}} \otimes \fgvoa{\hroot} \otimes \bgvoa$, for convenience.

We fix a basis of $\slthree$ for the computations to follow.  Let $E_{ij}$ denote the $3 \times 3$ matrix with $1$ in the $(i,j)$-th position and zeroes elsewhere.  Then, we set
\begin{equation}
	\rvec{\hroot} = E_{13}, \quad
	\begin{aligned}
		\rvec{\sroot{1}} &= E_{12}, & \cvec{\sroot{1}} &= E_{11}-E_{22}, & \nrvec{\sroot{1}} &= E_{21}, \\
		\rvec{\sroot{2}} &= E_{23}, & \cvec{\sroot{2}} &= E_{22}-E_{33}, & \nrvec{\sroot{2}} &= E_{32},
	\end{aligned}
	\quad \nrvec{\hroot} = E_{31}.
\end{equation}
Here, $\hroot = \sroot{1} + \sroot{2}$ is the highest root of $\slthree$ and we shall also set $\cvec{\hroot} = \cvec{\sroot{1}} + \cvec{\sroot{2}} = E_{11}-E_{33}$.

To define $\qhrfun{\Mod{M}}$ for a $\uslvoak$-module $\Mod{M}$, one first grades $\Mod{M} \otimes \ghvoa$ by the fermionic ghost number, that is by the total number of $c$-modes minus the total number of $b$-modes.  Equivalently, the ghost number is the eigenvalue of the zero mode of the field $\sum_{\alpha \in \proots} \no{b^{\alpha}(z) c^{\alpha}(z)}$.  Next, one introduces \cite{BerCon91,KacQua03} the following fermionic field of ghost number $1$:
\begin{equation} \label{eq:diff}
	d(z) = \brac[\big]{\rvec{\hroot}(z) + \wun} c^{\hroot}(z) + \brac[\big]{\rvec{\sroot{1}}(z) + \beta(z)} c^{\sroot{1}}(z) + \brac[\big]{\rvec{\sroot{2}}(z) + \gamma(z)} c^{\sroot{2}}(z) + \no{b^{\hroot}(z) c^{\sroot{2}}(z) c^{\sroot{1}}(z)}.
\end{equation}
A straightforward computation verifies that $d(z) d(w) \sim 0$.  We then form a differential complex by requiring that $d(z)$ is homogeneous of conformal weight $1$ and equipping $\Mod{M} \otimes \ghvoa$ with the differential $d = d_0$ (which obviously squares to $0$).

With \eqref{eq:diff}, this requirement on $d(z)$ requires that the conformal weight of $c^{\hroot}$ is also $1$, whilst that of $\rvec{\hroot}$ is $0$.  The latter may be achieved by adding $\frac{1}{2} \pd \cvec{\hroot}$ to the standard Sugawara \emt\ $T^{\text{Sug.}}$ of $\uslvoak$.  When this is done, homogeneity and \eqref{eq:diff} now fix the conformal weight $\widetilde{\Delta}$ of all the generating fields as in \cref{tab:genwts}.  The \emt\ of $\uslvoak \otimes \ghvoa$ is thus
\begin{equation}
	\begin{gathered}
		\widetilde{L} = T^{\text{Sug.}} + \frac{1}{2} \pd \cvec{\hroot} + \sum_{\alpha \in \proots} T^{\fgvoa{\alpha}} + T^{\bgvoa}, \\
		\text{where} \quad
		T^{\fgvoa{\sroot{i}}} = \frac{1}{2} \no{\pd b^{\sroot{i}} c^{\sroot{i}} + \pd c^{\sroot{i}} b^{\sroot{i}}}, \quad
		T^{\fgvoa{\hroot}} = \no{\pd b^{\hroot} c^{\hroot}} \quad \text{and} \quad
		T^{\bgvoa} = \frac{1}{2} \no{\pd \gamma \beta - \pd \beta \gamma}.
	\end{gathered}
\end{equation}
The central charge matches that of $\ubpvoak$, see \eqref{eq:bpc}:
\begin{equation}
	\frac{8\kk}{\kk+3} - 6\kk + 1 + 1 - 2 - 1 = -\frac{(2\kk+3)(3\kk+1)}{\kk+3}.
\end{equation}

\begin{table}
	\begin{tabular}{C||CCCCCCCC|CCCCCC|CC}
		& \rvec{\hroot} & \rvec{\sroot{1}} & \rvec{\sroot{2}} & \cvec{\sroot{1}} & \cvec{\sroot{2}} & \nrvec{\sroot{1}} & \nrvec{\sroot{2}} & \nrvec{\hroot} & b^{\sroot{1}} & c^{\sroot{1}} & b^{\sroot{2}} & c^{\sroot{2}} & b^{\hroot} & c^{\hroot} & \beta & \gamma \\
		\hline
		\# & 0 & 0 & 0 & 0 & 0 & 0 & 0 & 0 & -1 & 1 & -1 & 1 & -1 & 1 & 0 & 0 \\
		\widetilde{j} & 0 & 1 & -1 & 0 & 0 & -1 & 1 & 0 & 1 & -1 & -1 & 1 & 0 & 0 & 1 & -1 \\
		\widetilde{\Delta} & 0 & \frac{1}{2} & \frac{1}{2} & 1 & 1 & \frac{3}{2} & \frac{3}{2} & 2 & \frac{1}{2} & \frac{1}{2} & \frac{1}{2} & \frac{1}{2} & 0 & 1 & \frac{1}{2} & \frac{1}{2}
	\end{tabular}
	\bigskip
	\caption{The ghost numbers $\#$, charges $\widetilde{j}$ and conformal weights $\widetilde{\Delta}$ of the generating fields of the \svoa\ $\uslvoak \otimes \ghvoa$.} \label{tab:genwts}
\end{table}

As the notation suggests, $\widetilde{L}$ is closed and its image in cohomology (that is, in $\qhrfun{\uslvoak \otimes \ghvoa, d} = \qhrfun{\uslvoak} \cong \ubpvoak$) is $L$.  Note that the ``symmetric'' deformation of adding $\frac{1}{2} \pd \cvec{\hroot}$ to $T^{\text{Sug.}}$ ensures this result.  There are other deformations consistent with $d$ being a differential --- they correspond to adding a multiple of $\pd J$ to $L$.  Speaking of which, the element
\begin{equation}
	\widetilde{J} = \frac{1}{3} (\cvec{\sroot{1}} - \cvec{\sroot{2}}) + \no{b^{\sroot{1}} c^{\sroot{1}}} - \no{b^{\sroot{2}} c^{\sroot{2}}} - \no{\beta \gamma}
\end{equation}
is likewise closed and its image in cohomology is $J$ \cite{KacQua03}.  We give the charge ($\widetilde{J}_0$-eigenvalue) of the generating fields of $\uslvoak \otimes \ghvoa$ in \cref{tab:genwts} for completeness.  We also note that
\begin{equation} \label{eq:defGpm}
	\begin{aligned}
		\widetilde{G}^+ &= \nrvec{\sroot{2}} + \no{\cvec{\sroot{2}} \beta} - \no{b^{\sroot{1}} c^{\hroot}} - \no{b^{\sroot{1}} c^{\sroot{1}} \beta} + 2 \no{b^{\sroot{2}} c^{\sroot{2}} \beta} + \no{b^{\hroot} c^{\hroot} \beta} + \no{\beta \beta \gamma} + (\kk+1) \pd \beta \\
		\text{and} \quad
		\widetilde{G}^- &= \nrvec{\sroot{1}} - \no{\cvec{\sroot{1}} \gamma} + \no{b^{\sroot{2}} c^{\hroot}} - 2 \no{b^{\sroot{1}} c^{\sroot{1}} \gamma} + \no{b^{\sroot{2}} c^{\sroot{2}} \gamma} - \no{b^{\hroot} c^{\hroot} \gamma} + \no{\gamma \gamma \beta} - (\kk+1) \pd \gamma
	\end{aligned}
\end{equation}
are both closed.  Their images in cohomology are $G^+$ and $G^-$, respectively \cite{KacQua04}.

We remark that deforming the \emt\ of $\uslvoak$ means that we now have two distinct mode conventions for affine fields.  Our convention will be that mode indices with respect to the deformed conformal weight will be denoted with parentheses.  Thus, for an affine generator $a$ with deformed conformal weight $\widetilde{\Delta}$ as in \cref{tab:genwts}, we shall write
\begin{equation} \label{eq:modeconvention}
	a(z) = \sum_{n \in \ZZ} a_n z^{-n-1} = \sum_{n \in \ZZ - \widetilde{\Delta}} a_{(n)} z^{-n-\widetilde{\Delta}}.
\end{equation}
We shall not bother to so distinguish mode indices for ghost fields: their expansions will always be taken with respect to the conformal weights in \cref{tab:genwts}.

\subsection{The proof} \label{sec:proof}

We start with a well known fundamental result for the \hwv\ $v$ of $\slihw{\lambda}$, recalling that we are assuming throughout that $\lambda_0 \notin \NN$ and that $\kk$ satisfies \cref{assume}.  Let $\ghvac$ denote the vacuum vector of $\ghvoa$.  By \cite[Lem.~4.6.1 and Prop.~4.7.1]{AraRep05}, we have the following \lcnamecref{lem:evnotexact}.
\begin{lemma} \label{lem:evnotexact}
	For all $n \in \NN$, $(\rvec{\hroot}_{-1})^n v \otimes \ghvac$ is closed and inexact.  In particular, $[v \otimes \ghvac] \ne 0$.
\end{lemma}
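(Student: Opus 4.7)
The plan is to handle closedness and inexactness separately, in each case reducing to statements about $v\otimes\ghvac$ alone by using that $\rvec{\hroot}_{-1}$ commutes with the BRST differential $d_0$. For closedness, I would first verify that $d(z)\rvec{\hroot}(w)\sim 0$: each affine factor in \eqref{eq:diff} is a positive root generator bracketing trivially with $\rvec{\hroot}$ in $\slthree$ (because $\hroot$ is the highest root and $\sroot{i}+\hroot$ is not a root), with central-extension contributions annihilated by Killing-orthogonality, while the purely ghost summand $\no{b^{\hroot}c^{\sroot{2}}c^{\sroot{1}}}$ trivially commutes with $\rvec{\hroot}$. Hence $[d_0,\rvec{\hroot}_{-1}]=0$, and closedness of $(\rvec{\hroot}_{-1})^n v\otimes\ghvac$ reduces to that of $v\otimes\ghvac$. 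The latter is a direct mode-by-mode check after normal-ordering: each summand of $d_0$ terminates in either a nonnegative-index mode of a positive root generator (killing $v$) or a sufficiently positive ghost mode (killing $\ghvac$).

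For inexactness, note that the second bullet of \cref{thm:qhr} gives $\qhrfun{\slihw{\lambda}}\cong\ihw{j,\Delta}\neq 0$ since $\lambda_0\notin\NN$. As $\ihw{j,\Delta}$ is a simple highest-weight $\ubpvoak$-module, its top space (the minimal $\widetilde{L}_0$-weight piece) is one-dimensional, and a weight computation from \cref{tab:genwts} places the degree-zero class of $v\otimes\ghvac$ there. Being the natural candidate for the generator of the nonzero cohomology, $[v\otimes\ghvac]$ is itself nonzero. This disposes of the ``in particular'' statement and gives inexactness for $n=0$.

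To bootstrap to $n\geq 1$, the key ingredient is the OPE $d(z)b^{\hroot}(w)\sim(\rvec{\hroot}(w)+\wun)/(z-w)$, obtained from the single surviving contraction $c^{\hroot}(z)b^{\hroot}(w)\sim\wun/(z-w)$ applied to the first summand of \eqref{eq:diff} (all other contractions vanish by ghost statistics or absence of a pole). The resulting algebra relation $\{d_0,b^{\hroot}_0\}=\rvec{\hroot}_{-1}+\wun$, applied to $v\otimes\ghvac$ together with $d_0(v\otimes\ghvac)=0$, yields
\[
d_0\bigl(v\otimes b^{\hroot}_0\ghvac\bigr)\;=\;\bigl(\rvec{\hroot}_{-1}v+v\bigr)\otimes\ghvac,
\]
so $(\rvec{\hroot}_{-1}v+v)\otimes\ghvac$ is a coboundary in degree zero and $[\rvec{\hroot}_{-1}v\otimes\ghvac]=-[v\otimes\ghvac]$. (A side benefit is that the algebra relation forces $b^{\hroot}_0\ghvac\neq 0$, pinning down the ghost vacuum convention.) Iterating via the induced action of $\rvec{\hroot}_{-1}$ on cohomology, which is well defined by the closedness step, gives $[(\rvec{\hroot}_{-1})^n v\otimes\ghvac]=(-1)^n[v\otimes\ghvac]\neq 0$ for every $n\in\NN$.

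The hardest part is the second paragraph: the closedness check and the identity $\{d_0,b^{\hroot}_0\}=\rvec{\hroot}_{-1}+\wun$ are short, purely formal BRST calculations, but identifying $[v\otimes\ghvac]$ with a nonzero element of $\qhrfun{\slihw{\lambda}}\cong\ihw{j,\Delta}$ requires genuine cohomological input --- precisely the substance of \cite[Lem.~4.6.1, Prop.~4.7.1]{AraRep05} via Arakawa's vanishing theorem. Granting that, the rest of the argument is a short exercise reducing the general $n$ case to the $n=0$ case.
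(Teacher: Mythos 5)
Your proposal is correct in substance, but it takes a different route from the paper: the paper does not prove this lemma at all, it simply quotes \cite[Lem.~4.6.1 and Prop.~4.7.1]{AraRep05}, which give closedness and inexactness for every $n \in \NN$ directly. What you do instead is (i) verify closedness by the elementary OPE checks $d(z)\rvec{\hroot}(w) \sim 0$ and $d_0(v \otimes \ghvac) = 0$ (both computations are right: $\hroot$ being the highest root kills all brackets and the Killing-form terms, and every summand of $d_0$ ends in an annihilator of $v$ or of $\ghvac$), and (ii) reduce the inexactness for $n \ge 1$ to the case $n=0$ via the relation $\acomm{d_0}{b^{\hroot}_0} = \rvec{\hroot}_{-1} + \wun$, which indeed follows from the single contraction $c^{\hroot}(z)b^{\hroot}(w)$ in \eqref{eq:diff} and shows that $\rvec{\hroot}_{-1}$ acts as $-\id$ on cohomology, so $[(\rvec{\hroot}_{-1})^n v \otimes \ghvac] = (-1)^n [v \otimes \ghvac]$. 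This is a genuine economy: it replaces the full strength of Arakawa's statement by its $n=0$ case plus a two-line BRST computation, and it is exactly the kind of manipulation the paper itself uses later (\cref{lem:G=f}, \cref{cor:done}).

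The one caveat is your middle paragraph. Knowing that $\qhrfun{\slihw{\lambda}} \cong \ihw{j,\Delta} \ne 0$ and that $v \otimes \ghvac$ sits in the correct weight space does \emph{not} show that this particular cocycle is inexact: the weight space of the complex at that charge and conformal weight is not one-dimensional, and ``the natural candidate for the generator'' is not an argument --- a priori the nonzero class could be represented by a different cocycle while $v \otimes \ghvac$ is a coboundary. You acknowledge this yourself in the final paragraph and defer precisely this point to \cite[Lem.~4.6.1, Prop.~4.7.1]{AraRep05}; since that is the very citation the paper relies on for the entire lemma, your proof is acceptable on the same terms, but be aware that without that external input the $n=0$ inexactness remains unproved, and that the cited results in fact already cover all $n$, so your reduction is a simplification of the input rather than a replacement for it.
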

%

We next consider the maximal ideal $\slmpik$ of $\uslvoak$.
\begin{lemma} \label{lem:chi}
	$\slmpik$ is generated by a single \sv\ $\chi$ whose $\slthree$-weight and conformal weight are $(\uu-2) \hroot$ and $(\uu-2) \vv$, respectively.  Moreover, $\chi \otimes \ghvac$ is closed.
\end{lemma}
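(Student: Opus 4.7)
The plan is in three parts: cyclicity of $\slmpik$, determination of the $\aslthree$-weight of $\chi$, and BRST-closure. Cyclicity follows from the Gorelik--Kac structure theory \cite{GorSim07}: for admissible $\kk$, the maximal proper submodule of the vacuum Verma module $\slvhw{\kk\fwt{0}}$ coincides with $\slmpik$ and is generated by a single \sv\ $\chi$.

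To pin down the $\aslthree$-weight of $\chi$, I appeal to the linkage principle together with the Kac--Kazhdan determinant formula for $\uslvoak$. The generator of $\slmpik$ lies in the shifted affine Weyl orbit of $\kk\fwt{0}$ and is the unique representative ``closest below'' in the Verma order. A short computation using the shifted reflection across the affine positive real root $\sroot{0} + (\vv-1)\imroot$ places $\chi$ at the $\aslthree$-weight $\kk\fwt{0} + (\uu-2)\hroot - (\uu-2)\vv\,\imroot$. The horizontal part gives the $\slthree$-weight $(\uu-2)\hroot$, while the $-\imroot$ coefficient reads off the conformal weight as $(\uu-2)\vv$ (relative to the undeformed Sugawara $L_0$, which agrees with the deformed $\widetilde{L}_0$ on elements of the form $v \otimes \ghvac$). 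For $\vv=1$ this recovers the classical integrable \sv\ $(\rvec{\hroot}_{-1})^{\uu-2}\vac$, and for $\vv=2$ it matches the \sv\ used in \cite{AraRat13}.

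For closure under $d$, the cleanest route is to invoke the general principle from \cite[Prop.~4.7.1]{AraRep05}, which implies that any \sv\ of a Verma $\uslvoak$-module produces a $d$-cocycle upon tensoring with $\ghvac$. For completeness, I would also verify this directly by expanding $d_0$ using \eqref{eq:diff} into its seven contributions and showing each vanishes on $\chi \otimes \ghvac$. For each bilinear affine-ghost piece $(\no{(\rvec{\alpha} + \delta_{\alpha,\hroot}\wun)\, c^\alpha})_0$ with $\alpha\in\proots$, the mode sum splits cleanly into a half killed by the singularity of $\chi$ (since $\alpha\in\proots$) and a complementary half killed by the matching positive-energy ghost modes on $\ghvac$. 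The boson-ghost pieces $(\no{\beta c^{\sroot{1}}})_0$ and $(\no{\gamma c^{\sroot{2}}})_0$ annihilate $\ghvac$ by a mode-counting check, as does $c^\hroot_0$ (since $c^\hroot$ has conformal weight $1$). Finally, the trilinear $(\no{b^\hroot c^{\sroot{2}} c^{\sroot{1}}})_0$ annihilates $\ghvac$ because the weight-$0$ constraint precludes all three modes being simultaneously of creation type; no normal-ordering corrections appear since the three fields live in pairwise distinct ghost subsystems. The only technical nuisance is keeping mode conventions straight (integer versus half-integer indices, and the asymmetric annihilation ranges for the weight-$0$/weight-$1$ pair $b^\hroot, c^\hroot$), but this is purely bookkeeping.
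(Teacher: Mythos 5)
Your overall route is the same as the paper's: identify the generating \sv\ as the one attached to the shifted reflection in the real root $-\hroot+\vv\imroot$ (your $\sroot{0}+(\vv-1)\imroot$ is the same root), read off the $\slthree$-weight $(\uu-2)\hroot$ and conformal weight $(\uu-2)\vv$, and get closure of $\chi\otimes\ghvac$ from $\chi$ being a \hwv\ (your direct term-by-term check of $d_0$ is a correct elaboration of exactly that point).  However, there is a genuine gap in the cyclicity step, which is the real content of the lemma.  Your opening claim --- that the maximal proper submodule of the vacuum Verma module $\slvhw{\kk\fwt{0}}$ ``coincides with $\slmpik$ and is generated by a single \sv'' --- is false as stated and is not what \cite{GorSim07} provides.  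The Verma module surjects onto $\uslvoak$ with nonzero kernel (generated by the \svs\ $\nrvec{\sroot{1}}_0 v$ and $\nrvec{\sroot{2}}_0 v$), so its maximal submodule strictly contains that kernel and cannot coincide with $\slmpik$; moreover it is generated by three \svs, not one --- the weight of $\nrvec{\sroot{1}}_0 v$ does not lie below that of $\chi$ in the standard order, so the submodule generated by $\chi$ alone misses it.  The input you actually need is the Kac--Wakimoto result \cite[Cor.~1]{KacMod88} (or the linkage/Fiebig argument the paper sketches) that the maximal submodule of a Verma module with \emph{admissible} highest weight is generated by \svs\ of known weights, followed by the observation that of the three generating \svs\ only the one attached to $-\hroot+\vv\imroot$ survives in the quotient $\uslvoak$, and its image therefore generates $\slmpik$.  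The reference \cite{GorSim07} is used in the paper only for the criterion \eqref{eq:fraclev} for nonsimplicity; it does not by itself give single-\sv\ generation of the maximal ideal.

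Two smaller points.  First, your parenthetical that the undeformed Sugawara $L_0$ agrees with $\widetilde{L}_0$ on vectors of the form $v\otimes\ghvac$ is incorrect: $\widetilde{L}_0 = L_0^{\text{Sug.}} - \tfrac{1}{2}\cvec{\hroot}_0 + (\text{ghost terms})$, so on $\chi\otimes\ghvac$ the deformed eigenvalue is $(\uu-2)(\vv-1)$, not $(\uu-2)\vv$.  This is harmless because the lemma's conformal weight refers to the standard grading of $\uslvoak$, but the remark should be deleted.  Second, the citation of \cite[Prop.~4.7.1]{AraRep05} as saying that \emph{any} \sv\ tensored with $\ghvac$ is a $d$-cocycle overstates that result; fortunately your direct verification (positive affine modes kill $\chi$, nonnegative ghost modes kill $\ghvac$, and the trilinear ghost term cannot consist solely of creation modes by weight counting) is sound and suffices on its own.
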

\begin{proof}
	This follows easily from \cite[Cor.~1]{KacMod88}, which says that the maximal submodule of a Verma module whose highest weight is admissible is generated by \svs\ of known weight.  In our case, the highest weight is $\kk \fwt{0}$ (which is admissible because $\kk$ is) and the only generating \sv\ that is nonzero in the quotient $\uslvoak$ of this Verma module has weight $\wref{} \cdot (\kk \fwt{0})$, where $\wref{}$ is the Weyl reflection corresponding to the root $-\hroot + \vv \imroot$.  Here, $\imroot$ denotes the standard imaginary root of $\aslthree$.  This \sv\ is $\chi$ and its $\slthree$- and conformal weights are now easily computed.  The fact that $\chi \otimes \ghvac$ is closed follows from $\chi$ being a \hwv.
\end{proof}
\noindent In fact, $\chi \otimes \ghvac$ is also inexact, though we will not need to \emph{a priori} establish this fact for what follows.

We remark that a nice conceptual proof of \cite[Cor.~1]{KacMod88} starts from the celebrated fact that the submodule structure of a Verma module only depends on the corresponding integral Weyl group \cite{FieCom06}.  This structure is therefore the same for all admissible \hw\ $\aslthree$-modules, irrespective of their level.  In particular, this structure matches that of a Verma module whose simple quotient is integrable, integrability being equivalent to admissibility for simple \hwms\ with $\vv=1$.  However, the fact that the maximal submodule is generated by \svs\ is well known in the integrable case, see \cite{KacInf90} or \cite{CarLie05}.

Suppose now that $\chi(z) v = 0$.  Because $\chi$ generates $\slmpik$, it follows that $\slmpik \cdot v = 0$.  Since $v$ generates $\slihw{\lambda}$, as a $\uslvoak$-module, and $\slmpik$ is a two-sided ideal of $\uslvoak$, we get $\slmpik \cdot \slihw{\lambda} = 0$.  Thus, the hypothesis of \eqref{eq:assertion}, that $\slihw{\lambda}$ is not an $\sslvoa{k}$-module, requires that $\chi_n v \ne 0$ for some $n \in \ZZ$.  As $\chi$ has $\slthree$-weight $(\uu-2) \hroot$, our knowledge of the weights of $\slihw{\lambda}$ lets us refine this requirement to $\chi_{-(\uu-2)-i} v \ne 0$ for some $i \in \NN$.  There is therefore a minimal $N \in \NN$ such that $\chi_{-(\uu-2)-N} v \ne 0$.

As $\slihw{\lambda}$ is simple, there therefore exists a \pbw\ monomial $U \in \envalg{\aslthree}$ such that
\begin{equation}
	U \chi_{-(\uu-2)-N} v = v
\end{equation}
(rescaling $\chi$ if necessary).  We choose an ordering for $U$ so that
\begin{equation}
	\nrvec{\alpha}_{n\le0} < \cvec{\alpha}_{n<0} < \rvec{\alpha}_{n<0} < \nrvec{\alpha}_{n>0} < \cvec{\alpha}_{n>0} < \rvec{\alpha}_{n\ge0}
\end{equation}
(obviously we may omit the $\cvec{\alpha}_0$ and $K$).  This means, for example, that the $\nrvec{\alpha}_n$ with $n\le0$ are ordered to the left while the $\rvec{\alpha}_n$ with $n\ge0$ are ordered to the right.  For $n\ge0$, we have $\rvec{\alpha}_0 \chi = 0$ and $\rvec{\alpha}_n v = 0$, hence
\begin{equation}
	\rvec{\alpha}_n \chi_{-(\uu-2)-N} v = (\rvec{\alpha}_0 \chi)_{-(\uu-2)-N+n} v = 0.
\end{equation}
We may therefore assume that $U$ contains no $\rvec{\alpha}_n$-modes with $n\ge0$.  Similarly,
\begin{equation}
	\cvec{\alpha}_n \chi_{-(\uu-2)-N} v = (\uu-2) \hroot(\cvec{\alpha}) \chi_{-(\uu-2)-(N-n)} v = 0
\end{equation}
for $n>0$, by the minimality of $N$.  Thus, we may assume that $U$ contains no $\cvec{\alpha}_n$-modes with $n>0$ either.  Finally, $v$ is not in the image of any $\nrvec{\alpha}_n$, with $n\le0$, $\cvec{\alpha}_n$, with $n<0$, or $\rvec{\alpha}_n$, with $n<0$.  All these modes may therefore also be excluded from $U$.  We conclude that $U$ may be taken to be a monomial in the modes $\nrvec{\alpha}_n$ with $n>0$.

Given a partition $\xi = [\xi_1 \ge \xi_2 \ge \cdots]$, let $\ell(\xi)$ denote its length and $\abs{\xi}$ denote its weight.  We write $\nrvec{\alpha}_{\xi} = \nrvec{\alpha}_{\xi_1} \nrvec{\alpha}_{\xi_2} \cdots$.  Then, there exist partitions $\xi$, $\pi$ and $\rho$ such that $U = \nrvec{\hroot}_{\xi} \nrvec{\sroot{2}}_{\pi} \nrvec{\sroot{1}}_{\rho}$ and so
\begin{equation} \label{eq:halfwaythere}
	\nrvec{\hroot}_{\xi} \nrvec{\sroot{2}}_{\pi} \nrvec{\sroot{1}}_{\rho} \chi_{-(\uu-2)-N} v = v.
\end{equation}
Moreover, considering $\slthree$- and conformal weights gives
\begin{equation} \label{eq:axipirho}
	\ell(\pi) = \ell(\rho), \quad \ell(\xi) + \ell(\pi) = \uu-2 \quad \text{and} \quad \abs{\xi} + \abs{\pi} + \abs{\rho} = \uu-2+N.
\end{equation}

\begin{lemma} \label{lem:usefulcomm}
	Let $F(z)$, $F \in \slthree$, be an affine field and let $U_0$ be a monomial in the negative root vectors $\nrvec{\alpha}_0$ of $\aslthree$.  Then, the modes of the field $(U_0 \chi)(w)$ satisfy
	\begin{equation} \label{eq:usefulcomm}
		\comm{F_m}{(U_0 \chi)_n} = (F_0 U_0 \chi)_{m+n}, \quad \text{for all}\ m,n \in \ZZ.
	\end{equation}
\end{lemma}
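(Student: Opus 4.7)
The plan is to apply the standard commutator (Borcherds) formula for modes of vertex algebra fields. Since $F(z)$ is an affine current of conformal weight one, this formula reads
\begin{equation*}
\comm{F_m}{(U_0\chi)_n} = \sum_{k\ge 0} \binom{m}{k} \brac[\big]{F_k(U_0\chi)}_{m+n-k}
\end{equation*}
for all $m,n \in \ZZ$, where on the right $F_k(U_0\chi)$ denotes the image of the state $U_0\chi$ under the affine mode $F_k$ acting inside $\uslvoak$. The lemma therefore reduces to establishing that $F_k(U_0\chi) = 0$ for every $k \ge 1$; only the $k=0$ term then survives, yielding exactly the asserted identity.

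I would prove the vanishing $F_k(U_0\chi) = 0$ for $k \ge 1$ by induction on the length of the monomial $U_0$. The base case $U_0 = \wun$ is the singular vector property itself: $\chi$ is a \hwv\ of its Verma submodule of $\uslvoak$, so $F_k \chi = 0$ for all $k \ge 1$ and all $F \in \slthree$. For the inductive step, write $U_0 = \nrvec{\alpha}_0 U_0'$ with $U_0'$ a shorter monomial, and expand
\begin{equation*}
F_k(\nrvec{\alpha}_0 U_0' \chi) = \nrvec{\alpha}_0\, F_k(U_0'\chi) + \comm{F_k}{\nrvec{\alpha}_0}(U_0'\chi).
\end{equation*}
The first term vanishes by the inductive hypothesis. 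For the second, the affine \km\ relations give $\comm{F_k}{\nrvec{\alpha}_0} = \comm{F}{\nrvec{\alpha}}_k$, because the central Kac--Moody cocycle contributes only when $k=0$. Since $\comm{F}{\nrvec{\alpha}} \in \slthree$, this reduces to the same vanishing statement with $F$ replaced by $\comm{F}{\nrvec{\alpha}}$ and with the shorter monomial $U_0'$, and so vanishes by the inductive hypothesis as well.

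The only real subtlety in this proof is a matter of bookkeeping: one must verify that the expression $F_k(U_0\chi)$ appearing in the Borcherds formula (where $F_k$ denotes the $k$-th VOA-mode acting on the state $U_0\chi$) coincides with the ordinary $\aslthree$-action on $\uslvoak$ regarded as the vacuum module over itself. This is a standard feature of the universal affine \voa\ and poses no genuine obstacle. I expect no further difficulties, so this is really the whole argument.
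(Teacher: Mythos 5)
Your proposal is correct and is essentially the paper's own proof: the paper likewise observes that $U_0 \chi$ is annihilated by the modes $F_m$ with $m>0$, so that the OPE $F(z)\,(U_0\chi)(w)$ has only a simple pole with coefficient $(F_0 U_0 \chi)(w)$, which is precisely the surviving $k=0$ term in your commutator formula. The only difference is that you spell out, by induction on the length of $U_0$ using the singular-vector property of $\chi$ and the vanishing of the central term, the annihilation statement that the paper leaves as an observation.
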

\begin{proof}
	Observe that $U_0 \chi$ is annihilated by the $F_m$ with $m>0$.  Consequently, the assertion follows easily from the \ope
	\begin{equation}
		F(z) (U_0 \chi)(w) \sim \frac{(F_0 U_0 \chi)(w)}{z-w}. \qedhere
	\end{equation}
\end{proof}

We apply \cref{lem:usefulcomm} to the \lhs\ of \eqref{eq:halfwaythere}, noting that the $\nrvec{}$-modes all annihilate $v$.  The result is
\begin{equation} \label{eq:complicated}
	\nrvec{\hroot}_{\xi} \nrvec{\sroot{2}}_{\pi} \nrvec{\sroot{1}}_{\rho} \chi_{-(\uu-2)-N} v
	= \brac*{(\nrvec{\hroot}_0)^{\ell(\xi)} (\nrvec{\sroot{2}}_0)^{\ell(\pi)} (\nrvec{\sroot{1}}_0)^{\ell(\rho)} \chi}_0 v,
\end{equation}
using \eqref{eq:axipirho}.  This looks complicated, but it allows us to determine the partitions $\xi$, $\pi$ and $\rho$.
\begin{lemma} \label{lem:mustbe0}
	If any of the parts of $\xi$, $\pi$ or $\rho$ are greater than $1$, then $\nrvec{\hroot}_{\xi} \nrvec{\sroot{2}}_{\pi} \nrvec{\sroot{1}}_{\rho} \chi_{-(\uu-2)-N} v = 0$.
\end{lemma}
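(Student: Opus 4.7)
The strategy is to exploit the minimality of $N$ by observing that the right-hand side of \eqref{eq:complicated} depends on the partitions $\xi, \pi, \rho$ only through their lengths. Indeed, the composite vector $W = (\nrvec{\hroot}_0)^{\ell(\xi)} (\nrvec{\sroot{2}}_0)^{\ell(\pi)} (\nrvec{\sroot{1}}_0)^{\ell(\rho)} \chi$ appearing there is determined solely by the triple $\brac{\ell(\xi), \ell(\pi), \ell(\rho)}$, so any two partition triples with these same lengths must produce the same value of $W_0 v$. My plan is therefore to compare the given partitions against the ``all-ones'' triple of the same lengths, and then to invoke the minimality of $N$ to force the result to vanish.

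First, I would record a length inequality. For any partition $\sigma$ with positive integer parts, $\abs{\sigma} \ge \ell(\sigma)$, with equality if and only if every part equals $1$. The hypothesis that some part of $\xi$, $\pi$, or $\rho$ exceeds $1$ therefore gives the strict inequality $\abs{\xi}+\abs{\pi}+\abs{\rho} > \ell(\xi)+\ell(\pi)+\ell(\rho)$. Substituting the three constraints $\abs{\xi}+\abs{\pi}+\abs{\rho} = \uu-2+N$, $\ell(\xi)+\ell(\pi) = \uu-2$ and $\ell(\rho) = \ell(\pi)$ from \eqref{eq:axipirho}, this collapses to $\ell(\pi) = \ell(\rho) < N$.

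Next, I would rerun the commutation computation that produced \eqref{eq:complicated}, invoking \cref{lem:usefulcomm} in exactly the same way, but now applied to the ``all-ones'' expression $(\nrvec{\hroot}_1)^{\ell(\xi)} (\nrvec{\sroot{2}}_1)^{\ell(\pi)} (\nrvec{\sroot{1}}_1)^{\ell(\rho)} \chi_{-(\uu-2)-\ell(\pi)} v$. This application is legitimate since each positive-index mode $\nrvec{\alpha}_1$ annihilates $v$. The iteration reduces the all-ones expression to $W_{M'} v$ with $M' = \ell(\xi)+\ell(\pi)+\ell(\rho) - (\uu-2) - \ell(\pi)$; using $\ell(\xi)+\ell(\pi) = \uu-2$ and $\ell(\rho) = \ell(\pi)$, this collapses to $M' = 0$, so the all-ones expression also equals $W_0 v$. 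On the other hand, the minimality of $N$ forces $\chi_{-(\uu-2)-\ell(\pi)} v = 0$, so the all-ones expression already vanishes, whence $W_0 v = 0$. Combining this with \eqref{eq:complicated} for the original partitions yields $\nrvec{\hroot}_{\xi} \nrvec{\sroot{2}}_{\pi} \nrvec{\sroot{1}}_{\rho} \chi_{-(\uu-2)-N} v = W_0 v = 0$, as claimed.

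The main obstacle is conceptual rather than technical: one must notice that \eqref{eq:complicated} records a dependence on the partitions only through their lengths, because this is precisely what allows the minimality of $N$ to enter the argument via a partition with the same lengths but smaller total weight. Once this observation is in hand, the remainder is routine bookkeeping, since \cref{lem:usefulcomm} applies uniformly to any partition with strictly positive parts and the mode-index arithmetic in the all-ones case mirrors the original.
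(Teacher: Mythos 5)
Your proof is correct and follows essentially the same route as the paper: both arguments use \cref{lem:usefulcomm} together with the minimality of $N$ to show that the common value $\brac[\big]{(\nrvec{\hroot}_0)^{\ell(\xi)} (\nrvec{\sroot{2}}_0)^{\ell(\pi)} (\nrvec{\sroot{1}}_0)^{\ell(\rho)} \chi}_0 v$ from \eqref{eq:complicated} vanishes, by realising it as the image under commutation of a vector already annihilated because the relevant $\chi$-mode kills $v$. The only difference is cosmetic: the paper decrements a single part $\xi_i > 1$ by one (the hypothesis keeping all parts positive), whereas you pass to the all-ones monomial and feed the hypothesis in through the strict inequality $\abs{\xi} + \abs{\pi} + \abs{\rho} > \ell(\xi) + \ell(\pi) + \ell(\rho)$, which gives $\ell(\pi) < N$.
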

\begin{proof}
	Suppose that $\xi$ has a part $\xi_i > 1$ (the argument is identical if $\pi$ or $\rho$ has a part greater than $1$).  Then, we can form a new partition $\xi'$ from $\xi$ by subtracting $1$ from $\xi_i$ and reordering parts if necessary.  Note that $\ell(\xi') = \ell(\xi)$ and $\abs{\xi'} = \abs{\xi} - 1$.  Then, \cref{lem:usefulcomm} and $N$ being minimal give
	\begin{align}
		0
		&= \nrvec{\hroot}_{\xi'} \nrvec{\sroot{2}}_{\pi} \nrvec{\sroot{1}}_{\rho} \chi_{-(\uu-2)-(N-1)} v
		= \brac*{(\nrvec{\hroot}_0)^{\ell(\xi')} (\nrvec{\sroot{2}}_0)^{\ell(\pi)} (\nrvec{\sroot{1}}_0)^{\ell(\rho)} \chi}_{-(\uu-2) + \abs{\xi'} + \abs{\pi} + \abs{\rho} - N+1} v \\
		&= \brac*{(\nrvec{\hroot}_0)^{\ell(\xi)} (\nrvec{\sroot{2}}_0)^{\ell(\pi)} (\nrvec{\sroot{1}}_0)^{\ell(\rho)} \chi}_0 v. \notag
	\end{align}
	But, this is the \rhs\ of \eqref{eq:complicated}.
\end{proof}
\noindent Combining \eqref{eq:halfwaythere}, which is manifestly nonzero, with \cref{lem:mustbe0} now forces all parts of $\xi$, $\pi$ and $\rho$ to be $1$.  As partition lengths and weights are now equal, the relations of \eqref{eq:axipirho} are easily solved to give $\abs{\xi} = \uu-2-N$ and $\abs{\pi} = \abs{\rho} = N$.  In particular, \eqref{eq:halfwaythere} now becomes
\begin{equation}
	(\nrvec{\hroot}_1)^{\uu-2-N} (\nrvec{\sroot{2}}_1)^N (\nrvec{\sroot{1}}_1)^N \chi_{-(\uu-2)-N} v = v.
\end{equation}
By rescaling $\chi$ again, if necessary, we arrive at following key result.
\begin{proposition} \label{prop:thekey}
	If $N$ is the minimal integer such that $\chi_{-(\uu-2)-N} v \ne 0$, then
	\begin{equation} \label{eq:halfwaythereagain}
		(\nrvec{\sroot{2}}_1)^N (\nrvec{\sroot{1}}_1)^N \chi_{-(\uu-2)-N} v = (\rvec{\hroot}_{-1})^{\uu-2-N} v.
	\end{equation}
\end{proposition}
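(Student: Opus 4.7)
The plan is to show that both sides of the claimed identity span the same one-dimensional subspace of $\slihw{\lambda}$, so that they agree up to a nonzero scalar which is then absorbed into a further rescaling of $\chi$. A direct weight computation shows that each side is a simultaneous eigenvector of $\cvec{\sroot{1}}_0$, $\cvec{\sroot{2}}_0$ and $L_0$, with finite $\slthree$-weight $(\uu-2-N)\hroot$ greater than that of $v$ and conformal weight $\uu-2-N$ greater than that of $v$. Moreover, equation \eqref{eq:halfwaythere} above already shows that the \lhs\ is nonzero, since $(\nrvec{\hroot}_1)^{\uu-2-N}$ maps it to $v$.

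The main task is to establish that the corresponding weight space of $\slihw{\lambda}$ is one-dimensional, spanned by $(\rvec{\hroot}_{-1})^{\uu-2-N} v$. I would work first in the Verma module $M(\lambda)$: a \pbw\ basis there consists of ordered monomials in the negative affine modes $\rvec{\alpha}_{-n}$ ($n\ge 1$, $\alpha\in\proots$), $\nrvec{\alpha}_{-n}$ ($n\ge 0$, $\alpha\in\proots$) and $\cvec{i,-n}$ ($n\ge 1$, $i\in\{1,2\}$), each contributing conformal weight $n$. A short cost analysis shows that to achieve the target finite weight $(\uu-2-N)\hroot$ within the conformal-weight budget $\uu-2-N$, the only admissible monomial is $(\rvec{\hroot}_{-1})^{\uu-2-N}$: $\rvec{\hroot}_{-1}$ uniquely produces $\hroot$ at unit cost, the alternative decomposition via $\rvec{\sroot{1}}_{-1}\rvec{\sroot{2}}_{-1}$ costs $2$ per $\hroot$, and any use of $\nrvec{\alpha}_{-n}$ or $\cvec{i,-n}$ modes must be compensated by further $\rvec{\alpha}_{-m}$ modes, strictly exceeding the budget. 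This combinatorial step is the main obstacle, but it reduces to a small finite check.

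To transfer one-dimensionality to the simple quotient $\slihw{\lambda}$, it suffices to verify the nonvanishing of $(\rvec{\hroot}_{-1})^{\uu-2-N} v$ there. The elements $E = \rvec{\hroot}_{-1}$, $F = \nrvec{\hroot}_1$ and $H = \cvec{\hroot}_0 - \kk\wun$ span an $\sltwo$-subalgebra of $\envalg{\aslthree}$, and since $Fv = 0$ (as $\nrvec{\hroot}_1$ corresponds to the positive simple affine root $\sroot{0}$) and $Hv = (\lambda_1+\lambda_2-\kk)v = -\lambda_0 v$, the standard $\sltwo$ induction gives
\begin{equation*}
	F^k E^k v = k!\,\lambda_0(\lambda_0-1)\cdots(\lambda_0-k+1)\,v,
\end{equation*}
which is nonzero for $k = \uu-2-N$ by the standing assumption $\lambda_0 \notin \NN$. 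Hence $(\rvec{\hroot}_{-1})^{\uu-2-N} v \ne 0$ in $\slihw{\lambda}$.

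Combining these steps, the \lhs\ and \rhs\ of \eqref{eq:halfwaythereagain} are both nonzero elements of the same one-dimensional weight subspace and therefore differ by a nonzero scalar (whose value is in fact the reciprocal of the $\sltwo$ factor above). Since rescaling $\chi$ scales the \lhs\ linearly while leaving the \rhs\ untouched, absorbing this scalar into the rescaling announced in the text immediately preceding the proposition delivers the stated identity.
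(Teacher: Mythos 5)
Your overall frame is sound and in fact spells out the step the paper leaves implicit at the very end: both sides of \eqref{eq:halfwaythereagain} lie in the weight space of $\slihw{\lambda}$ at $\lambda-(\uu-2-N)\sroot{0}$ (finite weight shifted by $(\uu-2-N)\hroot$, conformal weight by $\uu-2-N$); since this weight sits at a multiple of the single affine simple root $\sroot{0}=\imroot-\hroot$ below the highest weight, the corresponding Verma weight space is one-dimensional, spanned by $(\rvec{\hroot}_{-1})^{\uu-2-N}v$ (your \pbw\ cost count is a correct way to see this); and $(\rvec{\hroot}_{-1})^{\uu-2-N}v\ne0$ in $\slihw{\lambda}$ does follow from your $\sltwo$-triple $\rvec{\hroot}_{-1}$, $\nrvec{\hroot}_1$, $\cvec{\hroot}_0-\kk\wun$ together with $\lambda_0\notin\NN$ (it also follows from \cref{lem:evnotexact}). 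So the reduction ``both sides nonzero in a one-dimensional weight space, then rescale $\chi$'' is fine.

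The genuine gap is the nonvanishing of the \lhs, which you dispose of in one sentence by appealing to \eqref{eq:halfwaythere}. As it stands, \eqref{eq:halfwaythere} only asserts $\nrvec{\hroot}_{\xi}\nrvec{\sroot{2}}_{\pi}\nrvec{\sroot{1}}_{\rho}\chi_{-(\uu-2)-N}v=v$ for \emph{some} partitions $\xi,\pi,\rho$ constrained only by \eqref{eq:axipirho}; it does not say that $(\nrvec{\hroot}_1)^{\uu-2-N}$ maps $(\nrvec{\sroot{2}}_1)^N(\nrvec{\sroot{1}}_1)^N\chi_{-(\uu-2)-N}v$ to $v$, nor, therefore, that this vector is nonzero. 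Pinning the partitions down to all parts equal to $1$ is precisely the technical heart of the derivation: it needs \cref{lem:usefulcomm} and \cref{lem:mustbe0}, both of which exploit the minimality of $N$ --- a hypothesis your argument never otherwise touches, which is itself a warning sign. (If instead you meant to cite the display immediately preceding the proposition, that display is the conclusion of this very argument and cannot be taken as given.) Without this input, the \lhs\ could vanish for all your proof shows, and then no rescaling of $\chi$ can produce the identity, since the \rhs\ is nonzero. In short, you have correctly reduced the proposition to the statement $(\nrvec{\hroot}_1)^{\uu-2-N}(\nrvec{\sroot{2}}_1)^N(\nrvec{\sroot{1}}_1)^N\chi_{-(\uu-2)-N}v=v$, but that statement still has to be proved, and I see no way to get it from weight considerations alone without redoing the minimality argument.
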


The idea now is to use the fact that the \rhs\ of \eqref{eq:halfwaythereagain} is inexact when tensored with $\ghvac$ (\cref{lem:evnotexact}) to prove that the same is true for $\chi_{-(\uu-2)-N} v$.  For this, we need to replace the action of $\nrvec{\sroot{2}}_1$ and $\nrvec{\sroot{1}}_1$ with elements that preserve exactness, for example any closed elements.
\begin{lemma} \label{lem:G=f}
	For all $i,j \in \NN$, we have
	\begin{equation} \label{eq:G=f}
		\brac[\big]{\widetilde{G}^+_{(1/2)}}^i \brac[\big]{\widetilde{G}^-_{(1/2)}}^j \brac*{\chi_{-(\uu-2)-N} v \otimes \ghvac}
		= (\nrvec{\sroot{2}}_1)^i (\nrvec{\sroot{1}}_1)^j \chi_{-(\uu-2)-N} v \otimes \ghvac.
	\end{equation}
\end{lemma}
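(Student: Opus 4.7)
The plan is to reduce the action of $\widetilde{G}^{\pm}_{(1/2)}$ to that of their purely-affine parts $\nrvec{\sroot{2}}_{(1/2)} = \nrvec{\sroot{2}}_1$ (for $\widetilde{G}^+$) and $\nrvec{\sroot{1}}_{(1/2)} = \nrvec{\sroot{1}}_1$ (for $\widetilde{G}^-$), and then iterate. Concretely, we shall establish the base-case identities
\begin{equation*}
	\widetilde{G}^+_{(1/2)}(W \otimes \ghvac) = \nrvec{\sroot{2}}_1 W \otimes \ghvac \quad \text{and} \quad \widetilde{G}^-_{(1/2)}(W \otimes \ghvac) = \nrvec{\sroot{1}}_1 W \otimes \ghvac,
\end{equation*}
for every vector $W$ of the form $W = U\chi_{-(\uu-2)-N}v$, where $U$ is a word in the letters $\nrvec{\sroot{1}}_1$ and $\nrvec{\sroot{2}}_1$. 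Since this class of vectors is obviously preserved by multiplication by $\nrvec{\sroot{i}}_1$, the lemma will then follow by first applying $\widetilde{G}^-_{(1/2)}$ to $\chi_{-(\uu-2)-N}v \otimes \ghvac$ exactly $j$ times and then applying $\widetilde{G}^+_{(1/2)}$ exactly $i$ times.

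To verify the base case, we use the explicit formulae \eqref{eq:defGpm} to separate the non-affine summands of $\widetilde{G}^{\pm}$ into \emph{pure-ghost} terms (all of ghost conformal weight $\frac{3}{2}$) and a single \emph{mixed} term, namely $\no{\cvec{\sroot{2}}\beta}$ in $\widetilde{G}^+$ and $-\no{\cvec{\sroot{1}}\gamma}$ in $\widetilde{G}^-$. The pure-ghost contributions are dispatched at once by the vacuum axiom: the $(\frac{1}{2})$-mode of any weight-$\frac{3}{2}$ state annihilates $\ghvac$. For each mixed term, the tensor-product structure together with the ghost-vacuum conditions $\beta_n\ghvac = \gamma_n\ghvac = 0$ (for $n \ge \frac{1}{2}$) give
\begin{equation*}
	\no{\cvec{\alpha}\delta}_{(1/2)}(W \otimes \ghvac) = \sum_{m \ge 1}\cvec{\alpha}_m W \otimes \delta_{1/2-m}\ghvac,
\end{equation*}
where $\delta \in \set{\beta,\gamma}$, so the desired vanishing reduces to the claim that $\cvec{\alpha}_m W = 0$ for every $m \ge 1$ and $\alpha \in \set{\sroot{1},\sroot{2}}$.

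Establishing this positive-Cartan annihilation property is the main obstacle. The plan is to commute $\cvec{\alpha}_m$ through $U$ using $[\cvec{\alpha}_m,\nrvec{\sroot{i}}_1] = -\sroot{i}(\cvec{\alpha})\nrvec{\sroot{i}}_{m+1}$; the residual term $U\cvec{\alpha}_m\chi_{-(\uu-2)-N}v$ vanishes by the minimality of $N$, as already noted in the discussion preceding \cref{prop:thekey}. Each of the remaining summands contains exactly one negative-root mode of index $m+1 \ge 2$. We then reorder it into the canonical $\nrvec{\hroot}_{\xi}\nrvec{\sroot{2}}_{\pi}\nrvec{\sroot{1}}_{\rho}$ form using the affine brackets $[\nrvec{\sroot{1}}_a,\nrvec{\sroot{2}}_b] = \pm\nrvec{\hroot}_{a+b}$ and $[\nrvec{\hroot}_a,\nrvec{\sroot{i}}_b] = 0$; the large-index factor either survives intact as a part $\ge 2$ of $\pi\cup\rho$, or is converted into $\nrvec{\hroot}_{m+2}$, a part $\ge 2$ of $\xi$. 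Either way, the resulting ordered word contains a part $\ge 2$, so \cref{lem:mustbe0} delivers zero, completing the base case. The iterative argument outlined in the first paragraph then yields the lemma.
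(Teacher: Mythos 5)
Your proposal is correct and follows essentially the same route as the paper's proof: split $\widetilde{G}^{\pm}_{(1/2)}$ into its affine, mixed and pure-ghost pieces, note that the pure-ghost pieces annihilate $\ghvac$, and reduce the mixed pieces to showing that the positive Cartan modes $\cvec{\alpha}_m$, $m\ge1$, kill the relevant affine vectors, which follows from the minimality of $N$ together with \cref{lem:usefulcomm,lem:mustbe0}. The only (harmless) difference is organisational: by proving the one-step identity for arbitrary words in $\nrvec{\sroot{1}}_1$ and $\nrvec{\sroot{2}}_1$ you need the additional reordering into the canonical $\nrvec{\hroot}_{\xi}\nrvec{\sroot{2}}_{\pi}\nrvec{\sroot{1}}_{\rho}$ form, whereas the paper works directly with the ordered monomials $(\nrvec{\sroot{2}}_1)^i(\nrvec{\sroot{1}}_1)^j$, for which the commutator terms are already in canonical order.
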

\begin{proof}
	We start with \eqref{eq:defGpm}, which gives
	\begin{equation}
		\widetilde{G}^-_{(1/2)}
		= \nrvec{\sroot{1}}_{(1/2)} - \sum_{m \in \ZZ} \cvec{\sroot{1}}_{(m)} \gamma_{-m+1/2} + \cdots
		= \nrvec{\sroot{1}}_1 - \sum_{m \in \ZZ} \cvec{\sroot{1}}_m \gamma_{-m+1/2} + \cdots,
	\end{equation}
	where the $\cdots$ stands for pure ghost terms.  As these ghost terms annihilate $\ghvac$, we have
	\begin{equation}
		\widetilde{G}^-_{(1/2)} \brac[\Big]{(\nrvec{\sroot{1}}_1)^j \chi_{-(\uu-2)-N} v \otimes \ghvac}
		= (\nrvec{\sroot{1}}_1)^{j+1} \chi_{-(\uu-2)-N} v \otimes \ghvac
		- \sum_{m=1}^{\infty} \cvec{\sroot{1}}_m (\nrvec{\sroot{1}}_1)^j \chi_{-(\uu-2)-N} v \otimes \gamma_{-m+1/2} \ghvac,
	\end{equation}
	for any $j \in \NN$.  Now, $m\ge1$ implies that $\cvec{\sroot{2}}_m v = 0$, hence that
	\begin{equation}
		\cvec{\sroot{1}}_m (\nrvec{\sroot{1}}_1)^j \chi_{-(\uu-2)-N} v
		= \comm{\cvec{\sroot{1}}_m}{(\nrvec{\sroot{1}}_1)^j} \chi_{-(\uu-2)-N} v
		+ (\nrvec{\sroot{1}}_1)^j \comm{\cvec{\sroot{1}}_m}{\chi_{-(\uu-2)-N}} v.
	\end{equation}
	The first commutator on the \rhs\ is a sum of terms, each obtained from $(\nrvec{\sroot{1}}_1)^j$ by replacing one of the $\nrvec{\sroot{1}}_1$ by $-2 \nrvec{\sroot{1}}_{m+1}$.  However, each of these terms is $0$ by \cref{lem:mustbe0}.  On the other hand, the second commutator is proportional to $\chi_{-(\uu-2)-(N-m)}$, so it annihilates $v$ by minimality of $N$.  We therefore obtain
	\begin{equation} \label{eq:iszero}
		\widetilde{G}^-_{(1/2)} \brac[\Big]{(\nrvec{\sroot{1}}_1)^j \chi_{-(\uu-2)-N} v \otimes \ghvac}
		= \nrvec{\sroot{1}}_1 (\nrvec{\sroot{1}}_1)^j \chi_{-(\uu-2)-N} v \otimes \ghvac,
	\end{equation}
	from which we conclude inductively that $\brac[\big]{\widetilde{G}^-_{(1/2)}}^j \brac*{\chi_{-(\uu-2)-N} v \otimes \ghvac} = (\nrvec{\sroot{1}}_1)^j \chi_{-(\uu-2)-N} v \otimes \ghvac$, for all $i\in\NN$.

	To deduce \eqref{eq:G=f}, we now repeat the argument by acting with $\widetilde{G}^+_{(1/2)}$ on $(\nrvec{\sroot{2}}_1)^i (\nrvec{\sroot{1}}_1)^j \chi_{-(\uu-2)-N} v \otimes \ghvac$.  There are no essential differences between this case and that described above, so we omit the details.
\end{proof}
\begin{corollary} \label{cor:done}
	$\chi_{-(\uu-2)-N} v \otimes \ghvac$ is closed and inexact.
\end{corollary}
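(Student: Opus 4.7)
The plan is to prove closedness and inexactness separately, leveraging the two previous results in the appendix (\cref{prop:thekey,lem:G=f}) as well as the foundational \cref{lem:evnotexact,lem:chi}.

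For closedness, I would argue that since $\chi \otimes \ghvac$ is closed (\cref{lem:chi}), the field $\chi(z) \otimes \ghvac$ (in the $\uslvoak \otimes \ghvoa$ BRST complex) defines a chain map from the BRST complex of $\slihw{\lambda} \otimes \ghvoa$ to itself. Concretely, one writes $\comm{d_0}{\chi_n} = (d_0 \chi)_n = 0$ as operators acting on $\slihw{\lambda} \otimes \ghvoa$, so that each mode $\chi_n$ preserves the kernel of $d_0$. Since $v \otimes \ghvac$ is closed (\cref{lem:evnotexact}), applying $\chi_{-(\uu-2)-N}$ produces a closed element, giving the first half of the corollary.

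The inexactness argument is the substantive part, and it is where I would invoke the key relations assembled above. Suppose for contradiction that $\chi_{-(\uu-2)-N} v \otimes \ghvac = d_0(\omega)$ for some $\omega$ in the BRST complex. The fields $\widetilde{G}^\pm(z)$ of \eqref{eq:defGpm} were noted to be closed, so their modes commute with $d_0$ and therefore take exact elements to exact elements. Applying $\brac[\big]{\widetilde{G}^+_{(1/2)}}^N \brac[\big]{\widetilde{G}^-_{(1/2)}}^N$ to both sides and invoking \cref{lem:G=f}, we would obtain exactness of $(\nrvec{\sroot{2}}_1)^N (\nrvec{\sroot{1}}_1)^N \chi_{-(\uu-2)-N} v \otimes \ghvac$. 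By \cref{prop:thekey}, however, this element equals $(\rvec{\hroot}_{-1})^{\uu-2-N} v \otimes \ghvac$, and note that the exponent $\uu-2-N$ is a nonnegative integer by the length constraint in \eqref{eq:axipirho}. The inexactness statement of \cref{lem:evnotexact} applied to $n=\uu-2-N$ then produces the desired contradiction.

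The main conceptual obstacle has essentially been handled already by the earlier results in the appendix: \cref{prop:thekey} reduces the element of interest to a power of $\rvec{\hroot}_{-1}$ acting on $v$ (whose image in cohomology was shown to be nonzero by Arakawa's foundational computations invoked in \cref{lem:evnotexact}), and \cref{lem:G=f} supplies the crucial comparison between the action of the closed fields $\widetilde{G}^\pm$ and the action of the affine modes $\nrvec{\sroot{i}}_1$. With these in hand, only a single sign-inverting argument using exact-preservation under closed operators is needed, and the proof is complete. Hence the write-up should amount to little more than chaining \cref{prop:thekey,lem:G=f,lem:evnotexact} together as above.
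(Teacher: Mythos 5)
Your proposal is correct and follows essentially the same route as the paper: closedness comes from $\chi\otimes\ghvac$ and $v\otimes\ghvac$ being closed, and inexactness is proved by contradiction, using that the modes $\widetilde{G}^{\pm}_{(1/2)}$ commute with $d$ and chaining \cref{lem:G=f}, \cref{prop:thekey} and \cref{lem:evnotexact} exactly as in the paper's argument. Your extra observation that $\uu-2-N\in\NN$ (from \eqref{eq:axipirho}) is a nice, though implicit in the paper, sanity check.
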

\begin{proof}
	We have already seen that $\chi_{-(\uu-2)-N} v \otimes \ghvac$ is closed.  Suppose therefore that it is exact.  As $\comm{d}{\widetilde{G}^{\pm}_{(1/2)}} = 0$, since $\widetilde{G}^{\pm}$ is closed, it now follows from \cref{prop:thekey,lem:G=f} that
	\begin{equation}
		\brac[\big]{\widetilde{G}^+_{(1/2)}}^N \brac[\big]{\widetilde{G}^-_{(1/2)}}^N \brac[\big]{\chi_{-(\uu-2)-N} v \otimes \ghvac}
		= (\nrvec{\sroot{2}}_1)^N (\nrvec{\sroot{1}}_1)^N \chi_{-(\uu-2)-N} v \otimes \ghvac
		= (\rvec{\hroot}_{-1})^{\uu-2-N} v
	\end{equation}
	is also exact.  But, this contradicts \cref{lem:evnotexact}.
\end{proof}
\noindent This \lcnamecref{cor:done} completes the proof of \cref{thm:surjection}.

We conclude with a few remarks about this proof.  First, proving that \qhr\ indeed realises all simple \hwms\ is obviously desirable and has been studied in several settings.  However, Arakawa's general results \cite{AraRep05,AraRat15} in this direction for universal minimal and regular W-algebras do not immediately imply the desired results for their simple quotients.  Indeed, the cases where this completeness result for simple W-algebras is known seem to be cases in which the simple quotient is rational and $C_2$-cofinite, see for example \cite{AraRat13,AraRat15,AraRat19}.  Our proof, applying as it does to the nonrational and non-$C_2$-cofinite simple \bp\ algebras, is therefore quite novel and seems to be very different from the rational proofs in the literature.

Second, this proof relies on certain key facts that might be regarded as special to the \bp\ algebras.  In particular, we use the explicit realisation \eqref{eq:defGpm} of the charged generators of $\ubpvoak$.  However, the pure ghost terms played no role in the proof, so it may be possible to generalise this part of the argument to other minimal, or perhaps even subregular, W-algebras.  On the other hand, the proof also exploits the fact that the maximal ideal of $\uslvoak$ is generated by a single \sv, which does not normally hold when generalising to nonadmissible levels.  It is therefore not clear that this proof can be adapted for the nonadmissible case, but it would of course be interesting to try.

Alternatively, it may be that one can prove more general completeness results of this type by further developing the inverse \qhr\ methods introduced in \cite{SemInv94,AdaRea17} and extended to the \bp\ algebras in \cite{AdaRea20}.  These methods have the advantage of building up the representation theory iteratively from that of the so-called exceptional W-algebras \cite{AraRat19}, in particular from the regular ones.  This may then lead to uniform methods for all W-algebras, at least when the level is admissible and (sufficiently) nondegenerate.  We hope to have the opportunity to report on this promising direction in the future.

\flushleft
\providecommand{\opp}[2]{\textsf{arXiv:\mbox{#2}/#1}}
\providecommand{\pp}[2]{\textsf{arXiv:#1 [\mbox{#2}]}}

\end{document}